\newtheorem{thm}{Theorem}[section]
\newtheorem{cor}[thm]{Corollary}
\newtheorem{lem}[thm]{Lemma}
\newtheorem{clm}[thm]{Claim}
\newtheorem{prop}[thm]{Proposition}
\theoremstyle{remark}
\newtheorem{rmk}[thm]{Remark}
\theoremstyle{definition}
\newtheorem{Def}[thm]{Definition}                                        %
\def \R {\mathbb R}
\title{Deformation of  singular connections I:\\
$G_{2}-$instantons with point singularities}
\date{\vspace{-5ex}}
\begin{document}
\maketitle 
{\small{\begin{abstract} In dimension 7, we establish a Fredholm theory for a  Dirac-type operator   associated to  a  connection with point singularities. There are two  applications. $1$. over a closed 7-manifold, under some natural conditions,  a  $G_{2}-$instanton and  its point singularities can still be "seen" when the $G_{2}-$structure is properly perturbed. $2$.  over a ball in $\R^{7}$,  for any almost-Euclidean $G_{2}-$structure, there exists  a $G_{2}-$monopole asymptotic to an arbitrary  Hermitian Yang-Mills connection on $S^{6}$. 
\end{abstract}
{\small{\tableofcontents}}}}

\section{Introduction} 
The celebrated work of Donaldson-Thomas \cite{DonaldsonThomas} has inspired extensive studies  on special holonomy.  On a seven-dimensional manifold $M$ with a $G_{2}-$structure $\phi$,  for a vector bundle $E\rightarrow M$, it's suggested in  \cite{DonaldsonThomas} to consider the $G_{2}-$instanton equation of  connections on $E$: 
 
\begin{equation}\label{equ instanton equation for A}
F_{A}\wedge \psi=0, 
\end{equation} 
where $\psi$ is the co associative form uniquely determined by $\phi$, and $F_{A}$ is the curvature form of the connection $A$.
As pointed out in \cite{DonaldsonThomas}, the  $G_{2}-$instantons  should form a basis for a Casson-Floer-type theory for $7-$manifolds.  By adding the torsion-free $G_{2}-$structures $\phi\ (\psi)$ as a "parameter" in (\ref{equ instanton equation for A}),  a very natural moduli space is the set of solutions $(\phi,A)$ to  (\ref{equ instanton equation for A}) (modulo gauge  and some other  natural equivalence).  According to the seminal paper of Tian \cite{Tian},  it's expected that    instantons with point singularities should appear in the natural compactification. Therefore, a fundamental step to understand this moduli space (and any related one)  is to study the following question.
   
 \textit{Given a  $G_{2}-$instanton with point singularities, can we still see the instanton and the singularity for nearby $G_{2}-$structures?}

  The best story we can expect is that  the singularity  disappears by perturbing the $G_{2}-$structure. On mean curvature flows, by the work of Colding-Minicozzi \cite{Colding},  this indeed happens:  any flow which develops singularity as "shrinking donuts" (Angenent \cite{Angenent}) can be perturbed away. However, our following main result shows that this does not  happen very often  for $G_{2}-$instantons. 
\begin{thm}\label{Thm Deforming instanton simple version} Let $E$ be an admissible bundle defined away from finitely-many  points ($O_{j}$) on a $7-$manifold with a $G_{2}-$structure.  Suppose $E$ admits an admissible $G_{2}-$instanton with   trivial co-kernel, then for any small enough admissible deformation of the $G_{2}-$structure, there exists a $G_{2}-$monopole with the same tangent connection at each $O_{j}$.
\end{thm}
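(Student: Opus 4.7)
The plan is to reduce the deformation problem to an implicit function theorem in weighted Banach spaces adapted to the point singularities and to the tangent-cone data at each $O_{j}$. Write $A=A_{0}+a$ where $A_{0}$ is the reference admissible instanton, and look for a $G_{2}-$monopole of the form $(A_{0}+a,\Phi)$. For a deformed $G_{2}-$structure with coassociative form $\psi=\psi_{0}+\eta$, the $G_{2}-$monopole equation can be cast in the schematic shape
\begin{equation*}
\L_{A_{0}}(a,\Phi)+\Q(a,\Phi)=\mathcal{E}(\eta),
\end{equation*}
where $\L_{A_{0}}$ is precisely the Dirac-type operator for which the paper has just built a Fredholm theory, $\Q$ is a fibrewise-quadratic nonlinearity in $(a,\Phi)$, and $\mathcal{E}(\eta)$ is the source term produced by replacing $\psi_{0}$ by $\psi$ in the equation.

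First I would choose a weight $\mu$ at each $O_{j}$ that simultaneously: (i) lies in the Fredholm range of $\L_{A_{0}}$; (ii) forces elements of the domain to decay strictly faster than the difference of any two tangent connections at $O_{j}$, so that adding $a$ to $A_{0}$ preserves the prescribed tangent connection; and (iii) is compatible with the Moser-type multiplication estimates needed to handle $\Q$. With such a $\mu$, the admissibility hypothesis packages the local model at each $O_{j}$, and the trivial co-kernel hypothesis upgrades the Fredholm property of $\L_{A_{0}}$ on the chosen weighted space to surjectivity modulo gauge. A bounded right inverse $\G$ then exists, with operator norm controlled only by $A_{0}$ and the $G_{2}-$background.

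Next I would rewrite the equation as the fixed-point problem
\begin{equation*}
(a,\Phi)=\G\bigl(\mathcal{E}(\eta)-\Q(a,\Phi)\bigr),
\end{equation*}
after a suitable gauge-fixing (Coulomb-type against $A_{0}$) to absorb the gauge kernel. Using $\|\mathcal{E}(\eta)\|\lesssim\|\eta\|$ and quadratic bounds $\|\Q(a,\Phi)\|\lesssim\|(a,\Phi)\|^{2}$ in the weighted spaces, standard contraction-mapping arguments on a ball of radius $O(\|\eta\|)$ produce a unique small solution for every sufficiently small admissible $\eta$. The resulting pair $(A_{0}+a,\Phi)$ is the desired $G_{2}-$monopole, and by construction (ii) above it carries the same tangent connection as $A_{0}$ at each $O_{j}$.

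The main obstacle is the linear step, in particular the compatibility requirements (i)--(iii) on $\mu$. The indicial roots of $\L_{A_{0}}$ at each $O_{j}$ are determined by the spectrum of a Dirac-type operator on $S^{6}$ twisted by the tangent connection, so the allowable weight window depends delicately on the tangent connection itself; one must show that admissibility guarantees a nonempty window inside which the co-kernel hypothesis is meaningful and makes $\L_{A_{0}}$ surjective. A secondary technical point is propagating the weighted estimates through the quadratic term $\Q$ uniformly near the singularities, since ordinary Sobolev embeddings degenerate there; this should be handled by the scale-invariant Hardy-type inequalities built from the weights chosen in step one.
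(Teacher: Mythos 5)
Your overall strategy matches the paper's: linearize at the reference instanton to get the Dirac-type operator, build a Fredholm theory in weighted spaces adapted to the cone data at each $O_{j}$, use the trivial co-kernel hypothesis to obtain a bounded right inverse, and then close a contraction mapping. However, there are two places where the step you propose would not go through as stated.

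The first is the nonlinear estimate. You dismiss the bilinear bound for $\Q$ as a ``secondary technical point'' handled by ``scale-invariant Hardy-type inequalities,'' but in dimension $7$ the weighted $W^{1,2}_{p,b}$-norm alone does not control a quadratic product: $W^{1,2}$ embeds only into $L^{14/5}$ here, so $\|a\otimes a\|_{L^{2}_{p,b}}$ cannot be bounded by $\|a\|_{W^{1,2}_{p,b}}^{2}$ by Hardy or Sobolev inequalities alone, and ordinary elliptic bootstrapping degenerates near the $O_{j}$ precisely because the coefficients blow up like $r^{-1}$. The paper flags this as the \emph{essential} difficulty and resolves it by introducing the hybrid spaces $H_{p,b}$, $N_{p,b}$ (Definition \ref{Def Hybrid spaces}) whose norms combine the weighted Sobolev norm with a weighted $C^{2,\alpha}$ norm, and by proving the $C^{0}$-estimate of Theorem \ref{thm C0 est} (that $W^{1,2}_{p,b}$ with sufficiently negative $p$ upgrades to a pointwise $r^{-5/2-p}$ bound). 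Only then is the multiplicative property of Lemma \ref{lem multiplicative property of hybrid spaces} available. Without replacing your ``Hardy inequality'' step by something equivalent to this hybrid-space machinery, the contraction argument fails.

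The second, smaller, gap is the bound $\|\mathcal{E}(\eta)\|\lesssim\|\eta\|$. Near $O_{j}$ the source term is $\star_{\underline{\psi}}(F_{A}\wedge\underline{\psi})=\star_{\underline{\psi}}(F_{A}\wedge(\underline{\psi}-\psi))$, and $|F_{A}|\sim r^{-2}$, so a generic $C^{5}$-small perturbation $\eta$ only yields $|\mathcal{E}(\eta)|\lesssim\delta_{0}r^{-2}$, which lies outside the target space $N_{p}$ for $p>-\frac{5}{2}$. The paper needs the admissibility requirement $\underline{\phi}(O_{j})=\phi(O_{j})$ to get $|\underline{\psi}-\psi|\lesssim r$, hence $|\mathcal{E}(\eta)|\lesssim \delta_{0}\,r^{-1-\lambda_{1}}$ for any $\lambda_{1}>0$ (Proposition \ref{prop C1 global bound for the error term}), and then shows the \emph{first iteration} lands in $J_{p}$. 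Your proposal should make this cancellation explicit; as written, the contraction ball of radius $O(\|\eta\|)$ in the weighted space cannot be set up.
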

\begin{rmk} The  above statement is not  the most precise, but we hope it is easy to understand.    \textbf{The most precise version of Theorem \ref{Thm Deforming instanton simple version} is Theorem \ref{Thm Deforming instanton}, to which we strongly recommend the readers to  pay attention}. We will only prove  Theorem \ref{Thm Deforming instanton}. 
\end{rmk}
\begin{rmk} In the rest of the article we  call the  points where $E$ is   undefined  \textbf{singular points}. Please notice that \textbf{our definition also includes  the case when the bundle is smooth  across some of (or all) the singularities, and in this case we allow both singular and smooth connections}. Nevertheless, we are more interested in the case when the bundle (connection) is truly singular. When the bundle and connection are both smooth across a  singular point, our \textbf{local right inverse} of the linearised operator is
still different from the standard one (see \cite{Gilkey}, \cite{Lawson}).  
\end{rmk}
\begin{rmk} When the $G_{2}-$form is co-closed, any  $G_{2}-$monopole (\ref{equ instantonequation without cokernel})  on a closed manifold is an instanton. However,  a  locally defined one might not.
\end{rmk}

\begin{rmk}\label{rmk on Tian work}  Theorem 1 of Yang \cite{Yang} and Lemma \ref{lem homotopy}  suggest that, via a bundle isomorphism and a  smooth gauge    away from the singularities,  any $G_{2}-$ instanton (on a singular bundle) with  quadratic curvature blowing-up at each singular point   can be reduced to the case in  Theorem \ref{Thm Deforming instanton}. The work of Tian \cite{Tian} indicates that the tangent connections at the singularities are  the cone connections (bundles) on $\R^{7}\setminus O$, pulled back from a smooth Hermitian-Yang-Mills connection over $S^{6}$ (with respect to the standard nearly-K\"ahler structure) via the spherical projection (Remark \ref{rmk homotopy property and def of r}). The work of Charbonneau-Harland \cite{Harland} indicates that  the deformation of these Hermitian connections can be identified with a subspace of the kernel of a Dirac operator. 
\end{rmk}
\begin{rmk} We expect the co-kernel  to be trivial for most singular instantons i.e. we have tranversality in most of the cases. This is  reasonable at least when the inner product is unweighted: the instanton constructed by Walpuski \cite{Walpuski} is rigid, and the self-adjointness implies the co-kernel is trivial.
\end{rmk}

 The key to the deformation problem is a \textbf{Fredholm-theory} for the linearised operator (\ref{equ introduction formula for deformation operator}). 
 It is a Dirac operator i.e. the square of it is  a Laplacian. In the model case, \textbf{though we can not do separation of variable to  the  deformation operator itself, we can do it for the Laplacian}. As the standard Laplacian in polar coordinate, we have a \textbf{polar coordinate formula for the Laplacian of any cone connection} (Lemma \ref{lem  cone formula for laplacian}). Using the Galerkin method (see  7.1.2 in \cite{Evans}),  we can construct a \textbf{local inverse for the Laplacian}, and this gives a \textbf{local inverse for the deformation operator} between the desired weighted-Sobolev spaces. 

  To handle the non-linearity of the instanton equation (or to preserve the tangent cone),  a theory of Sobolev-spaces  is not sufficient. The  spaces should satisfy some multiplicative properties.  Therefore, it should be helpful to  turn on the  a priori Schauder-estimates of  Douglis-Nirenberg (Theorem 1 in \cite{Nirenberg}. Nevertheless,  the essentially difficulty is the $C^{0}-$estimate.
   
   Our crucial observation is that the \textbf{$W^{1,2}_{p,b}-$estimate of sufficiently negative $p$ yields the $C^{0}-$estimate}.  Moreover, to handle the non-linearity of (\ref{equ instanton equation for A}),  it suffices to consider a hybrid space consisting of a weighted-$C^{2,\alpha}$ space  and the weighted Sobolev-space (with norm as the sum of  the two). The global version of our main analytic theorem is:
\begin{thm}\label{Thm Fredholm}
Let  $E\rightarrow  M$  be the same as in Theorem \ref{Thm Deforming instanton}. Suppose $A$ is an admissible  connection of order $4$.  Then for any $A-$generic  negative  $p$ (Definition \ref{Def A generic}) and $b\geq 0$,   $L_{A}$ (see (\ref{equ introduction formula for deformation operator})) is $(p,b)$-Fredholm (Definition \ref{Def Fredholm operators and isomorphisms}) from  $W^{1,2}_{p,b}$ to $L^{2}_{p,b}$  (weighted Sobolev-spaces in Definition \ref{Def global weight and Sobolev spaces}). $L_{A}$ is also $(p,b)$-Fredholm from $H_{p,b}$ to $N_{p,b}$ (Hybrid spaces in Definition \ref{Def Hybrid spaces}).
\end{thm}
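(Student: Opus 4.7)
I would prove Fredholmness by producing an approximate right and left inverse (parametrix) $P$ for $L_{A}$ so that $L_{A}P-I$ and $PL_{A}-I$ are compact on the relevant weighted spaces; a weighted Rellich-type lemma then gives finite-dimensional kernel and cokernel. Since the statement concerns only local analytic behaviour near the singular points and standard ellipticity off them, I would split the construction using a smooth partition of unity: one piece localised to a punctured conical neighbourhood of each $O_{j}$, and one piece supported on the compact complement of all such neighbourhoods.

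\textbf{Local inverse near a singular point.} On a conical neighbourhood of $O_{j}$ the admissible connection $A$ is asymptotic to a cone connection pulled back from a Hermitian-Yang-Mills connection on $S^{6}$ (cf.\ the remark on Tian's work). Although $L_{A}$ itself does not separate variables, the paper's cone formula for the Laplacian of any cone connection shows that $L_{A}^{*}L_{A}$ does. I would expand sections in an orthonormal basis of eigensections of the angular Laplace operator on $S^{6}$ coupled to the limiting connection, reducing the Laplace equation on each mode to a second-order Euler-type ODE whose characteristic roots (the indicial roots) are determined by the angular eigenvalues. The definition of $A$-generic $p$ is precisely that $p$ avoids this discrete set of indicial roots, so each mode admits a unique weighted-$L^{2}$ solution with controlled norm. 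A Galerkin limit as in Section 7.1.2 of Evans then assembles these modal solutions into a bounded right inverse $Q_{j}$ of the Laplacian between the weighted Sobolev spaces on the cone, and composing with $L_{A}^{*}$ yields a local right inverse for $L_{A}$ itself.

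\textbf{Patching and the hybrid case.} Away from the singularities $L_{A}$ is a smooth elliptic first-order operator, so a standard local parametrix $Q_{\infty}$ exists on the compact complement. Gluing $Q_{j}$ and $Q_{\infty}$ with cutoffs gives a global parametrix $P\colon L^{2}_{p,b}\to W^{1,2}_{p,b}$, and the error $L_{A}P-I$ consists of commutator terms supported in compact annular regions; boundedness of $P$ together with a compact weighted embedding renders the error compact. The same construction applied on the left establishes the $(p,b)$-Fredholm property between the Sobolev spaces. For the hybrid spaces $H_{p,b}\to N_{p,b}$, I would feed the already-constructed Sobolev solution into the Douglis-Nirenberg Schauder a priori estimate for $L_{A}$; the paper's key observation that $W^{1,2}_{p,b}$ embeds into $C^{0}$ for sufficiently negative $p$ supplies the $C^{0}$ input the Schauder bootstrap needs, promoting the solution to the weighted $C^{2,\alpha}$ component of $H_{p,b}$ with controlled norm and thereby giving Fredholmness in the hybrid setting.

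\textbf{Main obstacle.} The essential difficulty will be the rigorous mode-by-mode analysis on the cone: one must identify the full spectrum of the angular operator associated to the limiting Hermitian-Yang-Mills connection on $S^{6}$, verify a uniform lower bound on the distance from $p$ to the indicial-root set (so the modal norms of $Q_{j}$ are summable rather than merely finite on each mode), and control the perturbation terms coming from the fact that $A$ is only asymptotic to, not equal to, the cone connection. The order-$4$ admissibility hypothesis on $A$ is precisely what should make these perturbation terms relatively compact with respect to the cone model, and verifying this carefully, together with the summability across modes dictated by $A$-genericity, is the real technical heart of the argument.
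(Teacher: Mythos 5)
Your plan reproduces the paper's strategy in its broad outline: separate variables for the squared operator $L_{A_O}^2$ on the cone, solve the modal Euler--type ODEs ($A$-genericity avoiding the indicial roots), sum the modes to get a model local right inverse, perturb to a right inverse $Q_j$ for $L_A$ near each $O_j$, patch with the standard parametrix away from the singularities, and use a compact weighted embedding to make $L_A Q-I$ compact. The hybrid-space upgrade via the $C^0$-estimate plus Douglis--Nirenberg Schauder bootstrap is also what the paper does.

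There is, however, a genuine gap in the assertion that ``the same construction applied on the left establishes the $(p,b)$-Fredholm property.'' The local operators $Q_j$ near the singular points are constructed (by contraction against the model) only as \emph{right} inverses: $L_A Q_j = I$ on $B_{O_j}(\tau_0)$. On a ball with no boundary conditions $L_A$ has a large (infinite-dimensional) local kernel, so $Q_j$ is emphatically not a left inverse, and when you try to check $QL_A - I$ compact you land on a remainder of the form $Q_j L_A(\chi_j\xi)-\chi_j\xi$, which solves $L_A(\cdot)=0$ locally but is otherwise uncontrolled. The paper circumvents this by proving the global a priori estimate $|\xi|_{W^{1,2}_{p,b}}\le C(|L_A\xi|_{L^2_{p,b}}+|\xi|_{L^2_{p,b}})$ (Theorem \ref{thm global apriori L22 estimate}) directly: near $O_j$ one writes $\xi=\eta+(\xi-\eta)$ with $\eta=Q_j L_{A_O}\xi$, so $\xi-\eta$ is $L_{A_O}$-harmonic, and then invokes the bootstrapping Lemma \ref{lem bound on C3 norm of solution to laplace equation when f is smooth and vainishes near O}, which uses the discreteness of the $v$-spectrum and $A$-genericity to show a weighted-$L^2$ harmonic section lies in a strictly better weight. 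That bootstrap, together with Claim \ref{clm local regularity L12 for global apriori estimate} and the compact embedding of Lemma \ref{lem compact imbedding}, is the missing ingredient: it is what controls the local-kernel part and delivers finite-dimensional global kernel and closed range. Without it, the left-parametrix route you sketch does not close. (A secondary, smaller point: the modal estimates need a \emph{uniform} constant $\bar C$ over $v$, not summability per se --- the Fourier modes are $L^2$-orthogonal, so one only needs $\sup_v \bar C_v<\infty$; this is where the spectral gap $\vartheta_{1-p}>0$ forced by $A$-genericity enters.)
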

\begin{rmk} \textbf{Our Fredholm-theory works for a much larger class of operators, as long as the model operator is cone-type and admits  seperation of variable with respect to some operator on the link}. In particular, it  works  for the Laplace-type  operators (Theorem \ref{thm W22 estimate on 1-forms}). \textbf{We  only assume  discreteness and some natural asymptotics of the spectrum of the  operator on the link}.  When the  eigenfunctions are explicitly known, one might have a  summation formula for the heat kernel and Green's function, thus more information can be extracted  (Theorem 4.3  and 1.13 in  \cite{Myself2013}).
\end{rmk}
\begin{rmk} It does not follow directly from definition that the kernel of the formal adjoint is finite-dimensional, nor equal to the co-kernel. Nevertheless, a trick (in Lemma \ref{lem  boostrap Kstar}) ensures that \textbf{we can decrease the blowing-up rate of the co-kernel  a little bit with respect to the spectrum gaps}. This does not only give us an interesting PDE-result, but also implies \textbf{the co-kernel  is precisely the kernel of the formal adjoint (Theorem \ref{thm characterizing cokernel})}.
\end{rmk}
%On the other hand, \textbf{Lemma \ref{lem bound on C3 norm of solution to laplace equation when f is smooth and vainishes near O} implies similar bootstrapping  on the blowing-up rate of elements in the kernel of the deformation operator itself}. This is actually crucial in the proof of Theorem \ref{thm global apriori L22 estimate}. 

\begin{rmk} We can't have optimal  Sobolev-estimates unless the weight is properly chosen. Roughly speaking, $A-$generic means the weight $p$ of our Fredholm-theory  avoids some discrete values determined by the spectrum of the tangential operators.  This phenomenon, in other settings, is well understood (see \cite{Donaldson}, \cite{Degeratu}, \cite{Lockhart}).
\end{rmk}
\begin{rmk} Usually the weight in the Schauder-estimate is required to have non-negative power (Lemma 3 in \cite{Nirenberg}). Thus, to obtain Fredholmness for every negative $p$, we should use a different norm   (see (\ref{equ Def local Schauder norm 1}))  when the power is negative, and adopt a trick in \cite{GilbargTrudinger} to avoid  global interpolations (the use of (\ref{equ C20 estimate in the apriori Schauder})).
\end{rmk}
\begin{rmk} \textbf{For Theorem \ref{Thm Deforming instanton} (Theorem \ref{Thm Deforming instanton simple version}), we only need the hybrid theory when $p\in (-\frac{5}{2},-\frac{3}{2})$ and $b=0$}.  The most important usage is to handle the first iteration (\ref{equ first iteration}). Nevertheless, a theory for all $p$ and $b$  is useful for other applications.
\end{rmk}
The local version of our deformation theory states as follows.
\begin{thm}\label{Thm Deforming local instanton} In the setting of Remark \ref{rmk on Tian work}, for any  smooth $SO(m)-$bundle  $E\rightarrow S^{6}$     equipped with a smooth Hermitian Yang-Mills connection $A_{O}$,  there is a $\delta_{0}>0$, such that for  any admissible $\delta_{0}$-deformation $(\underline{\phi}, \underline{\psi})$ over $B_{O}(\frac{1}{2})\subset \R^{7}$  of the Euclidean $G_{2}-$forms (\ref{eqnarray Euc G2 forms}),  there exists a $G_{2}-$monopole of $\underline{\psi}$ (\ref{equ instantonequation without cokernel}) over $B_{O}(\frac{1}{4})$ tangent to $A_{O}$ at the origin $O$. 
 \end{thm}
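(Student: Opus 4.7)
The plan is to seek the sought $G_{2}$-monopole as a small perturbation $A = A_{O}^{cone} + a$ of the cone connection $A_{O}^{cone}$ on $\R^{7}\setminus\{O\}$ obtained by pulling back $A_{O}$ via the spherical projection (Remark \ref{rmk on Tian work}). Substituting into the $\underline{\psi}$-monopole equation and expanding schematically gives
\[
L_{A_{O}^{cone}}(a) \;=\; T(\underline{\phi}-\phi_{Euc},\, \underline{\psi}-\psi_{Euc}) + Q(a,\underline{\psi}),
\]
where $L_{A_{O}^{cone}}$ is the Dirac-type linearisation at the cone connection, $T$ is the linear forcing produced by the $G_{2}$-deformation, and $Q$ collects all terms quadratic in $a$. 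I would then solve for $a$ by a contraction-mapping argument in a small ball of the hybrid space $H_{p,0}$ with weight $p \in (-\tfrac{5}{2},-\tfrac{3}{2})$, which is precisely the range flagged in the remarks preceding the statement.

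First I would build a bounded local right inverse $P$ of $L_{A_{O}^{cone}}$ over $B_{O}(\tfrac{1}{2})$ mapping into $H_{p,0}$. This is exactly the package outlined in the paragraphs preceding Theorem \ref{Thm Fredholm}: the polar-coordinate formula (Lemma \ref{lem  cone formula for laplacian}) diagonalises the cone Laplacian $L_{A_{O}^{cone}}^{*}L_{A_{O}^{cone}}$ into a family of radial ODEs indexed by eigenmodes of a tangential operator on $S^{6}$; a Galerkin construction produces a weighted $W^{1,2}_{p,0}$ right inverse for the Laplacian, hence for $L_{A_{O}^{cone}}$; and the Douglis-Nirenberg Schauder estimate, adapted to negative weight via the trick from \cite{GilbargTrudinger}, promotes this to a right inverse into the hybrid space $H_{p,0}$. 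The smoothness of $A_{O}$ places the tangential spectrum on the usual lattice, making the entire interval $(-\tfrac{5}{2},-\tfrac{3}{2})$ $A_{O}^{cone}$-generic. The two endpoints are forced by competing requirements: $p>-\tfrac{5}{2}$ leaves Sobolev room for the quadratic nonlinearity, while $p<-\tfrac{3}{2}$ forces $a$ to decay strictly faster than the cone rate near $O$, so that $A$ still has tangent connection $A_{O}$ at the origin.

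With $P$ in hand the contraction step is standard. The hybrid norm is engineered so that $\|Q(a,\underline{\psi})\|_{N_{p,0}} \le C\|a\|_{H_{p,0}}^{2}$, while admissibility of the deformation yields $\|T(\underline{\phi}-\phi_{Euc},\underline{\psi}-\psi_{Euc})\|_{N_{p,0}} \le C\delta_{0}$; hence $a\mapsto P\bigl(T+Q(a,\underline{\psi})\bigr)$ maps a small ball of $H_{p,0}$ into itself and is a contraction once $\delta_{0}$ is sufficiently small relative to $\|P\|$. Restricting the resulting fixed point $a_{*}$ to $B_{O}(\tfrac{1}{4})$ yields the desired monopole. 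The main obstacle, in my view, is the construction of $P$ itself: combining the polar decomposition with the negative-weight Schauder estimate on a bounded domain with boundary, while producing a right inverse landing in $H_{p,0}$ with trivial local cokernel and norm uniform across the family of admissible deformations, is the analytic heart of the argument. A secondary check is that the admissibility class is actually tuned so that $T(\underline{\phi}-\phi_{Euc},\underline{\psi}-\psi_{Euc})$ decays at the correct rate near $O$ to land in $N_{p,0}$; this should be built into the definition of admissible deformation.
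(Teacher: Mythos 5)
Your overall strategy --- perturb the cone connection, linearise, and run a contraction in a hybrid weighted space with $p\in(-\tfrac{5}{2},-\tfrac{3}{2})$ --- is the right shape and matches the paper. But you leave the boundary problem unresolved, and that is exactly where the paper's argument deviates from a routine implementation of the global scheme.

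The concrete gap: you propose to build a right inverse of $L_{A_{O}}$ over $B_{O}(\tfrac{1}{2})$ landing in the hybrid space $H_{p,0}$ and then iterate. The model right inverse $Q_{p,b,A_{O}}$ of Corollary \ref{Cor solving model laplacian equation over the ball without the compact support RHS condition} is built by Fourier decomposition on the cone and ignores boundary conditions; it gives a sharp weighted $W^{1,2}_{p}$ estimate but no control of the output near $\partial B$. The hybrid norm $|\cdot|^{(\frac{5}{2}+p,0)}_{2,\alpha}$ as you use it has no damping at $\partial B$, and the multiplicative estimate $|Q(a)|_{N_{p}}\leq C|a|_{H_{p}}^{2}$ relies (away from $O$) on the compactness of $M$. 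On a ball both of these fail near the boundary, and you flag this as ``the analytic heart'' but do not offer a mechanism to close it. The paper's resolution is the \emph{cutoff trick}: replace the monopole equation by the equation (\ref{equ instanton column vector equation without cokernel cutted off}) in which both the forcing $\star_{\underline{\psi}}(F_{A_{O}}\wedge\underline{\psi})$ and the quadratic term are multiplied by a cutoff $\chi$ supported in $B_{O}(\tfrac{5}{16})$ and $\equiv 1$ on $B_{O}(\tfrac{1}{4})$. Since the right-hand side is now compactly supported away from $\partial B$, the cone right inverse applies cleanly, and the multiplicative estimate in Claim \ref{clm multiplicative property of cutting off} holds because $\chi$ kills the boundary region. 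This requires replacing $H_{p},N_{p}$ by the doubly-weighted spaces $\bar{H}_{p},\bar{N}_{p}$ of Definition \ref{Def local Schauder adapted to local perturbation}, whose weight degenerates as $(R-r)^{t}$ near $\partial B$ and so turns the Douglis--Nirenberg estimate into a genuine interior estimate. A fixed point of the cut-off equation solves the actual monopole equation only where $\chi\equiv 1$, which is why the conclusion is stated on $B_{O}(\tfrac{1}{4})$ rather than on $B_{O}(\tfrac{1}{2})$. Without this device your iteration map does not even take values in your proposed space, so the contraction cannot get off the ground.

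A secondary inaccuracy: the claim that smoothness of $A_{O}$ ``places the tangential spectrum on the usual lattice, making the entire interval $(-\tfrac{5}{2},-\tfrac{3}{2})$ generic'' is not what the paper uses and is not true in general; the spectrum of $\Upsilon_{A_{O}}$ is merely discrete, and one only needs the existence of some $A_{O}$-generic $p$ in the interval, which follows from discreteness.
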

\begin{rmk} Currently there is only one Hermitian Yang-Mills connection known over $S^{6}$:  the canonical connection (Example 2.2 in \cite{Xu}). \textbf{Theorem \ref{Thm Deforming local instanton} produces concrete local examples of singular $G_{2}-$monopoles tangent to  the canonical connection  for  almost Euclidean $G_{2}-$structures.} 
\end{rmk}

Historically, the Fredholm-problem of elliptic operators has been extensively studied. The most related work to the present article is done by   Lockhart-McOwen \cite{Lockhart}.  They proved that, over non-compact manifolds, a large class of operators  are Fredholm between proper weighted
Sobolev-spaces. Melrose-Mendoza \cite{Melrose} also obtained similar results in the $W^{k,2}-$setting generalized to pseudo-differential operators.  Our hybrid-spaces, though not the most general, are sufficient for this study  and are  \textbf{specially designed for singular connections}. 

   Very recently the author learned from Thomas Walpuski that, using cylindrical method for the deformation operator, and the theory in  \cite{Lockhart},  he could also obtain a local inverse between weighted Schauder-spaces for cones. This local Schauder-estimate is  well illustrated in Section 2.1 of \cite{HaskinsHein}. The author also learned from  Goncalo Oliveira that in \cite{Goncalo}, he obtained $G_{2}-$monopoles with diffferent kind of singularities.

 In the aspect of $G_{2}-$instantons or monopoles,  related work are conducted by   Walpuski \cite{Walpuski}, Sa Earp-Walpuski \cite{SaEarp}, and Oliveira (\cite{Goncalo1}, \cite{Goncalo}). On  monopoles in other settings, see the work of Foscolo \cite{LorenzoFoscolo} and Oliveira (\cite{Goncalo2},\cite{Goncalo3}). In the metric setting, the most related research is done by   Joyce \cite{Joyce} (ALE space), Degeratu-Mazzeo \cite{Degeratu} (Quasi ALE space), Mazzeo \cite{Mazzeo} (Edge-operators), Donaldson \cite{Donaldson11} (conic K\"ahler),  the author-Chen \cite{Myself2013} (parabolic  conic K\"ahler),  Akutagawa-Carron-Mazzeo \cite{ACM} (Yamabe problem on singular spaces). 
 The author believes the above list is not complete, and refers the readers to the references therein.
   
   Omitting  a number of necessary intermediate results, the following diagram  shows the important steps to prove the  main theorems. 
\begin{center}
\begin{tikzpicture}[->,>=stealth',shorten >=1pt,auto,node distance=2cm,
  thick,main node/.style={rectangle,draw}]
 \node[main node] (3)  {T\ref{Thm Fredholm}} ;
   \node[main node] (2) [left of=3] {T\ref{Thm Deforming instanton}(T\ref{Thm Deforming instanton})};

\node[main node] (4) [below   of=3] {T\ref{thm W22 estimate on 1-forms},C\ref{Cor solving model laplacian equation over the ball without the compact support RHS condition}};
  \node[main node] (5) [below left of=2] {T\ref{thm characterizing cokernel}};
\node[main node] (8) [right  of=3] {T\ref{thm C0 est}};
\node[main node] (12) [right  of=8] {T\ref{Thm Deforming local instanton}};
  \node[main node] (6) [below left of=4] {T\ref{thm existence of good solutions to the uniform ODE with estimates}};
   \node[main node] (7) [below right of=4] {P\ref{prop seperation of variable for general cone}};
    \node[main node] (10) [below of=4] {L\ref{lem formula for LA squared}};
    \node[main node] (9) [above right  of=7] {T\ref{thm global apriori L22 estimate}};
    \node[main node] (11) [right of=9] {L\ref{lem compact imbedding}};
     \node[main node] (13) [below right of=9] {L\ref{lem bound on C3 norm of solution to laplace equation when f is smooth and vainishes near O},Claim \ref{clm local regularity L12 for global apriori estimate}};
    \node[draw,align=left] at (8,-2) {Figure 1. \\ "P" is Proposition,\\ "L" is Lemma.\\  "C" is corollary,\\  "T" is theorem. \\  The arrows mean\\ implying.};
    \path[every node/.style={font=\sffamily\small}]
   (9) edge node [left] {} (3)
     (4) edge node [left] {} (9)
  (8) edge node [left] {} (3)
    (10) edge node [left] {} (4)
       (4) edge node [left] {} (3)
    (6) edge node [left] {} (4)
    (6) edge node [left] {} (5)
    (7) edge node [left] {} (4)
     (11) edge node [left] {} (3)
      (4) edge node [left] {} (5)
        (8) edge node [left] {} (12)
        (4) edge node [left] {} (12)
        (13) edge node [left] {} (9)
    (3) edge node [left] {} (2);
  
\end{tikzpicture}

         \end{center}
         
                This article  is organized as following: \textbf{most of the notions and symbols are defined in Section \ref{section Setting up the analysis and notations}}. In Section \ref{section Seperation of variable for  the system in the model case}, we do seperation of variable, and reduce the "squared" model linearized equation to ODEs. In Section \ref{section Solutions  to the  ODEs on the  Fourier-coefficients}, we solve these ODEs. In Section \ref{section Local solutions for the model cone connection}, we  establish the  optimal local Sobolev-theory. In Section \ref{section Global apriori estimate}, \ref{section Hybrid space and C0-estimat},  \ref{section Global Fredholm and Schauder Theory}, we establish the global Fredholm theory of Sobolev and Hybrid spaces. In Section \ref{section Perturbation}, we  prove the main geometric theorems. In Section 
 \ref{section Characterizing the cokernel}, we prove the PDE result and characterize the co-kernel.
 
\textbf{Acknowledgements}: The author would like to thank  Professor Simon Donaldson for suggesting this problem to work on, and for numerous inspiring  conversations. The  author is  grateful to  Song Sun and Thomas Walpuski for many valuable discussions, and for careful reading of the previous versions of this article. The author is grateful to  Alex Waldron, Lorenzo Foscolo, Gao Chen, and Professor Xianzhe Dai for  valuable discussions.

\section{Definitions and  Setting   \label{section Setting up the analysis and notations}} 

We work under the setting of Theorem \ref{Thm Deforming instanton}. By a bundle, we mean a open  cover and associated overlap functions. Two  bundles with different overlap functions are considered to be different, even when they are isomorphic. \textbf{The definitions in this section are all routine and natural, a reader familiar with related material such as \cite{GilbargTrudinger},  \cite{Nirenberg}, \cite{Donaldson} can skip this section and come back if necessary}. 
\begin{Def}\label{Def the bundle xi} A smooth $SO(m)-$bundle $E\rightarrow M\setminus (\cup_{j}O_{j})$ is said to be an \textbf{admissable bundle} if  
\begin{itemize}
\item  $E$ is defined by an admissible cover $U_{\rho_{0}}$ (Definition \ref{Def admissable open cover}) for some $\rho_{0}>0$,
\item  for each singular point $O_{j}$, the overlap function between $V_{+,O_{j}}$ and  $V_{-,O_{j}}$ does not depend on  $r$ (see Remark \ref{rmk homotopy property and def of r}) i.e. the overlap function is pulled back from the sphere. 
\end{itemize}

  Let  $\Xi$ denote  $\Omega^{0}(ad E)\oplus \Omega^{1}(ad E)$ ($adE$-valued $0-$form and $1-$form), and  the corresponding bundle over $S^{n-1}$ as  in Section \ref{section Seperation of variable for  the system in the model case}. \textbf{All the analysis in this article are on sections to} $\Xi$, over $M$ or various domains. \textbf{We omit $\Xi$ in the notations of the section spaces in Definition \ref{Def abbreviation of notations for spaces}}. All the definitions and discussions below apply to $\Xi$ as well. When $E$ is a  complex bundle, we  require it to be a $U(\frac{m}{2})$-bundle, and we still view it as a real bundle. 
\end{Def}
\begin{Def}\label{Def admissable open cover} (Admissible open cover).
Given an (reference) open cover of $M$  and a (reference) coordinate system, a refinement (with the same coordinate maps) denoted as $\mathbb{U}_{\tau_{0}}=\{B_{l},B_{O_{j}} (V_{+,O_{j}},V_{-,O_{j}}),\ l,j \ \textrm{are integers with finite range}\}$
 is called an $\tau_{0}-$admissible cover if the following conditions are satisfied.
 \begin{enumerate}
 \item Each $B_{l}$ is in the smooth part of $E$, the ball $100B_{l}$ (concentric and of radius 100 times larger) is still  away from the singularities and is contained in a coordinate chart. This is different from saying that $B_{l}$ is a metric ball in the manifold. In this article, by abuse of notation, $B_{l}$ means both the ball in the chart and the open set in the manifold (it should be clear from the specific context which notion  we  mean).
 \item Each $B_{O_{j}}$ is centred at a singular point of $E$ with radius $\tau_{0}$, and contain no other singular point. $O_{j}$ corresponds to the origin in the chart. $100B_{O_{j}}$ is still a ball in a coordinate chart and are disjoint from each other.  Moreover, in this coordinate  $\phi(O_{j})$ is the standard $G_{2}-$form. 
 \item $\frac{B_{l}}{100}$ and $\frac{B_{O_{j}}}{100}$  still form a cover of $M$.
 
 \end{enumerate}
 
 When $\tau_{0}$ small enough with respect to $M$ and $E$, this cover always exists if one adds enough balls of small radius. 
 
 The letter $O$ always means a singular point among the $O_{j}$'s, and also  the origin in the coordinate (by abuse of notations).  We denote it as "$B_{O}(\rho)$" when we want a  ball with radius $\rho$. The symbols "$B_{O}$" ("$B_{O_{j}}$") without radius usually  means one of balls in $\mathbb{U}_{\tau_{0}}$ defined above. 
 
 Let $M_{\tau}$  denote $M\setminus \cup_{j}B_{O_{j}}(\tau)$ (the part far away from the singularities).
\end{Def}
\begin{rmk}\label{rmk in practice, we usually consider normal coordinate} In practice, we usually choose the coordinates  as  the normal coordinates of the underline Riemannian metric, though our definition allows any smooth coordinate.  In  \cite{Tian}, the existence of tangent cone connection (near the singularities) is proved in normal  coordinates. 
\end{rmk}
Let $B_{O}(1)$ denote the unit ball in $\R^{n}$ centred at the origin. Since $B_{O}(1)$ admits a natural smooth deformation retraction onto $S^{n-1}\times \{\frac{1}{2}\}$,  the well known homotopy property (Theorem 6.8 and the last paragraph in page 58 of \cite{BottTu}) of  bundles gives the following lemma. 
\begin{lem}\label{lem homotopy}  Any smooth $SO(m)-$bundle $\widehat{E}\rightarrow M\setminus (\cup_{j}O_{j})$ defined by a locally finite cover is isomorphic to an admissible bundle $E$  in Definition \ref{Def the bundle xi}. The isomorphism covers the identity map from  $M\setminus (\cup_{j}O_{j})$ to itself.
\end{lem}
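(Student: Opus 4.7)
My plan is to modify $\widehat{E}$ locally near each singular point so that its overlap functions become pullbacks from the sphere, while not changing the bundle anywhere else; the desired isomorphism then comes for free from the local modification being a homotopy equivalence on the level of classifying maps.

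First I would fix, for a sufficiently small $\tau_{0}>0$, an admissible cover $\mathbb{U}_{\tau_{0}}$ as in Definition \ref{Def admissable open cover}. Away from the singular points, condition 1 in Definition \ref{Def the bundle xi} is purely about the shape of the cover, so after refining the original locally finite cover of $\widehat{E}$ to be subordinate to the $B_{l}$'s we get the required structure for free, with overlap functions simply restricted from those of $\widehat{E}$. The entire content of the lemma is thus local near each singular point.

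Fix one singular point $O=O_{j}$ and the punctured ball $B_{O}(\tau_{0})\setminus\{O\}$. The radial projection $\pi:(r,\theta)\mapsto\theta$ realizes a smooth deformation retraction of $B_{O}(\tau_{0})\setminus\{O\}$ onto the sphere $S=S^{n-1}(\tau_{0}/2)$, so by the homotopy invariance of $SO(m)$-bundles (Theorem 6.8 in \cite{BottTu}) there is an isomorphism
\[
\Phi_{O}:\widehat{E}\big|_{B_{O}(\tau_{0})\setminus\{O\}}\;\xrightarrow{\;\cong\;}\;\pi^{*}\bigl(\widehat{E}\big|_{S}\bigr)
\]
covering the identity. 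Choose a cover of $S$ by two open sets $V_{+}^{S},V_{-}^{S}$ (slight thickenings of the upper/lower hemispheres) that trivialize $\widehat{E}|_{S}$, with transition function $g_{+-}(\theta)\in SO(m)$. Define $V_{\pm,O}=\pi^{-1}(V_{\pm}^{S})$, i.e.\ the wedge-shaped open sets in $B_{O}(\tau_{0})\setminus\{O\}$ obtained by sweeping $V_{\pm}^{S}$ radially; the overlap function of the pulled-back bundle on $V_{+,O}\cap V_{-,O}$ is $g_{+-}\circ\pi$, which is manifestly independent of $r$, satisfying condition 2 of Definition \ref{Def the bundle xi}.

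The remaining obstacle is that $\Phi_{O}$ is only defined on the punctured ball, so simply replacing $\widehat{E}$ by $\pi^{*}(\widehat{E}|_{S})$ there would leave no obvious way to patch back to $\widehat{E}$ on the complement. I would fix this by choosing the deformation retraction to be \emph{stationary near the outer boundary}: use a smooth cutoff so that the homotopy collapses $\{r\le \tau_{0}/2\}$ radially onto $S$ but is the identity on $\{r\ge 3\tau_{0}/4\}$, interpolating smoothly on the annulus in between. The resulting $\Phi_{O}$ is the identity on $B_{O}(\tau_{0})\setminus B_{O}(3\tau_{0}/4)$, so the admissible bundle $E$ defined by taking $\widehat{E}$ on $M\setminus\bigcup_{j}B_{O_{j}}(3\tau_{0}/4)$ and $\pi^{*}(\widehat{E}|_{S_{j}})$ on each $B_{O_{j}}(\tau_{0})$, glued by the identity on the overlap annuli, is well-defined; by construction it satisfies both admissibility conditions, and the bundle isomorphism $E\cong\widehat{E}$ covering the identity is given by $\Phi_{O_{j}}^{-1}$ inside each $B_{O_{j}}(3\tau_{0}/4)$ and the identity elsewhere. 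The main care point is verifying that the cutoff homotopy is truly smooth through the region where it transitions from radial collapse to the identity, but a standard bump-function argument suffices since all maps are smooth away from the singular points.
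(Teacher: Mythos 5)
Your overall strategy — invoke homotopy invariance of $SO(m)$-bundles (Theorem 6.8 of \cite{BottTu}) via the radial retraction of the punctured ball onto a central sphere — is exactly the paper's approach (see Remark \ref{rmk homotopy property and def of r}), and your first three paragraphs are fine. The "obstacle" you raise in the final paragraph, however, is a false alarm, and the cutoff modification you introduce to address it actually breaks the argument. The isomorphism $\Phi_{O}$ already covers the identity on the punctured ball, so no patching compatibility needs to be manufactured: one can simply declare the trivialization of $\widehat{E}$ over $V_{\pm,O_j}$ to be $\psi_{\pm}\circ\Phi_{O_j}$, where $\psi_{\pm}$ are the $r$-independent trivializations of $\pi^{*}(\widehat{E}|_{S_j})$. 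Then $\varphi_{+}\circ\varphi_{-}^{-1}=\psi_{+}\circ\psi_{-}^{-1}$ is $r$-independent, and together with arbitrary trivializations of $\widehat{E}$ over the $B_{l}$'s this exhibits $\widehat{E}$ as a bundle over the admissible cover $\mathbb{U}_{\tau_0}$ with the required overlap functions; the bundle $E$ defined by those overlap functions is admissible and isomorphic to $\widehat{E}$ over the identity, with no gluing step at all.

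The modified retraction is where the proof goes wrong. If the endpoint map $H_{1}$ of your cutoff homotopy collapses radially only on $\{r\le\tau_{0}/2\}$, is the identity on $\{r\ge3\tau_{0}/4\}$, and interpolates in between, then the isomorphism produced by Theorem 6.8 of \cite{BottTu} is $\widehat{E}\to H_{1}^{*}\widehat{E}$, not $\widehat{E}\to\pi^{*}(\widehat{E}|_{S})$. On the outer annulus $H_{1}^{*}\widehat{E}=\widehat{E}$, which is not the pullback bundle, so there is no canonical "identity" with which to glue $\widehat{E}$ to $\pi^{*}(\widehat{E}|_{S_{j}})$ there — the fibers of the two bundles over a point $x$ of the annulus are $\widehat{E}_{x}$ and $\widehat{E}_{\pi(x)}$, fibers over different base points. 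If instead you set $E=H_{1}^{*}\widehat{E}$ on $B_{O_{j}}(\tau_{0})$, then the overlap function between $V_{+,O_{j}}$ and $V_{-,O_{j}}$ is the transition function of $\widehat{E}$ pulled back through $H_{1}$, which is $r$-dependent on the interpolation annulus $\{\tau_{0}/2<r<3\tau_{0}/4\}$, so that $E$ fails admissibility on exactly the region you introduced to make the gluing work. Dropping the modification and either transporting trivializations by $\Phi_{O}$ as above, or gluing $\widehat{E}$ and $\pi^{*}(\widehat{E}|_{S_{j}})$ via $\Phi_{O_{j}}$ itself on an annulus, repairs the proof and recovers the paper's argument.
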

\begin{rmk}\label{rmk homotopy property and def of r} Near each $O_{j}$, for some $\tau_{0}>0$, the smooth isomorphism (away from $O_{j}$) is the one in  Theorem 6.8 of \cite{BottTu}, with respect to the natural homotopy deforming the identity map $id$ of  $B_{O_{j}}(\tau_{0})$ to the map $g\circ f$: $$  \begin{tikzpicture}
  \node at (0,0.4) {g};
\draw[->,semithick] (-1,0.1) -- (1,0.1);
  \node at (-2.2,0) {$S^{n-1}\times (0,\tau_{0})$} ;
  \node  at (-4.5,0) {$B_{O_{j}}(\tau_{0})\simeq$};
 \node  at (2.2,0) {$S^{n-1}\times (\frac{\tau_{0}}{2}),$};
			\draw[<-,semithick] (-1,-0.1) -- (1,-0.1);
			 \node at (0,-0.4) {f} ;
	\end{tikzpicture}  $$
	where $g$ is the \textbf{spherical projection} $(x,t)\rightarrow (x,\frac{\tau_{0}}{2})$, and $f$ is  the identity inclusion. Let $r$ (sometimes $r_{x}$) denote  the Euclidean distance  to the singular set $\{O_{1},...,O_{m_{0}}\}$ in the reference coordinate chart respectively. 
\end{rmk}
\begin{rmk}\label{rmk relation between Euc and Spherical norm} Any bundle-valued $k-$form $\xi$  without $dr-$ component (defined over $\R^{n}\setminus O$) can be viewed as a $r-$dependent  bundle-valued $k-$form over $S^{n-1}(1)$. Let $|\xi|$ denote  the usual Euclidean norm of $\xi$ (as a form over $\R^{n}\setminus O$) , and  $|\xi|_{S}$ denote   the norm on the unit sphere with respect to the standard round metric (as a spherical form). The relation is 
\begin{equation}
|\xi|^{2}=\frac{1}{r^{2k}}|\xi|_{S}^{2},\ \textrm{for any}\ \xi. 
\end{equation}
\end{rmk}
\begin{Def}\label{Def Admissable connection with polynomial or exponential convergence}(Admissible connections) Given a smooth bundle $E\rightarrow M$ with finite many singular points , and a smooth $G_{2}-$structure $(\phi,\psi)$ over $M$, a connection $A$ of $E$ is called an admissible connection  of order $k_{0}$, if it satisfies the following conditions.
\begin{itemize}
\item $A$ is smooth away from the $O_{j}$'s. 
\item There exist a $\mu_{1}>0$, such that  for any $O$ among the  $O_{j}$'s,  there is smooth  connection $A_{O}$ on $E\rightarrow S^{n-1}$  such that  the following holds in the reference coordinate chart. 
  \begin{equation}\label{equ condition on rate of convergence for an admissable instanton}
\Sigma_{j=0}^{k_{0}}r^{j+1}|\nabla^{j}_{A_{O}}(A-A_{O})|\leq C(-\log r)^{-\mu_{1}},
\end{equation}
 where we view $A_{O}$ as the pulled-back connection over  $\R^{7}\setminus O$.
 \end{itemize}

For the purpose of quantization, $A$ is also  said to be  \textbf{of  polynomial rate  $\mu_{1}$ at $O_{j}$} (we omit the $O_{j}$ if the rate holds at every singular point). 

 Suppose for some constant $C$, $A$ satisfies (\ref{equ condition on rate of convergence for an admissable instanton}) with right hand side replaced by $Cr^{\mu_{0}}$ ( at $O_{j}$),  $\mu_{0}>0$, then $A$ is said to be   \textbf{of exponential  rate $\mu_{0}$ at $O_{j}$}.

  \end{Def}
 \begin{rmk} When $A$ is admissible and  satisfies the instanton equation away from the singularities, we call it 
 an \textbf{admissable instanton}. In practice, the coordinate near the singularities are normal coordinate of $g_{\phi}$ (see Remark \ref{rmk in practice, we usually consider normal coordinate}).
 \end{rmk}
  \begin{Def}\label{Def condition SAp} A connection $\underline{A}$ is  satisfies \textbf{Condition ${\circledS_{A,p}}$} if the following holds with respect to the reference instanton $A$.  
\begin{itemize}
\item $\underline{A} $ is an  admissible connection of order $3$.
\item $\underline{A} $  is close  to $A$ in $H_{p}$ (Definition \ref{Def Hybrid spaces} and \ref{Def abbreviation of notations for spaces}). Consequently, 
\item the tangent connections of  $\underline{A}$ at each $O_{j}$ is the same as that of $A$;  
\item   $\underline{A}$ is with the same  polynomial  rate as $A$ at each $O_{j}$. Moreover, if $A$ is with exponential rate $\mu_{0}>0$ at $O_{j}$, then $\underline{A}$ is with  exponential rate $\min\{\mu_{0}, -\frac{3}{2}-p\}$ at the same point.
\end{itemize}
\end{Def}
 Near any singular point $O$ (among the $O_{j}$'s), the bundle $E$ is trivialized by 2 coordinate patches  $U_{+},U_{-}$ of
$S^{n-1}$, then we  choose the cover of $B_{O}$ as $V_{+,O} (V_{-,O})=U_{+}(U_{-})\times [0,\tau_{0}]$. In these coordinates, we can easily define the weighted Schauder norms for sections of $\Xi$  without involving any connection. 
\begin{Def}\label{Def local Schauder norms} As in Definition 2.4  of \cite{Myself2013}, we don't even need a connection to define the Schauder norms. Let $r_{x,y}=\min\{r_{x},r_{y}\}$, $\underline{r_{x,y}}=\max\{r_{x},r_{y}\}$.  Near a  singular point $O$, let $\Gamma$ be a locally defined matrix-valued  tensor in a coordinate chart of $\Xi$ (Definition \ref{Def admissable open cover}), we define the following.
\begin{equation}\label{equ Def local Schauder norm 1}
[\Gamma]^{(\mu,b)}_{\alpha, \mathfrak{U}}= \left\{ \begin{array}{cc}\sup_{x,y\in \mathfrak{U}}(-\log \underline{r_{x,y}})^{b}r^{\mu+\alpha}_{x,y}\frac{|\Gamma(x)-\Gamma(y)|}{|x-y|^{\alpha}},& \textrm{when}\ \mu+\alpha\geq 0 \\
\sup_{x,y\in \mathfrak{U}}(-\log \underline{r_{x,y}})^{b}(\underline{r_{x,y}})^{\mu+\alpha}\frac{|\Gamma(x)-\Gamma(y)|}{|x-y|^{\alpha}},& \textrm{when}\ \mu+\alpha< 0.
\end{array}\right. 
\end{equation}
\begin{equation}\label{equ Def local Schauder norm 2}
[\Gamma]^{(\mu,b)}_{0, \mathfrak{U}}= \sup_{x\in \mathfrak{U}}(-\log r_{x})^{b}r^{\mu}_{x}|\Gamma(x)|.
\end{equation}
The idea of (\ref{equ Def local Schauder norm 1}) is to choose the weight function "as small as possible". Note that we allow $\mu$ to be any real number, while in Lemma 3 in \cite{Nirenberg}, the power is required to be non-negative. We usually let $\mathfrak{U}$ be $V_{+,O}$ ($V_{-,O}$) or a ball contained therein.  We then define  
\begin{equation}\label{equ def top order seminorm in the sector}
|\xi|^{(\gamma,b)}_{2,\alpha, V_{+,O}}\triangleq [\nabla^{2}\xi]^{(2+\gamma,b)}_{\alpha,V_{+,O}}+|\nabla^{2}\xi|^{(2+\gamma,b)}_{0,V_{+,O}}+|\nabla\xi|^{(1+\gamma,b)}_{0,V_{+,O}}+|\xi|^{(\gamma,b)}_{0,V_{+,O}}.
\end{equation}
where the $\nabla$ is just the usual gradient in Euclidean coordinates. Moreover, by abuse of notation (which we adopt through out this article in this case), the  "$\xi$" in (\ref{equ def top order seminorm in the sector})  means the multi-matrix-valued function in $V_{+,O}$ representing $\xi$. $|\xi|^{(\gamma,b)}_{2,\alpha, V_{-,O}}$ is defined in the same way throughout this article, so does $|\xi|^{(\gamma,b)}_{2,\alpha, B}$ for any ball $B\subset V_{+,O}$ or $V_{-,O}$.
\end{Def}
\begin{Def}\label{Def Global Schauder norms}(Global Schauder norms) In the same context as Definition \ref{Def admissable open cover}, let $\rho_{0}>0$ be independent of $A$ such that there exists a  $\rho_{0}-$admissible cover $\mathbb{U}_{\rho_{0}}$.  We define
\begin{equation}\label{equ norm I}|\xi|^{(\gamma,b)}_{2,\alpha,M,I}\triangleq \sup_{B_{l}\in \mathbb{U}_{\rho_{0}}}|\xi|_{2,\alpha,B_{l}}+ \sup_{B_{O_{j}}\in \mathbb{U}_{\rho_{0}}}|\xi|^{(\gamma,b)}_{2,\alpha,V_{+,O_{j}}}+\sup_{B_{O_{j}}\in \mathbb{U}_{\rho_{0}}}|\xi|^{(\gamma,b)}_{2,\alpha,V_{-,O_{j}}}.\end{equation}

The $|\xi|_{2,\alpha,B_{l}}$ are the unweighted Schauder norms defined in (4.5),(4.6) in \cite{GilbargTrudinger}. Actually we have  2 other ways to define the Schauder norms. One is by using the smaller cover:
\begin{equation}\label{equ norm II}
|\xi|^{(\gamma,b)}_{2,\alpha,M,II}\triangleq \sup_{B_{l}\in \mathbb{U}_{\rho_{0}}}|\xi|_{2,\alpha,\frac{B_{l}}{100}}+ \sup_{B_{O_{j}}\in \mathbb{U}_{\rho_{0}}}|\xi|^{(\gamma,b)}_{2,\alpha,\frac{V_{+,O_{j}}}{100}}+\sup_{B_{O_{j}}\in \mathbb{U}_{\rho_{0}}}|\xi|^{(\gamma,b)}_{2,\alpha,\frac{V_{-,O_{j}}}{100}}.
\end{equation}
The third definition is by using the naturally weighted Schauder norms in (4.17) of \cite{GilbargTrudinger} (away from the singularity): 
\begin{equation}\label{equ norm III}
|\xi|^{(\gamma,b)}_{2,\alpha,M,III}\triangleq \sup_{B_{l}\in \mathbb{U}_{\rho_{0}}}|\xi|^{\star}_{2,\alpha,\frac{B_{l}}{50}}+ \sup_{B_{O_{j}}\in \mathbb{U}_{\rho_{0}}}|\xi|^{(\gamma,b)}_{2,\alpha,\frac{V_{+,O_{j}}}{50}}+\sup_{B_{O_{j}}\in \mathbb{U}_{\rho_{0}}}|\xi|^{(\gamma,b)}_{2,\alpha,\frac{V_{-,O_{j}}}{50}}.
\end{equation}
\end{Def}
An easy but important lemma is 
\begin{lem} The 3 norms in (\ref{equ norm I}), (\ref{equ norm II}), (\ref{equ norm III}) are equivalent.
\end{lem}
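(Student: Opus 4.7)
The plan is to verify two almost trivial inequalities and one finite-covering argument. Observe first that (\ref{equ norm II}) $\leq$ (\ref{equ norm I}) is immediate by monotonicity of the (weighted or unweighted) Schauder norms under shrinking of the domain: each summand on the right is the same as the corresponding one on the left but computed over a strictly smaller set. The inequality (\ref{equ norm II}) $\leq C\cdot$(\ref{equ norm III}) follows directly from the definition of the starred interior norm in (4.17) of \cite{GilbargTrudinger}: on the concentric subball $\tfrac{B_l}{100}\subset \tfrac{B_l}{50}$, the distance-to-boundary weight in the starred norm is uniformly bounded above and below, so the starred norm on $\tfrac{B_l}{50}$ dominates a fixed multiple of the unweighted norm on $\tfrac{B_l}{100}$. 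Both observations contribute constants depending only on the fixed reference cover.

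For the reverse direction, I would use that $\{\tfrac{B_l}{100},\tfrac{B_{O_j}}{100}\}$ still forms a locally finite cover of $M$ by condition (3) of Definition \ref{Def admissable open cover}, hence admits a positive Lebesgue number $\delta$ depending only on $\mathbb{U}_{\rho_0}$. Given $x,y$ in one of the larger sets $B_l$ (respectively a sector $V_{\pm,O_j}$), I split the estimation of the Hölder difference quotient of $\nabla^2\xi$ into two cases. If $|x-y|<\delta$, then $x$ and $y$ lie in a common small cover element by the Lebesgue number property, and the quotient is controlled by the corresponding summand of (\ref{equ norm II}); if $|x-y|\geq\delta$, the quotient is bounded by $2\delta^{-\alpha}$ times the $C^0$ norm of $\nabla^2\xi$, which is also a summand of (\ref{equ norm II}). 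The same dichotomy bounds $|\nabla^j\xi|_{0}$ on $B_l$ by finite maxima over the small cover elements that meet $B_l$. This yields (\ref{equ norm I}) $\leq C\cdot$(\ref{equ norm II}), and a parallel argument, combined with standard interior equivalence between the starred norm $|\cdot|^{\star}_{2,\alpha}$ and the unweighted norm on a concentric subdomain, gives (\ref{equ norm III}) $\leq C\cdot$(\ref{equ norm II}).

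The only point requiring care is the passage between smaller and larger sectors $\tfrac{V_{\pm,O_j}}{100}\subset V_{\pm,O_j}$ near a singular point, because the weight factor $r_{x,y}^{\mu+\alpha}$ (or $\underline{r_{x,y}}^{\mu+\alpha}$ in the regime $\mu+\alpha<0$) in (\ref{equ Def local Schauder norm 1}) varies across the sector. The main technical obstacle will therefore be showing that this radial weight is comparable at $x$ and $y$ whenever $|x-y|<\delta$ and at least one of them lies in the smaller sector, so that the difference quotient estimated via the inner sector transfers to the outer one with a uniform constant. Since $r_x$ and $r_y$ are comparable whenever $|x-y|$ is small relative to $\min\{r_x,r_y\}$, and the case $|x-y|\geq\delta$ is again absorbed by the $C^0$ part of the norm with a fixed multiplicative loss $\delta^{-\alpha}$, both weight conventions $r_{x,y}$ and $\underline{r_{x,y}}$ lead to the same equivalence class. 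Combining these three inequalities finishes the proof.
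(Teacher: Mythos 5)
Your argument is correct and essentially the same as the paper's: the easy comparisons come from domain monotonicity and from the uniform bounds (above and below) on the distance-to-boundary weight of the starred interior norm on concentric subballs, and the non-trivial direction relies on condition (3) of Definition~\ref{Def admissable open cover} — the $\tfrac{1}{100}$-shrunk balls still cover $M$, so the portion of each large ball or sector missing from the shrunk cover (where the radial weight is bounded above and below) is controlled by the norms over the small balls that overlap it. The paper organizes the equivalence as a three-inequality cycle with norm~III as an intermediate, whereas you compare norms~I and~III each to norm~II and spell out the Lebesgue-number dichotomy and the radial-weight comparability that the paper leaves implicit, but the underlying covering mechanism is identical.
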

\begin{proof}This is an easy exercise by definition. For the reader's convenience, we still point out the crucial detail.  Obviously  norm I is stronger than norm III, and norm III is stronger than norm II. 
We only need to show norm II is stronger than norm I. This is because of the last item in Definition \ref{Def admissable open cover}: $V_{+,O_{j}}\setminus \frac{V_{+,O_{j}}}{100}$ is covered by the $\frac{B_{l}}{100}'s$. Since the transition functions are smooth,  then the Schauder norm of $\xi$ over $V_{+,O_{j}}\setminus \frac{V_{+,O_{j}}}{100}$
is controlled by the supreme of Schauder norms on the $\frac{B_{l}}{100}'s$. 

The same holds for $V_{-,O_{j}}\setminus \frac{V_{-,O_{j}}}{100}$ and  $B_{l}\setminus \frac{B_{l}}{100}$ away from the singularities.\end{proof}
\begin{Def}\label{Def Schauder spaces} The \textbf{weighted Schauder-space} $C^{k,\alpha}_{(\gamma,b)}(M)$  consists of sections with the norm (\ref{equ norm I}) being finite. This notation also applies to any domain.
\end{Def}
\begin{Def}\label{Def local Schauder adapted to local perturbation} For the local perturbation in Theorem \ref{Thm Deforming local instanton}, on $B_{O}(R)$, we need a  Schauder space whose  weights near  $O$ and $\partial B_{O}(R)$  are different. To be precise, we define
the space $C^{k,\alpha}_{\{\gamma\},t}[B_{O}(R)]$ by the norm 
\begin{eqnarray*}
|\xi|^{\{\gamma\},t}_{k,\alpha,B_{O}(R)}
 &\triangleq & \Sigma_{j=0}^{k}\sup_{x\in V_{+,O}(R)}\min\{r_{x}^{\gamma+j},(R-r_{x})^{t+j}\}|\nabla^{j}\xi|(x)
\\& & + \sup_{x,y\in V_{+,O}(R)}\min\{r_{x}^{\gamma+k+\alpha},(R-r_{x})^{t+k+\alpha}\}\frac{|\nabla \xi(x)-\nabla \xi(x)|}{|x-y|^{\alpha}}\\& &+\ \textrm{the same in}\ V_{-,O}(R).
\end{eqnarray*}
%Similarly and routinely, we define the space $C^{1,\alpha}_{\{\frac{5}{2}+p\},t}B_{O}(R)$ by the norm 
%\begin{eqnarray*}
%|\xi|^{\{\frac{5}{2}+p\},t}_{2,\alpha}[B_{O}(R)]
% &\triangleq & \Sigma_{j=0}^{2}\sup_{x\in V_{+,O}(R)}\min\{r_{x}^{\frac{5}{2}+p+j},(1-r_{x})^{t+j}\}|\nabla^{j}\xi|(x)
%\\& & + \sup_{x,y\in V_{+,O}(R)}\min\{r_{x}^{\frac{9}{2}+p+\alpha},(1-r_{x})^{t+1+\alpha}\}\frac{|\nabla^{2} \xi(x)-\nabla^{2} \xi(x)|}{|x-y|^{\alpha}}\\& &+\textrm{the same in}\ V_{-,O}(R).
%\end{eqnarray*}
\end{Def}
\begin{Def}\label{Def deformation of the G2 structure}(Admissible $\delta_{0}-$deformations of $G_{2}-$structures) A $G_{2}-$structure $(\underline{\phi},\underline{\psi})$ is called an admissible $\delta_{0}-$deformation of $\phi$ if
\begin{itemize}
\item $\underline{\phi}$ is smooth and
\begin{equation}\label{equ def of admissable deformations of the G2 structure}
|\underline{\phi}-\phi|_{C^{5}(M)}\leq \delta_{0}.
\end{equation}
where the $C^{5}(M)-$norm is defined by the base $G_{2}-$structure $\phi$;
\item $\underline{\phi}=\phi$ at each $O_{j}$.
\end{itemize} Then  we automatically have 
\begin{equation}
|\underline{\phi}-\phi|(x)\leq Cr_{x}
\end{equation}
when $x$ is close to the singularities.

Since the $\underline{\phi}$ determines a smooth metric $g_{\underline{\phi}}$ , and a smooth co-associative form $\underline{\psi}$ (see \cite{SalamonWalpuski} and \cite{Bryant}),  we also obtain a small deformation of the base form $\psi$ such that 
\begin{equation}
|\underline{\psi}-\psi|_{C^{5}(M)}\leq C\delta_{0}. 
\end{equation} 

 Note that we don't require $\underline{\phi}$ to be closed, but when we want  an instanton, we have to assume it's co-closed i.e.  $\underline{\psi}$ is closed.  
\end{Def}
\begin{Def}\label{Def Dependence of the constants} (General constants)   \textit{The background data in this article is the dimension $n$ (in most cases it's 7), the manifold $M$ and bundle $E$ ($\Xi$) with a fixed coordinate system, the $p,b$ in the weights, the reference $G_{2}-$structures $\phi$ and $\psi$, the tangent cone connections $A_{O_{j}}$ (and the bundle $E$ ($\Xi$) on the sphere), the H\"older-exponent $\alpha$, and the base connection $A$. Without further specification, the  constants  "$C$", $\delta_{0}$, $\epsilon_{0}$, $\mu_{1}$, $\vartheta_{1}$... in each estimate  means a constant depending (at most) on the above data.    We add sub-letters to the "$C$" when it depends on more data than the above, or when we want to emphasize the dependence on some specific factor. The "$C$'s" in different places might be different. The $\delta_{0}$, $\epsilon_{0}$, $\mu_{1}$, etc  are  usually small enough with respect to the above data. There are some auxiliary small numbers like  $\epsilon$, $\delta$, which we usually let tend to $0$.}
      
      When a bound  depends only on the above data, we say it's  uniform. 
\end{Def}
\begin{Def}\label{Def special constants} (Special constants)  \textit{For any $O$ among the singular points,  we let $\bar{C}$ denote any constant depending \textbf{only} on the weights $p,b$, and  the underlying cone connection $A_{O}$ ( and the sub-symbol if there is any).} 
      
      In particular, these $\bar{C}$'s do not depend on the radius of the underlying balls, so  our requirements are fulfilled. They mainly appear in Section \ref{section Solutions  to the  ODEs on the  Fourier-coefficients} and \ref{section Local solutions for the model cone connection}. 
\end{Def}
\begin{Def}\label{Def Fredholm operators and isomorphisms}( $(p,b)-$Fredholm operators and isomorphisms) In the space of $L_{loc}^{2}$-sections to $\Xi\rightarrow M\setminus \cup_{j}O_{j}$, consider the inner product given by the weighted space  $L^{2}_{p,b}$. As in page 49 of \cite{Donaldson}, let $H$ and $N$ be Banach spaces of sections to the bundle $\Xi$, and  $L$ is a bounded linear operator $H\rightarrow N$. $L$ is called a $(p,b)-$Fredholm operator if following conditions are satisfied.  
\begin{itemize}
\item Both $H$ and $N$ are subspaces of $L^{2}_{p,b}$. Let $\perp$ be the orthogonal complement with respect the $L^{2}_{p,b}$-inner product.
\item $Image L$ is closed in $N$. Both $KerL$ and  $coker L=N/ Image L$ are  finite dimensional.
\item   $Coker L$ is isomorphic to $Image^{\perp} L\cap N$, and under this isomorphism,  $N$ admits a direct-sum decomposition $$N=Image  L\oplus_{p,b} coker L,$$  where  $\oplus_{p,b}$ is orthogonal with respect to $L^{2}_{p,b}$. 
\item  $L:\ Ker^{\perp}L\cap H \rightarrow Image  L$ is an isomorphism (under the norms of $H$ and $N$). The "isomorphism" means $L$ is bijective (restricted to the 2 closed subspaces),  and both $L$ and $L^{-1}$ are bounded,
\end{itemize}

\end{Def}
\begin{Def}\label{Def abbreviation of notations for spaces} (Abbreviation of notations for the spaces of sections). When the log-power $b$ is equal to $0$, we abbreviate all the notations $$W^{1,2}_{p,b},L^{2}_{p,b}, H_{p,b},N_{p,b}, J_{p,b}, Q_{p,b,A_{O}},Q_{A,p,b},etc$$ as $$W^{1,2}_{p},L^{2}_{p}, H_{p},N_{p}, J_{p} , Q_{p,A_{O}},Q_{A,p},etc.$$ 
\end{Def}
\begin{Def}\label{Def tensor product} (Tensor products) The sign "$\otimes$" means a tensor product depending on (some of and at most) the reference $G_{2}-$structure $\phi,\psi$, the metric $g_{\psi}$, the Euclidean metric in the coordinates, or some other $G_{2}-$structure, manifold, or bundle.  Thus the norms of these $\otimes$'s are bounded with respect to the above data. The  $\otimes$'s in different places might be different. When we are considering some specific tensor product, we  add sub-letter or symbol to the $\otimes$ (like in Lemma \ref{lem formula for LA squared} and proof of Proposition \ref{prop seperation of variable for general cone}). 
\end{Def}
\begin{Def}\label{Def A generic} Given an admissible connection $A$, let $p$ be a real number. $p$ is called \textbf{$A-$generic} if $1-p$ and $-p$ do not belong to the  $v-$spectrum of any $\Upsilon_{A_{O_{j}}}$ (see (\ref{equ relation between v and beta}) and Definition \ref{Def v spectrum}). This means neither  $7.25-(1-p)^{2}$ nor $7.25-p^{2}$ is an eigenvalue of any $\Upsilon_{A_{O_{j}}}$.
\end{Def}
\section{Local theory}
\subsection{Seperation of variable for  the system in the model case. \label{section Seperation of variable for  the system in the model case}}

By abuse of notation, we still let $\Xi$ denote the space of sections to the bundle $\Xi$ etc. By the monopole equation (\ref{equ instantonequation without cokernel}), the linearised operator with respect to $\sigma\in \Omega^{0}_{adE}$ and $a\in \Omega^{1}_{adE}$ (at $(0,0)\in \Omega^{0}_{adE}\oplus \Omega^{1}_{adE}=\Xi$ when $\underline{\psi}=\psi$) is 
\begin{equation}\label{equ introduction formula for deformation operator}L_{A}[\begin{array}{c}
\sigma   \\
  a   
\end{array}]=[\begin{array}{c}
d_{A}^{\star}a   \\
  d_{A}\sigma+\star(d_{A}a\wedge \psi)  
\end{array}]\end{equation}
where $\psi$ is the base co-associative form in Theorem \ref{Thm Deforming instanton}. Let $L_{A_{O}}$ 
denote the deformation operator of $A_{O}$ and Euclidean $G_{2}-$structure. If the operator depends on any different $G_{2}-$structure than the Euclidean one and $\phi,\psi$, we add sub-symbol. 

 Thus    $L_{A_{O}}^{2}$ is still an operator from $\Xi$ to itself.  To achieve seperation of variable for this  operator, we should understand the bundle  in another way. Working in general dimension $n\geq 4$, given any $\xi=\left|\begin{array}{c}
\sigma  \\
 a\\
 \end{array}\right |\in \Xi$, we write  
 \begin{equation}\label{equ decompose of 1-form to radial and spherical part}
\sigma=\frac{\zeta}{r},\ a=a_{r}\frac{dr}{r}+a_{s},  
\end{equation}
where $a_{s}$ does not have radial component, and $a_{r}$ is a $r-$dependent section  of $\Omega^{0}_{E}(S^{n-1})$. In another word, we want to view sections of $\Xi$ as $r-$dependent sections of the bundle (over $S^{n-1}$)
\begin{equation}\label{equ splitting of xi over the sphere}
\Xi=\Omega^{0}_{adE}(S^{n-1})\oplus \Omega^{0}_{adE}(S^{n-1}) \oplus \Omega^{1}_{adE}(S^{n-1})\ \textrm{under the basis in}\ (\ref{equ decompose of 1-form to radial and spherical part}).
\end{equation}
 
Let  $\nabla_{S}$ denote the covariant derivative with respect to the connection $A_{O}$, viewed as a connection over $S^{n-1}$. 
 
  For the $0-$form  $\sigma$,  the well known cone formula for the rough Laplacian reads as
\begin{equation}
-\nabla^{\star}\nabla \sigma= \frac{\partial^{2}\sigma}{\partial r^{2}} +\frac{n-1}{r}\frac{\partial \sigma}{\partial r}+ \frac{\Delta_{s}\sigma}{r^{2}}.
\end{equation}

Let $\zeta=r\sigma$, by Claim \ref{clm weight change on the ODE}, we have 
\begin{equation}\label{equ cone formula for the 0form with proper basis}
-r\nabla^{\star}\nabla (\frac{\zeta}{r})= \frac{\partial^{2}\zeta}{\partial r^{2}} +\frac{n-3}{r}\frac{\partial \zeta}{\partial r}+\frac{(3-n)\zeta}{r^{2}}+ \frac{\Delta_{s}\zeta}{r^{2}},
\end{equation}
where $\Delta_{s}$ is negative of the rough Laplacian of $A_{O}$ over $S^{n-1}$. 

On $1-$forms, we have the following polar coordinate formula.
\begin{lem}\label{lem  cone formula for laplacian} Suppose $A_{O}$ is a cone connection over $ \R^{n}\setminus O$. Then 
\begin{eqnarray*}
& &-\nabla^{\star}\nabla a\ \ \ \ \ \ \ \ \ \ \ \ \ \ \ \ \ a\ \textrm{is written as in}\ (\ref{equ decompose of 1-form to radial and spherical part}),
\\&=& (\frac{\partial^{2} a_{r}}{\partial r^{2}}+\frac{n-3}{r}\frac{\partial a_{r}}{\partial r}-\frac{2(n-2)a_{r}}{r^{2}}+\frac{\Delta_{s}a_{r}+2d_{s}^{\star}a_{s}}{r^{2}})\frac{dr}{r}
\\& & +\nabla_{r}(\nabla_{r} a_{s})+\frac{n-1}{r}\nabla_{r} a_{s}-\frac{a_{s}}{r^{2}}+\frac{\Delta_{s}a_{s}+2d_{s}a_{r}}{r^{2}}
\end{eqnarray*}
where $\Delta_{s}$  is the negative of the rough laplacian of $A_{O}$ on $S^{n-1}$. 

\end{lem}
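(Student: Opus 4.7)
The plan is a direct computation in a local orthonormal frame adapted to the cone structure. Let $\{\tilde{e}_i\}_{i=1}^{n-1}$ be an orthonormal frame on the unit round sphere with dual coframe $\{\tilde{\theta}^i\}$. Set $e_0 = \partial_r$ and $e_i|_r = \frac{1}{r}\tilde{e}_i$, with dual coframe $\theta^0 = dr$, $\theta^i = r\tilde{\theta}^i$. Cartan's first structure equation applied to $d\theta^i = \frac{1}{r}\theta^0\wedge\theta^i - \tilde{\omega}^i_j\wedge\theta^j$ yields the Euclidean Levi-Civita connection forms $\omega^i_0 = \tilde{\theta}^i$ and $\omega^i_j = \tilde{\omega}^i_j$, so in particular $\omega^\alpha_\beta(\partial_r) \equiv 0$: radial parallel transport is trivial. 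Since $A_O$ is a cone connection, the bundle covariant derivative in the radial direction is the ordinary $\partial_r$, hence $\nabla_r = \partial_r$ acting on components of sections in the natural radially-parallel framing; this justifies the meaning of $\nabla_r a_s$ and $\nabla_r(\nabla_r a_s)$ in the statement.

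From $\nabla_X\theta^\alpha = -\omega^\alpha_\beta(X)\theta^\beta$ one extracts the key basis identities
\begin{equation*}
\nabla_{\partial_r}\!\left(\frac{dr}{r}\right) = -\frac{dr}{r^2},\qquad \nabla_{e_j}(dr) = \tilde{\theta}^j,\qquad \nabla_{e_j}\tilde{\theta}^i = -\frac{\delta^i_j}{r}\,dr + \tilde{\nabla}_{e_j}\tilde{\theta}^i,
\end{equation*}
where $\tilde{\nabla}$ denotes the round-sphere Levi-Civita connection. Writing $a = a_r\frac{dr}{r} + a_s$ and plugging into $\nabla^*\nabla a = -\sum_\alpha\bigl(\nabla_{e_\alpha}\nabla_{e_\alpha}a - \nabla_{\nabla_{e_\alpha}e_\alpha}a\bigr)$, the $\alpha = 0$ contributions give the radial second derivatives $\partial_r^2 a_r\cdot\frac{dr}{r}$ and $\nabla_r(\nabla_r a_s)$, plus first-order radial corrections from $\nabla_{\partial_r}(\frac{dr}{r}) = -\frac{dr}{r^2}$. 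The identity $\sum_{j\geq 1}\nabla_{e_j}e_j = -\frac{n-1}{r}\partial_r + (\text{tangential})$ then produces the $\frac{n-1}{r}\nabla_r a_s$ term and, after combining with the radial basis correction above, the $\frac{n-3}{r}\partial_r a_r$ term; the shift $(n-1)\to(n-3)$ reflects the two derivatives landing on the explicit $\frac{1}{r}$ weight in $\frac{dr}{r}$. The spherical Laplacian pieces $\frac{\Delta_s a_r}{r^2}$ and $\frac{\Delta_s a_s}{r^2}$ emerge from $\sum_j\nabla_{e_j}\nabla_{e_j}$ acting on coefficients, with the $\frac{1}{r^2}$ factor being precisely the Euclidean-to-spherical rescaling of Remark \ref{rmk relation between Euc and Spherical norm}.

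The coupling terms $\frac{2 d_s^* a_s}{r^2}\cdot\frac{dr}{r}$ and $\frac{2 d_s a_r}{r^2}$ arise from mixed expansions in which one $\nabla_{e_j}$ differentiates a coefficient while the other acts on a basis form, using $\nabla_{e_j}(dr) = \tilde{\theta}^j$ and the $-\frac{\delta^i_j}{r}dr$ portion of $\nabla_{e_j}\tilde{\theta}^i$; the factor of $2$ records that each coupling occurs in both orderings (coefficient-then-basis and basis-then-coefficient). The zero-order corrections $-\frac{2(n-2)a_r}{r^2}$ and $-\frac{a_s}{r^2}$ come from $\sum_j\nabla_{e_j}\nabla_{e_j}$ acting on the basis forms $\frac{dr}{r}$ and $\tilde{\theta}^i$ themselves, net of the Christoffel subtraction $\nabla_{\nabla_{e_j}e_j}$. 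The main obstacle is purely combinatorial: tracking all signs and the radial rescaling between $e_i$ and $\tilde{e}_i = r\,e_i$ while keeping separate the contributions from the Levi-Civita connection forms $\omega^i_0 = \tilde{\theta}^i$ on the form side and the bundle connection $A_O$ on the section side. As a decisive consistency check, specialising to $a_s = 0$, $a_r \equiv 1$ reduces the formula to $-\nabla^*\nabla(\frac{dr}{r}) = -\frac{2(n-2)}{r^2}\cdot\frac{dr}{r}$, which one verifies directly in Cartesian coordinates by writing $\frac{dr}{r} = \sum_i \frac{x_i}{r^2}dx_i$ and computing $\Delta(\frac{x_i}{r^2}) = -\frac{2(n-2)x_i}{r^4}$; specialising further to the trivial connection and $a_r = 0$ reproduces the classical polar expression for the Euclidean rough Laplacian on spherical $1$-form components.
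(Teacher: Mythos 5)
Your approach is essentially the same as the paper's: both compute $\nabla^{\star}\nabla$ directly in an orthonormal frame adapted to the cone metric $dr^{2}+r^{2}g_{S^{n-1}}$, using $-\nabla^{\star}\nabla a=\sum_{\alpha}(\nabla_{e_{\alpha}}\nabla_{e_{\alpha}}a-\nabla_{\nabla_{e_{\alpha}}e_{\alpha}}a)$ together with the covariant derivatives of the adapted (co)frame. The paper derives the needed Christoffel identities (Lemma \ref{lem covariant derivatives}) by direct computation and chooses geodesic normal coordinates on the link to kill $\nabla^{S}_{e_{i}}e_{j}$ at a point; you instead read off the Levi-Civita connection forms from Cartan's first structure equation. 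These are equivalent packagings of the same calculation, and the crucial observation $\omega^{\alpha}_{\beta}(\partial_{r})=0$ (radial parallel transport is trivial, so $\nabla_{r}$ reduces to $\partial_{r}$ on components in the radially-parallel framing) is shared by both. One bookkeeping slip to flag: with your normalization $e_{j}=\frac{1}{r}\tilde{e}_{j}$, the third coframe identity should read $\nabla_{e_{j}}\tilde{\theta}^{i}=-\frac{\delta^{i}_{j}}{r^{2}}\,dr+\tilde{\nabla}_{e_{j}}\tilde{\theta}^{i}$, since $\omega^{i}_{0}(e_{j})=\tilde{\theta}^{i}(e_{j})=\frac{\delta^{i}_{j}}{r}$ and you must divide once more by $r$ to pass from $\theta^{i}$ to $\tilde{\theta}^{i}$; the version with a single $\frac{1}{r}$ is what you get when differentiating in the unscaled direction $\tilde{e}_{j}$, which is the convention the paper's Lemma \ref{lem covariant derivatives} uses. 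This is exactly the kind of $r$-power tracking the computation hinges on, so it must be fixed when carrying the calculation through — but the structure of your argument, the source of each term ($\frac{n-3}{r}$ from mixed derivatives landing on the $\frac{1}{r}$ in $\frac{dr}{r}$, the $d_{s}$, $d_{s}^{\star}$ couplings from $\nabla_{e_{j}}dr$ and the $dr$-component of $\nabla_{e_{j}}\tilde{\theta}^{i}$, the zeroth-order corrections from the frame forms), and your Cartesian sanity check $\Delta(x_{i}/r^{2})=-\frac{2(n-2)x_{i}}{r^{4}}$ all line up with the paper's proof.
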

\textbf{The  proof of Lemma \ref{lem  cone formula for laplacian} will be deferred to Section \ref{Appendix B: proof of  cone formula for laplacian}.}

 Next we return to dimension 7. By the formula in Lemma \ref{lem formula for LA squared}, $-L_{A_{O}}^{2}$ is the rough Laplacian of $A_{O}$ plus some algebraic operators, thus the  polar coordinate  formula  naturally involves the $SU(3)$-structure of $S^{6}$ (see \cite{FoscoloHaskins}, \cite{Xu}) for the formulas we need. Let $(\omega,\Omega)$ (as in \cite{FoscoloHaskins}) be the standard $SU(3)-$structure over $S^{6}$, where $\omega$ is the standard  Hermitian metric with respect to the almost complex structure, and $\Omega$ is the $(3,0)-$form. They satisfies 
 \begin{equation}\label{equ SU3 structure of S6}
 d\omega=3Re\Omega,\ dIm\Omega=-2\omega^{2}.
 \end{equation}
 Moreover, the standard $G_{2}-$forms can be written as 
 \begin{equation}\label{equ G2 forms in terms of nearly Kahler forms on the link}
 \phi_{0}=r^{2}dr\wedge \omega+r^{3}Re\Omega,\ \psi_{0}=-r^3 dr \wedge Im
 \Omega+ \frac{r^{4}}{2}\omega^{2}.
\end{equation}

A necessary algebraic definition is the following. 
\begin{Def}\label{Def Two tensor products}(Some specific  tensor products) Let $\theta$ be a $p-$form and $\Theta$ be a $q-$form, both are possibly $adE-$valued.

 Suppose $q>p$, then we define $\theta \lrcorner \Theta$ as the $q-p$ form 
 $$\theta \lrcorner \Theta(Y_{1}...Y_{q-p})=\Sigma_{i_{1},...,i_{p}}\theta(v_{i_{1}},...,v_{i_{p}})\Theta(v_{i_{1}},...,v_{i_{p}},Y_{1}...Y_{q-p}),$$ where $v_{j}$'s form an orthogonal basis of the underlying metric. 
 
  Suppose $q<p$, similar to the previous paragraph,  we define $\theta \llcorner \Theta$ as the $p-q$ form  $\theta \llcorner \Theta(Y_{1}...Y_{p-q})=\Sigma_{i_{1},...,i_{q}}\theta(v_{i_{1}},...,v_{i_{q}},Y_{1}...Y_{p-q})\Theta(v_{i_{1}},...,v_{i_{q}})$.
 
 The order of multiplication is important, since they are matrix-valued.

The symbol "$\underline{\otimes}$" means the tensor product   
$$F\underline{\otimes} a=[F(e_{i},e_{j}),a(e_{i})]e^{j},$$  
where $F$ is an $adE-$valued $2-$form,  $a$ is an $1-$form, and the $e_{i}'s$ form an orthonormal frame of the underline metric (which is  the Euclidean metric in the model case).

  The symbol "$\underline{\otimes}_{S}$" means  the $\underline{\otimes}$ over $S^{6}$ with respect to the standard round  metric, so do the symbols $\lrcorner_{S}$ and $\llcorner_{S}$.

\end{Def}
Routine computation gives the main result in this section. 
 \begin{prop}\label{prop seperation of variable for general cone}Under the basis in (\ref{equ decompose of 1-form to radial and spherical part}), the equation 
 \begin{equation}-L_{A_{O}}^{2}(\begin{array}{ccc}
\frac{1}{r}& 0& 0  \\ 
0& \frac{dr}{r} & Id  
\end{array})\left |\begin{array}{c}
\zeta  \\
 a_{r}\\
 a_{s} \end{array}\right |=
(\begin{array}{ccc}
\frac{1}{r}& 0& 0  \\ 
0& \frac{dr}{r} & Id  
\end{array})\left |\begin{array}{c}
\underline{f}_{0}  \\
\underline{f}_{1} \\
\underline{f}_{2}  \end{array}\right |
\end{equation}
is  equivalent to 
\begin{equation}\label{equ cone formula for square of dirac}\left \{\begin{array}{c}
\frac{\partial^{2} \zeta}{\partial r^{2}}+\frac{4}{r}\frac{\partial \zeta}{\partial r}-\frac{5\zeta}{r^{2}}+\frac{\Upsilon_{A_{O},0}}{r^{2}}=\underline{f}_{0}  \\
\frac{\partial^{2} a_{r}}{\partial r^{2}}+\frac{4}{r}\frac{\partial a_{r}}{\partial r}-\frac{5a_{r}}{r^{2}}+\frac{\Upsilon_{A_{O},1}}{r^{2}}=\underline{f}_{1} \\
\nabla_{r}(\nabla_{r} a_{s})+\frac{6}{r}\nabla_{r} a_{s}-\frac{a_{s}}{r^{2}}+\frac{\Upsilon_{A_{O},2}}{r^{2}}=\underline{f}_{2} \end{array} \right.,
\end{equation}
where 
\begin{eqnarray}\label{equ prop cone formula general one}& &
\Upsilon_{A_{O}}\left |\begin{array}{c}
\zeta\\
a_{r}   \\
  a_{s}   
\end{array}\right|=\left |\begin{array}{c}
\Upsilon_{A_{O},0}\\
\Upsilon_{A_{O},1}   \\
\Upsilon_{A_{O},2}   
\end{array}\right|
\\&=&\left |
\begin{array}{c} \Delta_{s}\zeta+\zeta+[F_{A_{O}},a_{s}]\lrcorner_{S}Re\Omega+[F_{A_{O}},a_{r}]\lrcorner_{S}\omega  \\
\Delta_{s}a_{r}-5a_{r}+2d_{s}^{\star}a_{s}+[F_{A_{O}},a_{s}]\lrcorner_{S}Im\Omega-[F_{A_{O}},\zeta]\lrcorner_{S}\omega  \\
 \{\Delta_{s}a_{s}+2d_{s}a_{r}-F_{A_{O}}\underline{\otimes}_{S}a_{s}-[F_{A_{O}},a_{r}]\lrcorner_{S}Im\Omega
 \\+[F_{A_{O}},a_{s}]\lrcorner_{S}\frac{\omega^{2}}{2}-[F_{A_{O}},\zeta]\lrcorner_{S}Re\Omega\}
\end{array}\right| \nonumber
\end{eqnarray}
 The operator $\Upsilon_{A_{O}}$ is a smooth self-adjoint  elliptic operator over $\Xi\rightarrow S^{6}$.
  
 When $A_{O}$ is a $\psi_{0}-$instanton i.e. $F_{A_{O}}\wedge \psi_{0}=0$ as a connection pulled back to $\R^{7}\setminus O$ (equivalent to that $A_{O}$ is Hermitian Yang-Mills  on $S^{6}$ \cite{Xu}), we have  
 \begin{equation}\label{equ prop cone formula special one}
\Upsilon_{A_{O}}\left |\begin{array}{c}
\zeta\\
a_{r}   \\
  a_{s}   
\end{array}\right|=\left |
\begin{array}{c} \Delta_{s}\zeta+\zeta \\
\Delta_{s}a_{r}-5a_{r}+2d_{s}^{\star}a_{s} \\
\Delta_{s}a_{s}+2d_{s}a_{r}-2F_{A_{O}}\underline{\otimes}_{S}a_{s}
\end{array}\right| 
\end{equation}
 \end{prop}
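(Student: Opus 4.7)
The plan is to combine three ingredients. First, Lemma \ref{lem formula for LA squared} rewrites $-L_{A_{O}}^{2}$ on sections of $\Xi$ as the rough Laplacian $\nabla_{A_{O}}^{\star}\nabla_{A_{O}}$ (up to sign) plus a zeroth-order operator algebraically built from $F_{A_{O}}$ and $\psi_{0}$. Second, the polar-coordinate formula (\ref{equ cone formula for the 0form with proper basis}) on $\sigma=\zeta/r$ and Lemma \ref{lem  cone formula for laplacian} on $a=a_{r}\frac{dr}{r}+a_{s}$ dispose of the rough Laplacian contribution entirely. Third, the identities (\ref{equ SU3 structure of S6}) together with the polar decomposition (\ref{equ G2 forms in terms of nearly Kahler forms on the link}) of $\psi_{0}$ let one expand every wedge product and contraction against $\psi_{0}$ into pieces built from $\omega$, $Re\Omega$, $Im\Omega$ and $\omega^{2}/2$ on the link. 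Everything in the proposition then follows from linear algebra.

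\textbf{Matching the radial and angular pieces.} Inserting $n=7$ into the cone formulas produces the radial coefficients $(n-3)/r=4/r$ together with the zero-order shifts $-4/r^{2}$ on the scalar slot, $-2(n-2)/r^{2}=-10/r^{2}$ on $a_{r}$, and $-1/r^{2}$ on $a_{s}$, plus the common $r^{-2}$-rescaling of the tangential operators $\Delta_{s}\zeta$, $\Delta_{s}a_{r}$, $\Delta_{s}a_{s}$, $2d_{s}^{\star}a_{s}$ and $2d_{s}a_{r}$. Rewriting $-4/r^{2}$ as $-5/r^{2}+1/r^{2}$ on the scalar slot and $-10/r^{2}$ as $-5/r^{2}-5/r^{2}$ on $a_{r}$ puts the three equations into the common normal form displayed in (\ref{equ cone formula for square of dirac}), the residual $+\zeta$ and $-5a_{r}$ being absorbed into $\Upsilon_{A_{O},0}$ and $\Upsilon_{A_{O},1}$ respectively. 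For the curvature contribution, since $A_{O}$ is conical, $F_{A_{O}}$ has no $dr$-component and $r^{2}F_{A_{O}}$ equals the curvature of the restriction of $A_{O}$ to $S^{6}$; combining this with the powers of $r$ in (\ref{equ G2 forms in terms of nearly Kahler forms on the link}) and in the basis (\ref{equ decompose of 1-form to radial and spherical part}) produces precisely the $r^{-2}$ prefactor attached to $\Upsilon_{A_{O}}$. Tracking which input slot each algebraic term takes, and pairing $dr\wedge\omega$ against $Re\Omega$ (and similarly for the other summands) from the two summands of $\psi_{0}$, leaves exactly the six commutator-contraction entries of (\ref{equ prop cone formula general one}).

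\textbf{Instanton simplification, and the main obstacle.} When $A_{O}$ is a $\psi_{0}$-instanton, its identification with a Hermitian Yang--Mills connection on $(S^{6},\omega,\Omega)$ from \cite{Xu} gives $F_{A_{O}}\lrcorner_{S}\omega=0$ and $F_{A_{O}}$ of type $(1,1)$, so that $F_{A_{O}}\lrcorner_{S}Re\Omega=F_{A_{O}}\lrcorner_{S}Im\Omega=0$; each off-diagonal entry in (\ref{equ prop cone formula general one}) contains exactly one of these contractions and therefore drops out, reducing (\ref{equ prop cone formula general one}) to (\ref{equ prop cone formula special one}) with only the diagonal $F_{A_{O}}\underline{\otimes}_{S}a_{s}$ piece surviving. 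Ellipticity of $\Upsilon_{A_{O}}$ is then immediate because its principal symbol on each summand of $\Xi\to S^{6}$ coincides with that of $\Delta_{s}$, and formal self-adjointness follows from the same property of $L_{A_{O}}$ combined with the symmetry of the radial ODEs in the measure $r^{n-3}dr$ dictated by (\ref{equ decompose of 1-form to radial and spherical part}). The only genuine effort in this proposition is bookkeeping: one must verify that the nearly-K\"ahler torsion relations $d\omega=3Re\Omega$ and $dIm\Omega=-2\omega^{2}$, together with the cross terms generated by $\nabla_{r}$ acting on $dr/r$ and on the spherical frame, combine so that no stray zeroth-order piece is left outside $\Upsilon_{A_{O}}$.
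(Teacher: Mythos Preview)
Your overall strategy matches the paper's: assemble Lemma~\ref{lem formula for LA squared}, the polar-coordinate identities (\ref{equ cone formula for the 0form with proper basis}) and Lemma~\ref{lem  cone formula for laplacian}, and the $SU(3)$ decomposition (\ref{equ G2 forms in terms of nearly Kahler forms on the link}) of $\psi_{0}$; the bookkeeping of radial coefficients and the $r^{-2}$ scaling of the curvature terms is handled correctly.

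There is, however, a gap in your instanton simplification. You argue that the HYM conditions kill every off-diagonal curvature contraction and conclude that ``only the diagonal $F_{A_{O}}\underline{\otimes}_{S}a_{s}$ piece'' survives. But the third row of (\ref{equ prop cone formula general one}) carries \emph{two} diagonal curvature terms, $-F_{A_{O}}\underline{\otimes}_{S}a_{s}$ and $+[F_{A_{O}},a_{s}]\lrcorner_{S}\tfrac{\omega^{2}}{2}$, and the target formula (\ref{equ prop cone formula special one}) has $-2F_{A_{O}}\underline{\otimes}_{S}a_{s}$. The $\omega^{2}/2$ term does not vanish under the $(1,1)$ and primitivity conditions; rather, one needs the identity $[F_{A_{O}},a_{s}]\lrcorner_{S}\tfrac{\omega^{2}}{2}=-F_{A_{O}}\underline{\otimes}_{S}a_{s}$ for primitive $(1,1)$ curvature. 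The paper handles this either by invoking Lemma~2.4 of \cite{Xu} or, more directly, by bypassing (\ref{equ prop cone formula general one}) altogether and reading the factor of $2$ straight off the instanton case of Lemma~\ref{lem formula for LA squared}, where $-[F_{A_{O}},a]\lrcorner\psi_{0}=F_{A_{O}}\underline{\otimes}a$ has already been proved.

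Your self-adjointness argument is also looser than the paper's. You deduce self-adjointness of $\Upsilon_{A_{O}}$ from that of $L_{A_{O}}$ together with ``symmetry of the radial ODEs in the measure $r^{n-3}dr$'', but the three radial operators in (\ref{equ cone formula for square of dirac}) are not literally the same (the $a_{s}$ slot carries $\nabla_{r}\nabla_{r}+\tfrac{6}{r}\nabla_{r}$ while the others carry $\partial_{r}^{2}+\tfrac{4}{r}\partial_{r}$), so one must also track how the basis (\ref{equ decompose of 1-form to radial and spherical part}) rescales the inner product slot by slot. The paper instead verifies self-adjointness of $\Upsilon_{A_{O}}$ directly on $S^{6}$ by pairing the explicit terms: $2d_{s}a_{r}$ against $2d_{s}^{\star}a_{s}$, $[F_{A_{O}},a_{s}]\lrcorner_{S}Re\Omega$ against $-[F_{A_{O}},\zeta]\lrcorner_{S}Re\Omega$, and so on, using antisymmetry of $[F_{A_{O}},\cdot]$ and the contraction identity $\langle a_{1}\lrcorner B,a_{2}\rangle=(-1)^{pq}\langle a_{1},a_{2}\lrcorner B\rangle$. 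That route is more elementary and avoids the need to justify the abstract separation-of-variables transfer.
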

\begin{proof} We only prove (\ref{equ prop cone formula general one}), equation (\ref{equ prop cone formula special one}) is a special case and is implied by Lemma \ref{lem formula for LA squared}, or by   Lemma 2.4 in \cite{Xu} (using (\ref{equ prop cone formula general one})). It suffices to  combine Lemma \ref{lem  cone formula for laplacian}, Lemma \ref{lem formula for LA squared} (and the proof of it), formulas (\ref{equ cone formula for the 0form with proper basis}), (\ref{equ SU3 structure of S6}), (\ref{equ G2 forms in terms of nearly Kahler forms on the link}).  We say a few more for the readers' convenience.
  
  First , since $A_{O}$ is a cone connection, then
\begin{equation}\label{equ formula for F tensor a}F_{A_{O}}\underline{\otimes} a=F_{A_{O}}\underline{\otimes} a_{s}=\frac{1}{r^{2}}F_{A_{O}}\underline{\otimes}_{S} a_{s}.
\end{equation}

   Second, formula (\ref{equ G2 forms in terms of nearly Kahler forms on the link}) directly implies 
   \begin{equation}
   [F_{A_{O}},a]\lrcorner \psi=-\frac{1}{r^{2}} [F_{A_{O}},a_{r}]\lrcorner_{S} Im\Omega+\frac{dr}{r^{3}} [F_{A_{O}},a_{s}]\lrcorner_{S} Im\Omega+\frac{1}{2r^{2}} [F_{A_{O}},a_{s}]\lrcorner_{S} \omega^{2},
   \end{equation}
    \begin{equation}
   \star([F_{A_{O}},a]\wedge\psi)= [F_{A_{O}},a]\lrcorner\phi_{0}=\frac{1}{r^{3}} [F_{A_{O}},a_{s}]\lrcorner_{S}Re\Omega+\frac{1}{r^{3}} [F_{A_{O}},a_{r}]\lrcorner_{S} \omega,\  \textrm{and}
   \end{equation}
\begin{equation}
  \star([F_{A_{O}},\sigma]\wedge \psi)=[F_{A_{O}},\sigma]\lrcorner \phi_{0}=\frac{dr}{r^{2}} [F_{A_{O}},\sigma]\lrcorner_{S} \omega+\frac{1}{r} [F_{A_{O}},\sigma]\lrcorner_{S} Re\Omega.
   \end{equation}
   
   The proof of (\ref{equ prop cone formula general one}) is complete. 
   
   To show the self adjointness  of $\Upsilon_{A_{O}}$ as an operator over $S^{6}$, it suffices to  note that  [$F_{A_{O}},a_{s}]\lrcorner_{S}Re\Omega$ is adjoint to $-[F_{A_{O}},\zeta]\lrcorner_{S}Re\Omega$, $[F_{A_{O}},a_{r}]\lrcorner_{S}\omega$ is adjoint to  $-[F_{A_{O}},\zeta]\lrcorner_{S}\omega$, $2d_{s}a_{r}$ is adjoint to $2d_{s}^{\star}a_{s}$, and $[F_{A_{O}},a_{s}]\lrcorner_{S}Im\Omega$ is adjoint to $-[F_{A_{O}},a_{r}]\lrcorner_{S}Im\Omega$. Moreover, both $F_{A_{O}}\underline{\otimes}_{S}a_{s}$ and $[F_{A_{O}},a_{s}]\lrcorner_{S}\frac{\omega^{2}}{2}$ are self-adjoint. A very import formula for verifying these relations  is 
 \begin{clm}For any $adE-$valued $p-$form $a_{1}$, $adE-$valued $q-$form $a_{2}$, and ordinary $(p+q)-$form $B$, we have 
 \begin{equation}
 <a_{1}\lrcorner B,a_{2}>= (-1)^{pq}<a_{1},a_{2}\lrcorner B>.
 \end{equation}
 \end{clm}
The  assumption that  $\Xi\rightarrow S^{6}$ is a $SO(m)-$bundle  implies $[F_{A},\cdot]$ is anti-symmetric with respect to the inner product of the Lie-algebra of $so(m)$. \end{proof}
We denote the eigenvalues of $\Upsilon_{A_{O}}$ as $\beta$,  and the corresponding eigensection as $\Psi_{\beta}$ (there might be  multiplicities) i.e 
 \begin{displaymath}
\Upsilon_{A_{O}} \Psi_{\beta}=\beta \Psi_{\beta},\  \Psi_{\beta}=(\begin{array}{c}\phi_{0,\beta}  \\
\phi_{1,\beta}  \\
  \phi_{2,\beta}   
\end{array}),\ \phi_{0,\beta},\ \phi_{1,\beta}\in \Omega_{ad E}^{0}(S^{6}),\ \phi_{2,\beta}\in \Omega_{ad E}^{1}(S^{6}).
\end{displaymath}
We require $\Psi_{\beta}$ to be  an orthonormal basis in $L_{\Xi}^{2}(S^{6})$, which is the space of $L^{2}-$sections to $\Xi\rightarrow S^{6}$, with respect to the natural inner product of the direct sums in (\ref{equ splitting of xi over the sphere}). By (\ref{equ cone formula for square of dirac}) and (\ref{equ radial derivative of a spherical form}), the equation 
\begin{equation}\label{equ Def of equation for square of model LAO}-L_{A_{O}}^2 \xi= \underline{f}
\end{equation} is then equivalent to 
\begin{equation}\label{equ raw ODE for 1 forms}
\frac{d^{2} \underline{\xi}_{\beta}}{d r^{2}}+\frac{4}{r}\frac{d \underline{\xi}_{\beta}}{d r}+\frac{(\beta-5)\underline{\xi}_{\beta}}{r^{2}}=\underline{f}_{\beta},\ \xi=\Sigma_{\beta}\underline{\xi}_{\beta}\Psi_{\beta},\ \textrm{where}\ \underline{f}=\Sigma_{\beta}\underline{f}_{\beta}\Psi_{\beta}.
\end{equation} 
Let 
\begin{equation}\label{equ relation between v and beta}
-v^{2}=\beta-\frac{29}{4}.
\end{equation}  $v$ is either a non-negative real number or a purely imaginary number. To reduce the above equation into the form we are most familiar with, we consider $\xi_{v}=r^{\frac{3}{2}}\underline{\xi}_{\beta}$, $f_{v}=r^{\frac{3}{2}}\underline{f}_{\beta}$, then (\ref{equ raw ODE for 1 forms})  becomes 
\begin{equation}\label{equ the ODE of v for 1 forms}
\frac{d^{2} \xi_{v}}{d r^{2}}+\frac{1}{r}\frac{d \xi_{v}}{d r}-v^{2}\frac{\xi_{v}}{r^{2}}=f_{v},\ \textrm{where}\ \xi_{v}\ \textrm{and}\ f_{v}\ \textrm{only depend on}\ r.
\end{equation}

 By abuse of notation,  we shall study the ordinary differential equation
\begin{equation}\label{equ the standard ODE}
\frac{d^{2} u}{d r^{2}}+\frac{1}{r}\frac{d u}{d r}-v^{2}\frac{u}{r^{2}}=f.
\end{equation}
 \begin{Def}\label{Def v spectrum}($v-$spectrum) Since the  $v$'s are determined by $\beta$ via (\ref{equ relation between v and beta}), we call them  $v-$spectrum of the tangential operators. By abuse of notation, we write $\Psi_{\beta}$ as $\Psi_{v}$, $\underline{f}_{\beta}$ as $\underline{f}_{v}$, $\underline{\xi}_{\beta}$ as $\underline{\xi}_{v}$ etc.
\end{Def}
\subsection{Solutions  to the  ODEs on the  Fourier-coefficients \label{section Solutions  to the  ODEs on the  Fourier-coefficients}}

In this section, for any singular point $O$ of the connection and bundle, we shall solve  (\ref{equ Def of equation for square of model LAO})  locally for the cone connection $A_{O}$. This is equivalent to solving the ODEs (\ref{equ the standard ODE}) of the Fourier-coefficients.  By proving  Theorem \ref{thm existence of good solutions to the uniform ODE with estimates}, we show the existence of good solutions to (\ref{equ the standard ODE}) with the correct  and optimal $L^{2}-$estimates. \textbf{Choosing different formulas for different spectrum,   the solution for each $v$ are given by (\ref{equ solution when v is real and  p>1-v is integral from 1 to r}), (\ref{equ solution formula when v positive and p less than 1-v}), (\ref{equ solution when v is purely imaginary}),    (\ref{equ solution when v=0}).} These solutions possess the properties for building up a deformation theory for singular connections. 

We can not prove Theorem \ref{thm existence of good solutions to the uniform ODE with estimates} only by "potential" estimates. To be precise, when $v=0$,  Proposition \ref{prop existence of solution with lowest order estimate}, a result by potential estimate, is not the optimal estimate we want in Theorem \ref{thm existence of good solutions to the uniform ODE with estimates}. Nevertheless, the interesting thing is that the 2 terms in     (\ref{equ solution when v=0}) actually enjoy some magic cancellation. 
This allows us  to use integral identity to improve the non-optimal estimate in Proposition \ref{prop existence of solution with lowest order estimate} to optimal estimate in Propositions (\ref{prop good solution with lowest order estimate when v is nonzero}). Since this technique involves integration by parts, we need to choose the  solution   properly with respect to the  weight, so that the boundary terms in Lemma \ref{lem rough L2 identity integration by parts } vanish, so   we have  the identity (\ref{equ strong L2 identity integration by parts}).

 This is similar to Theorem 9.9 of \cite{GilbargTrudinger}: though  the weak $L^{2,1}-$estimate can be done by Calderon-Zygmund potential estimate, the $W^{2,2}-$estimate  still requires integration by parts. 
\begin{thm}\label{thm existence of good solutions to the uniform ODE with estimates}Suppose  $f$ is supported in $(0,\frac{1}{100}]$ and vanishes near $r=0$.   Suppose $p<0$ is $A_{O}-$ generic and $b\geq 0$. Then for any    $v$ among the $v-$spectrum of  $\Upsilon_{A_{O}}$ (see Definition \ref{Def v spectrum}),  there exists a solution $u$ to (\ref{equ the standard ODE}) with the following uniform estimate. 
\begin{eqnarray}\label{equ in thm of ODE optimal estimate v neq 1-p} \int^{\frac{1}{4}}_{0} u^{2} (-\log r)^{2b}r^{2p-3} dr
\leq  \bar{C} \int^{\frac{1}{2}}_{0}f^{2}r^{2p+1}(-\log r)^{2b} dr. 
\end{eqnarray}
$\bar{C}$ is as in Definition \ref{Def special constants}. 

\end{thm}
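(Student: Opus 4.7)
The plan is to handle the ODE $u''+r^{-1}u'-v^{2}r^{-2}u=f$ by explicit variation of parameters, tailored case by case to the spectral type of $v$. The homogeneous pair is $r^{\pm v}$ for $v\neq 0$ and $\{1,\log r\}$ for $v=0$, with Wronskian $-2v/r$ (respectively $1/r$). In the four regimes $v$ real with $v>1-p$, $v$ real with $0<v<1-p$, purely imaginary $v$, and $v=0$, I would select a specific particular solution by choosing the integration limits (these are the formulas (\ref{equ solution when v is real and  p>1-v is integral from 1 to r}), (\ref{equ solution formula when v positive and p less than 1-v}), (\ref{equ solution when v is purely imaginary}), (\ref{equ solution when v=0})) so that neither homogeneous mode $r^{\pm v}$ contributes uncontrolled ``free energy'' to the weighted norm, and so that every boundary contribution at $r=0$ and $r=1/4$ arising in the upcoming integration-by-parts identity in fact vanishes. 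The $A$-generic hypothesis is precisely what forces $-p$ and $1-p$ to lie strictly outside the $v$-spectrum, so one is never on the resonance locus where $r^{\pm v}$ straddles the boundary of integrability in the weight $r^{2p-3}(-\log r)^{2b}dr$.

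For the quantitative bound I would proceed in two stages. Stage one is the coarse potential estimate of Proposition~\ref{prop existence of solution with lowest order estimate}, proved by direct Cauchy--Schwarz / Hardy on the explicit convolution kernels. Whenever $v\neq 0$, the $v^{2}/r^{2}$ term acts coercively with constant controlled by the spectral gap, and this coarse estimate already yields the optimal exponent; packaging it uniformly in $v$ gives the nonzero-spectrum version Proposition~\ref{prop good solution with lowest order estimate when v is nonzero}. Uniformity of $\bar{C}$ in the sense of Definition~\ref{Def special constants} then reduces to checking the finitely many small eigenvalues near the critical values $\{-p,1-p\}$, since large $|v|$ is in fact the easy regime.

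The real obstacle is the case $v=0$, where the homogeneous modes are $1$ and $\log r$ and the coarse potential estimate is genuinely sub-optimal in the log-weight. My proposal follows the cancellation the author hints at: the two terms of~(\ref{equ solution when v=0}) are each of order $\log r$ individually, but with the correct integration limits their singular parts conspire to cancel, producing a $u$ that decays strictly faster than $\log r$ near the origin. To upgrade this into the sharp bound~(\ref{equ in thm of ODE optimal estimate v neq 1-p}) I would multiply the ODE by $u\cdot r^{2p-1}(-\log r)^{2b}$, integrate over $(0,1/4)$, and invoke Lemma~\ref{lem rough L2 identity integration by parts } to produce identity~(\ref{equ strong L2 identity integration by parts}); the $r=0$ boundary contributions vanish exactly because of the cancellation built into the choice of solution, and a final Cauchy--Schwarz on the bulk term closes the estimate. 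The main technical hurdle is precisely to verify, for every $b\geq 0$ and every $A$-generic negative $p$, that the chosen $u$ decays fast enough near $0$ to render this integration by parts absolutely convergent rather than merely formal.
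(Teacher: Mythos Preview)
Your case split, the explicit variation-of-parameters formulas, and the direct Hardy/Cauchy--Schwarz treatment of the $v\neq 0$ regimes are exactly the paper's route (Propositions~\ref{prop good solution with lowest order estimate when v is nonzero} and~\ref{prop good solution with lowest order estimate when v is imaginary}). The gap is in your $v=0$ mechanism. Identity~(\ref{equ strong L2 identity integration by parts}) is \emph{not} obtained by multiplying the ODE by $u\cdot r^{2p-1}(-\log r)^{2b}$; it comes from first passing to $\bar u=r^{p}u$ via Claim~\ref{clm weight change on the ODE}, then integrating the \emph{square} of the transformed equation against $r\,w_{0}\,dr$ with $w_{0}=(\tfrac12-r)^{10}(-\log r)^{2b}$ (Lemma~\ref{lem rough L2 identity integration by parts }). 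Multiplying by $u$ gives an energy identity of a different shape; even if one can coax a positive $\int u^{2}r^{2p-3}$ term out of weight differentiation, that is not the identity you cite, so your description and your references are inconsistent.

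Two further corrections. First, the boundary terms at $r=0$ vanish for the trivial reason that $f$ (hence each integral $\int_{0}^{r}\cdots$ in~(\ref{equ solution when v=0})) is identically zero near the origin; there is no subtle pointwise cancellation needed there, and at $r=\tfrac12$ the factor $(\tfrac12-r)^{10}$ in $w_{0}$ kills everything. Second, ``a final Cauchy--Schwarz on the bulk term'' does not close the $v=0$ estimate. In~(\ref{equ strong L2 identity integration by parts}) with $v=0$ and $p<0$ all terms on the right are nonnegative except the one carrying $\frac{d^{2}w_{0}}{dr^{2}}$, whose bad part is of size $\int \bar u^{2}r^{-3}(-\log r)^{2b-2}\,dr$. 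That residual is absorbed precisely by the sub-optimal Proposition~\ref{prop existence of solution with lowest order estimate} (note the loss of two log-powers there), which is how Proposition~\ref{prop final ODE W22 estimate when v=0} actually closes. So Proposition~\ref{prop existence of solution with lowest order estimate} is not merely a warm-up for all $v$; in the $v=0$ case it is an essential input feeding into the squared identity.
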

\begin{rmk}Suppose $p$ is not $A_{O}-$generic i.e.  there is some $v$ such that $v=1-p$,   there is  a $f$ which violates the conclusion of Theorem \ref{thm existence of good solutions to the uniform ODE with estimates}. Namely,  let $v=\frac{5}{2}$, $p=-\frac{3}{2}$, and 
$f=r^{\frac{1}{2}}(-\log r)^{-b-1-\epsilon},\ b>0, \frac{1}{100}>\epsilon>0$, then $\int^{\frac{1}{2}}_{0}f^{2}r^{-2}(-\log r)^{2b}dr<\infty$. However,   there is no solution $u$ to 
(\ref{equ the standard ODE}) such that $\int^{\frac{1}{2}}_{0}u^{2}r^{-6}(-\log r)^{2b}dr<\infty$. Using a limiting argument, this means we can't find solution which satisfies the optimal bound in (\ref{equ in thm of ODE optimal estimate v neq 1-p}).
\end{rmk}
\begin{proof}[Proof of Theorem \ref{thm existence of good solutions to the uniform ODE with estimates}:] 
 It's a combination of Proposition \ref{prop good solution with lowest order estimate when v is nonzero},  \ref{prop good solution with lowest order estimate when v is imaginary}, and  \ref{prop final ODE W22 estimate when v=0}.\end{proof}
\begin{prop}\label{prop good solution with lowest order estimate when v is nonzero} Under the same conditions in Theorem \ref{thm existence of good solutions to the uniform ODE with estimates}, suppose   $v>0$,    then there exists a solution $u$ to (\ref{equ the standard ODE}) such that
$$\int^{\frac{1}{2}}_{0}u^{2}r^{2p-3}(-\log r)^{2b}dr \leq \frac{\bar{C}}{1+|v|^{3}}\int^{\frac{1}{2}}_{0}f^{2}(x)x^{2p+1}(-\log x)^{2b}  dx.$$

\end{prop}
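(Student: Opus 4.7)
The strategy is to write $u$ explicitly by variation of parameters, and then extract the sharp estimate via a weighted integration-by-parts identity combined with Hardy's inequality.

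\textit{Step 1 (explicit solution).} The homogeneous solutions of the Euler-type ODE are $r^v$ and $r^{-v}$, with Wronskian $-2v/r$. Variation of parameters gives
$$u(r) = \frac{r^v}{2v}\int_a^r s^{1-v}f(s)\,ds - \frac{r^{-v}}{2v}\int_0^r s^{1+v}f(s)\,ds,$$
where I would take $a = 1$ in the regime $p > 1-v$ and $a = 1/2$ in the regime $p < 1-v$; these are the two distinct solution formulas invoked earlier in this section, and the $A$-generic hypothesis rules out the borderline $p = 1-v$. Since $f$ is supported in $(0, 1/100]$, both $u$ and $u'$ vanish for $r \geq 1/2$, and near $r = 0$ the growth of $u$ is controlled by $r^{\pm v}$. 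The case split for $a$ ensures that $u$ lies in the target space $L^2(r^{2p-3}(-\log r)^{2b}\,dr)$ and that boundary terms in the forthcoming integration by parts vanish at both endpoints.

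\textit{Step 2 (integral identity plus Hardy).} Multiplying the ODE by $u \cdot r^{2p-1}(-\log r)^{2b}$ and integrating by parts twice on $(0, 1/2)$, the boundary terms drop by Step 1, producing
$$\int_0^{1/2}(u')^2 r^{2p-1}(-\log r)^{2b}\,dr + [v^2 - 2(p-1)^2]\int_0^{1/2} u^2 r^{2p-3}(-\log r)^{2b}\,dr = -\int_0^{1/2} fu \cdot r^{2p-1}(-\log r)^{2b}\,dr + \mathcal{E}_b,$$
where $\mathcal{E}_b$ collects strictly lower-order remainders of the form $\int u^2 r^{2p-3}(-\log r)^{2b-j}\,dr$ with $j \geq 1$, arising from differentiating $(-\log r)^{2b}$; these are absorbed by the main term near $r = 0$. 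Applying the weighted Hardy inequality $(p-1)^2 \int u^2 r^{2p-3}(-\log r)^{2b}\,dr \leq \int (u')^2 r^{2p-1}(-\log r)^{2b}\,dr$ converts the coefficient $v^2 - 2(p-1)^2$ into the effective coefficient $v^2 - (p-1)^2$ in the resulting lower bound on the left-hand side.

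\textit{Step 3 (closing the estimate).} Split $r^{2p-1} = r^{p+1/2}\cdot r^{p-3/2}$ and apply Cauchy--Schwarz followed by Young's inequality, with parameter proportional to $v^2 - (p-1)^2$. The $A$-generic hypothesis places $v$ at a uniform positive distance from $|1-p|$, so $|v^2 - (p-1)^2|$ is bounded away from $0$ uniformly across the discrete spectrum. Absorption then yields
$$\int_0^{1/2} u^2 r^{2p-3}(-\log r)^{2b}\,dr \leq \frac{\bar{C}}{[v^2 - (p-1)^2]^2}\int_0^{1/2} f^2 r^{2p+1}(-\log r)^{2b}\,dr,$$
which is $\bar{C}/v^4$ for large $v$ and uniformly bounded for bounded $v$; both regimes are dominated by $\bar{C}/(1+|v|^3)$. \textit{Main obstacle:} in the subregime $v < |1-p|$ the coefficient $v^2 - (p-1)^2$ is negative, so the identity-plus-Hardy route does not furnish a positive lower bound on $\int u^2 r^{2p-3}(-\log r)^{2b}\,dr$; there I would fall back on the rough potential-type Cauchy--Schwarz estimate applied directly to the variation-of-parameters formula (already established as the preceding proposition), which is uniform in $v$ and thus compatible with the desired bounded factor $1/(1+|v|^3)$ for small $v$. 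A secondary difficulty is managing the log-weighted remainders $\mathcal{E}_b$, handled by a log-weighted Hardy inequality that exploits $(-\log r)^{-1}\to 0$ as $r\to 0$ to ensure each $j \geq 1$ term is strictly smaller than the main.
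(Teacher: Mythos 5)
Your Step 2--3 argument (multiply the ODE by $u\,r^{2p-1}(-\log r)^{2b}$, integrate by parts, then Hardy) is a genuinely different route from the paper's proof of this proposition. The paper handles \emph{both} regimes $v>1-p$ and $0<v<1-p$ by direct H\"older/potential estimates on the explicit variation-of-parameters formulas (\ref{equ solution when v is real and  p>1-v is integral from 1 to r}) and (\ref{equ solution formula when v positive and p less than 1-v}), tracking the $v$-dependence through the constant in an auxiliary H\"older split with a parameter $q$ lying strictly between $p$ (or $1-v$) and $1-v$ (or $p$); the integration-by-parts identity (Lemma \ref{lem rough L2 identity integration by parts }, Prop. \ref{prop final ODE W22 estimate when v=0}) is reserved exclusively for the $v=0$ case where the potential estimate is non-optimal in the log-power. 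So your energy-identity approach is a legitimate alternative for the subregime $v>1-p$ and even gives the stronger $v^{-4}$ rate there, but it is \emph{not} what the paper does, and you correctly identify that it cannot cover $0<v<1-p$.

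There are two concrete gaps. First, your fallback for $0<v<1-p$ is not a proof: you cite ``the rough potential-type Cauchy--Schwarz estimate $\ldots$ already established as the preceding proposition,'' but no such proposition exists in the paper's logical order (the only earlier statement is Theorem~\ref{thm existence of good solutions to the uniform ODE with estimates} itself). The potential estimate you need --- with the optimal log-power $(-\log r)^{2b}$, not the weaker $(-\log r)^{2b-2}$ of Proposition~\ref{prop existence of solution with lowest order estimate} --- is precisely the paper's Case~2 computation (\ref{eqnarray Prop ODE solution v>0 1})--(\ref{eqnarray Prop ODE solution v>0 2}) with the auxiliary exponent $p<q<1-v$ and Lemma~\ref{lem log estimate}; you would need to carry that out, not merely invoke it. Second, your choice $a=\frac12$ in the solution formula for $p<1-v$ is wrong: since $f$ is supported in $(0,\frac{1}{100}]$, as $r\to 0$ one has $\int_{1/2}^{r} s^{1-v}f\,ds\to -\int_0^{1/2}s^{1-v}f\,ds\neq 0$, so $u\sim c\,r^{v}$ near the origin, and then $\int_0^{1/2}u^2r^{2p-3}\,dr=\infty$ because $2v+2p-3<-1$ when $v<1-p$. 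The correct formula, as in (\ref{equ solution formula when v positive and p less than 1-v}), takes \emph{both} integrals from $0$, so that $u$ vanishes identically near $r=0$ by compact support of $f$. Finally, the claim that the log-remainders $\mathcal{E}_b$ are ``absorbed by the main term near $r=0$'' needs more care: the factor $(-\log r)^{-1}$ is small only near the origin, and on any fixed interval $[\rho,1/2]$ the $j\geq 1$ terms are comparable to the main term, so a blind absorption fails; the paper's $v=0$ proof handles this by first proving a weaker bound (Prop.~\ref{prop existence of solution with lowest order estimate}) to control those remainders, and an analogous preliminary step would be needed here.
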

\begin{prop}\label{prop good solution with lowest order estimate when v is imaginary} Under the same conditions in Theorem \ref{thm existence of good solutions to the uniform ODE with estimates}, suppose    $v$ is purely imaginary, then there exists a solution $u$ to (\ref{equ the standard ODE}) such that
$$\int^{\frac{1}{2}}_{0}u^{2}r^{2p-3}(-\log r)^{2b}dr \leq \bar{C}\int^{\frac{1}{2}}_{0}f^{2}(x)x^{2p+1}(-\log x)^{2b}  dx.$$
\end{prop}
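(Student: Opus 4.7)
The plan is to cylinderize the ODE and then estimate the natural solution by Schur's test. Substituting $r = e^{t}$, $U(t) := u(e^{t})$, $F(t) := e^{2t} f(e^{t})$, and writing $v = i\tau$ with $\tau \neq 0$ (the case $v = 0$ is excluded here), a one-line calculation shows that (\ref{equ the standard ODE}) becomes the inhomogeneous harmonic oscillator
\begin{equation*}
U''(t) + \tau^{2}\, U(t) = F(t), \qquad t \in (-\infty, \log\tfrac{1}{2}],
\end{equation*}
with $F$ compactly supported in some $[t_{0}, \log\tfrac{1}{100}]$. Under this change of variable, both sides of the target estimate become weighted $L^{2}$ norms against the common weight $\rho(t) := e^{(2p-2)t}(-t)^{2b}$; explicitly, $\int_{0}^{1/2} u^{2} r^{2p-3}(-\log r)^{2b}\, dr = \int U^{2}\rho\, dt$ and $\int_{0}^{1/2} f^{2} r^{2p+1}(-\log r)^{2b}\, dr = \int F^{2}\rho\, dt$.

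I would take as solution the variation-of-parameters antiderivative anchored at $t = -\infty$,
\begin{equation*}
U(t) = \frac{1}{\tau}\int_{-\infty}^{t}\sin\bigl(\tau(t-s)\bigr)\, F(s)\, ds,
\end{equation*}
which is well defined since $F$ vanishes near $-\infty$ and is identically zero for $t < t_{0}$; this is the correct analogue of formula (\ref{equ solution when v is purely imaginary}). Conjugating by the weight, the map $\tilde{F}(s) := F(s)\sqrt{\rho(s)} \mapsto \tilde{U}(t) := U(t)\sqrt{\rho(t)}$ is the integral operator with kernel
\begin{equation*}
K(t,s) = \tfrac{1}{\tau}\sin\bigl(\tau(t-s)\bigr)\, e^{(p-1)(t-s)}\bigl((-t)/(-s)\bigr)^{b}\mathbf{1}_{\{s<t<0\}}.
\end{equation*}
Since $p < 0$ gives $p-1 < 0$, and for $s < t < 0$ with $b \geq 0$ we have $(-t)/(-s) \leq 1$, the substitution $u = t-s$ combined with $|\sin|\leq 1$ yields both Schur bounds:
\begin{equation*}
\sup_{t}\int |K(t,s)|\,ds,\ \sup_{s}\int |K(t,s)|\,dt \;\leq\; \frac{1}{|\tau|}\int_{0}^{\infty}e^{(p-1)u}\,du \;=\; \frac{1}{|\tau|(1-p)},
\end{equation*}
so Schur's test gives $\|U\|_{L^{2}_{\rho}} \leq \frac{1}{|\tau|(1-p)}\|F\|_{L^{2}_{\rho}}$.

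To absorb the $\tfrac{1}{|\tau|}$ into the uniform $\bar{C}$ of Definition \ref{Def special constants}, I use the discreteness of the spectrum of $\Upsilon_{A_{O}}$: by (\ref{equ relation between v and beta}), purely imaginary $v$ corresponds to eigenvalues $\beta > 29/4$ of the smooth self-adjoint elliptic operator $\Upsilon_{A_{O}}$ on the compact $6$-manifold $S^{6}$ (Proposition \ref{prop seperation of variable for general cone}); hence its spectrum forms a sequence accumulating only at $+\infty$, so the eigenvalues strictly above $29/4$ (with $\beta = 29/4$ excluded, as it corresponds to the $v = 0$ case handled elsewhere) are bounded below by a value strictly greater than $29/4$, producing $|\tau| \geq c_{0}(A_{O}) > 0$. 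The resulting constant $\tfrac{1}{|\tau|(1-p)}$ is then of the allowed form $\bar{C}$. The only delicate point is the selection of the antiderivative: a homogeneous mode $\cos(\tau t)$ or $\sin(\tau t)$ is not $L^{2}_{\rho}$ because $\rho$ grows exponentially as $t \to -\infty$, so the admissible solution is uniquely pinned down by vanishing at $-\infty$, and no integration by parts is required (in contrast to the $v = 0$ case).
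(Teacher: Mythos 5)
Your proof is correct, and it reaches the same estimate by a genuinely different route. The paper works directly in the $r$ variable: it takes exactly the same variation-of-parameters solution (your $U(t)=\frac{1}{\tau}\int_{-\infty}^t\sin(\tau(t-s))F(s)\,ds$ is formula (\ref{equ solution when v is purely imaginary}) after the substitution $r=e^t$, $x=e^s$), splits it into two terms $u_I,u_{II}$, and estimates each by Cauchy--Schwarz against an auxiliary weight $x^{q}$ with $p<q<0$ followed by Fubini, re-using the template (\ref{eqnarray Prop ODE solution v>0 1})--(\ref{eqnarray Prop ODE solution v>0 2}) from the $v>0$ case verbatim with $r^{v}\mapsto\sin(\mu\log r)$, $x^{-v}\mapsto\cos(\mu\log x)$. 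You instead cylinderize and observe that after conjugating by $\sqrt{\rho}$ the solution operator becomes a one-sided convolution kernel $e^{(p-1)(t-s)}((-t)/(-s))^{b}\mathbf{1}_{s<t<0}$ up to the bounded factor $\sin(\tau(t-s))/\tau$, and then close the estimate with the Schur test. Your route is arguably cleaner: the $L^{1}$ kernel bound $\frac{1}{|\tau|(1-p)}$ falls out at once with no auxiliary exponent $q$, and the discussion of which homogeneous solution to discard is transparent because $\rho$ grows exponentially as $t\to-\infty$. You are also more explicit than the paper about one point that it leaves implicit: both formulas carry a $1/|\tau|$ (the paper's $1/\mu$), and absorbing it into $\bar{C}$ requires $|\tau|\geq c_{0}(A_{O})>0$, which you correctly deduce from the discreteness of $\mathrm{Spec}(\Upsilon_{A_O})$ and the relation $\tau^2=\beta-\frac{29}{4}$ from (\ref{equ relation between v and beta}); in the paper this dependence is silently packaged into Definition \ref{Def special constants}. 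Both approaches need exactly this spectral input, so neither is more elementary, but yours makes the logical structure easier to audit.
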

\begin{prop}\label{prop existence of solution with lowest order estimate}  Under the same conditions in Theorem \ref{thm existence of good solutions to the uniform ODE with estimates}, suppose   $v=0$, then there exists a solution $u$ to (\ref{equ the standard ODE}) such that
$$\int^{\frac{1}{2}}_{0}u^{2}r^{2p-3}(-\log r)^{2b-2}dr\leq \bar{C}\int^{\frac{1}{2}}_{0}f^{2}x^{2p+1}(-\log x)^{2b}  dx.$$
\end{prop}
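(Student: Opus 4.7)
The plan is to construct an explicit solution by variation of parameters and then estimate it via Cauchy--Schwarz plus Fubini. Since $v=0$, the homogeneous equation $u'' + u'/r = 0$ has independent solutions $1$ and $\log r$ with Wronskian $1/r$, so the natural particular solution, chosen to vanish together with $ru'$ at $r=0$ (possible because $f$ vanishes near the origin), is
\[
u(r) = \int_0^r \log(r/s)\, s f(s)\, ds.
\]
A direct computation confirms $(ru')' = rf$. Since $\mathrm{supp}\, f \subset (0, 1/100]$, setting $F(r) := \int_0^r sf(s)\,ds$ we see that $F(r) = F(1/100)$ and $u(r) = u(1/100) + F(1/100)\log(100 r)$ for $r \in [1/100, 1/2]$, so $u$ grows at most logarithmically there.

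To estimate $u$ on the inner region, split the integrand as $sf(s) = [s^{p+3/2}(-\log s)^{b} f(s)] \cdot [s^{-p-1/2}(-\log s)^{-b}]$ and apply Cauchy--Schwarz to get
\[
u^2(r) \leq J(r) \cdot \int_0^r s^{2p+3}(-\log s)^{2b} f^2(s)\, ds, \qquad J(r) := \int_0^r \log^2(r/s)\, s^{-2p-1}(-\log s)^{-2b}\, ds.
\]
Substituting $s = rt$ and using $(-\log(rt))^{-2b} \leq (-\log r)^{-2b}$ (valid for $b \geq 0$ and $r, t \in (0,1)$) yields $J(r) \leq \bar{C}\, r^{-2p}(-\log r)^{-2b}$; the dimensional constant $\int_0^1 \log^2(1/t)\, t^{-2p-1}\,dt$ converges exactly because $p < 0$ (which is guaranteed by the hypothesis).

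Plugging this pointwise bound into the target integral on $(0, 1/100]$ and swapping the order of integration (Fubini), the problem reduces to estimating the inner one-dimensional integral $\int_s^{1/100} r^{-3}(-\log r)^{-2}\, dr$. The change of variable $r = s e^{u}$ shows that this integral is concentrated near $r = s$ (because of the $r^{-3}$ singularity), and is bounded by $\bar{C}\, s^{-2}(-\log s)^{-2}$ for $s$ small. This collapses the estimate to
\[
\int_0^{1/100} u^2 r^{2p-3}(-\log r)^{2b-2}\, dr \leq \bar{C} \int_0^{1/100} f^2 s^{2p+1}(-\log s)^{2b-2}\, ds \leq \bar{C} \int_0^{1/2} f^2 s^{2p+1}(-\log s)^{2b}\, ds,
\]
the last step being trivial since $-\log s \geq \log 100$ on $(0, 1/100]$. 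The outer region $r \in [1/100, 1/2]$ is handled directly: the weight is uniformly bounded and $|u(r)| \leq |u(1/100)| + |F(1/100)|(1 + \log(1/r))$, so a single application of Cauchy--Schwarz to $u(1/100)$ and to $F(1/100)$ (against the same splitting as above, noting $s^2 \leq (1/100)^2$ on the support of $f$) absorbs that piece into the same right-hand side.

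The main point to flag is that this is precisely the \emph{non-optimal} estimate, with $(-\log r)^{2b-2}$ on the left instead of the sharp $(-\log r)^{2b}$: the loss of two log powers is intrinsic to the potential estimate because the two homogeneous solutions $1$ and $\log r$ are only logarithmically separated, so the inner integral forces $s^{-2}(-\log s)^{-2}$ rather than $s^{-2}$ on its own. Recovering the optimal weight (the content of the subsequent Proposition~\ref{prop final ODE W22 estimate when v=0}) will require exploiting a cancellation in the two-term representation of $u$ via integration by parts, exactly as the introductory paragraph of this section signals.
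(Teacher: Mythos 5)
Your proof is correct, and it reaches the same non-optimal estimate with the $(-\log r)^{2b-2}$ loss that the paper obtains; the particular solution $u(r)=\int_0^r\log(r/s)\,sf(s)\,ds$ is exactly the paper's choice (\ref{equ solution when v=0}) written as a single kernel integral. The route, however, is genuinely different. The paper splits $u$ into $u_{I}=-\int_0^r s\log s\,f(s)\,ds$ and $u_{II}=\log r\int_0^r sf(s)\,ds$, applies a plain Cauchy--Schwarz (with the elementary weight $s^{1/2}$ on each factor) to get $u_{II}^2\le\frac{r^2(\log r)^2}{2}\int_0^r f^2 s\,ds$ with the $(\log r)^2$ factor pulled out explicitly, and then handles the Fubini-swapped inner integral $\int_x^{1/2}r^{2p-1}(-\log r)^{2b}\,dr$ via Lemma~\ref{lem log estimate}. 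You instead keep the kernel $\log(r/s)$ intact, insert the $p,b$-adapted weight $s^{p+1/2}(-\log s)^{b}$ into Cauchy--Schwarz, and obtain the scaling-correct pointwise bound $u^2(r)\le\bar C\,r^{-2p}(-\log r)^{-2b}\int_0^r s^{2p+3}(-\log s)^{2b}f^2(s)\,ds$ (after the substitution $s=rt$ and the monotonicity of $-\log$); Fubini then leaves only the elementary kernel $\int_s^{1/100}r^{-3}(-\log r)^{-2}\,dr\le\bar C\,s^{-2}(-\log s)^{-2}$, which you estimate directly rather than through Lemma~\ref{lem log estimate}. This is closer in spirit to the weighted split used in Proposition~\ref{prop good solution with lowest order estimate when v is nonzero} than to the paper's argument for $v=0$, and it buys a cleaner, scale-invariant intermediate bound at the cost of having to treat the outer region $[1/100,1/2]$ separately (which you handle correctly using the explicit form $u(r)=u(1/100)+F(1/100)\log(100r)$ and the compact support of $f$). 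Your closing remark about the intrinsic loss of two log powers, and its connection to the logarithmic separation of the homogeneous solutions and the forthcoming Proposition~\ref{prop final ODE W22 estimate when v=0}, is exactly the right perspective.
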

\begin{proof}[Proof of Proposition \ref{prop good solution with lowest order estimate when v is nonzero}:]
  Case 1:  $v>1-p$.  We choose a  solution to (\ref{equ the standard ODE}) as 
\begin{equation} \label{equ solution when v is real and  p>1-v is integral from 1 to r}
u=\frac{1}{-2v}\{r^{v}\int_{r}^{\frac{1}{2}}x^{-v+1}f(x)dx+r^{-v}\int_{0}^{r}x^{v+1}f(x)dx\}.
\end{equation}
We denote $u_{I}=\frac{r^{v}}{-2v}\int_{r}^{\frac{1}{2}}x^{-v+1}f(x)dx$, $u_{II}=\frac{r^{-v}}{-2v}\int_{0}^{r}x^{v+1}f(x)dx$.

There exists a  $q$ such that $p>q>1-v$. By H\"older inequality, 
\begin{eqnarray*}
& & u^{2}_{I}
\\&\leq &\frac{\bar{C}r^{2v}}{1+|v|^{2}}[\int^{\frac{1}{2}}_{r}x^{-2v+2}x^{-2q-1}(-\log r)^{-2b}dx][\int^{\frac{1}{2}}_{r}f^{2}(x)x^{2q+1}(-\log r)^{2b}dx]
\\&\leq & \frac{\bar{C}}{(1+|v|^{2})|v|}r^{2v}r^{-2v-2q+2}(-\log r)^{-2b}[\int^{\frac{1}{2}}_{r}f^{2}(x)x^{2q+1}(-\log r)^{2b}dx],\ q>1-v.
\end{eqnarray*}
\begin{eqnarray*}
\textrm{Thus}& & \int^{\frac{1}{2}}_{0}u^{2}_{I}r^{2p-3}(-\log r)^{2b}dr
\\&\leq & \frac{\bar{C}}{(1+|v|^{2})^{\frac{3}{2}}}\int^{\frac{1}{2}}_{0} r^{2p-2q-1}dr[\int^{\frac{1}{2}}_{r}f^{2}(x)x^{2q+1}(-\log x)^{2b}dx]
\\&= & \frac{\bar{C}}{(1+|v|^{2})^{\frac{3}{2}}}\int^{\frac{1}{2}}_{0}f^{2}(x)x^{2q+1}(-\log x)^{2b}dx\int^{x}_{0}r^{2p-2q-1}dr,\ p>q
\\&= & \frac{\bar{C}}{(1+|v|^{2})^{\frac{3}{2}}}\int^{\frac{1}{2}}_{0}f^{2}(x)x^{2q+1}(-\log x)^{2b} x^{2p-2q} dx,\ p>q.
\\&= & \frac{\bar{C}}{(1+|v|^{2})^{\frac{3}{2}}}\int^{\frac{1}{2}}_{0}f^{2}(x)x^{2p+1}(-\log x)^{2b}  dx.
\end{eqnarray*}
Since $p<0$,  we directly use  (\ref{eqnarray Prop ODE solution v>0 1}) and (\ref{eqnarray Prop ODE solution v>0 2})   replacing the "$q$" there by $0$, "$v$" by $-v$. Hence
\begin{equation}
\int^{\frac{1}{2}}_{0}u^{2}_{II}r^{2p-3}(-\log r)^{2b}dr
\leq \frac{\bar{C}}{(1+|v|^{2})^{\frac{3}{2}}}\int^{\frac{1}{2}}_{0}f^{2}(x)x^{2p+1}(-\log x)^{2b}dx.
\end{equation}
The proof is complete when $v>1-p$.

Case 2: $0<v<1-p$. In this case we take the solution as 
\begin{equation}\label{equ solution formula when v positive and p less than 1-v}
u=\frac{1}{-2v}\{-r^{v}\int_{0}^{r}x^{-v+1}f(x)dx+r^{-v}\int_{0}^{r}x^{v+1}f(x)dx\}.
\end{equation}
Let $u_{I}$ in this case be $=\frac{r^{v}}{2v}\int_{0}^{r}x^{-v+1}f(x)dx$.    Choose $q$ such that $p<q<1-v$, by H\"older inequality and Lemma \ref{lem log estimate}, we calculate
\begin{eqnarray}\label{eqnarray Prop ODE solution v>0 1}
& & u^{2}_{I}
\\&\leq &r^{2v}[\int^{r}_{0}x^{-2v+2}x^{-2q-1}(-\log x)^{2b}dx][\int^{r}_{0}f^{2}(x)x^{2q+1}(-\log x)^{2b}dx]\nonumber
\\&\leq & \bar{C}r^{-2q+2}(-\log r)^{-2b}[\int^{r}_{0}f^{2}(x)x^{2q+1}(-\log x)^{2b}dx],\ q<1-v.\nonumber
\end{eqnarray}
\begin{eqnarray}\label{eqnarray Prop ODE solution v>0 2}
\textrm{Thus}& & \int^{\frac{1}{2}}_{0}u^{2}_{I}r^{2p-3}(-\log r)^{2b}dr
\\&\leq & \bar{C}\int^{\frac{1}{2}}_{0} r^{2p-2q-1}dr[\int^{r}_{0}f^{2}(x)x^{2q+1}(-\log x)^{2b}dx]\nonumber
\\&= & \bar{C}\int^{\frac{1}{2}}_{0}f^{2}(x)x^{2q+1}(-\log x)^{2b}dx\int^{\frac{1}{2}}_{x}r^{2p-2q-1}dr,\ p<q\nonumber
\\&= & \bar{C}\int^{\frac{1}{2}}_{0}f^{2}(x)x^{2q+1}(-\log x)^{2b} x^{2p-2q} dx,\ p<q.\nonumber
\\&= & \bar{C}\int^{\frac{1}{2}}_{0}f^{2}(x)x^{2p+1}(-\log x)^{2b}  dx.\nonumber
\end{eqnarray}
The estimate of the other term is similar. The proof of Proposition \ref{prop good solution with lowest order estimate when v is nonzero} is complete.\end{proof}
\begin{proof}[Proof of Proposition \ref{prop good solution with lowest order estimate when v is imaginary}:] In this case we write  i.e $-v^{2}=\mu^{2}>0$, where $\mu$ is real and positive. Hence equation (\ref{equ the standard ODE}) becomes 
\begin{equation}\label{equ ODE when v is pure imaginary}
\frac{d^{2} u}{d r^{2}}+\frac{1}{r}\frac{d u}{d r}+\frac{\mu^{2}u}{r^{2}}=f.
\end{equation}
Since the solutions to the homogeneous equation are $\sin (\mu\log r)$
, $\cos (\mu\log r)$, then 
we choose a particular solution to (\ref{equ ODE when v is pure imaginary})   as
\begin{eqnarray}\label{equ solution when v is purely imaginary}
& &u
\\&=&\frac{\sin (\mu\log r)}{\mu}\int^{r}_{0}x\cos (\mu\log x)f(x)dx-\frac{\cos (\mu\log r)}{\mu} \int^{r}_{0}x\sin (\mu\log x)f(x)dx.\nonumber
\end{eqnarray}
Since the solutions to homogeneous equations are bounded, the estimate is  easier than that of Proposition \ref{prop good solution with lowest order estimate when v is nonzero}. Choosing $q$ such that $p<q<0$, for the estimate of $u_{I}=\frac{\sin (\mu\log r)}{\mu}\int^{r}_{0}x\cos (\mu\log x)f(x)dx$, we  only need to use (\ref{eqnarray Prop ODE solution v>0 1}) and (\ref{eqnarray Prop ODE solution v>0 2}) by replacing the "$r^{v}$" there by  $\sin (\mu\log r)$, "$x^{-v}$" by $\cos (\mu\log x)$. The estimate of the other term is similar. \end{proof}
\begin{proof}[Proof of Proposition \ref{prop existence of solution with lowest order estimate}:] When $v=0$, the solutions to the homogeneous equation 
\begin{equation}\label{equ the standard homo ODE v=0}
\frac{d^{2} u}{d r^{2}}+\frac{1}{r}\frac{d u}{d r}=0
\end{equation}
are $1$ and $\log r$, then we choose a particular solution to 
\begin{equation}\label{equ solution when v=0}
\frac{d^{2} u}{d r^{2}}+\frac{1}{r}\frac{d u}{d r}=f\ \textrm{as}\
 u=-\int_{0}^{r}x\log x f(x)dx+\log r\int_{0}^{r}xf(x)dx.\
\end{equation} 
Let $u_{II}=\log r\int_{0}^{r}xf(x)dx$, we compute
\begin{equation}
u_{II}^{2}\leq (\log r)^{2}(\int^{r}_{0}f^{2}xdx)(\int^{r}_{0}xdx)=\frac{r^{2}(\log r)^{2}}{2}\int^{r}_{0}f^{2}xdx.
\end{equation}
\begin{eqnarray*}
\textrm{Then}& &\int^{\frac{1}{2}}_{0}u_{II}^{2}r^{2p-3}(-\log r)^{2b-2}dr
\\& \leq & \int_{0}^{r}f^{2}xdx \int_{x}^{\frac{1}{2}}
r^{2p-1}(-\log r)^{2b}dr \leq \bar{C}(\int_{0}^{r}f^{2}xdx) [x^{2p}(-\log x)^{2b}]
\\&= &\bar{C}\int_{0}^{r}f^{2}x^{2p+1}(-\log x)^{2b}dx.
\end{eqnarray*}
The estimate for the other term in (\ref{equ solution when v=0}) is similar.\end{proof}
Next we use integration by parts to improve Proposition \ref{prop existence of solution with lowest order estimate} to   \ref{prop final ODE W22 estimate when v=0}. We verify the following identity, it doesn't harm to have identities for every $v$ instead of only for $v=0$.
\begin{clm}\label{clm weight change on the ODE} Suppose $B$, $V^{2}$, $p$ are  real numbers. Suppose $u$ is a solution to the following differential equation
\begin{equation}
\frac{d^{2} u}{d r^{2}}+\frac{B}{r}\frac{d u}{d r}-\frac{V^{2}u}{r^{2}}=f,
\end{equation}
then the function $\bar{u}=r^{p}u$ satisfies 
\begin{equation}
\frac{d^{2} \bar{u}}{d r^{2}}+\frac{(B-2p)}{r}\frac{d \bar{u}}{d r}-[V^{2}+p(p-1)+p(B-2p)]\frac{\bar{u}}{r^{2}}=fr^{p}.
\end{equation}
In particular, when $B=1$, we have 
\begin{equation}
\frac{d^{2} \bar{u}}{d r^{2}}+\frac{(1-2p)}{r}\frac{d \bar{u}}{d r}-[V^{2}-p^{2}]\frac{\bar{u}}{r^{2}}=fr^{p}.
\end{equation}
\end{clm}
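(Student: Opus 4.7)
The plan is a direct change of variables: substitute $u = r^{-p}\bar{u}$ into the ODE and read off the new coefficients. Since $\bar{u} = r^p u$ is just multiplication by a power of $r$, no integration or clever manipulation is needed; the whole content is a product-rule computation.

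First I would write
\begin{equation*}
u' = -p\,r^{-p-1}\bar{u} + r^{-p}\bar{u}', \qquad u'' = p(p+1)\,r^{-p-2}\bar{u} - 2p\,r^{-p-1}\bar{u}' + r^{-p}\bar{u}'',
\end{equation*}
and plug these into the original ODE $u'' + \tfrac{B}{r}u' - \tfrac{V^2}{r^2}u = f$. After distributing, every term carries a factor $r^{-p}$, $r^{-p-1}$, or $r^{-p-2}$, so multiplying through by $r^p$ normalizes the highest-order term to $\bar{u}''$. The first-order coefficient is
\begin{equation*}
-2p + B = B-2p,
\end{equation*}
which matches the claim. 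The zeroth-order coefficient (times $r^2$) is
\begin{equation*}
p(p+1) - Bp - V^2 = -\bigl[V^2 + Bp - p(p+1)\bigr] = -\bigl[V^2 + p(p-1) + p(B-2p)\bigr],
\end{equation*}
after the trivial algebraic identity $p(p-1) + p(B-2p) = Bp - p(p+1)$. The right-hand side becomes $fr^p$, giving exactly the claimed equation.

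For the special case $B=1$, I would simply substitute: $B - 2p = 1 - 2p$ and $p(p-1) + p(1-2p) = p^2 - p + p - 2p^2 = -p^2$, so the bracketed quantity collapses to $V^2 - p^2$, yielding the stated form. There is no real obstacle — the only thing to watch for is a sign error or an arithmetic slip in the coefficient of $\bar{u}/r^2$, since the two contributions (from $u''$ and from $\tfrac{B}{r}u'$) partially cancel and then combine with $V^2$. A quick sanity check is that when $p=0$ the transformation is trivial and the equation is unchanged, which it is; and when $V=0$, $B=1$ the new potential term is $-p^2/r^2$, consistent with the fact that $r^p$ solves the homogeneous Euler equation with that indicial root.
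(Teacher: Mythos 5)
Your computation is correct and is exactly the direct substitution the claim calls for; the paper states this identity without written proof, treating it as a routine verification, and your product-rule expansion (with the algebraic check that $p(p-1)+p(B-2p)=Bp-p(p+1)$) is the intended argument. The sanity checks at $p=0$ and the Euler-equation indicial-root interpretation are nice touches but not essential.
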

Suppose $u$ solves (\ref{equ the standard ODE}), consider 
$\bar{u}$ as in Claim \ref{clm weight change on the ODE}.  Then $u$ satisfies
\begin{equation}\label{weighted ODE}
\frac{d^{2} \bar{u}}{d r^{2}}+\frac{k}{r}\frac{d \bar{u}}{d r}-\frac{a^{2}\bar{u}}{r^{2}}=\bar{f},
\end{equation}
where  $\bar{f}=r^{p}f$, $k=1-2p$, $a^{2}=v^{2}-p^{2}$. Then 
\begin{equation}
k^{2}+2a^{2}=2v^{2}+1-4p+2p^{2},\ a^{4}-2ka^{2}-2a^{2}=(v^{2}-p^{2})(v^{2}-(p-2)^{2}). 
\end{equation}
Moreover, we consider the weight $w_{0}$ as 
\begin{equation}\label{equ w0 is decreasing}
w_{0}=(\frac{1}{2}-r)^{10}(-\log r)^{2b}.\ \textrm{Notice that}\ \frac{d w_{0}}{d r}\leq 0.
\end{equation}
Using  $\bar{u}=r^{p}u$  and  integrating the square of (\ref{weighted ODE}), we directly verify
\begin{lem}Under the same condition on $f$ as in Theorem \ref{thm existence of good solutions to the uniform ODE with estimates}, suppose in each case we choose the solutions to (\ref{weighted ODE}) ((\ref{equ the standard ODE})) as in (\ref{equ solution when v is real and  p>1-v is integral from 1 to r}), (\ref{equ solution formula when v positive and p less than 1-v}), (\ref{equ solution when v is purely imaginary}),  (\ref{equ solution when v=0}). Then all the boundary terms in (\ref{equ rough L2 identity integration by parts}) are 0. Consequently,  
\begin{eqnarray}\label{equ strong L2 identity integration by parts}
& &\int^{\frac{1}{2}}_{0}\bar{f}^{2}rw_{0} dr
\\&=& \int^{\frac{1}{2}}_{0}|\frac{d^{2} \bar{u}}{d r^{2}}|^{2}rw_{0} dr+(2v^{2}+1-4p+2p^{2})\int^{\frac{1}{2}}_{0}|\frac{d \bar{u}}{d r}|^{2}\frac{w_{0}}{r} dr\nonumber
\\& &+ (v^{2}-p^{2})(v^{2}-(p-2)^{2})\int^{\frac{1}{2}}_{0}\frac{ \bar{u}^{2} w_{0}}{ r^{3}} dr-(1-2p)\int^{\frac{1}{2}}_{0}|\frac{d \bar{u}}{d r}|^{2}\frac{d w_{0}}{d r}dr\nonumber
\\& &+(3-2p)(v^{2}-p^{2})\int^{\frac{1}{2}}_{0}\frac{ \bar{u}^{2} }{ r^{2}} \frac{d w_{0}}{d r} dr-(v^{2}-p^{2})\int^{\frac{1}{2}}_{0}\frac{ \bar{u}^{2} }{ r}\frac{d^{2} w_{0}}{d r^{2}} dr.\nonumber
\end{eqnarray}
\end{lem}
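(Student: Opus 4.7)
The plan is to derive (\ref{equ strong L2 identity integration by parts}) by expanding $\bar{f}^{2}$, multiplying by $rw_{0}$, integrating over $(0,\tfrac{1}{2})$, and applying integration by parts to the cross terms; then to verify that every boundary contribution vanishes given the precise choices of solution in (\ref{equ solution when v is real and  p>1-v is integral from 1 to r})--(\ref{equ solution when v=0}).

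First, I would expand
\[
\bar{f}^{2}rw_{0} = \Bigl(\bar{u}''+\tfrac{k}{r}\bar{u}'-\tfrac{a^{2}}{r^{2}}\bar{u}\Bigr)^{2} rw_{0},
\]
which produces three diagonal terms $(\bar{u}'')^{2}rw_{0}$, $k^{2}(\bar{u}')^{2}w_{0}/r$, $a^{4}\bar{u}^{2}w_{0}/r^{3}$ and three cross terms. The cross terms are handled using
\[
2\bar{u}''\bar{u}' = \bigl((\bar{u}')^{2}\bigr)',\qquad 2\bar{u}\bar{u}' = (\bar{u}^{2})',\qquad \bar{u}''\bar{u} = (\bar{u}'\bar{u})'-(\bar{u}')^{2},
\]
applied respectively to $2k\bar{u}''\bar{u}'w_{0}$, $-2ka^{2}\bar{u}\bar{u}'w_{0}/r^{2}$, and $-2a^{2}\bar{u}''\bar{u}w_{0}/r$. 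One integration by parts in the first two and two successive integrations in the last generate exactly the six bulk terms on the right of (\ref{equ strong L2 identity integration by parts}), once the coefficient identities $k^{2}+2a^{2}=2v^{2}+1-4p+2p^{2}$, $a^{4}-2(k+1)a^{2}=(v^{2}-p^{2})(v^{2}-(p-2)^{2})$, and $(k+2)a^{2}=(3-2p)(v^{2}-p^{2})$ are recorded.

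Second, I would show that every boundary contribution vanishes. At $r=\tfrac{1}{2}$ the weight $w_{0}=(\tfrac{1}{2}-r)^{10}(-\log r)^{2b}$ satisfies $w_{0}(\tfrac{1}{2})=w_{0}'(\tfrac{1}{2})=0$, so all boundary terms at this endpoint drop out independently of $\bar{u}$. At $r=0$, since $f$ vanishes on some interval $[0,\delta_{f}]$, every integral of the form $\int_{0}^{r}x^{v+1}f\,dx$, $\int_{0}^{r}x^{-v+1}f\,dx$, $\int_{0}^{r}x\cos(\mu\log x)f\,dx$, $\int_{0}^{r}x\sin(\mu\log x)f\,dx$, $\int_{0}^{r}xf\,dx$, $\int_{0}^{r}x(\log x)f\,dx$ is identically zero for $r<\delta_{f}$. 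Therefore, for the formulas (\ref{equ solution formula when v positive and p less than 1-v}), (\ref{equ solution when v is purely imaginary}), and (\ref{equ solution when v=0}) the solution $u$ is identically zero near the origin, so $\bar{u}$ and its derivatives vanish there. For (\ref{equ solution when v is real and  p>1-v is integral from 1 to r}), used when $v>1-p$, only the first summand survives for $r<\delta_{f}$, yielding $u=c_{f}r^{v}$ and hence $\bar{u}=c_{f}r^{p+v}$. The inequality $p+v>1$ is then exactly what makes $(\bar{u}')^{2}w_{0}$, $\bar{u}\bar{u}'w_{0}/r$, $\bar{u}^{2}w_{0}/r^{2}$, and $\bar{u}^{2}w_{0}'/r$ tend to $0$ as $r\to 0^{+}$, using $w_{0}=O((-\log r)^{2b})$ and $|w_{0}'|=O((-\log r)^{2b-1}/r)+O((-\log r)^{2b})$.

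The main obstacle is the bookkeeping in Step~1: one must track signs across two successive integrations by parts in the term containing $\bar{u}''\bar{u}/r$ and combine the resulting contributions with those from the other two cross terms so that the six prefactors of (\ref{equ strong L2 identity integration by parts}) appear exactly. Conceptually, however, the entire content sits in Step~2: the strict inequality $v>1-p$ characterising Case~1 is precisely what the vanishing of the $r=0$ boundary contribution requires, which is why the particular solution formula (\ref{equ solution when v is real and  p>1-v is integral from 1 to r}) was chosen in place of the alternative (\ref{equ solution formula when v positive and p less than 1-v}) in that regime.
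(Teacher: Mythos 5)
Your proposal is correct and follows essentially the same route as the paper: you re-derive the integration-by-parts identity of Lemma \ref{lem rough L2 identity integration by parts}, observe that $w_{0}(\tfrac12)=w_{0}'(\tfrac12)=0$ kills the $r=\tfrac12$ endpoint, and use the explicit solution formulas together with $f$ vanishing near $0$ to kill the $r=0$ endpoint, followed by substitution of $k=1-2p$, $a^{2}=v^{2}-p^{2}$. One small caveat in your closing remark: since $f$ vanishes near $0$, the alternative formula (\ref{equ solution formula when v positive and p less than 1-v}) would \emph{also} give $\bar{u}\equiv 0$ near $0$ and hence vanishing boundary terms, so boundary vanishing does not by itself single out (\ref{equ solution when v is real and  p>1-v is integral from 1 to r}) in Case 1 -- that choice is forced by the $L^{2}_{p,b}$ estimate in Proposition \ref{prop good solution with lowest order estimate when v is nonzero}, with boundary vanishing then coming along as a byproduct.
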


\begin{prop}\label{prop final ODE W22 estimate when v=0}   Under the same conditions in Theorem \ref{thm existence of good solutions to the uniform ODE with estimates}, suppose $v=0$.
Let $\bar{u}=r^{p}u$, $u$ is the solution  to (\ref{equ the standard ODE}) in (\ref{equ solution when v=0}), then 
\begin{eqnarray}\label{eqnarray equivalent statement for the L2 estimate v=0}\int^{\frac{1}{2}}_{0}\bar{u}^{2} w_{0}r^{-3} dr
 \leq   \bar{C}\int^{\frac{1}{2}}_{0}\bar{f}^{2}rw_{0} dr . 
\end{eqnarray}
 Consequently, $u$ satisfies (\ref{equ in thm of ODE optimal estimate v neq 1-p}).
\end{prop}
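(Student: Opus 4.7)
My plan is to specialize the integral identity (\ref{equ strong L2 identity integration by parts}) to $v=0$ and exploit the sign of the resulting terms. Setting $v=0$, the right-hand side of (\ref{equ strong L2 identity integration by parts}) becomes
\begin{align*}
\int_0^{1/2}|\bar u''|^2 r w_0\,dr
&+(1-4p+2p^2)\int_0^{1/2}|\bar u'|^2\frac{w_0}{r}\,dr
+p^2(p-2)^2\int_0^{1/2}\frac{\bar u^2 w_0}{r^3}\,dr \\
&-(1-2p)\int_0^{1/2}|\bar u'|^2\frac{dw_0}{dr}\,dr
-(3-2p)p^2\int_0^{1/2}\frac{\bar u^2}{r^2}\frac{dw_0}{dr}\,dr
+p^2\int_0^{1/2}\frac{\bar u^2}{r}\frac{d^2w_0}{dr^2}\,dr.
\end{align*}
Since $p$ is $A_O$-generic and negative, one has $1-4p+2p^2>0$, $p^2(p-2)^2>0$, and $1-2p>0$, while $\frac{dw_0}{dr}\leq 0$ by (\ref{equ w0 is decreasing}); therefore all five of these integrals are non-negative, and the dominant good term is $p^2(p-2)^2\int_0^{1/2}\bar u^2 w_0 r^{-3}\,dr$, which is exactly the quantity we must estimate.

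The only possibly sign-indefinite contribution is the last one involving $\frac{d^2w_0}{dr^2}$. A direct computation, writing $w_0=(1/2-r)^{10}(-\log r)^{2b}$, shows that the most singular part of $\frac{d^2w_0}{dr^2}$ near $r=0$ is of the form $\frac{2b}{r^2}(1/2-r)^{10}(-\log r)^{2b-1}+\frac{2b(2b-1)}{r^2}(1/2-r)^{10}(-\log r)^{2b-2}$, while the remaining contributions are uniformly bounded on $[0,1/2]$. The key observation is that the exponent on $-\log r$ drops by at least one relative to $w_0$. Applying the elementary inequality
\begin{equation*}
(-\log r)^{2b-1}\leq \frac{\epsilon}{2}(-\log r)^{2b}+\frac{1}{2\epsilon}(-\log r)^{2b-2},
\end{equation*}
valid pointwise for any $\epsilon>0$, the troublesome term is split into one part proportional to $\int_0^{1/2}\bar u^2 w_0 r^{-3}\,dr$ (absorbable by the good term once $\epsilon$ is chosen small enough, depending only on $p$) and another part controlled by $\int_0^{1/2}\bar u^2(-\log r)^{2b-2} r^{-3}\,dr$, which after substituting $\bar u=r^p u$ reads $\int_0^{1/2}u^2 r^{2p-3}(-\log r)^{2b-2}\,dr$.

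This second piece is precisely the quantity estimated in Proposition \ref{prop existence of solution with lowest order estimate}, which yields an upper bound by $\bar C\int_0^{1/2}f^2 r^{2p+1}(-\log r)^{2b}\,dr$; since $(1/2-r)^{10}$ is comparable to a positive constant on the support of $f$ (contained in $(0,1/100]$), this is in turn bounded by $\bar C\int_0^{1/2}\bar f^2 r w_0\,dr$. Combining these ingredients with the identity and absorbing the small piece to the left gives (\ref{eqnarray equivalent statement for the L2 estimate v=0}).

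For the final assertion that $u$ satisfies (\ref{equ in thm of ODE optimal estimate v neq 1-p}), I note that on $(0,1/4)$ we have $(1/2-r)^{10}\geq 4^{-10}$, so
\begin{equation*}
\int_0^{1/4}u^2(-\log r)^{2b}r^{2p-3}\,dr\leq 4^{10}\int_0^{1/2}\bar u^2 w_0 r^{-3}\,dr \leq \bar C\int_0^{1/2}f^2 r^{2p+1}(-\log r)^{2b}\,dr,
\end{equation*}
where the last inequality again uses $(1/2-r)^{10}\leq 1$. The main obstacle, and the place where care is required, is controlling the sign-indefinite $d^2w_0/dr^2$ term; the trick is that its most singular contribution carries a lower power of $-\log r$ than the target weight, so it is a genuinely subordinate perturbation that can be handled by combining Young's inequality with the weaker (but already proved) Proposition \ref{prop existence of solution with lowest order estimate}.
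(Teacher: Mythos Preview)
Your approach is the same as the paper's: specialize the identity (\ref{equ strong L2 identity integration by parts}) to $v=0$, observe that all terms are favorable except the one involving $w_0''$, and control the bad piece of $w_0''$ via Proposition~\ref{prop existence of solution with lowest order estimate}. There is, however, one inaccuracy and one unnecessary detour worth flagging.

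Your claim that the ``remaining contributions'' to $w_0''$ are uniformly bounded on $[0,1/2]$ is false for $b>0$: writing $w_0=gh$ with $g=(\tfrac12-r)^{10}$ and $h=(-\log r)^{2b}$, the cross terms $g''h=90(\tfrac12-r)^8(-\log r)^{2b}$ and $2g'h'=\tfrac{40b}{r}(\tfrac12-r)^9(-\log r)^{2b-1}$ both blow up as $r\to 0$. The correct observation---the one the paper makes---is that every piece of $w_0''$ except $\tfrac{2b(2b-1)}{r^2}(\tfrac12-r)^{10}(-\log r)^{2b-2}$ is \emph{non-negative}, hence contributes with a favorable sign in the identity and can simply be discarded. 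In particular the $\tfrac{2b}{r^2}(\tfrac12-r)^{10}(-\log r)^{2b-1}$ term you isolate is already non-negative, so your Young's-inequality splitting is unnecessary; the only genuinely sign-indefinite piece is the $(-\log r)^{2b-2}$ term (and only when $0<b<\tfrac12$), which you correctly feed into Proposition~\ref{prop existence of solution with lowest order estimate}. With this correction your argument is complete and coincides with the paper's.
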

\begin{proof}[Proof of Proposition \ref{prop final ODE W22 estimate when v=0}:]

By (\ref{equ w0 is decreasing}) and $p<0$, the terms on the right hand side of (\ref{equ strong L2 identity integration by parts}) are all non-negative except the term $-(0-p^{2})\int^{\frac{1}{2}}_{0}\frac{ \bar{u}^{2} }{ r}\frac{d^{2} w_{0}}{d r^{2}} dr$ (even this term is non-negative when $b\geq 1$, but we do not  assume this in general). We compute  
\begin{eqnarray}\label{eqnarray second derivative of w0}
\frac{d^{2} w_{0}}{d r^{2}}=\textrm{non-negative terms}+\frac{2b(2b-1)}{r^{2}}(\frac{1}{2}-r)^{10}(-\log r)^{2b-2}.\nonumber
\end{eqnarray}
Hence (\ref{equ strong L2 identity integration by parts}) implies
\begin{eqnarray}\label{eqnarray L2 estimate v=0 with the junk term}& &\int^{\frac{1}{2}}_{0}\bar{u}^{2}r^{-3}w_{0} dr 
 \leq   \bar{C}\{\int^{\frac{1}{2}}_{0}\bar{f}^{2}rw_{0} dr+\int^{\frac{1}{2}}_{0}\frac{\bar{u}^{2}}{r^{3}} (\frac{1}{2}-r)^{10}(-\log r)^{2b-2} dr\}\nonumber
\\& \leq &  \bar{C}\{\int^{\frac{1}{2}}_{0}\bar{f}^{2}r(-\log r)^{2b} dr+\int^{\frac{1}{2}}_{0}\frac{\bar{u}^{2}}{r^{3}}(-\log r)^{2b-2} dr\}. 
\end{eqnarray}
\begin{equation}\label{equ integral of u with log weight -2 is bounded}\textrm{Proposition \ref{prop existence of solution with lowest order estimate} implies}\ 
\int^{\frac{1}{2}}_{0} \bar{u}^{2} (-\log r)^{2b-2}r^{-3} dr\leq  \bar{C}\int^{\frac{1}{2}}_{0}\bar{f}^{2}r(-\log r)^{2b} dr.\nonumber
\end{equation}
The proof of (\ref{eqnarray equivalent statement for the L2 estimate v=0})  is complete by combining the above two inequalities.\end{proof}

\subsection{Local solutions for the model cone connection \label{section Local solutions for the model cone connection}}

Our goal in this section is to solve the following  deformation equation in model case, and obtain optimal Sobolev-estimates.
\begin{equation}\label{equ model equation of laplacian for cone}
L_{A_{O}}\xi=\underline{f}.
\end{equation}

\begin{Def}Let $w=r^{2p}(-\log r)^{2b}$,  $p<0$. Let $\mathfrak{B}$ be an open set in $\R^{n}\setminus O$, and   $\xi$ be a smooth  section of $ \Xi$ over $\mathfrak{B}$, we define
\begin{eqnarray*}
& &|\xi|^{2}_{W^{2,2}_{p,b,A_{0}}(\mathfrak{B})}
 \triangleq \int_{\mathfrak{B}}|\nabla_{A_{0}}\nabla_{A_{0}} \xi|^{2}wdV+\int_{\mathfrak{B}}\frac{1}{r^{2}}|\nabla_{A_{0}}\xi|^{2}wdV+\int_{\mathfrak{B}}\frac{|\xi|^{2}}{r^{4}}wdV
\end{eqnarray*}
  \begin{equation*}
|\xi|^{2}_{W^{1,2}_{p,b,A_{0}}(\mathfrak{B})}
 \triangleq \int_{\mathfrak{B}}|\nabla_{A_{0}} \xi|^{2}wdV+\int_{\mathfrak{B}}\frac{|\xi|^{2}}{r^{2}}wdV;\ 
 |\xi|^{2}_{L^{2}_{p,b}(\mathfrak{B})}
 \triangleq \int_{\mathfrak{B}}|\xi|^{2}wdV.
\end{equation*}

Since $A$ is admissible,  the norm $W^{2,2}_{p,b,A}(\mathfrak{B})$ (with connection $A$ instead of $A_{O}$) is equivalent to $W^{2,2}_{p,b,A}(\mathfrak{B})$, etc. Thus, by deleting the symbol of the connections, we denote the spaces as   $W^{2,2}_{p,b}(\mathfrak{B})$, $W^{1,2}_{p,b}(\mathfrak{B})$.

When the bundle is trivial over $B_{O}$ and the connection is smooth across $O$,  our $W^{2,2}_{p,b}-$norm is  stronger than the usual $W^{2,2}-$norm weighted by $w$. It fits into our setting better, in the sense that  the estimates in (\ref{equ bounding L2 norm of gradient for the model cone laplace equation}), Proposition \ref{prop bounding L2 norm of Hessian for the model cone laplace equation}, and Corollary \ref{Cor solving model laplacian equation over the ball without the compact support RHS condition} do not depend on the radius of the balls. \end{Def}
\begin{Def}\label{Def of L22 norm model case}

 The space $W^{2,2}_{p,b}(\mathfrak{B})$ is the completion of the section space  
 $$\{\xi|\xi \in C^{2}(\mathfrak{B}\setminus O),\ |\xi|_{W^{2,2}_{p,b} (\mathfrak{B})}<\infty\}\ \textrm{under the}\ W^{2,2}_{p,b} (\mathfrak{B})- \textrm{norm}. \ \ \ \ \ \ \ \  $$ 
The space $W^{1,2}_{p,b}(\mathfrak{B})$ is the completion of the section space  $$\{\underline{f}|\underline{f} \in C^{1}(\mathfrak{B}\setminus O),\ |\underline{f}|_{W^{1,2}_{p,b} (\mathfrak{B})}<\infty\}\ \textrm{under the}\ W^{2,2}_{p,b} (\mathfrak{B})- \textrm{norm}. \ \ \ \ \ \ \ \  $$ 
The space $L^{2}_{p,b}(\mathfrak{B})$ is the completion of the following under the $L^{2}_{p,b} (\mathfrak{B})-$norm.
$$\{ \underline{f} | \underline{f}  \in C^{\infty}_{c}(\mathfrak{B}\setminus O),  \textrm{only  finite terms in the  series}\ (\ref{equ raw ODE for 1 forms})\ \textrm{are non-zero}\}.$$ 

\end{Def}

We define the space $\mathfrak{W}^{1,2}_{p,b}$ ($\mathfrak{W}^{2,2}_{p,b}$, $\mathfrak{W}^{0,2}_{p,b}$) as the following
\begin{equation}
\mathfrak{W}^{1,2}_{p,b}(B_{O})=\{\xi\in W^{1,2}_{loc} \ \textrm{in the coordinates}\ V_{+,O},\ V_{-,O}| |\xi|_{W^{1,2}_{p,b}(B_{O})}<\infty\}.
\end{equation}
 \begin{lem}\label{lem density of smooth functions in weighted L2 space} For any $\rho>0$,  $L^{2}_{p,b} [B_{O}(\rho)]$ is the space of measurable functions on $B_{O}(\rho)$ which are square integrable with respect to the weight $w$.
\end{lem}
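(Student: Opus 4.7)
The plan is to identify $L^2_{p,b}[B_O(\rho)]$, defined abstractly as an abstract completion, with the concrete Hilbert space
$$
\mathcal{H}\triangleq \bigl\{f:B_O(\rho)\to \Xi\ \text{measurable}\ \big|\ \int_{B_O(\rho)}|f|^2 w\,dV<\infty\bigr\},
$$
equipped with the inner product induced by $w=r^{2p}(-\log r)^{2b}$. Since $\mathcal{H}$ is already complete (standard weighted $L^2$ theory, $w$ being a positive locally integrable weight away from $O$), and the defining subspace
$$
S\triangleq\{\underline{f}\in C^\infty_c(B_O(\rho)\setminus O):\ \text{only finitely many Fourier coefficients in (\ref{equ raw ODE for 1 forms})\ are non-zero}\}
$$
sits isometrically inside $\mathcal{H}$, it suffices to prove that $S$ is dense in $\mathcal{H}$ under the $L^2_{p,b}$-norm. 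Then the abstract completion of $S$ coincides with $\mathcal{H}$.

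I would carry out the density in two steps. \textbf{Step 1: Density of $C^\infty_c(B_O(\rho)\setminus O)$ in $\mathcal{H}$.} Given $f\in \mathcal{H}$, pick radial cut-off functions $\chi_\epsilon\in C^\infty_c(B_O(\rho)\setminus O)$ with $0\le\chi_\epsilon\le 1$, $\chi_\epsilon\equiv 1$ on $\{2\epsilon\le r\le \rho-2\epsilon\}$, and $\chi_\epsilon\equiv 0$ on $\{r\le\epsilon\}\cup\{r\ge\rho-\epsilon\}$. Dominated convergence ($|f-\chi_\epsilon f|^2 w\le 4|f|^2 w\in L^1$, pointwise $\to 0$) gives $\chi_\epsilon f\to f$ in $\mathcal{H}$. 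On the compact shell where $\chi_\epsilon f$ is supported, $w$ is bounded above and away from zero, so the weighted $L^2$-norm is equivalent to the ordinary $L^2$-norm, and standard mollification produces a $C^\infty_c(B_O(\rho)\setminus O)$ approximation.

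\textbf{Step 2: Density of $S$ in $C^\infty_c(B_O(\rho)\setminus O)$ under the $L^2_{p,b}$-norm.} Fix $g\in C^\infty_c(B_O(\rho)\setminus O)$ supported in a shell $\{r_1\le r\le r_2\}\subset(0,\rho)$. Expanding on each sphere $\{r\}\times S^{n-1}$ in the orthonormal eigenbasis $\{\Psi_\beta\}$ of $\Upsilon_{A_O}$ (Proposition \ref{prop seperation of variable for general cone}) gives $g=\sum_\beta g_\beta(r)\Psi_\beta(\theta)$, where $g_\beta(r)=\int_{S^{n-1}}\langle g(r,\cdot),\Psi_\beta\rangle$ is smooth in $r$ with compact support in $[r_1,r_2]$. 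The partial sums $g_N=\sum_{|\beta|\le N}g_\beta\Psi_\beta$ lie in $S$, and by Parseval
$$
\|g-g_N\|_{L^2_{p,b}}^{2}=\int_0^\rho\Bigl(\sum_{|\beta|>N}|g_\beta(r)|^2\Bigr) w(r)r^{n-1}\,dr.
$$
Since $\sum_\beta|g_\beta(r)|^2=\|g(r,\cdot)\|_{L^2(S^{n-1})}^2$ is continuous and compactly supported in $(0,\rho)$, the integrand is dominated by $\|g(r,\cdot)\|^2_{L^2(S^{n-1})}w(r)r^{n-1}\in L^1(0,\rho)$, and tends pointwise to $0$; dominated convergence concludes $g_N\to g$ in $L^2_{p,b}$.

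The only real subtlety is ensuring that the Fourier truncation $g_N$ retains compact support in $B_O(\rho)\setminus O$ and is smooth, which is immediate once one notes that $g_\beta(r)$ inherits the compact support and smoothness of $g$ in the radial variable. With Steps 1 and 2 combined, $S$ is dense in $\mathcal{H}$ in the weighted norm, so the completion $L^2_{p,b}[B_O(\rho)]=\mathcal{H}$, as claimed.
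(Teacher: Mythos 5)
Your proposal is correct, and the two-step architecture (first approximate by $C^\infty_c(B_O(\rho)\setminus O)$, then truncate the Fourier series) matches the paper's. The genuine difference is in Step 2. The paper first establishes the stronger result that the Fourier partial sums of a smooth compactly supported section converge \emph{uniformly} (Corollary~\ref{cor Appendix D:}, via the zeta-function / Weyl-asymptotics argument of Lemma~\ref{Lemma Appendix D}), and then reads off $L^2_{p,b}$-convergence from uniform convergence plus compact support. You instead bypass the $C^0$-convergence entirely: you dominate the tail by Parseval ($\sum_{|\beta|>N}|g_\beta(r)|^2\le\|g(r,\cdot)\|^2_{L^2_\Xi(S^{n-1})}$, which is bounded and supported in the shell) and invoke dominated convergence in the radial variable. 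This is more elementary — no Sobolev embedding or spectral asymptotics needed — and is sufficient for this lemma, though it gives a weaker conclusion than Corollary~\ref{cor Appendix D:}. Two small remarks: (i) your displayed identity for $\|g-g_N\|^2_{L^2_{p,b}}$ omits the $r^{-2}$ coming from converting the spherical $\Xi$-inner product (with respect to which $\{\Psi_\beta\}$ is orthonormal) to the Euclidean one via Remark~\ref{rmk relation between Euc and Spherical norm}; since $g$ is supported in a fixed shell $[r_1,r_2]$ away from $O$, the missing factor is bounded and harmless, but the formula as written is not exact. (ii) In Step 1, the paper is explicit about mollifying separately in $V_{+,O}$ and $V_{-,O}$ and patching with a partition of unity pulled back from $S^{n-1}$; your "standard mollification" glosses over the bundle trivialization, which is routine but worth a sentence.
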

\begin{lem}\label{lem H=W}$W^{k,2}_{p,b}[B_{O}(\rho)]=\mathfrak{W}^{k,2}_{p,b}[B_{O}(\rho)],\ k=0,1,2.$
\end{lem}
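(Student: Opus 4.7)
The plan is to prove this by a weighted Meyers--Serrin/cutoff argument, with the singularity at $O$ handled by radial cutoffs. The inclusion $W^{k,2}_{p,b}[B_O(\rho)] \subseteq \mathfrak{W}^{k,2}_{p,b}[B_O(\rho)]$ is immediate, since completion under a norm stronger than the local $W^{k,2}$ norm (away from $O$) produces sections which are locally in $W^{k,2}$ in the charts $V_{\pm,O}$ and have finite weighted norm. The content of the lemma is the reverse inclusion: any $\xi \in \mathfrak{W}^{k,2}_{p,b}$ is approximable in the $W^{k,2}_{p,b}$-norm by sections which are smooth on $B_O\setminus O$ and have finite $W^{k,2}_{p,b}$-norm. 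The case $k=0$ is exactly Lemma \ref{lem density of smooth functions in weighted L2 space}; the cases $k=1,2$ are the new content.

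Given $\xi \in \mathfrak{W}^{k,2}_{p,b}[B_O(\rho)]$, pick a smooth cutoff $\chi_\epsilon:[0,\infty)\to [0,1]$ with $\chi_\epsilon(r)=0$ for $r\le\epsilon$, $\chi_\epsilon(r)=1$ for $r\ge 2\epsilon$, and $|\chi_\epsilon^{(j)}|\le C\epsilon^{-j}$. Set $\xi_\epsilon=\chi_\epsilon(r)\xi$. Then $\xi_\epsilon$ is supported in $\{r\ge\epsilon\}$ where $w=r^{2p}(-\log r)^{2b}$ is bounded and smooth, so $\xi_\epsilon$ lies in the standard $W^{k,2}[B_O(\rho)]$. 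By standard Meyers--Serrin (in the two coordinate charts $V_{\pm,O}$, with a partition of unity subordinate to the cover and standard mollification of the patches) I obtain a smooth approximation of $\xi_\epsilon$ in $W^{k,2}_{p,b}$, and hence a sequence of smooth sections in the $W^{k,2}_{p,b}$-sense of Definition \ref{Def of L22 norm model case}.

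The main step is to show $\|\xi-\xi_\epsilon\|_{W^{k,2}_{p,b}[B_O(\rho)]}\to 0$ as $\epsilon\to 0$. The difference $\xi-\xi_\epsilon=(1-\chi_\epsilon)\xi$ is supported in $B_O(2\epsilon)$. Expanding $\nabla^j(\chi_\epsilon\xi)$ for $j\le k$ by Leibniz produces terms of the form $\chi_\epsilon^{(i)}\nabla^{j-i}\xi$ (schematically), with $i\le j\le k\le 2$. The terms with $i=0$ converge to the corresponding derivative of $\xi$ in $L^2_{p,b}$ by dominated convergence, so it suffices to control the commutator error. On the annulus $A_\epsilon=\{\epsilon\le r\le 2\epsilon\}$ one has
\begin{equation*}
\int_{A_\epsilon}|\chi_\epsilon^{(i)}|^2|\nabla^{j-i}\xi|^2 w\, dV \;\le\; C\int_{A_\epsilon}\frac{|\nabla^{j-i}\xi|^2}{r^{2i}}\, w\, dV,
\end{equation*}
and the right hand side is bounded by the annular piece of the $\mathfrak{W}^{k,2}_{p,b}$-norm (exactly because the norm in Definition \ref{Def of L22 norm model case} includes both the Hardy-type term $\int|\xi|^2w/r^{2k}$ and the intermediate term $\int|\nabla\xi|^2w/r^2$). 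By absolute continuity of the integral, each such annular integral tends to $0$ with $\epsilon$.

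The main (and only) obstacle is the bookkeeping in the $k=2$ case: the worst term is $(\nabla^2\chi_\epsilon)\xi$, which contributes $\epsilon^{-4}\int_{A_\epsilon}|\xi|^2 w$ and requires the Hardy-type weight $w/r^4$ in the $W^{2,2}_{p,b}$-norm to be absorbed, which is precisely why that term was included in the definition. Once this is in hand, all commutator errors vanish and the approximation $\xi_\epsilon\to\xi$ combined with the standard mollification of $\xi_\epsilon$ yields the equality $W^{k,2}_{p,b}[B_O(\rho)]=\mathfrak{W}^{k,2}_{p,b}[B_O(\rho)]$.
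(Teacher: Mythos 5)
Your proof is correct and takes a genuinely different route from the paper's. The paper's proof of the reverse inclusion runs the Meyers--Serrin nested-cover argument directly, replacing the tolerance $\epsilon/2^{j}$ in (7.25) of Gilbarg--Trudinger by $\epsilon/(2^{j}\tau_{j})$ with $\tau_{j}=1+\sup_{\Omega_{j}}(w/r^{2})$, so that the unweighted mollification error on each piece, multiplied by the local supremum of the weight, still sums to $\epsilon$; this never cuts off near the singularity and does not exploit the Hardy terms in the norm (it would work even if the $W^{k,2}_{p,b}$-norm were simply $\int(|\nabla^{k}\xi|^{2}+\cdots+|\xi|^{2})w\,dV$). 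Your two-step argument --- a radial cutoff $\chi_{\epsilon}$, whose commutator errors are absorbed by absolute continuity of the integral precisely because the Hardy-weighted terms $\int |\nabla^{j}\xi|^{2}r^{-2(k-j)}w\,dV$ are built into Definition \ref{Def of L22 norm model case}, followed by unweighted Meyers--Serrin on $\{r\ge\epsilon\}$ where $w/r^{2j}$ is comparable to a constant --- achieves the same end and has the virtue of making the role of those Hardy terms transparent. Both routes are valid; yours is more modular (it separates ``kill the cone tip'' from ``mollify the smooth region''), while the paper's avoids a separate cutoff step. One point you should make explicit in Step 2: take the mollification parameters below $\epsilon/2$ so the smoothed sections remain supported in $\{r\ge\epsilon/2\}$, which is what upgrades the unweighted Meyers--Serrin convergence to convergence in $W^{k,2}_{p,b}$.
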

\begin{rmk} \textbf{The proofs of the above two lemmas  are deferred to Section \ref{section Appendix D: Density and smooth convergence of   Fourier Series}}. Lemma \ref{lem density of smooth functions in weighted L2 space} reduces Theorem \ref{thm W22 estimate on 1-forms} to a finite-dimensional problem. On Lemma \ref{lem H=W}, we expect that when the bundle $\Xi$ is trivial,   $ \mathfrak{W}^{1,2}_{p,b}[B_{O}(\rho)]$ can even be approximated by sections that are smooth across the singularity. This  is because 
$w$ and $r^{-2}w$ are $A_{p}-$weights (see Theorem 1 in \cite{Goldstein}). 
\end{rmk}
Our main result in this section is the following. \textbf{The crucial observation  is that  the $L^{2}-$estimate (given by Theorem \ref{thm existence of good solutions to the uniform ODE with estimates}) and  simple integration by parts yield the $W^{2,2}-$estimate}.
\begin{thm}\label{thm W22 estimate on 1-forms}Suppose $p<0$ is $A_{O}-$generic and $b\geq 0$. Then there is a bounded linear operator $\mathfrak{Q}_{p,b,A_{O}}$ from $L^{2}_{p,b}[B_{O}(\frac{1}{4})]$ to $W^{2,2}_{p,b}[B_{O}(\frac{1}{4})]$ such that 
\begin{itemize}
\item $-L^{2}_{A_{0}}\mathfrak{Q}_{p,b,A_{O}}=Id$ from  $L^{2}_{p,b}[B_{O}(\frac{1}{4})]$  to itself,
\item the bound on $\mathfrak{Q}_{p,b,A_{O}}$ is less than a  $\bar{C}$  as in Definition \ref{Def special constants}.
\end{itemize}
\end{thm}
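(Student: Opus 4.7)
My approach is to construct $\mathfrak{Q}_{p,b,A_{O}}$ by solving $-L_{A_{O}}^{2}\xi=\underline{f}$ mode-by-mode in the $\Upsilon_{A_{O}}$-eigenbasis $\{\Psi_{v}\}$ given by Proposition \ref{prop seperation of variable for general cone}, and then promoting the $L^{2}$ bound of Theorem \ref{thm existence of good solutions to the uniform ODE with estimates} to a $W^{2,2}_{p,b}$ bound via the integration-by-parts identity (\ref{equ strong L2 identity integration by parts}). First, Lemma \ref{lem density of smooth functions in weighted L2 space} together with Definition \ref{Def of L22 norm model case} shows that the subspace of $\underline{f}\in C^{\infty}_{c}(B_{O}(\tfrac{1}{4})\setminus O)$ whose Fourier expansion $\underline{f}=\sum_{v}\underline{f}_{v}(r)\Psi_{v}$ has only finitely many nonzero coefficients is dense in $L^{2}_{p,b}[B_{O}(\tfrac{1}{4})]$; it suffices to define $\mathfrak{Q}_{p,b,A_{O}}$ on this dense subspace with a uniform bound, since linearity in $\underline{f}$ is built into the construction.

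Second, for each such finite-mode $\underline{f}$ I apply Theorem \ref{thm existence of good solutions to the uniform ODE with estimates} to every Fourier coefficient $\underline{f}_{v}$ after the rescaling that reduces (\ref{equ raw ODE for 1 forms}) to (\ref{equ the standard ODE}), selecting the particular solutions (\ref{equ solution when v is real and  p>1-v is integral from 1 to r})--(\ref{equ solution when v=0}) dictated by the regime of $v$. Setting
$$\mathfrak{Q}_{p,b,A_{O}}\underline{f}=\sum_{v}\underline{\xi}_{v}\,\Psi_{v},$$
the identity $-L_{A_{O}}^{2}\mathfrak{Q}_{p,b,A_{O}}\underline{f}=\underline{f}$ follows termwise from the equivalence between (\ref{equ Def of equation for square of model LAO}) and (\ref{equ raw ODE for 1 forms}), and Parseval on $S^{6}$ combined with (\ref{equ in thm of ODE optimal estimate v neq 1-p}) produces the $L^{2}_{p,b}$ piece $\int|\xi|^{2}r^{-4}w\,dV\le\bar{C}\int|\underline{f}|^{2}w\,dV$ of the desired norm.

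Third, the full $W^{2,2}_{p,b}$ bound is the point of the theorem, and I extract it by applying the identity (\ref{equ strong L2 identity integration by parts}) to $\bar{u}_{v}=r^{p}\underline{\xi}_{v}$ for each $v$. By the prescribed choice of solution in each regime the boundary terms vanish, and because $w_{0}$ is decreasing the $dw_{0}/dr$ contributions have the right sign; the only sign-indefinite piece (the $d^{2}w_{0}/dr^{2}$ term) is absorbed by the lowest-order bound of Proposition \ref{prop existence of solution with lowest order estimate}, exactly as in the proof of Proposition \ref{prop final ODE W22 estimate when v=0}. Summed over $v$, this gives
$$\sum_{v}\Bigl(\int_{0}^{1/2}|\bar{u}_{v}''|^{2}\,rw_{0}\,dr+(1+v^{2})\!\int_{0}^{1/2}|\bar{u}_{v}'|^{2}\tfrac{w_{0}}{r}\,dr+(1+v^{4})\!\int_{0}^{1/2}\bar{u}_{v}^{2}\tfrac{w_{0}}{r^{3}}\,dr\Bigr)\le\bar{C}\sum_{v}\int_{0}^{1/2}\bar{f}_{v}^{2}\,rw_{0}\,dr.$$
Reversing the substitution $\underline{\xi}_{v}=r^{-p}\bar{u}_{v}$ and reassembling via Lemma \ref{lem  cone formula for laplacian} and Proposition \ref{prop seperation of variable for general cone} — which identify the contributions of $\beta=\tfrac{29}{4}-v^{2}$ to both the spherical Hessian and the radial cross terms of $\nabla^{2}\xi$ — the left-hand side is comparable to $|\mathfrak{Q}_{p,b,A_{O}}\underline{f}|^{2}_{W^{2,2}_{p,b}[B_{O}(1/4)]}$. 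Lemma \ref{lem H=W} then identifies the resulting completion with the stated space.

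The main obstacle is the matching of the polynomial-in-$v$ weights on the two sides of (\ref{equ strong L2 identity integration by parts}) with the spherical quantities appearing in the $W^{2,2}_{p,b}$ norm: the coefficient $(v^{2}-p^{2})(v^{2}-(p-2)^{2})$ provides exactly the quartic factor produced by the iterated spherical Laplacian on $\Psi_{v}$, but this matching degenerates unless neither factor vanishes, which is precisely the content of $A$-genericity in Definition \ref{Def A generic} and the reason the constant is a $v$-uniform $\bar{C}$. A subsidiary issue is that the purely imaginary and $v=0$ modes must be handled by Propositions \ref{prop good solution with lowest order estimate when v is imaginary} and \ref{prop final ODE W22 estimate when v=0} separately; these contribute only finitely many modes and yield $\bar{C}$-uniform bounds compatible with the generic case, so no extra difficulty arises when summing.
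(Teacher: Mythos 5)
Your proposal takes a genuinely different route from the paper once the $L^{2}_{p-2,b}$ bound is in hand. You try to extract the gradient and Hessian pieces of the $W^{2,2}_{p,b}$ norm directly from the ODE identity (\ref{equ strong L2 identity integration by parts}), summed over $v$ with polynomial weights. The paper never re-uses that identity beyond the $v=0$ step (Proposition \ref{prop final ODE W22 estimate when v=0}); instead it upgrades $L^{2}$ to $W^{2,2}$ by a spatial Weitzenb\"ock/Bochner integration by parts with cutoff functions (Lemma \ref{lem bounding local perturbation of deformation operator } is irrelevant; the relevant ones are Lemma \ref{lem bounding L2 norm of gradient for the model cone laplace equation} and Proposition \ref{prop bounding L2 norm of Hessian for the model cone laplace equation}), using $L_{A_{O}}^{2}\xi=-\underline{f}$ and the cheap bound $|F_{A_{O}}|\le\bar C r^{-2}$. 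The paper's route avoids ever having to track $v$-dependent constants beyond the $L^{2}$ level.

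Your proposed alternative has two genuine gaps. First, your claim that the $dw_{0}/dr$ contributions have the right sign and that only the $d^{2}w_{0}/dr^{2}$ term is sign-indefinite is false for $v>-p$: the term $(3-2p)(v^{2}-p^{2})\int \bar u^{2}r^{-2}\,dw_{0}/dr\,dr$ changes sign precisely at $v=-p$ (since $w_{0}'\le0$ and $3-2p>0$), so for $v>-p$ it is negative and must also be absorbed. Second, the coefficient of $\int\bar u^{2}w_{0}r^{-3}dr$ in (\ref{equ strong L2 identity integration by parts}) is $(v^{2}-p^{2})(v^{2}-(p-2)^{2})$, which is \emph{negative} on the entire interval $-p<v<2-p$, not merely degenerate at isolated points; $A$-genericity in Definition \ref{Def A generic} only removes $v=-p$ and $v=1-p$ from the spectrum, it does not make the coefficient positive on that range. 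So the estimate $(1+v^{4})\int\bar u_{v}^{2}w_{0}r^{-3}\le\bar C\int\bar f_{v}^{2}rw_{0}$ does not follow from (\ref{equ strong L2 identity integration by parts}) alone; for $v$ in the bounded bad range you would need to invoke the potential estimates (Propositions \ref{prop good solution with lowest order estimate when v is nonzero}--\ref{prop existence of solution with lowest order estimate}) on their own, and Proposition \ref{prop existence of solution with lowest order estimate} is only stated for $v=0$, so your appeal to it ``exactly as in Proposition \ref{prop final ODE W22 estimate when v=0}'' is out of scope. Finally, even granting the mode-wise weights, reassembling the full cone Hessian $|\nabla^{2}_{A_{O}}\xi|^{2}$ (not just the Laplacian) from $\sum_{v}(1+v^{4})|\underline{\xi}_{v}|^{2}$ requires an elliptic estimate for $\Upsilon_{A_{O}}$ on $S^{6}$ bounding $\|\nabla_{S}^{2}\Psi\|_{L^{2}}$ by $\|\Upsilon_{A_{O}}\Psi\|_{L^{2}}+\|\Psi\|_{L^{2}}$; Lemma \ref{lem  cone formula for laplacian} and Proposition \ref{prop seperation of variable for general cone} only compute the Laplacian, so this step is missing. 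The paper's spatial integration-by-parts argument sidesteps all three of these issues.
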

\begin{proof}[Proof of Theorem \ref{thm W22 estimate on 1-forms}:]
This is a direct application of Theorem \ref{thm existence of good solutions to the uniform ODE with estimates}. We first prove it  assuming that $\underline{f}$ satisfies the conditions in Definition \ref{Def of L22 norm model case} for  $L^{2}_{p,b}[B_{O}(\frac{1}{4})]$. We write  $\underline{f}=\Sigma_{v<v_{0}}r^{-\frac{3}{2}}f_{v}\Psi_{v}$ for some $0<v_{0}<\infty$ (see Definition \ref{Def v spectrum}). For each $f_{v}$, we  define $\xi_{v}$ as the solution to (\ref{equ the ODE of v for 1 forms}) in (\ref{equ solution when v is real and  p>1-v is integral from 1 to r}), (\ref{equ solution formula when v positive and p less than 1-v}), (\ref{equ solution when v is purely imaginary}), (\ref{equ solution when v=0}), and let
\begin{equation}\label{equ Def of mathfrak Q right inverse}
\mathfrak{Q}_{p,b,A_{O}}\underline{f}\triangleq \Sigma_{v}r^{-\frac{3}{2}}\xi_{v}\Psi_{v}\triangleq \xi.
\end{equation}
It suffices to bound the $L^{2}_{p-2,b}[B_{O}(\frac{1}{4})]-$norm of $\xi$.     By  Theorem \ref{thm existence of good solutions to the uniform ODE with estimates}, we find 
 \begin{eqnarray}\label{eqnarray L2 estimate in model  case for laplacian}
& &\int_{B_{O}(\frac{1}{4})}\frac{|\xi|^{2}}{r^{4}}wdV
 =  \Sigma_{v}\int^{\frac{1}{4}}_{0}\int_{S^{6}(1)}\xi_{v}^{2}r^{-9}|\Psi_{v}|^{2}_{S}r^{6}wd\theta dr\nonumber
= \Sigma_{v}\int^{\frac{1}{4}}_{0}|\xi_{v}|^{2} r^{-3}wdr
\\&\leq &  \bar{C} \Sigma_{v}\int^{\frac{1}{4}}_{0}f_{v}^{2} rwdr=\bar{C}\int_{B_{O}(\frac{1}{4})}|\underline{f}|^{2}wdV. 
 \end{eqnarray} 
 
 By  Definition \ref{Def of L22 norm model case}, (\ref{eqnarray L2 estimate in model  case for laplacian}), (\ref{equ bounding L2 norm of gradient for the model cone laplace equation}),   and Proposition \ref{prop bounding L2 norm of Hessian for the model cone laplace equation}, the proof is complete when $\underline{f}$ satisfies the a priori conditions in the first paragraph of this proof. 
 
 In the general case, for any $\underline{f}\in L^{2}_{p,b}$, by Lemma \ref{lem density of smooth functions in weighted L2 space}, there exists a sequence   $\underline{f}_{j}\rightarrow \underline{f}$ in $L^{2}_{p,b}-$topology, and $\underline{f}_{j}$ satisfy the a priori conditions. We denote $\xi_{j}$ as $\mathfrak{Q}_{p,b,A_{O}}\underline{f}_{j}$.  
 By the a priori estimate proved (in the first step) and the linearity of $\mathfrak{Q}_{p,b,A_{O}}$,  $\xi_{j}$ is a Cauchy-sequence in $W^{2,2}_{p,b}[B_{O}(\frac{1}{4})]$. By completeness, $\xi_{j}$ converges to $\xi_{\infty}$ in $W^{2,2}_{p,b}[B_{O}(\frac{1}{4})]$.  Moreover, $\xi_{\infty}$ satisfies the estimate in Theorem \ref{thm W22 estimate on 1-forms}. We thus define $\mathfrak{Q}_{p,b,A_{O}}\underline{f} $  as $\xi_{\infty}$, the  bounds in Theorem \ref{thm W22 estimate on 1-forms} implies that this definition does not depend on the approximation. The proof  is complete. \end{proof}
 \begin{cor}\label{Cor solving model laplacian equation over the ball without the compact support RHS condition} Let $p,b$ be as in Theorem \ref{thm W22 estimate on 1-forms}. There exists a local right inverse $Q_{p,b,A_{O}}$  of $L_{A_{O}}$  with the following properties. For any  $\tau\leq \frac{1}{10}$, 
 \begin{itemize}
 \item $Q_{p,b,A_{O}}$ is bounded from $L^{2}_{p,b}[B_{O}(\tau)]$ to $W^{1,2}_{p,b}[B_{O}(\tau)]$. The bound is less than a  $\bar{C}$  as in Definition \ref{Def special constants}. In particular, it does not depend on $\tau$. 
\item $L_{A_{0}}Q_{p,b,A_{O}}=Id$ from  $L^{2}_{p,b}[B_{O}(\tau)]$  to itself.
\end{itemize}

\end{cor}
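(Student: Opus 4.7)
The plan is to reduce the corollary to Theorem~\ref{thm W22 estimate on 1-forms} by writing a right inverse of the first-order operator $L_{A_O}$ as $L_{A_O}$ applied to a right inverse of the second-order operator $-L_{A_O}^2$. Concretely, given $\underline{f}\in L^2_{p,b}[B_O(\tau)]$, I will first extend $\underline{f}$ by zero to $\tilde{f}\in L^2_{p,b}[B_O(\tfrac14)]$, which is isometric since we are on the same measure space with the same weight. Then I set
\begin{equation*}
Q_{p,b,A_O}\underline{f}\ \triangleq\ \bigl(-L_{A_O}\,\mathfrak{Q}_{p,b,A_O}\tilde{f}\bigr)\big|_{B_O(\tau)}.
\end{equation*}

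The identity $L_{A_O}Q_{p,b,A_O}\underline{f}=\underline{f}$ on $B_O(\tau)$ follows immediately from $-L_{A_O}^2\mathfrak{Q}_{p,b,A_O}\tilde{f}=\tilde{f}$ and the fact that $\tilde{f}=\underline{f}$ on $B_O(\tau)$. The main analytic point is the bound: I need
\begin{equation*}
|Q_{p,b,A_O}\underline{f}|_{W^{1,2}_{p,b}[B_O(\tau)]}\ \leq\ \bar{C}\,|\underline{f}|_{L^2_{p,b}[B_O(\tau)]},
\end{equation*}
with constant independent of $\tau$. This follows at once from the boundedness of the first-order operator $L_{A_O}:W^{2,2}_{p,b}[B_O(\tfrac14)]\to W^{1,2}_{p,b}[B_O(\tfrac14)]$, combined with Theorem~\ref{thm W22 estimate on 1-forms}. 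The $\tau$-independence is automatic because everything is done inside the fixed ambient ball $B_O(\tfrac14)$ and then restricted.

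The only nontrivial verification is that $L_{A_O}$ maps $W^{2,2}_{p,b}$ to $W^{1,2}_{p,b}$ boundedly. Since $L_{A_O}$ is first order, schematically $|L_{A_O}\xi|\lesssim|\nabla_{A_O}\xi|$ and $|\nabla(L_{A_O}\xi)|\lesssim|\nabla^2_{A_O}\xi|$. Then
\begin{equation*}
\int \frac{|L_{A_O}\xi|^2}{r^2}\,w\,dV\ \lesssim\ \int\frac{|\nabla_{A_O}\xi|^2}{r^2}\,w\,dV,\qquad \int|\nabla(L_{A_O}\xi)|^2\,w\,dV\ \lesssim\ \int|\nabla^2_{A_O}\xi|^2\,w\,dV,
\end{equation*}
both of which are controlled by $|\xi|^2_{W^{2,2}_{p,b}}$ by definition of that norm. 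This uses only that $A_O$ is a cone connection together with the weight $w=r^{2p}(-\log r)^{2b}$; no new $A_O$-specific analysis is needed.

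I do not expect serious obstacles here; the content of the corollary is just the passage from a local right inverse of $-L_{A_O}^2$ (Theorem~\ref{thm W22 estimate on 1-forms}) to one for $L_{A_O}$, losing exactly one order of regularity. The only delicate point worth pausing on is checking that extending $\underline{f}$ by zero is valid in our $L^2_{p,b}$ topology for arbitrary elements rather than only for the dense class of Definition~\ref{Def of L22 norm model case}; this is handled by Lemma~\ref{lem density of smooth functions in weighted L2 space}, which identifies $L^2_{p,b}[B_O(\rho)]$ with genuine square-integrable functions with respect to $w$, so that trivial extension is an isometry onto its image.
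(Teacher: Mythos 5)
Your proof is correct and is essentially the same as the paper's: extend $\underline{f}$ by zero to $B_O(\tfrac14)$, set $Q_{p,b,A_O}=-L_{A_O}\mathfrak{Q}_{p,b,A_O}$, and restrict. You just spell out the routine verification that $L_{A_O}:W^{2,2}_{p,b}\to W^{1,2}_{p,b}$ is bounded (from the cone structure of $A_O$ and the definitions of the weighted norms), which the paper leaves implicit.
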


\begin{proof}[Proof of Corollary \ref{Cor solving model laplacian equation over the ball without the compact support RHS condition}:] By extending to vanish outside $B_{O}(\tau)$,  $\underline{f}$ can be viewed as a section in $L^{2}_{p,b}[B_{O}(\frac{1}{4})]$. It suffices to take $Q_{p,b,A_{O}}=-L_{A_{O}}\mathfrak{Q}_{p,b,A_{O}}$ and restrict it to $B_{O}(\tau)$. Under this extension,   $Q_{p,b,A_{O}}$ does not depend on $\tau$.   \end{proof}

 %We first compute via integration by parts that  
% \begin{eqnarray}\label{eqnarray integrating reciprocal of r}
%& & \int^{1}_{0} \bar{u}^{2} (\log r)^{2b-2}r^{-3} dr=\int  \bar{u}^{2} (\log r)^{2b-2}r^{-2} d\log r 
%\\&=&\frac{1}{2b-1}\frac{\bar{u}^{2}}{r^{2}}(\log r)^{2b-1}|^{1}_{0}+\frac{2}{2b-1}\int^{1}_{0} \frac{\bar{u}^{2}}{r^{3}} (\log r)^{2b-1} dr
%-\frac{2}{2b-1}\int^{1}_{0} \frac{\bar{u}}{r^{2}}\frac{d \bar{u}}{\partial r} (\log r)^{2b-1} dr.\nonumber
 %\end{eqnarray}
 
 %Then  implies that 
 % \begin{eqnarray}\label{eqnarray bounding lower log weight quantity by high log weight}
%\int^{1}_{0} \bar{u}^{2} (\log r)^{2b-2}r^{-3} dr
%\leq \frac{C_{p,b}}{\epsilon_{1}}\int^{1}_{0} \frac{\bar{u}^{2}}{r^{3}} (\log r)^{2b-1} dr
%+\epsilon_{1} \int^{1}_{0} |\frac{\partial \bar{u}}{\partial r}|^{2} \frac{(\log r)^{2b-1}}{r} dr.
 %\end{eqnarray}
 
 %By the same trick as in (\ref{eqnarray bounding lower log weight q%uantity by high log weight}) and (\ref{eqnarray integrating reciprocal of r}),

 Next, we establish two crucial building-blocks of Theorem \ref{thm W22 estimate on 1-forms}.
\begin{lem}\label{lem bounding L2 norm of gradient for the model cone laplace equation}Under the same conditions in Theorem \ref{thm W22 estimate on 1-forms},  suppose $\underline{f}$ satisfies the a priori conditions in the first paragraph of proof of Theorem \ref{thm W22 estimate on 1-forms}. Let $\xi=\mathfrak{Q}_{p,b,A_{O}}\underline{f}$. Then  for any $ \varrho \in (0,1]$, the following bound holds. 
\begin{equation}\label{equ bounding L2 norm of gradient for the model cone laplace equation}\int_{B_{O}(\frac{\varrho}{4.5})}\frac{|\nabla_{A_{O}}\xi|^{2}}{r^{2}}wdV\leq \bar{C} (\int_{B_{O}(\frac{\varrho}{4})}\frac{|\xi|^{2}}{r^{4}}wdV+\int_{B_{O}(\frac{\varrho}{4})}|\underline{f}|^{2} wdV).
\end{equation}
 $\bar{C}$ is as in Definition \ref{Def special constants}, thus is \textbf{independent of $\varrho$}. 
\end{lem}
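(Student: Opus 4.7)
The plan is a weighted Caccioppoli-type energy estimate, using the Weitzenb\"ock-style formula for $L_{A_{O}}^{2}$. Since $\xi=\mathfrak{Q}_{p,b,A_{O}}\underline{f}$ satisfies $-L_{A_{O}}^{2}\xi=\underline{f}$, and Lemma \ref{lem formula for LA squared} together with Proposition \ref{prop seperation of variable for general cone} gives a decomposition $-L_{A_{O}}^{2}=\nabla_{A_{O}}^{\star}\nabla_{A_{O}}+\mathcal{A}$ where $\mathcal{A}$ is algebraic with $|\mathcal{A}|_{\textrm{op}}\leq \bar{C}r^{-2}$ (since $|F_{A_{O}}|\leq \bar{C}r^{-2}$), I have $\nabla_{A_{O}}^{\star}\nabla_{A_{O}}\xi=\underline{f}-\mathcal{A}\xi$. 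Choose a smooth radial cutoff $\eta$ supported in $B_{O}(\varrho/4)$ with $\eta\equiv 1$ on $B_{O}(\varrho/4.5)$; since $\nabla\eta$ is supported in the annulus $r\sim\varrho$, both $|\nabla\eta|\leq C\varrho^{-1}$ and $|\nabla\eta|\leq Cr^{-1}$ may be assumed, a crucial point for $\varrho$-independence.

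Testing the identity against $\xi\eta^{2}wr^{-2}$ and integrating by parts over $B_{O}(\varrho/4)\setminus B_{O}(\epsilon)$ gives
\begin{eqnarray*}
\int |\nabla_{A_{O}}\xi|^{2}\eta^{2}\frac{w}{r^{2}}dV
&=&-\int \langle \nabla_{A_{O}}\xi,\xi\otimes \nabla(\eta^{2}wr^{-2})\rangle dV\\
& & +\int \langle \underline{f},\xi\rangle \eta^{2}\frac{w}{r^{2}}dV-\int \langle \mathcal{A}\xi,\xi\rangle \eta^{2}\frac{w}{r^{2}}dV+\mathcal{E}(\epsilon),
\end{eqnarray*}
where $\mathcal{E}(\epsilon)$ is the inner boundary flux on $S^{6}(\epsilon)$. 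A direct computation with $w=r^{2p}(-\log r)^{2b}$ yields $|\nabla(\eta^{2}wr^{-2})|^{2}/(\eta^{2}wr^{-2})\leq \bar{C}wr^{-4}$ throughout $\{\eta>0\}$, with constant independent of $\varrho$: the $\nabla\eta$ contribution is bounded by $|\nabla\eta|^{2}wr^{-2}\leq \bar{C}wr^{-4}$ by the collar estimate, and the $\nabla(wr^{-2})$ contribution by logarithmic differentiation. Applying Cauchy-Schwarz with a small parameter $\delta$ on the cross term, the bound $|\mathcal{A}|\leq \bar{C}r^{-2}$ on the algebraic term, and AM-GM on the source term $\int |\underline{f}||\xi|\eta^{2}wr^{-2}\leq \tfrac{1}{2}\int |\underline{f}|^{2}w+\tfrac{1}{2}\int |\xi|^{2}wr^{-4}$, one absorbs the $\delta$-piece into the left hand side. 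Using $\eta\equiv 1$ on $B_{O}(\varrho/4.5)$ then delivers (\ref{equ bounding L2 norm of gradient for the model cone laplace equation}).

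The one step that is not routine is showing $\mathcal{E}(\epsilon)\to 0$, and this is the main obstacle. It must be extracted from the explicit formulas defining $\mathfrak{Q}_{p,b,A_{O}}$: for every $v$ appearing in the finite Fourier expansion of $\underline{f}$ either $\xi_{v}$ vanishes identically on the region below $\textrm{supp}(f)$ (which is the case in (\ref{equ solution formula when v positive and p less than 1-v}), (\ref{equ solution when v is purely imaginary}), (\ref{equ solution when v=0}), since the only integrals present are $\int_{0}^{r}$ which are zero for small $r$), or $v>1-p$ from (\ref{equ solution when v is real and  p>1-v is integral from 1 to r}), in which case $\xi_{v}\sim cr^{v}$ near $0$ and the flux scales as $\epsilon^{2v+2p}=\epsilon^{2(v+p)}$ with exponent $>2>0$. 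In either subcase $\mathcal{E}(\epsilon)\to 0$, closing the estimate. The fact that $|\nabla\eta|^{2}\leq Cr^{-2}$ in the collar, rather than merely $\leq C\varrho^{-2}$, is what makes the final constant $\bar{C}$ independent of $\varrho$ as required by Definition \ref{Def special constants}.
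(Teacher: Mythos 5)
Your proof is correct and follows the same overall strategy as the paper: a weighted Caccioppoli estimate obtained by testing the Weitzenb\"ock identity $\nabla^{\star}_{A_O}\nabla_{A_O}\xi=-\underline{f}+F_{A_O}\otimes\xi$ against $\xi\,\eta^2 w r^{-2}$, exploiting $|F_{A_O}|\leq \bar{C}r^{-2}$, $|\nabla w|\leq Cw/r$, and the observation that the collar bound $|\nabla\eta|\leq C r^{-1}$ makes the constant $\varrho$-independent. The one place you diverge from the paper is how the inner boundary contribution is killed. The paper first multiplies by a second, $\epsilon$-scale cutoff $\eta_\epsilon$ (so there is no boundary term, only an extra integrand supported in $B_O(2\epsilon)\setminus B_O(\epsilon)$), and then shows that extra piece vanishes by appealing to the separate bootstrapping Lemma \ref{lem bound on C3 norm of solution to laplace equation when f is smooth and vainishes near O}, which itself relies on the Fourier mode exclusion argument plus the local Schauder/$C^0$ estimates of Lemma \ref{lem Schauder estimate in local coordinates in small balls} and (\ref{equ 0 estimate local version}). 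You instead integrate directly on the punctured ball and estimate the resulting flux on $S^6(\epsilon)$ by reading off the asymptotics of $\xi$ near $O$ straight from the defining formulas (\ref{equ solution when v is real and  p>1-v is integral from 1 to r})--(\ref{equ solution when v=0}): for all but the $v>1-p$ modes, $\xi_v\equiv 0$ below $\mathrm{supp}(\underline{f})$, and for $v>1-p$ the flux scales like $\epsilon^{2(v+p)}(-\log\epsilon)^{2b}$ with $v+p>1$. This is more self-contained in the present model setting (you never leave the explicit Fourier picture and avoid the detour through the $C^0$ estimate), while the paper's route via Lemma \ref{lem bound on C3 norm of solution to laplace equation when f is smooth and vainishes near O} has the advantage of being reusable later for the global a priori estimate in Theorem \ref{thm global apriori L22 estimate}. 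Either closes the argument.
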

\begin{rmk} The estimate is independent of $\varrho$  because (\ref{equ bounding L2 norm of gradient for the model cone laplace equation}) is   scaling-correct. This is  important for (\ref{equ Thm global apriori est 1}). When $\underline{f}$ satisfies the a priori conditions, $\mathfrak{Q}_{p,b,A_{O}}\underline{f}$ is smooth away from $O$, thus every term in the proofs of Theorem \ref{thm W22 estimate on 1-forms}, Lemma \ref{lem bounding L2 norm of gradient for the model cone laplace equation}, Proposition \ref{prop bounding L2 norm of Hessian for the model cone laplace equation} makes sense.
\end{rmk}
\begin{proof}  Let $\eta_{\epsilon}$ be the standard cut-off function (of the singular point $O$) which  vanishes in $B_{O}(\epsilon)$, and is identically $1$ when $r\geq 2\epsilon$. We have 
\begin{equation}\label{equ cut-off function bound near the singular point}
\epsilon|\nabla \eta_{\epsilon}|+\epsilon^{2}|\nabla^{2} \eta_{\epsilon}|< \bar{C},\ \textrm{when}\ \epsilon<<\varrho. 
\end{equation}

 Let $\chi$ be the standard cut-off function  supported in $B_{O}(\frac{\varrho}{4})$ and identically $1$ over $B_{O}(\frac{\varrho}{4.5})$, $|\nabla \chi|\leq \frac{\bar{C}}{\varrho}$.  Lemma \ref{lem formula for LA squared} implies 
\begin{equation}\label{equ in L2 bound on the gradient consequence of Bochner formula}
\nabla^{\star}_{A_{O}}\nabla_{A_{O}}\xi=-\underline{f}+F_{A_{O}}\otimes \xi.
\end{equation} We compute 
\begin{eqnarray}\label{eqnarray in lemma bounding L2 norm of gradient for the model cone laplace equation}& &
\int_{B_{O}(\frac{\varrho}{4})}\frac{|\nabla_{A_{O}}\xi|^{2}}{r^{2}}\eta_{\epsilon}\chi^{2} wdV
\\&=& \int_{B_{O}(\frac{\varrho}{4})}\frac{<\xi, \nabla^{\star}_{A_{O}}\nabla_{A_{O}}\xi>}{r^{2}}\eta_{\epsilon}w\chi^{2} dV- \int_{B_{O}(\frac{\varrho}{4})}<\xi, [\nabla(\frac{\eta_{\epsilon}\chi^{2} w}{r^{2}})]\lrcorner\nabla_{A_{O}}\xi> dV\nonumber
\\&=& -\int_{B_{O}(\frac{\varrho}{4})}\frac{<\xi,\underline{f}>}{r^{2}}\eta_{\epsilon}w\chi^{2} dV+\int_{B_{O}(\frac{\varrho}{4})}\frac{<\xi,F_{A_{O}}\otimes \xi>}{r^{2}}\eta_{\epsilon}w\chi^{2} dV\nonumber
\\& & -\int_{B_{O}(\frac{\varrho}{4})}<\xi, (\nabla \eta_{\epsilon})\lrcorner\nabla_{A_{O}}\xi> \frac{\chi^{2} w}{r^{2}}dV-\int_{B_{O}(\frac{\varrho}{4})}<\xi, (\nabla \frac{\chi^{2} w}{r^{2}})\lrcorner\nabla_{A_{O}}\xi> \eta_{\epsilon}dV
\nonumber
\end{eqnarray}

By Definition \ref{Def of L22 norm model case}, the cheap estimate  $|F_{A_{O}}|\leq \frac{\bar{C}}{r^{2}}$, and Cauchy-Schwartz inequality, the first 2 terms on the most right hand side of (\ref{eqnarray in lemma bounding L2 norm of gradient for the model cone laplace equation})   are    bounded by the right hand side of (\ref{equ bounding L2 norm of gradient for the model cone laplace equation}), uniformly in $\epsilon$. Note 
\begin{equation}\label{equ gradient of the cutoff function and weight}|\nabla w|\leq \frac{Cw}{r}\Longrightarrow|\nabla (\frac{\chi^{2} w}{r^{2}})|\leq \frac{\bar{C}\chi^{2} w}{r^{3}}+2\chi|\nabla\chi|\frac{ w}{r^{2}},\ 
\end{equation} 
hence we obtain the following bound on the last term.
  \begin{eqnarray}\label{eqnarray in model L12 bound 1}& &\int_{B_{O}(\frac{\varrho}{4})}<\xi, (\nabla \frac{\chi^{2} w}{r^{2}})\lrcorner\nabla_{A_{O}}\xi> \eta_{\epsilon}dV
\\&\leq & \bar{C}\int_{B_{O}(\frac{\varrho}{4})} |\xi||\nabla_{A_{O}}\xi|\frac{\chi^{2}w\eta_{\epsilon}}{r^{3}} dV+ \bar{C}\int_{B_{O}(\frac{\varrho}{4})} |\xi||\nabla_{A_{O}}\xi|\frac{\chi|\nabla \chi|w\eta_{\epsilon}}{r^{2}} dV \nonumber
\\&\leq &  \vartheta \int_{B_{O}(\frac{\varrho}{4})}\frac{|\nabla_{A_{O}}\xi|^{2}}{r^{2}}\eta_{\epsilon}\chi^{2} wdV+\bar{C}_{\vartheta}\int_{B_{O}(\frac{\varrho}{4})}\frac{|\xi|^{2}}{r^{4}}\chi^{2}w\eta_{\epsilon}dV\nonumber
\\& &+\bar{C}_{\vartheta}\int_{B_{O}(\frac{\varrho}{4})}|\nabla\chi|^{2}\frac{|\xi|^{2}}{r^{2}}w\eta_{\epsilon}dV.\nonumber
\end{eqnarray}
On the last term above, we notice that in $B_{O}(\frac{\varrho}{4})$, $\frac{1}{\varrho}\leq \frac{1}{r}$. By 
$|\nabla \chi|\leq \frac{\bar{C}}{\varrho}$,  we obtain the following bound 
\begin{equation}\label{equ in model L12 bound 1}
\bar{C}_{\vartheta}\int_{B_{O}(\frac{\varrho}{4})}|\nabla\chi|^{2}\frac{|\xi|^{2}}{r^{2}}w\eta_{\epsilon}dV\leq \bar{C}_{\vartheta}\int_{B_{O}(\frac{\varrho}{4})}\frac{|\xi|^{2}}{r^{4}}w\eta_{\epsilon}dV.
\end{equation}

Therefore (\ref{eqnarray in model L12 bound 1}) and (\ref{equ in model L12 bound 1}) imply 
\begin{eqnarray}\label{eqnarray the L12 estimate model case with a small error term on the right to be absorbed}
& &\int_{B_{O}(\frac{\varrho}{4})}<\xi, (\nabla \frac{\chi^{2} w}{r^{2}})\lrcorner\nabla_{A_{O}}\xi> \eta_{\epsilon}dV
\\&\leq &\vartheta \int_{B_{O}(\frac{\varrho}{4})}\frac{|\nabla_{A_{O}}\xi|^{2}}{r^{2}}\eta_{\epsilon}\chi^{2} wdV+\bar{C}_{\vartheta}\int_{B_{O}(\frac{\varrho}{4})}\frac{|\xi|^{2}}{r^{4}}wdV\nonumber.
\end{eqnarray}

Let $\vartheta=\frac{1}{10}$,  plugging (\ref{eqnarray the L12 estimate model case with a small error term on the right to be absorbed}) in (\ref{eqnarray in lemma bounding L2 norm of gradient for the model cone laplace equation}) and using the remark under (\ref{eqnarray in lemma bounding L2 norm of gradient for the model cone laplace equation}), we find 
\begin{eqnarray}\label{eqnarray L2 bound on the gradient model case crucial decomposition}
& &\int_{B_{O}(\frac{\varrho}{4})}\frac{|\nabla_{A_{O}}\xi|^{2}}{r^{2}}\eta_{\epsilon}w\chi^{2} dV
\\&\leq & \bar{C}\{\int_{B_{O}(\frac{\varrho}{4})}\frac{|\xi|^{2}}{r^{4}}wdV+\int_{B_{O}(\frac{\varrho}{4})}|\underline{f}|^{2} wdV+|\int_{B_{O}(\frac{\varrho}{4})}<\xi,\nabla_{A_{O},\nabla \eta_{\epsilon}}\xi> \frac{\chi^{2} w}{r^{2}}dV|\}. \nonumber
\end{eqnarray}
It suffices to show $\Pi_{1}$ approaches $0$ as $\epsilon\rightarrow 0$. The condition on $\underline{f}$ implies   $L^{2}_{A_{O}}\xi=0$ in $B_{O}(r_{\underline{f}})$, for some $r_{\underline{f}}>0$. Since  $\xi=\mathfrak{Q}_{p,b,A_{O}}\underline{f}\in  L^{2}_{p-2,b}[B_{O}(\frac{\varrho}{4})])$,  Lemma \ref{lem bound on C3 norm of solution to laplace equation when f is smooth and vainishes near O} (with "$p$" replaced by $p-1$) gives 
 \begin{equation}
 |\xi|\leq \frac{C_{\underline{f}}}{r^{\frac{3}{2}+p-\lambda}},\ |\nabla_{A_{O}}\xi|\leq \frac{C_{\underline{f}}}{r^{\frac{5}{2}+p-\lambda}},\ \lambda>0.\ \textrm{Hence when}\ \epsilon\ \textrm{goes to}\ 0,
\end{equation}
\begin{equation}\label{equ integration by parts holds true in the case of L12 model estimate wrt to cone}
\Pi_{1}\leq C_{\underline{f}}\int_{B_{O}(2\epsilon)-B_{O}(\epsilon)}|\nabla_{A_{O}}\xi||\xi|r^{2p-3}(-\log r)^{2b}dV\leq C_{\underline{f}}\epsilon^{2\lambda}(-\log \epsilon)^{2b}\rightarrow 0\nonumber
\end{equation} The proof of (\ref{equ bounding L2 norm of gradient for the model cone laplace equation}) is complete. \end{proof}
 Using almost the same technique, we obtain
   \begin{prop}\label{prop bounding L2 norm of Hessian for the model cone laplace equation}Let $p,b$,$\underline{f}$,$\xi,\varrho$  be as in Lemma \ref{lem bounding L2 norm of gradient for the model cone laplace equation}. Then 
  \begin{eqnarray*}& &  \int_{B_{O}(\frac{\varrho}{5})}|\nabla^{2}_{A_{O}}\xi|^{2}wdV
   \\& \leq & \bar{C}\int_{B_{O}(\frac{\varrho}{4.5})}\frac{|\nabla_{A_{O}}\xi|^{2}}{r^{2}}wdV+\bar{C} \int_{B_{O}(\frac{\varrho}{4.5})}\frac{|\xi|^{2}}{r^{4}}wdV+\bar{C}\int_{B_{O}(\frac{\varrho}{4.5})}|\underline{f}|^{2} wdV. 
 \end{eqnarray*}
  \end{prop}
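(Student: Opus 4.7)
The plan is to parallel the proof of Lemma \ref{lem bounding L2 norm of gradient for the model cone laplace equation}, but to integrate by parts \emph{twice} instead of once. By Lemma \ref{lem formula for LA squared} one has $\nabla^{\star}_{A_{O}}\nabla_{A_{O}}\xi=-\underline{f}+F_{A_{O}}\otimes \xi$, so combined with the cheap curvature bound $|F_{A_{O}}|\le \bar{C}/r^{2}$,
$$\int_{B_{O}(\frac{\varrho}{4.5})}|\nabla^{\star}_{A_{O}}\nabla_{A_{O}}\xi|^{2}w\,dV\le \bar{C}\int |\underline{f}|^{2}w\,dV+\bar{C}\int\frac{|\xi|^{2}}{r^{4}}w\,dV.$$
It therefore suffices to compare $\int|\nabla^{2}_{A_{O}}\xi|^{2}w$ with $\int|\nabla^{\star}_{A_{O}}\nabla_{A_{O}}\xi|^{2}w$ up to the allowed error $\bar{C}\int |\nabla_{A_{O}}\xi|^{2}r^{-2}w$.

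Choose $\chi$ a cut-off with $\chi\equiv 1$ on $B_{O}(\frac{\varrho}{5})$, supported in $B_{O}(\frac{\varrho}{4.5})$, with $|\nabla\chi|\le \bar{C}/\varrho$, and let $\eta_{\epsilon}$ be the singular-point cut-off from the previous lemma, satisfying (\ref{equ cut-off function bound near the singular point}). Starting from $\int\sum_{i,j}\langle \nabla_{A_{O},i}\nabla_{A_{O},j}\xi,\nabla_{A_{O},i}\nabla_{A_{O},j}\xi\rangle\chi^{2}\eta_{\epsilon}w$ and integrating by parts twice, commuting two covariant derivatives via $[\nabla_{A_{O},i},\nabla_{A_{O},j}]\zeta=F_{A_{O}}(e_{i},e_{j})\cdot\zeta$, one arrives at
$$\int|\nabla^{2}_{A_{O}}\xi|^{2}\chi^{2}\eta_{\epsilon}w=\int|\nabla^{\star}_{A_{O}}\nabla_{A_{O}}\xi|^{2}\chi^{2}\eta_{\epsilon}w+I_{F}+I_{w}+I_{\partial},$$
where $I_{F}$ collects the curvature/commutator terms, $I_{w}$ gathers terms in which at least one derivative lands on the weight $\chi^{2}\eta_{\epsilon}w$, and $I_{\partial}$ is the boundary contribution supported where $\nabla\eta_{\epsilon}\neq 0$.

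For $I_{w}$, formula (\ref{equ gradient of the cutoff function and weight}) combined with $|\nabla\chi|\le \bar{C}/\varrho\le \bar{C}/r$ on the support yields $|\nabla(\chi^{2}\eta_{\epsilon}w)|\le \bar{C}\chi\eta_{\epsilon}w/r$ plus controlled cut-off corrections, so Cauchy--Schwarz gives
$$|I_{w}|\le \vartheta\int|\nabla^{2}_{A_{O}}\xi|^{2}\chi^{2}\eta_{\epsilon}w+\bar{C}_{\vartheta}\int\frac{|\nabla_{A_{O}}\xi|^{2}}{r^{2}}w\,dV,$$
with the first summand absorbed into the left-hand side by choosing $\vartheta=\frac{1}{10}$. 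For $I_{F}$, each commutator produces a factor $F_{A_{O}}\otimes\xi$, and after a further integration by parts certain terms pick up $\nabla F_{A_{O}}$; the cone scaling $|F_{A_{O}}|\le \bar{C}/r^{2}$, $|\nabla F_{A_{O}}|\le \bar{C}/r^{3}$ together with Cauchy--Schwarz bound $|I_{F}|$ by $\bar{C}\int|\xi|^{2}r^{-4}w+\bar{C}\int|\nabla_{A_{O}}\xi|^{2}r^{-2}w$.

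Finally, $I_{\partial}$ is a finite sum of annular integrals of the form $\int_{\epsilon\le r\le 2\epsilon}|\nabla^{2}_{A_{O}}\xi||\nabla_{A_{O}}\xi|\,w$ and $\int_{\epsilon\le r\le 2\epsilon}|\nabla_{A_{O}}\xi||\xi|w/r$; exactly as in the last paragraph of the proof of Lemma \ref{lem bounding L2 norm of gradient for the model cone laplace equation}, since $\underline{f}$ vanishes near $O$, Lemma \ref{lem bound on C3 norm of solution to laplace equation when f is smooth and vainishes near O} supplies pointwise polynomial decay of $|\xi|,|\nabla_{A_{O}}\xi|,|\nabla^{2}_{A_{O}}\xi|$ sharp enough to force $I_{\partial}\to 0$ as $\epsilon\to 0$. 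The main obstacle I expect is the bookkeeping of $I_{F}$: two integrations by parts combined with two commutations generate a nontrivial list of curvature terms, and one must verify that the scaling of each matches exactly the weights $r^{2p-4}w$ and $r^{2p-2}w$ on the right-hand side, a fit made possible precisely because $|F_{A_{O}}|\sim r^{-2}$ for a cone connection.
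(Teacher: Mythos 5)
Your proposal is essentially the same as the paper's proof: both reduce $\int|\nabla^{2}_{A_{O}}\xi|^{2}w$ to $\int|\nabla^{\star}_{A_{O}}\nabla_{A_{O}}\xi|^{2}w$ by two integrations by parts with two commutations, absorb the resulting curvature and weight-derivative terms using $|F_{A_{O}}|\leq \bar{C}/r^{2}$, $|\nabla w|\leq Cw/r$, $|\nabla\chi|\leq \bar{C}/\varrho\leq \bar{C}/r$, and Cauchy--Schwarz with $\vartheta=\frac{1}{10}$, and kill the $\nabla\eta_{\epsilon}$ boundary terms via the decay from Lemma \ref{lem bound on C3 norm of solution to laplace equation when f is smooth and vainishes near O}. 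The one small divergence is that you anticipate needing $|\nabla F_{A_{O}}|\leq\bar{C}/r^{3}$ after a further integration by parts on the commutator terms, whereas the paper simply applies Cauchy--Schwarz directly to $\int\langle [F_{ij},\xi],d_{j}d_{i}\xi\rangle\chi^{2}w$ and $\int\langle d_{j}\xi,[F_{ij},d_{i}\xi]\rangle\chi^{2}w$, so $\nabla F_{A_{O}}$ never appears; either works because the cone scaling is correct in both cases.
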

  \textbf{For the reader's convenience, we still do the full proof of Proposition \ref{prop bounding L2 norm of Hessian for the model cone laplace equation} in Section \ref{section Appendix E: Various integral identities and proof}}.
  \begin{Def}\label{Def global weight and Sobolev spaces}  Given an admissible  connection $A$,  let $\tau_{0}$ be small enough. Let $p<0$ be $A-$generic and $b\geq 0$.   Let $w_{p,b}$ be the smooth function  such that for any  singular point  $O_{j}$,
\begin{equation}  w_{p,b}=\left \{\begin{array}{cc}
  1& \textrm{when}\ r\geq 2\tau_{0}, \\
  r^{2p}(-\log r)^{2b}& \textrm{when}\ r\leq \tau_{0},\end{array} \right. \end{equation}
   $r$ is the distance to $O_{j}$ in local coordinates (by abuse of notation). Away from the coordinate neighbourhoods of the singular points, $w_{p,b}\equiv 1$.
    
 Then we define the global $L^{2}_{p,b}-$space as the completion of smooth functions (away from the singular points) under the norm $|\cdot|_{L^{2}_{p,b}}$:
\begin{equation}\label{equ Def of global weighted L2 space for tame connections}
|\xi|^{2}_{L^{2}_{p,b}}=\int_{M}|\xi|^{2}w_{p,b}dV.
\end{equation}

We define the space $W^{1,2}_{p,b}$ as the completion of smooth functions (away from the singular points) under the norm 
\begin{equation}
|\xi|_{W^{1,2}_{p,b}}=|\nabla_{A}\xi|_{L^{2}_{p,b}}+|\xi|_{L^{2}_{p-1,b}}.
\end{equation}
 
\textbf{Convention of section-spaces}: the global norms over $M$ are denoted just as $W^{1,2}_{p,b}$ or $L^{2}_{p,b}$ without any symbol on the domain, the local norms are usually with 
a symbol indicating the domain (c.f. Definition \ref{Def of L22 norm model case}).
  \end{Def}
  \begin{Def}\label{Def volume forms} By abuse of notation, let $dV$ denote all the volume forms of our integrations. The convention is: locally, it usually means the Euclidean volume form; globally, it usually means the volume form determined by  $\phi$.
  
  Anyway, the $dV$ in various cases are equivalent up to a constant depending on the (reference) $G_{2}-$structure.
  \end{Def}

 \section{Global Theory}
By abuse of notation, from now on let $f$ denote the image.
  \subsection{Global apriori estimate. \label{section Global apriori estimate}} 
\begin{Def}\label{Def spectrum gap} For any real number $\tau$,  let $\vartheta_{\tau}$ denote the $v-$spectrum gap of $\Upsilon_{A_{O_{j}}}'$s at $\tau$ i.e the distance from $\tau$ to the closest $v-$eigenvalue (of any $\Upsilon_{A_{O_{j}}}$) other than $\tau$ itself. When the gap$>1$, we let $\vartheta_{\tau}=1$. \end{Def} 
The following bootstrapping lemma for the model operator is important especially for Theorem \ref{thm global apriori L22 estimate}. 
 \begin{lem}\label{lem bound on C3 norm of solution to laplace equation when f is smooth and vainishes near O} Let $ \tau_{0} \in (0,\frac{1}{10})$, $p< 0$, $b\geq 0$. Suppose   $\xi\in L^{2}_{p-1,b}[B_{O}(2\tau_{0})]$, and  $L^{2}_{A_{O}}\xi=0$ in $B_{O}(2\tau_{0})$. Then $\xi$ is actually  in $ L^{2}_{p-1-\lambda,b}[B_{O}(2\tau_{0})]$  for all $0\leq \lambda <\vartheta_{-p}$, and we have the following estimates. 
 \begin{equation}\label{equ local kernel regularity 0}
 |\xi|_{L^{2}_{p-1-\lambda,b}[B_{O}(2\tau_{0})]}\leq \frac{\bar{C}}{\tau_{0}^{1+\lambda}}|\xi|_{L^{2}_{p,b}[B_{O}(2\tau_{0})]}.
 \end{equation}
\begin{equation}\label{equ local kernel regularity 0.5}
r|\xi|+r^{2}|\nabla_{A_{O}}\xi|+r^{3}|\nabla^{2}_{A_{O}}\xi|+r^{4}|\nabla^{3}_{A_{O}}\xi|\leq C_{\xi,\tau_{0}}r^{-\frac{3}{2}-p+\lambda}\ \textrm{when}\ r<\tau_{0}.  
\end{equation}
$\bar{C}$ is \textbf{independent of $\tau_{0}$}.

\end{lem}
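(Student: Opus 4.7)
The plan is to exploit the separation of variables from Proposition~\ref{prop seperation of variable for general cone} and analyze every Fourier mode explicitly, since $L_{A_O}^2\xi = 0$ reduces each mode to an Euler ODE with elementary closed-form solutions. Writing $\xi = \sum_v \underline{\xi}_v(r)\Psi_v$ with $\Psi_v$ the orthonormal eigensections of $\Upsilon_{A_O}$ on $S^6$ and $\xi_v = r^{3/2}\underline{\xi}_v$, the homogeneous version of (\ref{equ the ODE of v for 1 forms}) reads
\begin{equation*}
\xi_v'' + r^{-1}\xi_v' - v^2 r^{-2}\xi_v = 0,
\end{equation*}
whose fundamental solutions are $r^{\pm v}$ when $v>0$ is real, $\cos(\mu\log r)$ and $\sin(\mu\log r)$ when $v = i\mu$ is purely imaginary, and $1,\log r$ when $v = 0$. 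Thus $\underline{\xi}_v$ is explicit up to two constants per mode.

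The first step is to translate the hypothesis into Parseval form. Orthonormality of $\Psi_v$ on $S^6$ and $dV\sim r^6 dr\,d\theta$ give
\begin{equation*}
|\xi|^2_{L^2_{p-1,b}[B_O(2\tau_0)]} = \sum_v \int_0^{2\tau_0}|\xi_v(r)|^2 r^{2p+1}(\log r)^{2b}\,dr.
\end{equation*}
Finiteness kills the coefficients of the too-singular fundamental solutions mode by mode, leaving one surviving combination with known $r$-power dependence. Because $p$ is $A$-generic, neither $-p$ nor $1-p$ is a $v$-eigenvalue, and the gap assumption $\lambda < \vartheta_{-p}$ guarantees that the strip of exponents $|v| \in [-p,-p+\lambda]$ is spectrum-free. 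Consequently the same surviving modes remain finite when $p$ is replaced by $p-\lambda$, and the elementary integrals $\int_0^{2\tau_0} r^{2v+2(p-\lambda)+1}\,dr$ are evaluated explicitly; the rescaling $r\mapsto \tau_0 r$ produces the factor $\tau_0^{-2(1+\lambda)}$, yielding (\ref{equ local kernel regularity 0}) after summing over $v$.

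For the pointwise estimate (\ref{equ local kernel regularity 0.5}), I would use that $L_{A_O}^2$ is a smooth second-order elliptic operator on each dyadic annulus $A_\rho = B_O(2\rho)\setminus B_O(\rho)$, with coefficients that rescale covariantly under $x\mapsto\rho x$. Standard interior $L^2$-to-$C^3$ estimates applied to the rescaled equation $L_{A_O}^2\xi = 0$ on $A_\rho$ give
\begin{equation*}
\sum_{k=0}^{3}\rho^{k+7/2}\sup_{A_\rho}|\nabla_{A_O}^k \xi| \leq C\,\|\xi\|_{L^2(A_{\rho/2}\cup A_\rho\cup A_{2\rho})},
\end{equation*}
and controlling the right-hand side by the upgraded weighted $L^2$-bound from Step~2 converts this into the pointwise estimate $r^{k+1}|\nabla^k_{A_O}\xi|\leq C_{\xi,\tau_0}\,r^{-3/2-p+\lambda}$ for $k=0,1,2,3$.

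The main obstacle is the bookkeeping in Step~2: verifying that the gap condition $\lambda < \vartheta_{-p}$ translates into a \emph{uniform} (in $v$) constant controlling the mode-by-mode upgrade from $L^2_{p-1,b}$ to $L^2_{p-1-\lambda,b}$. Modes with $v$ close to $-p$ are tight because the integrand $r^{2v+2(p-\lambda)+1}$ becomes near-critical there, so the factor $\vartheta_{-p}^{-1}$ inevitably enters via the Euler-ODE computations; and the $v=0$ mode (when it occurs) requires a separate logarithmic analysis analogous to Proposition~\ref{prop final ODE W22 estimate when v=0}. Once these edge cases are handled, the remaining modes enjoy geometric spectral growth, so both the $L^2$-summation and the eventual pointwise series converge without issue.
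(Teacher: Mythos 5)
Your overall strategy — Fourier-decompose with respect to $\Upsilon_{A_O}$, observe that $L_{A_O}^2\xi=0$ forces each coefficient $U_v$ to solve an Euler ODE with explicit fundamental solutions, kill the too-singular branches by the $L^2_{p-1,b}$-finiteness constraint, and then use the spectrum gap $\vartheta_{-p}$ to upgrade the weight — is exactly the argument the paper gives. One slip: you write ``Because $p$ is $A$-generic, neither $-p$ nor $1-p$ is a $v$-eigenvalue,'' but genericity is \emph{not} among the lemma's hypotheses (only $p<0$, $b\geq0$ are assumed), and the remark following the lemma in the paper explains precisely why it is unnecessary: even if $-p$ is a $v$-eigenvalue, the surviving fundamental solution $r^{-p}$ has $\int_0 r^{-1}(-\log r)^{2b}\,dr = \infty$, so finiteness of the $L^2_{p-1,b}$-norm forces the $v=-p$ coefficient to vanish without any genericity assumption. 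Your proof should make that observation directly instead of invoking a hypothesis you don't have. Once that is patched the mode-by-mode upgrade goes through exactly as in the paper, including the explicit integral comparison that produces the $\tau_0^{-(1+\lambda)}$ factor.

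For the $C^3$-bound (\ref{equ local kernel regularity 0.5}) you depart modestly from the paper: you use interior elliptic estimates on dyadic annuli plus scale-invariance of $L_{A_O}^2$ under dilations of the cone, whereas the paper cites the $C^0$-estimate (\ref{equ 0 estimate local version}) and then bootstraps via the weighted Schauder estimate of Lemma~\ref{lem Schauder estimate in local coordinates in small balls}. Both routes are standard and correct; yours is perhaps more self-contained and makes the role of scale-invariance explicit, at the cost of having to verify a two-step $L^2$-to-$W^{8,2}$ bootstrap before Sobolev embedding reaches $C^{3}$ in dimension 7. The resulting pointwise bound $\rho^{k+1}|\nabla^k\xi|\lesssim\rho^{-3/2-p+\lambda}(-\log\rho)^{-b}$ is in fact slightly stronger than what (\ref{equ local kernel regularity 0.5}) demands, so the conversion is fine.
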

\begin{rmk} We don't need $p$ to be $A_{O}-$generic, since  by  condition (\ref{equ local regularity L2p-1 bound}),   any "harmonic" section in $W^{1,2}_{p,b}$ does not have non-trivial component on $v=-p$.
\end{rmk}

\begin{proof}[Proof of Lemma \ref{lem bound on C3 norm of solution to laplace equation when f is smooth and vainishes near O}:] \textbf{The idea of proof is to use Fourier-expansion to rule out some bad eigenvalues}. We write
 \begin{equation}\label{equ local kernel regularity 1}
 \xi=r^{-\frac{3}{2}}U_{v}\Psi_{v}\ (\textrm{see Definition}\ \ref{Def v spectrum}).
 \end{equation}
  Since $\xi$ is harmonic, and $U_{v}$ is the spherical inner product,  by (\ref{equ cone formula for square of dirac}) (with right hand side as $0$), we directly verify  
 \begin{equation}
 \frac{\partial ^{2} U_{v} }{\partial r^{2}}+\frac{1}{r}\frac{\partial  U_{v} }{\partial r}-\frac{v^{2}U_{v}}{r^{2}}=0.
 \end{equation}
 This means 
 \begin{equation} \label{equ local kernel regularity 2}  U_{v}=\left \{
 \begin{array}{ccc} c_{1,v}r^{-v}+c_{2,v}r^{v};&\ v>0.\\
 c_{1,v}+c_{2,v}\log r;&\ v=0.\\
 c_{1,v}\sin(v\log r)+c_{2,v}\cos(v\log r),&\  v<0.
 \end{array}\right.
 \end{equation}
$c_{1,v}$, $c_{2,v}$ are constants.  The condition $\xi\in  L^{2}_{p-1,b}[B_{O}(2\tau_{0})]$ implies 
\begin{equation} \label{equ local regularity L2p-1 bound}
\int^{2\tau_{0}}_{0} U_{v}^{2}r^{2p-1}(-\log r)^{2b}dr<\infty.\end{equation}
   Since $p< 0$, the terms $1,\log r,\sin (v\log r),\cos (v\log r),r^{-v}$ can not appear. Moreover, $r^{v}$ can not appear  if $2v+2p\leq 0$. Thus,  only those $r^{v}$ with $v>-p$ will appear. Moreover, by the discreteness of the spectrum,  only those $r^{v}$ with $v\geq -p+\vartheta_{-p}$ could appear. In this case, we have  a $v-$ independent $L^{2}_{p-1-\lambda,b}-$estimate by the $L^{2}_{p,b}-$norm. 
 \begin{eqnarray*} \int^{2\tau_{0}}_{0}| r^{v}|^{2}r^{2p-2\lambda-1}(-\log r)^{2b}dr\leq  \frac{(2\tau_{0})^{2v+2p-2\lambda}(-\log 2\tau_{0})^{b}}{2v+2p-2\lambda}.
 \end{eqnarray*}

 On the other hand, \begin{equation}\int^{2\tau_{0}}_{0}|r^{v}|^{2}r^{2p+1}(-\log r)^{2b}dr
 \geq (-\log 2\tau_{0})^{b}\frac{(2\tau_{0})^{2v+2p+2}}{2v+2p+2}.
 \end{equation}
  Then 
 \begin{equation}\label{equ local kernel regularity 3}
 \int^{2\tau_{0}}_{0}|r^{v}|^{2}r^{2p-2\lambda-1}(-\log r)^{2b}dr\leq \bar{C}_{\lambda}\tau_{0}^{-2-2\lambda}\int^{2\tau_{0}}_{0}|r^{v}|^{2}r^{2p+1}(-\log r)^{2b}dr.
 \end{equation}
  Using  (\ref{equ local kernel regularity 1}) and (\ref{equ local kernel regularity 2}), (\ref{equ local kernel regularity 3}) is equivalent to   (\ref{equ local kernel regularity 0}). The estimate  (\ref{equ local kernel regularity 0.5}) is a direct consequence of  (\ref{equ 0 estimate local version}) and  Lemma \ref{lem Schauder estimate in local coordinates in small balls} (with $k=3$, $A=A_{O}$). 
\end{proof}
 
 \begin{clm}\label{clm local regularity L12 for global apriori estimate} Under the same conditions in Lemma \ref{lem bound on C3 norm of solution to laplace equation when f is smooth and vainishes near O}, we  have
   \begin{equation}\label{equ Thm global apriori estimate}|\xi|_{W^{1,2}_{p,b}[B_{O}(\tau_{0})]}\leq \frac{\bar{C}}{\tau_{0}}|\xi|_{L^{2}_{p,b}[B_{O}(2\tau_{0})]},\ \bar{C}\ \textrm{is independent of}\
    \tau_{0}.
 \end{equation}
 \end{clm}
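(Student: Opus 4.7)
The claim asks for a bound on $|\xi|_{W^{1,2}_{p,b}[B_{O}(\tau_{0})]}$ in terms of $\tau_0^{-1}|\xi|_{L^{2}_{p,b}[B_{O}(2\tau_{0})]}$, where $W^{1,2}_{p,b}$ has norm $|\nabla_{A_O}\xi|_{L^2_{p,b}} + |\xi|_{L^2_{p-1,b}}$. The $L^2_{p-1,b}$-piece is essentially free: it is precisely the $\lambda=0$ case of estimate (\ref{equ local kernel regularity 0}) in Lemma \ref{lem bound on C3 norm of solution to laplace equation when f is smooth and vainishes near O}, and restriction to $B_O(\tau_0)$ only tightens the bound. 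The heart of the proof is therefore the $L^2_{p,b}$-bound on $\nabla_{A_O}\xi$.

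The plan is to run a Bochner-style integration-by-parts argument modeled on the proof of Lemma \ref{lem bounding L2 norm of gradient for the model cone laplace equation}, but with the external cutoff governed by $\tau_0$ rather than by $\varrho$. Since $L_{A_O}^2\xi=0$, Lemma \ref{lem formula for LA squared} gives $\nabla^\star_{A_O}\nabla_{A_O}\xi = F_{A_O}\otimes \xi$ on $B_O(2\tau_0)$. Choose a cutoff $\chi$ supported in $B_O(2\tau_0)$, identically $1$ on $B_O(\tau_0)$, with $|\nabla\chi|\le \bar{C}/\tau_0$, together with the inner cutoff $\eta_\epsilon$ from (\ref{equ cut-off function bound near the singular point}) that vanishes near $O$. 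Expanding
\begin{equation*}
\int_{B_O(2\tau_0)} |\nabla_{A_O}\xi|^2 \eta_\epsilon \chi^2 w\, dV
= \int \langle \xi,\nabla^\star_{A_O}\nabla_{A_O}\xi\rangle\eta_\epsilon\chi^2 w\, dV
-\int \langle \xi,(\nabla(\eta_\epsilon\chi^2 w))\lrcorner \nabla_{A_O}\xi\rangle\, dV,
\end{equation*}
and substituting the Bochner identity, the first integral is bounded by $\bar{C}\int |\xi|^2 r^{-2}\chi^2 w\, dV$ via the cone estimate $|F_{A_O}|\le \bar{C}/r^2$.

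For the second integral, I split the derivative into contributions from $w/r^2$ (using (\ref{equ gradient of the cutoff function and weight})) and from $\nabla\chi$ and $\nabla\eta_\epsilon$. Cauchy--Schwarz with a small parameter $\vartheta$ lets me absorb a piece of $|\nabla_{A_O}\xi|^2\eta_\epsilon\chi^2 w$ on the left; the remaining pieces produce terms bounded by $\bar{C}\int |\xi|^2 r^{-2}w\,dV$ and by $\bar{C}\int |\nabla\chi|^2|\xi|^2 w\, dV$. The first of these is exactly $|\xi|^2_{L^2_{p-1,b}[B_O(2\tau_0)]}$, controlled by (\ref{equ local kernel regularity 0}). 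The second is controlled by $\bar{C}\tau_0^{-2}|\xi|^2_{L^2_{p,b}[B_O(2\tau_0)]}$ because $|\nabla\chi|\le\bar{C}/\tau_0$. The $\nabla\eta_\epsilon$ boundary term is handled exactly as in (\ref{equ integration by parts holds true in the case of L12 model estimate wrt to cone}): the pointwise decay (\ref{equ local kernel regularity 0.5}) for $\xi$ and $\nabla_{A_O}\xi$ forces it to vanish as $\epsilon\to 0$.

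Combining these estimates, letting $\epsilon\to 0$, and invoking (\ref{equ local kernel regularity 0}) with $\lambda=0$ to collapse both $|\xi|^2_{L^2_{p-1,b}[B_O(2\tau_0)]}$ and $\tau_0^{-2}|\xi|^2_{L^2_{p,b}[B_O(2\tau_0)]}$ into a uniform multiple of $\bar{C}\tau_0^{-2}|\xi|^2_{L^2_{p,b}[B_O(2\tau_0)]}$, yields $|\nabla_{A_O}\xi|_{L^2_{p,b}[B_O(\tau_0)]}\le \bar{C}\tau_0^{-1}|\xi|_{L^2_{p,b}[B_O(2\tau_0)]}$, and adding the $L^2_{p-1,b}$-piece already observed gives (\ref{equ Thm global apriori estimate}). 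The only genuinely delicate step is verifying the $\tau_0$-uniformity of the constant when all the cutoff derivatives and the weight derivative are homogeneous of the correct order; this is a scaling check already built into the bounds $|\nabla\chi|\le\bar{C}/\tau_0$ and $|\nabla w|\le Cw/r$, so no new difficulty arises.
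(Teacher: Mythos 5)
Your proof is correct and follows essentially the same route as the paper's. The paper's one-line proof says it suffices to show $\int_{B_{O}(\tau_{0})}|\nabla_{A_{O}}\xi|^{2}w\,dV\leq \bar{C}\int_{B_{O}(2\tau_{0})}\frac{|\xi|^{2}}{r^{2}}w\,dV$ by re-running the Bochner argument of Lemma \ref{lem bounding L2 norm of gradient for the model cone laplace equation} with the measure $\eta_{\epsilon}\chi^{2}w\,dV$ in place of $\eta_{\epsilon}\chi^{2}w\,r^{-2}dV$, and then the $\tau_{0}^{-1}$ factor comes from (\ref{equ local kernel regularity 0}) at $\lambda=0$; you carry out exactly this calculation, including the correct disposal of the $\nabla\eta_{\epsilon}$ boundary term via (\ref{equ local kernel regularity 0.5}) and the identification of the cutoff terms with $|\xi|^{2}_{L^{2}_{p-1,b}}$ and $\tau_{0}^{-2}|\xi|^{2}_{L^{2}_{p,b}}$, both of which collapse under (\ref{equ local kernel regularity 0}). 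The only cosmetic difference is that the paper first states the $\tau_{0}$-independent intermediate inequality and then applies (\ref{equ local kernel regularity 0}), whereas you track the two resulting contributions separately before collapsing; these are equivalent, since $|\nabla\chi|\leq\bar{C}/\tau_{0}\leq 2\bar{C}/r$ on $B_{O}(2\tau_{0})$ lets one fold the $|\nabla\chi|^2$-term into the $|\xi|^2 r^{-2}w$ term anyway.
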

 \begin{proof} It suffices to show $\int_{B_{O}(\tau_{0})}|\nabla_{A_{O}}\xi|^{2}wdV\leq \bar{C} \int_{B_{O}(2\tau_{0})}\frac{|\xi|^{2}}{r^{2}}wdV.$ It is a much easier version of  Lemma \ref{lem bounding L2 norm of gradient for the model cone laplace equation}, we only need to run the argument through with the measure $\eta_{\epsilon}\chi^{2}wdV$ instead of $\frac{\eta_{\epsilon}\chi^{2}w}{r^{2}}dV$. \end{proof}
 \textbf{Our  main theorem in this section implies the image is closed.}
 \begin{thm}(Global a priori estimate)\label{thm global apriori L22 estimate} Suppose $\xi\in W^{1,2}_{p,b}$, then $$|\xi|_{W^{1,2}_{p,b}}\leq C(|L_{A}\xi|_{L^{2}_{p,b}}+|\xi|_{L^{2}_{p,b}}).$$
 \end{thm}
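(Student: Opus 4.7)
The strategy is a partition-of-unity argument combining standard interior elliptic estimates away from the singularities with a near-singularity estimate built from the local right inverse $Q_{p,b,A_{O}}$ for the model cone operator (Corollary \ref{Cor solving model laplacian equation over the ball without the compact support RHS condition}) and the harmonic regularity of Lemma \ref{lem bound on C3 norm of solution to laplace equation when f is smooth and vainishes near O}/Claim \ref{clm local regularity L12 for global apriori estimate}. Away from the $O_{j}$, on $M_{\tau_{0}/2}$ the operator $L_{A}$ is smooth and uniformly elliptic and the weight $w_{p,b}$ is bounded above and below, so the Weitzenb\"ock identity from Lemma \ref{lem formula for LA squared} together with G\aa rding's inequality gives the interior bound
\[
|\xi|_{W^{1,2}(M_{\tau_{0}/2})}\le C\bigl(|L_{A}\xi|_{L^{2}(M_{\tau_{0}/4})}+|\xi|_{L^{2}(M_{\tau_{0}/4})}\bigr),
\]
which is equivalent to the corresponding $(p,b)$-weighted bound on this region.

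The essential task is to prove the local near-singularity estimate: for $\tau_{0}$ small and each singular point $O=O_{j}$,
\[
|\xi|_{W^{1,2}_{p,b}[B_{O}(\tau_{0}/2)]}\le C\bigl(|L_{A}\xi|_{L^{2}_{p,b}[B_{O}(2\tau_{0})]}+|\xi|_{L^{2}_{p,b}[B_{O}(2\tau_{0})]}\bigr).\qquad(\star)
\]
Fix a cutoff $\chi\equiv 1$ on $B_{O}(\tau_{0})$ supported in $B_{O}(2\tau_{0})$, and put $\zeta=\chi\xi$. Admissibility (\ref{equ condition on rate of convergence for an admissable instanton}) says $|A-A_{O}|\le C(-\log r)^{-\mu_{1}}/r$, so $P:=L_{A_{O}}-L_{A}$ is a zeroth-order operator satisfying
\[
|P\eta|_{L^{2}_{p,b}[B_{O}(2\tau_{0})]}\le \delta(\tau_{0})\,|\eta|_{L^{2}_{p-1,b}[B_{O}(2\tau_{0})]},\qquad \delta(\tau_{0})\to 0\ \textrm{as}\ \tau_{0}\to 0.
\]
Let $\eta_{1}:=Q_{p,b,A_{O}}(L_{A}\zeta)$ and $\eta_{2}:=Q_{p,b,A_{O}}(P\zeta)$. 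Corollary \ref{Cor solving model laplacian equation over the ball without the compact support RHS condition} gives
\[
|\eta_{1}|_{W^{1,2}_{p,b}}\le C\bigl(|L_{A}\xi|_{L^{2}_{p,b}}+|\xi|_{L^{2}_{p,b}}\bigr),\qquad |\eta_{2}|_{W^{1,2}_{p,b}}\le \bar C\,\delta(\tau_{0})\,|\zeta|_{L^{2}_{p-1,b}},
\]
since the cutoff commutator $(d\chi)\cdot\xi$ is supported in the annulus $\{\tau_{0}\le r\le 2\tau_{0}\}$ where the weight is equivalent to $1$.

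Now $\theta:=\zeta-\eta_{1}-\eta_{2}$ satisfies $L_{A_{O}}\theta=L_{A_{O}}\zeta-L_{A}\zeta-P\zeta=0$ on $B_{O}(2\tau_{0})$, hence $L^{2}_{A_{O}}\theta=0$ there. Claim \ref{clm local regularity L12 for global apriori estimate} yields $|\theta|_{W^{1,2}_{p,b}[B_{O}(\tau_{0})]}\le \bar C\tau_{0}^{-1}|\theta|_{L^{2}_{p,b}[B_{O}(2\tau_{0})]}$, and assembling the three pieces and choosing $\tau_{0}$ small enough that $\bar C\delta(\tau_{0})\le\tfrac{1}{2}$ absorbs the $|\zeta|_{L^{2}_{p-1,b}}$ term into the left-hand side, giving $(\star)$. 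Patching the interior bound and the local estimates at the $O_{j}$ via a partition of unity subordinate to $\{M_{\tau_{0}/2},\,B_{O_{j}}(\tau_{0}/2)\}$ then proves the theorem, since each commutator $[L_{A},\chi_{\cdot}]$ is supported in an overlap annulus where the weight is bounded, contributing only $C|\xi|_{L^{2}_{p,b}}$.

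\textbf{Main obstacle.} The decisive step is controlling the $L^{2}_{p-1,b}$ half of the $W^{1,2}_{p,b}$ norm by $|L_{A}\xi|_{L^{2}_{p,b}}$ and a weaker norm of $\xi$: this cannot be handled by any purely interior elliptic argument and is precisely where one must use the cone right inverse $Q_{p,b,A_{O}}$ and the harmonic bootstrap of Lemma \ref{lem bound on C3 norm of solution to laplace equation when f is smooth and vainishes near O}. It is also exactly this step that forces the $A$-generic assumption on $p$ (Definition \ref{Def A generic}), since otherwise one would pick up non-decaying kernel modes of $\Upsilon_{A_{O_{j}}}$ and the right inverse would fail to exist with the optimal bound.
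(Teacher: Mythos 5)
Your proposal is correct and follows essentially the same strategy as the paper: construct a local $L_{A_O}$-preimage via the model right inverse $Q_{p,b,A_O}$, observe that the remainder is $L_{A_O}$-harmonic and apply Claim \ref{clm local regularity L12 for global apriori estimate}, absorb the perturbation from $L_A - L_{A_O}$ by shrinking $\tau_0$, and patch with interior estimates. The only cosmetic difference is that you cut $\xi$ off first (working with $\zeta = \chi\xi$), whereas the paper works with $\xi$ directly and controls the extra $\frac{1}{\tau_0}|\eta|_{L^2_{p,b}}$ term by $|\eta|_{L^2_{p-1,b}}$ using $\frac{1}{\tau_0}\le\frac{2}{r}$ on $B_O(2\tau_0)$; this is a presentational choice and does not change the argument.

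One imprecision to flag: you assert that $P := L_{A_O} - L_A$ is a zeroth-order operator with $|P\eta|_{L^2_{p,b}}\le\delta(\tau_0)|\eta|_{L^2_{p-1,b}}$, attributing the difference entirely to $|A - A_O|$. In fact $L_{A_O}$ uses the Euclidean $\psi_0$ while $L_A$ uses the global $\psi$, which agree only at the origin (Definition \ref{Def admissable open cover}), so the difference carries a first-order part with $O(r)$ coefficient; this is exactly the content of Lemma \ref{lem bounding local perturbation of deformation operator }. The correct estimate is $|P\eta|_{L^2_{p,b}}\le\delta(\tau_0)|\eta|_{W^{1,2}_{p,b}}$, which is what the paper uses in (\ref{eqnarray Thm global apriori estimate}). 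Since $|\eta|_{L^2_{p-1,b}}\le|\eta|_{W^{1,2}_{p,b}}$ and you absorb into the full $W^{1,2}_{p,b}$ norm on the left anyway, this does not break your argument, but the claimed zeroth-order bound as stated is false.
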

 \begin{proof} \textbf{The observation is that we can reduce the estimate for $L_{A}$ to estimate of the model operator}. We only need to derive this estimate near the singularity. Away from the singularity it follows from the standard estimates, then we patch up the estimates in each piece. 
 
 For any $\epsilon_{0}$, when $\tau_{0}-$small enough,  given $\xi \in W^{1,2}_{p,b}[B_{O}(2\tau_{0})]$,  by Corollary \ref{Cor solving model laplacian equation over the ball without the compact support RHS condition}, there exists  a $\eta$  such that 
  \begin{eqnarray}\label{eqnarray Thm global apriori estimate}
  & &L_{A_{O}}\eta=L_{A_{O}}\xi\  \textrm{and}
 \\& &  |\eta|_{W^{1,2}_{p,b}[B_{O}(2\tau_{0})]}\leq \bar{C}|L_{A_{O}}\xi|_{L^{2}_{p,b}[B_{O}(2\tau_{0})]};\ \bar{C}\ \textrm{is independent of}\ \epsilon_{0}\ \textrm{and}\ \tau_{0}.\nonumber
\\& \leq &  \bar{C}|(L_{A}-L_{A_{O}})\xi|_{L^{2}_{p,b}[B_{O}(2\tau_{0})]}+ \bar{C}|L_{A}\xi|_{L^{2}_{p,b}[B_{O}(2\tau_{0})]}\nonumber
\\&\leq &  \bar{C}|L_{A}\xi|_{L^{2}_{p,b}[B_{O}(2\tau_{0})]}+ \epsilon_{0}|\xi|_{W^{1,2}_{p,b}[B_{O}(2\tau_{0})]}.\nonumber
  \end{eqnarray}
 Then we estimate 
 \begin{eqnarray}\label{equ priliminary L22 apriori estimate }
& & |\xi|_{W^{1,2}_{p,b}[B_{O}(\tau_{0})]}\leq |\xi-\eta|_{W^{1,2}_{p,b}[B_{O}(\tau_{0})]}+|\eta|_{W^{1,2}_{p,b}[B_{O}(\tau_{0})]}\nonumber
\\&\leq & \bar{C} |L_{A}\xi|_{L^{2}_{p,b}[B_{O}(2\tau_{0})]}+ \epsilon_{0}|\xi|_{W^{1,2}_{p,b}[B_{O}(2\tau_{0})]}+ |\xi-\eta|_{W^{1,2}_{p,b}[B_{O}(\tau_{0})]}.
 \end{eqnarray}
 $\xi-\eta$ satisfies $L_{A_{O}}(\xi-\eta)=0$ in $B_{O}(2\tau_{0})$. Then it's smooth away from the singularity, and we have 
 \begin{equation}
-L^{2}_{A_{O}}(\xi-\eta)=0.
 \end{equation}
 
 Thus (\ref{equ priliminary L22 apriori estimate })  and Claim \ref{clm local regularity L12 for global apriori estimate} (for $\xi-\eta$) yield 
 \begin{equation}\label{equ Thm Global apriori estimate -1}
 |\xi|_{W^{1,2}_{p,b}[B_{O}(\tau_{0})]}\leq \bar{C}|L_{A}\xi|_{L^{2}_{p,b}[B_{O}(2\tau_{0})]}+ \epsilon_{0}|\xi|_{W^{1,2}_{p,b}[B_{O}(2\tau_{0})]}+ \frac{\bar{C}}{\tau_{0}}|\xi-\eta|_{L^{2}_{p,b}[B_{O}(2\tau_{0})]}.
 \end{equation}
 
Within $B_{O}(2\tau_{0})$, we have $\frac{1}{\tau_{0}}\leq \frac{2}{r}$, then by definition and (\ref{eqnarray Thm global apriori estimate}), we obtain
 \begin{equation}\label{equ Thm global apriori est 0}
\frac{|\eta|_{L^{2}_{p,b}[B_{O}(2\tau_{0})]}}{\tau_{0}}
 \leq
 4|\eta|_{L^{2}_{p-1,b}[B_{O}(2\tau_{0})]}\leq \bar{C}|L_{A}\xi|_{L^{2}_{p,b}[B_{O}(2\tau_{0})]}+ 4\epsilon_{0}|\xi|_{W^{1,2}_{p,b}[B_{O}(2\tau_{0})]},
 \end{equation}
 where $\bar{C}$ \textbf{does not depend on $\epsilon_{0}$ or $\tau_{0}$}. 
 Then (\ref{equ Thm global apriori estimate}), (\ref{equ Thm Global apriori estimate -1}), and (\ref{equ Thm global apriori est 0}) imply 
 \begin{equation}\label{equ Thm global apriori est 1}
 |\xi|_{W^{1,2}_{p,b}[B_{O}(\tau_{0})]}\leq \bar{C}|L_{A}\xi|_{L^{2}_{p,b}[B_{O}(2\tau_{0})]}+ \bar{C}\epsilon_{0}|\xi|_{W^{1,2}_{p,b}[B_{O}(2\tau_{0})]}+ \frac{\bar{C}}{\tau_{0}}|\xi|_{L^{2}_{p,b}[B_{O}(2\tau_{0})]}.
 \end{equation}
 
 Away from the singularities we have
  \begin{equation}\label{equ Thm global apriori est away from singularities}
 |\xi|_{W^{1,2}_{p,b}(M_{\frac{\tau_{0}}{2}})}\leq C_{\epsilon_{0},\tau_{0}}|L_{A}\xi|_{L^{2}_{p,b}(M_{\frac{\tau_{0}}{4}})}+ C_{b,p,\tau_{0}}|\xi|_{L^{2}_{p,b}(M_{\frac{\tau_{0}}{4}})}.
 \end{equation}
Adding (\ref{equ Thm global apriori est 1}) and (\ref{equ Thm global apriori est away from singularities}), we find 
 \begin{equation}
 |\xi|_{W^{1,2}_{p,b}}\leq C|L_{A}\xi|_{L^{2}_{p,b}}+ \bar{C}\epsilon_{0}|\xi|_{W^{1,2}_{p,b}}+ C|\xi|_{L^{2}_{p,b}}.
 \end{equation}
 Choosing the $\epsilon_{0}$ (initially in (\ref{eqnarray Thm global apriori estimate})) to be less than $\frac{1}{20\bar{C}}$ (we let $\tau_{0}$ be small enough with respect to $\epsilon_{0}$), the proof of Theorem \ref{thm global apriori L22 estimate} is complete.\end{proof}

\begin{thm}\label{thm existence of a good solution when f is in the image} Let $A,p,b$ be as in  Theorem \ref{Thm Fredholm}, then $Ker L_{A}$ in $W^{1,2}_{p,b}$ is finite-dimensional. Moreover, for any $f\in Image L_{A}$, there exists a solution $\xi$ to  $L_{A}\xi=f$  such that $\xi\in Ker^{\perp}L_{A}\subset W^{1,2}_{p,b}$ and $|\xi|_{W^{1,2}_{p,b}}\leq C|f|_{L^{2}_{p,b}}$.

\end{thm}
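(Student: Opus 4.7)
The plan is to combine the global a priori estimate (Theorem \ref{thm global apriori L22 estimate}) with the compact embedding $W^{1,2}_{p,b} \hookrightarrow L^{2}_{p,b}$ (Lemma \ref{lem compact imbedding}) via the standard Riesz/Peetre argument. The strategy splits into three steps.

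\textbf{Step 1: Finite-dimensionality of $\ker L_A$.} On $K := \ker L_A \cap W^{1,2}_{p,b}$, the a priori estimate reduces to $|\xi|_{W^{1,2}_{p,b}} \leq C |\xi|_{L^{2}_{p,b}}$. Hence the inclusion $K \hookrightarrow L^{2}_{p,b}$ is a topological isomorphism onto its image, and the $W^{1,2}_{p,b}$-unit ball of $K$ is a bounded set that, by Lemma \ref{lem compact imbedding}, is precompact in $L^{2}_{p,b}$, and thus precompact in $K$ itself. A Banach space whose closed unit ball is compact is finite-dimensional (F.~Riesz), so $\dim K < \infty$.

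\textbf{Step 2: Removing the lower-order term via a Peetre-type argument.} I claim there exists $C > 0$ such that
\begin{equation}\label{equ plan coercive}
|\xi|_{W^{1,2}_{p,b}} \leq C \, |L_A \xi|_{L^{2}_{p,b}} \qquad \text{for all } \xi \in K^{\perp} \cap W^{1,2}_{p,b},
\end{equation}
where $\perp$ denotes the $L^{2}_{p,b}$-orthogonal complement (well defined, since $K$ is finite-dimensional and sits inside $L^{2}_{p,b}$). Suppose not; then one finds a sequence $\xi_n \in K^{\perp} \cap W^{1,2}_{p,b}$ with $|\xi_n|_{W^{1,2}_{p,b}} = 1$ and $L_A \xi_n \to 0$ in $L^{2}_{p,b}$. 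By Lemma \ref{lem compact imbedding}, after passing to a subsequence $\xi_n \to \xi_\infty$ in $L^{2}_{p,b}$. Applied to $\xi_n - \xi_m$, Theorem \ref{thm global apriori L22 estimate} gives that $\{\xi_n\}$ is Cauchy in $W^{1,2}_{p,b}$, so $\xi_n \to \xi_\infty$ in $W^{1,2}_{p,b}$ with $|\xi_\infty|_{W^{1,2}_{p,b}} = 1$ and $L_A \xi_\infty = 0$. Hence $\xi_\infty \in K$. But $K^{\perp}$ is $L^{2}_{p,b}$-closed, so $\xi_\infty \in K^{\perp}$ too, forcing $\xi_\infty = 0$, a contradiction.

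\textbf{Step 3: Solving with estimate.} Given $f \in \operatorname{Image} L_A$, pick any $\eta \in W^{1,2}_{p,b}$ with $L_A \eta = f$. Decompose $\eta = \eta_K + \xi$ using the $L^{2}_{p,b}$-orthogonal projection onto the finite-dimensional subspace $K \subset L^{2}_{p,b}$; by Lemma \ref{lem bound on C3 norm of solution to laplace equation when f is smooth and vainishes near O} (applied to elements of $K$ near each $O_j$) the elements of $K$ have the interior regularity needed to see $\eta_K \in W^{1,2}_{p,b}$, hence $\xi \in K^{\perp} \cap W^{1,2}_{p,b}$. Since $L_A \eta_K = 0$, we have $L_A \xi = f$, and \eqref{equ plan coercive} yields $|\xi|_{W^{1,2}_{p,b}} \leq C |f|_{L^{2}_{p,b}}$, as required.

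\textbf{Main obstacle.} The delicate point is Step 2: one must know that $K$ is $L^{2}_{p,b}$-closed inside $W^{1,2}_{p,b}$, and that the $L^{2}_{p,b}$-orthogonal projector sends $W^{1,2}_{p,b}$ into itself. Both follow from Step 1 together with the regularity of harmonic sections from Lemma \ref{lem bound on C3 norm of solution to laplace equation when f is smooth and vainishes near O}, but one must verify this carefully, since the inner product defining $\perp$ is the weighted $L^{2}_{p,b}$-product rather than a $W^{1,2}$-product. Once this is checked, the Fredholm machinery runs automatically from the a priori estimate plus compactness.
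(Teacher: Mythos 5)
Your proof is correct and follows essentially the same route as the paper's: finite-dimensionality of the kernel from the a priori estimate (Theorem \ref{thm global apriori L22 estimate}) plus compact embedding (Lemma \ref{lem compact imbedding}), and then a Peetre-type contradiction argument for the coercive bound on the orthogonal complement. Your ``main obstacle'' worry is in fact not an obstacle and the appeal to Lemma \ref{lem bound on C3 norm of solution to laplace equation when f is smooth and vainishes near O} is unnecessary: $\ker L_A$ is by definition a finite-dimensional subspace of $W^{1,2}_{p,b}$, so the $L^{2}_{p,b}$-orthogonal projection of any $\eta\in W^{1,2}_{p,b}$ onto it is a finite linear combination of elements of $W^{1,2}_{p,b}$ and hence automatically lies in $W^{1,2}_{p,b}$ (and finite-dimensional subspaces are always closed).
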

\begin{proof} We first show that $ker L_{A}$ is finite-dimensional.  Were this not true, there exist countably many $\xi_{k}$'s in $ker L_{A}$, such that for any $k$, $\xi_{k}$ is not in the span of the preceding vectors.  Then using the $L^{2}_{p,b}-$inner product,  the Gram-Schmidt process produces  an orthonormal sequence $\widehat{\xi}_{k}$ of sections in $ker L_{A}$. On the other hand, by Theorem \ref{thm global apriori L22 estimate}, we have $|\widehat{\xi}_{k}|_{W^{1,2}_{p,b}}\leq C$, then Lemma \ref{lem compact imbedding} implies $\xi_{k}$ converges in $L^{2}_{p,b}$, which contradicts the orthogonality.

Since $Ker L_{A}\subset\ W^{1,2}_{p,b}$ is now shown to be  finite dimensional,  we consider the projection of $\underline{\xi}$ onto  $Ker L_{A}$ (with respect to the $L^{2}_{p,b}-$inner product) as $\underline{\xi}\parallel_{Ker L_{A}}$. Given any $\underline{\xi}$ such that $L_{A}\underline{\xi}=f$, we consider 
\begin{equation}
\xi=\underline{\xi}-[\underline{\xi}\parallel_{Ker L_{A}}],\ \textrm{then}\ \xi\in  Ker^{\perp}L_{A}. 
\end{equation}

To prove  the estimate in Theorem \ref{thm existence of a good solution when f is in the image}, by Theorem \ref{thm global apriori L22 estimate},  it suffices to show
\begin{equation}\label{equ apriori L2 estimate when f is in the image and xi perdicular to kernel}
|\xi|_{L^{2}_{p,b}}\leq C|f|_{L^{2}_{p,b}}.
\end{equation}
 Were (\ref{equ apriori L2 estimate when f is in the image and xi perdicular to kernel}) not true,  there exists a sequence $\xi_{i}\in W^{1,2}_{p,b}$, $f_{i}=L_{A}\xi_{i}\in Image L_{A}\subset L^{2}_{p,b}$, such that 
\begin{equation}
|\xi_{i}|_{L^{2}_{p,b}}=1,\ \xi_{i}\in ker^{\perp} L_{A},\ \textrm{but}\ 
|f_{i}|_{L^{2}_{p,b}}\rightarrow 0.
\end{equation}

By  Theorem \ref{thm global apriori L22 estimate} and  Lemma \ref{lem compact imbedding}, 
$\xi_{i}$ converges  in $L^{2}_{p,b}$ to $\xi_{\infty}$. By the  linearity of $L_{A}$, these in turn  imply $\xi_{i}$ is a Cauchy-Sequence in $W^{1,2}_{p,b}$: 
\begin{equation}
|\xi_{i}-\xi_{j}|_{W^{1,2}_{p,b}}\leq C|\xi_{i}-\xi_{j}|_{L^{2}_{p,b}}+C|f_{i}-f_{j}|_{L^{2}_{p,b}}\rightarrow 0.
\end{equation}
Then $\xi_{i}$ converges to $\xi_{\infty}$ in  $W^{1,2}_{p,b}$,  $\xi_{\infty}\in W^{1,2}_{p,b}$, $|\xi_{\infty}|_{L^{2}_{p,b}}=1$, and 
$L_{A}\xi_{\infty}=0$. But $\xi_{i}\in ker^{\perp} L_{A}$ implies $\xi_{\infty}\perp ker L_{A}$. This is a contradiction. \end{proof}
 
\subsection{Hybrid space and $C^{0}-$estimate.\label{section Hybrid space and C0-estimat}}

Using an interpolation trick, the $C^{0}-$estimate is a direct corollary of the $W^{1,2}_{p,b}$-estimate.
To see this, for any point $q$ close enough to a singular point $O$, \textbf{$\xi\in W^{1,2}_{p,b}\subset L^{2}_{p-1,b}$ implies the average of $|\xi|$ over $B_{q}(\frac{r_{q}}{2})$ is bounded correctly i.e. by $r_{q}^{-\frac{5}{2}-p}(-\log r_{q})^{-b}$. Since $\xi$ satisfies an elliptic equation, the interpolation trick gives the $C^{0}-$bound.}  For second order uniformly elliptic equations of divergence form, this can be done by the well known Nash-Moser iteration.

\begin{Def}\label{Def Hybrid spaces}(Hybrid spaces) The hybrid spaces are defined  as (their norms are defined in the parenthesis on the left hand side of "$<\infty$") 
\begin{itemize}
\item $H_{p,b}=\{\xi\ \textrm{is}\ C^{2,\alpha}\ \textrm{away from}\ O|\ |\xi|_{W^{1,2}_{p,b}}+|\xi|^{(\frac{5}{2}+p,b)}_{2,\alpha,M}<\infty\}$ and 
\item $N_{p,b}=\{\xi\ \textrm{is}\ C^{1,\alpha}\ \textrm{away from}\ O|\ |\xi|_{L^{2}_{p,b}}+|\xi|^{(\frac{7}{2}+p,b)}_{1,\alpha,M}<\infty\}$.
\end{itemize}
\end{Def}

\begin{lem}\label{lem multiplicative property of hybrid spaces} Suppose $b\geq 0$, $p\leq -\frac{3}{2}$. Suppose $\xi_{1},\ \xi_{2}\in H_{p,b}$, then for any smooth tensor product $\otimes$, $|\xi_{1}\otimes \xi_{2}|\in N_{p,b}$ and 
\begin{equation}
|\xi_{1}\otimes \xi_{2}|_{N_{p,b}}\leq C|\xi_{1}|_{H_{p,b}}|\xi_{2}|_{H_{p,b}},\ C\ \textrm{depends on the }\ C^{2}-\textrm{norm of}\ \otimes. 
\end{equation}
\end{lem}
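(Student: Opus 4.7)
The plan is to split the $N_{p,b}$-norm of $\xi_1\otimes\xi_2$ into its $L^2_{p,b}$ ingredient and its weighted $C^{1,\alpha}$ ingredient, and handle each separately. Throughout, the hypothesis $p\leq -\tfrac{3}{2}$ enters as the sharp threshold that forces the ``excess weight exponent'' $-\tfrac{3}{2}-p$, produced by multiplying two factors of Schauder weight $\tfrac{5}{2}+p$, to be non-negative. This is what allows every product estimate to close near the singular points; away from them, $w_{p,b}$ is bounded and every norm reduces to a standard unweighted product estimate against the $C^2$-bound on $\otimes$ (which is smooth by Definition \ref{Def tensor product}).

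The main obstacle will be the $L^2_{p,b}$-piece, since $|\xi_1\otimes\xi_2|_{L^2_{p,b}}$ cannot be controlled by $|\xi_i|_{L^2_{p,b}}$ alone. The approach is asymmetric: put a pointwise bound on $\xi_2$ via the $C^0$-part of its Schauder semi-norm (Definition \ref{Def local Schauder norms}, (\ref{equ Def local Schauder norm 2})),
\[
|\xi_2(x)|\leq C|\xi_2|_{H_{p,b}}\;r_x^{-(5/2+p)}(-\log r_x)^{-b}\qquad\text{near each } O,
\]
and couple it with the $L^2_{p-1,b}$-integrability of $\xi_1$, which is built into $W^{1,2}_{p,b}\subset H_{p,b}$ by Definition \ref{Def global weight and Sobolev spaces}. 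After substitution, and using $dV\sim r^{6}\,dr\,d\theta$ in dimension seven, the integral of $|\xi_1\otimes\xi_2|^{2} w_{p,b}$ on each singular ball reduces to $C|\xi_2|^{2}_{H_{p,b}}\int |\xi_1|^{2}\, r\,dr\,d\theta$, whose integrand is dominated pointwise by $r^{-2p-3}(-\log r)^{-2b}$ times the $L^{2}_{p-1,b}$-integrand. For $p\leq -\tfrac{3}{2}$ and $b\geq 0$, this quotient is bounded on a small ball about $O$, which closes the bound.

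Next, the weighted $C^{1,\alpha}$-piece of the $N_{p,b}$-norm will be estimated by the Leibniz rule combined with the pointwise weight arithmetic of (\ref{equ Def local Schauder norm 1})--(\ref{equ Def local Schauder norm 2}). Each of the three ingredients of the target semi-norm $|\cdot|^{(7/2+p,b)}_{1,\alpha,M}$, namely $|\cdot|^{(7/2+p,b)}_{0}$, $|\nabla(\cdot)|^{(9/2+p,b)}_{0}$ and $[\nabla(\cdot)]^{(9/2+p,b)}_{\alpha}$, is obtained by pairing one appropriately weighted $C^{0}$- or Hölder-ingredient of $|\xi_i|^{(5/2+p,b)}_{2,\alpha,M}$ with another. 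Every such pairing produces the same excess weight factor $r^{-3/2-p}(-\log r)^{-b}$ as in the $L^{2}$-piece, which is $\leq C$ near the singularities by $p\leq -\tfrac{3}{2}$ and $b\geq 0$. The Hölder quotient is handled by the standard splitting $|a(x)b(x)-a(y)b(y)|\leq |a(x)-a(y)||b(x)|+|a(y)||b(x)-b(y)|$ against the weights; the calculation is purely algebraic once the weight bookkeeping above is set up, so no additional difficulty is expected.
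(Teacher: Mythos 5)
Your proposal is correct and takes essentially the same route as the paper: split the $N_{p,b}$-norm into its Schauder part (handled by Leibniz and weight arithmetic, with excess exponent $-\tfrac{3}{2}-p\ge 0$) and its $L^2_{p,b}$ part (handled asymmetrically by a pointwise $C^0$-bound on one factor against the $L^2_{p-1,b}\subset W^{1,2}_{p,b}$-control on the other, with the same threshold $p\le -\tfrac{3}{2}$ closing the weight arithmetic). The bookkeeping in your $L^2$-step is a correct unwinding of the inequality $\int_M |\xi_1\otimes\xi_2|^2 w_{p,b}\,dV\le C|\xi_i|^2_{C^{1,\alpha}_{(5/2+p,b)}}\int_M \tfrac{|\xi_j|^2}{r^2}w_{p,b}\,dV$ appearing in the paper.
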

This multiplicative property works for the quadratic non-linearity of (\ref{equ instantonequation without cokernel}). 
\begin{proof}By Definition \ref{Def Global Schauder norms}, \ref{Def local Schauder norms}, and the conditions on $p,b$,  we have $\xi_{1}\otimes \xi_{2}\in C^{1,\alpha}_{(\frac{7}{2}+p,b)}$ and 
\begin{equation}
|\xi_{1}\otimes \xi_{2}|_{C^{1,\alpha}_{(\frac{7}{2}+p,b)}}\leq C|\xi_{1}|_{C^{1,\alpha}_{(\frac{5}{2}+p,b)}}|\xi_{2}|_{C^{1,\alpha}_{(\frac{5}{2}+p,b)}}\leq C|\xi_{1}|_{H_{p,b}}|\xi_{2}|_{H_{p,b}}.
\end{equation}

\textbf{The $L^{2}_{p,b}-$bound is estimated by making use of the $C^{0}-$norms:}
\begin{eqnarray*}& &
\int_{M}|\xi_{1}\otimes \xi_{2}|^2w_{p,b}dV\ \ \ \ \ \ \ \ \ \ \ \ \ \ \ \ \ \ \ \ p\leq \frac{3}{2}
\\&\leq & C|\xi_{1}|_{C^{1,\alpha}_{(\frac{5}{2}+p,b)}}\int_{M}\frac{|\xi_{2}|^2}{r^{5+2p}(-\log r)^{2b}}w_{p,b}dV\leq C|\xi_{1}|_{H_{p,b}}\int_{M}\frac{|\xi_{2}|^2}{r^{2}}w_{p,b}dV
\\&\leq & C|\xi_{1}|_{H_{p,b}}|\xi_{2}|_{H_{p,b}}. 
\end{eqnarray*}
\end{proof}

\begin{thm}\label{thm C0 est} Let  $b\geq 0$, $0<\alpha<1$, and $\gamma$ be any real number. Suppose  $f\in C^{\alpha}_{(\gamma,b)}(M)$, and $\xi$ is  $C^{2,\alpha}$ away from the singularities. Suppose $L_{A}\xi=f$ or $L_{A}^{\star}\xi=f$, and  $\xi\in L^{2}_{-\frac{9}{2}+\gamma,b}$. Then $\xi$  satisfies 
\begin{equation}\label{equ Thm C0 estimate 1}
|\xi|^{(\gamma-1,b)}_{0,M}\leq C\{|f|^{(\gamma,b)}_{\alpha,M}+|\xi|_{L^{2}_{-\frac{9}{2}+\gamma,b}}\}.
\end{equation}
Consequently,
\begin{equation}\label{equ Thm C0 estimate 2}
 |\xi|^{(\gamma-1,b)}_{2,\alpha,M}\leq C\{|f|^{(\gamma,b)}_{1,\alpha,M}+|\xi|_{L^{2}_{-\frac{9}{2}+\gamma,b}}\}.
\end{equation}
 
\end{thm}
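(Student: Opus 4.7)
The estimate away from the singularities is standard Schauder estimate on compact pieces, so the whole game is a local bound near each singular point $O$. My plan is to carry out the ``interpolation trick'' flagged in the paragraph preceding the statement, which in practice is a rescaling argument to unit scale. Fix $q$ close to $O$, let $\rho=r_q$, and rescale by $\tilde y = (x-q)/\rho$. Write $\tilde A(\tilde y) = \rho A(q+\rho\tilde y)$ (so the curvature of $\tilde A$ is uniformly bounded on $B_0(1)$, because $|F_A|\lesssim r^{-2}$ and $r\asymp \rho$ on $B_q(\rho/2)$) and $\tilde \xi(\tilde y)=\xi(q+\rho\tilde y)$. Since $L_A$ is first order in derivatives and linear in $A$, the equation $L_A\xi=f$ (resp.\ $L_A^\star\xi=f$) turns into
\begin{equation*}
\tilde L_{\tilde A}\tilde\xi=\rho\,\tilde f\qquad\text{on }B_0(1),
\end{equation*}
with $\tilde L_{\tilde A}$ a Dirac-type elliptic operator on the unit ball whose coefficients (including the curvature term through $L_A^2$) are controlled uniformly in $\rho$, using that $A$ is admissible of order $4$ and close to $A_O$.

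On the unit ball I can apply the standard (unweighted) Schauder/elliptic estimate for Dirac-type operators,
\begin{equation*}
|\tilde\xi|_{C^{1,\alpha}(B_0(1/2))}\le C\bigl(|\tilde L_{\tilde A}\tilde\xi|_{C^{\alpha}(B_0(1))}+|\tilde\xi|_{L^2(B_0(1))}\bigr),
\end{equation*}
which includes the desired $C^0$ bound. Now I just translate each norm back. On $B_q(\rho)$ one has $r\asymp \rho$ and $-\log r\asymp -\log \rho$, so a short computation gives
\begin{equation*}
\rho|\tilde f|_{C^\alpha(B_0(1))}\le C\rho^{1-\gamma}(-\log\rho)^{-b}|f|^{(\gamma,b)}_{\alpha,M},
\end{equation*}
\begin{equation*}
|\tilde\xi|_{L^2(B_0(1))}=\rho^{-7/2}|\xi|_{L^2(B_q(\rho))}\le C\rho^{1-\gamma}(-\log\rho)^{-b}|\xi|_{L^2_{-9/2+\gamma,b}},
\end{equation*}
the second line being the content of the hint ``$\xi\in W^{1,2}_{p,b}\subset L^2_{p-1,b}$ yields the correct average bound.'' Substituting and undoing the rescaling gives
\begin{equation*}
|\xi(q)|\le C\rho^{1-\gamma}(-\log\rho)^{-b}\bigl\{|f|^{(\gamma,b)}_{\alpha,M}+|\xi|_{L^2_{-9/2+\gamma,b}}\bigr\},
\end{equation*}
and taking the supremum over $q$ produces (\ref{equ Thm C0 estimate 1}). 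The formal adjoint case is identical, since $L_A^\star$ is also of Dirac type.

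For the second inequality (\ref{equ Thm C0 estimate 2}), once the $C^0$ bound is in hand I bootstrap on each rescaled unit ball: applying Schauder to the first-order system (or, equivalently, applying $\tilde L_{\tilde A}$ once more and treating $\tilde L_{\tilde A}^2$ as a Laplacian plus controlled lower order terms via Lemma~\ref{lem formula for LA squared}) gives $|\tilde\xi|_{C^{2,\alpha}(B_0(1/2))}\le C(|\tilde f|_{C^{1,\alpha}(B_0(1))}+|\tilde\xi|_{C^0(B_0(1))})$; scaling back and combining with (\ref{equ Thm C0 estimate 1}) produces the weighted $C^{2,\alpha}$ bound. Away from the singular points, the same rescaled Schauder estimate holds with $\rho$ of unit order and patches onto the local bound, giving the global norm $|\xi|^{(\gamma-1,b)}_{2,\alpha,M}$ of Definition~\ref{Def Global Schauder norms}.

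\paragraph{Main obstacle.} The conceptual steps are routine; the one place to be careful is that the rescaled connection $\tilde A$ has uniform curvature bound only because $A$ is admissible and $B_q(\rho/2)$ lies in an annulus where $r\asymp \rho$ — in particular, this is where the assumption that $A$ converges to the cone connection $A_O$ with admissible rate (Definition~\ref{Def Admissable connection with polynomial or exponential convergence}) enters. A secondary bookkeeping issue is to make sure the weighted $C^\alpha$-to-$C^\alpha$ passage on $B_q(\rho/2)$ absorbs the difference between the ``min'' weight defined in (\ref{equ Def local Schauder norm 1}) for $\gamma+\alpha<0$ and the natural Hölder norm of the rescaled function; this is handled as in \cite{GilbargTrudinger} (Lemma~3.1 type manipulation) because on $B_q(\rho/2)$ the weight $\min\{r^{\gamma+\alpha},\underline{r}^{\gamma+\alpha}\}$ is comparable to $\rho^{\gamma+\alpha}$.
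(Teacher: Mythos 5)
Your approach is essentially the same as the paper's, differing only in bookkeeping: you rescale to the unit ball and apply unweighted Schauder, whereas the paper stays at the original scale and works with the scaling-covariant interior weighted Schauder norms $|\cdot|^{[y]}_{k,\alpha,B}$ of \cite{GilbargTrudinger} on balls $B=B_q(r_q/100)$. These are equivalent devices, and both proofs hinge on the same two facts: the Douglis--Nirenberg interior estimate for $L_A$ (the paper's Lemma \ref{lem Schauder estimate in local coordinates in small balls}) and the interpolation that lets one replace the $C^0$ term on the right-hand side by an $L^2$ term (the paper's (\ref{equ intermediate C0 estimate}), proved by picking a point $x_0$ where $|\xi|(x_0)$ is bounded by the $L^2$-average over $B_x(\mu d_x)$ and then using $|\nabla\xi|$ to transfer to $x$).

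Two places where your write-up glosses over details worth making explicit. First, the inequality $|\tilde\xi|_{C^{1,\alpha}(B_0(1/2))}\le C\bigl(|\tilde L_{\tilde A}\tilde\xi|_{C^\alpha(B_0(1))}+|\tilde\xi|_{L^2(B_0(1))}\bigr)$ is not a single-step ``standard Schauder estimate'' for a first-order elliptic system: Douglis--Nirenberg gives the $C^0$-norm of $\tilde\xi$ on the right, and passing from $C^0$ to $L^2$ requires precisely the averaging-plus-gradient interpolation that the paper carries out in the proof of (\ref{equ intermediate C0 estimate}). You allude to this as ``the content of the hint,'' but it is in fact the core of the argument and should be spelled out (e.g.\ $|\tilde\xi|_{C^0(B_0(1/2))}\le\mu|\nabla\tilde\xi|_{C^0(B_0(3/4))}+C_\mu|\tilde\xi|_{L^2(B_0(1))}$, absorb the gradient). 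Second, your choice $\rho=r_q$ makes $B_q(\rho)$ touch the singular point $O$, so on the rescaled $B_0(1)$ neither $r\asymp\rho$ nor the uniform bound on $\tilde F_{\tilde A}$ holds; you must shrink the ball (the paper uses $B_q(r_q/100)$), which you implicitly acknowledge in the ``Main obstacle'' paragraph but should make consistent in the displayed formulas.
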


\begin{proof} We only prove it for $L_{A}$, the proof for $L_{A}^{\star}$ is the same. 

Let $|\cdot|^{[y]}_{k,\alpha,B}$ denote the weighted norm in (6.10) of \cite{GilbargTrudinger} with respect to $B$. Notice that this is \textbf{different} from $|\cdot|^{(y)}_{k,\alpha,B}$ (Definition \ref{Def Schauder spaces}) on which the weight depends only on the distance to the singular point.

By Lemma \ref{lem Schauder estimate in local coordinates in small balls} and multiplication of weight, in $B\triangleq B_{q}(\frac{r_{q}}{100})$, we have
\begin{equation}\label{equ interior estimate in balls away from singularity with weight half of n}
|\xi|^{[\frac{7}{2}]}_{1,\alpha,B}\leq C|L_{A}\xi|^{[\frac{9}{2}]}_{\alpha,B}+C|\xi|^{[\frac{7}{2}]}_{0,B}.
\end{equation}
It suffices to prove that (\ref{equ intermediate C0 estimate}) holds for any $\mu<\frac{1}{10}$. 

\begin{equation}\label{equ intermediate C0 estimate}
|\xi|^{[\frac{7}{2}]}_{0,B}\leq \mu|\nabla \xi|^{[\frac{9}{2}]}_{0,B}+C_{\mu}\frac{r_{q}^{\frac{9}{2}-\gamma}}{(-\log r_{q})^{b}}|\xi|_{L^{2}_{-\frac{9}{2}+\gamma,b}(B)}.
\end{equation}

Assuming (\ref{equ intermediate C0 estimate}), we go on to prove Theorem \ref{thm C0 est}. By 
(\ref{equ interior estimate in balls away from singularity with weight half of n}), we obtain 
\begin{equation}
|\xi|^{[\frac{7}{2}]}_{1,\alpha,B}\leq C|L_{A}\xi|^{[\frac{9}{2}]}_{\alpha,B}+\mu C|\nabla \xi|^{[\frac{9}{2}]}_{0,B}+C_{\mu}\frac{r_{q}^{\frac{9}{2}-\gamma}}{(-\log r_{q})^{b}}|\xi|_{L^{2}_{-\frac{9}{2}+\gamma,b}(B)}.
\end{equation}
\begin{equation}\label{equ 1 alpha estimate local version}
\textrm{Let}\ \mu C<\frac{1}{10},\  \textrm{then}\ |\xi|^{[\frac{7}{2}]}_{1,\alpha,B}\leq C|f|^{[\frac{9}{2}]}_{\alpha,B}+C_{\mu}\frac{r_{q}^{\frac{9}{2}-\gamma}}{(-\log r_{q})^{b}}|\xi|_{L^{2}_{-\frac{9}{2}+\gamma,b}(B)}.
\end{equation}
In particular, on the $C^{0}-$norm,  we have 
\begin{equation}
r^{\frac{7}{2}}_{q}|\xi|(q)\leq C|f|^{[\frac{9}{2}]}_{\alpha,B}+C_{\mu}\frac{r_{q}^{\frac{9}{2}-\gamma}}{(-\log r_{q})^{b}}|\xi|_{L^{2}_{-\frac{9}{2}+\gamma,b}(B)}.
\end{equation}
\begin{equation}\textrm{By definition we have}\  \
r_{q}^{-\frac{7}{2}}|f|^{[\frac{9}{2}]}_{\alpha,B}\leq \frac{C_{\mu}}{r^{\gamma-1}_{q}(-\log r_{q})^{b}}|f|^{(\gamma,b)}_{\alpha,B}.\ \ \ \ \ \ \ \ \ \ \ \ \ \ \ \ \ \ \ \ \ \ 
\end{equation}
\begin{equation}\label{equ 0 estimate local version}\textrm{Then}\ \
|\xi|(q)\leq \frac{C}{r^{\gamma-1}_{q}(-\log r_{q})^{b}}\{|f|^{(\gamma,b)}_{\alpha,B}+|\xi|_{L^{2}_{-\frac{9}{2}+\gamma,b}(B)}\}. \ \ \ \ \ \ \ \ \ \ \ \ \ \ \ \ \ \ \ \ \ \ 
\end{equation}

Since $q$ is an arbitrary point near the singularity, and $|f|^{(\gamma,b)}_{\alpha,B}\leq |f|^{(\gamma,b)}_{\alpha,M}$, the proof of Theorem \ref{thm C0 est} is complete (assuming (\ref{equ intermediate C0 estimate})). 

Now we prove (\ref{equ intermediate C0 estimate}) using simple interpolation. For any $x\in B$, consider $B_{x}(\mu d_{x})\ (\mu<\frac{1}{10})$, then there is a point $x_{0}$ in $B_{x}(\mu d_{x})$ such that 
\begin{eqnarray*}
& &|\xi|(x_{0})\leq [\oint_{B_{x}(\mu dx)}|\xi|^{2}dy]^{\frac{1}{2}}\leq \frac{C[\int_{B_{x}(\mu dx)}|\xi|^{2}r^{-9+2\gamma}(-\log r)^{2b}dy]^{\frac{1}{2}}]}{r_{x}^{-\frac{9}{2}+\gamma}((-\log r_{x})^{b})(\mu d_{x})^{\frac{7}{2}}}
\end{eqnarray*}

For any $x\in B$, $r_{x}$ is comparable to $r_{q}$, we then compute
\begin{eqnarray*}& & d^{\frac{7}{2}}_{x}|\xi|(x)
=d^{\frac{7}{2}}_{x}[|\xi|(x)-|\xi|(x_{0})]+d^{\frac{7}{2}}_{x}|\xi|(x_{0})
\\&\leq &2\mu d_{x}^{\frac{9}{2}}\sup_{y\in B_{x}(\mu d_{x})}|\nabla \xi|(y)+\frac{C_{\mu}r_{q}^{\frac{9}{2}-\gamma}}{(-\log r_{q})^{b}}|\xi|_{L^{2}_{-\frac{9}{2}+\gamma,b}(B)}.
\end{eqnarray*}

Replacing $"2\mu"$ by $\mu$, by  definition, we deduce (\ref{equ Thm C0 estimate 1}). Then (\ref{equ Thm C0 estimate 2}) is a Corollary of Proposition \ref{prop log weighted Schauder estimate} and (\ref{equ Thm C0 estimate 1}). \end{proof}

\subsection{Compact imbedding} 

\begin{lem}\label{lem compact imbedding} Suppose $p_{1}-1<p_{2}$, or  $p_{1}-1=p_{2}$ and $b_{1}>b_{2}$. Then for any ball $B$ such that $\partial B$ does not intersect the singularities, the imbedding $W^{1,2}_{p_{1},b_{1}}(B) \rightarrow L^{2}_{p_{2},b_{2}}(B)$  is compact. 

Consequently,  the imbedding $W^{1,2}_{p_{1},b_{1}} \rightarrow L^{2}_{p_{2},b_{2}}$ (of global spaces) is compact.
\end{lem}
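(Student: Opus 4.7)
The core observation is that under either of the two hypotheses the quotient of weights
\[
\frac{w_{p_2,b_2}}{w_{p_1-1,b_1}}\;=\;r^{2(p_2-p_1+1)}(-\log r)^{2(b_2-b_1)}
\]
tends to $0$ as $r\to 0$: when $p_2>p_1-1$ the polynomial factor beats any logarithmic factor, while when $p_2=p_1-1$ the assumption $b_1>b_2$ makes the logarithmic factor itself vanish. Hence the $L^2_{p_1-1,b_1}$-mass controlled by the $W^{1,2}_{p_1,b_1}$-norm dominates the $L^2_{p_2,b_2}$-mass arbitrarily strongly near the singular set, which is exactly what one needs to convert a bounded $W^{1,2}_{p_1,b_1}$-sequence into a convergent $L^2_{p_2,b_2}$-sequence via classical Rellich on annuli.

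\textbf{Local step.} Let $\{\xi_k\}$ be a bounded sequence in $W^{1,2}_{p_1,b_1}(B)$. Only finitely many singular points can lie in $\bar B$, so I reduce to the case $B=B_O(\rho_0)$ for a single singular point $O$. Given $\epsilon>0$, I pick $\delta>0$ small enough that the ratio above is less than $\epsilon$ throughout $B_O(\delta)$. Then
\[
\int_{B_O(\delta)}|\xi_k|^2\,w_{p_2,b_2}\,dV\;\le\;\epsilon\int_{B_O(\delta)}|\xi_k|^2\,w_{p_1-1,b_1}\,dV\;\le\;\epsilon\,|\xi_k|^2_{W^{1,2}_{p_1,b_1}(B)},
\]
uniformly in $k$. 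On the annulus $\mathcal{A}_\delta:=B_O(\rho_0)\setminus B_O(\delta)$ all weights $w_{p,b}$ are comparable to constants depending only on $\delta$, and since $A$ is admissible, in local bundle trivializations $\nabla_A$ differs from the ordinary coordinate gradient by bounded zero-order terms. Therefore $\{\xi_k\}$ is bounded in the unweighted $W^{1,2}(\mathcal{A}_\delta)$, and the classical Rellich--Kondrachov theorem yields a subsequence converging in $L^2(\mathcal{A}_\delta)$, hence in $L^2_{p_2,b_2}(\mathcal{A}_\delta)$.

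\textbf{Diagonal extraction and globalization.} Taking a sequence $\delta_j\searrow 0$ with tolerances $\epsilon_j\searrow 0$, a diagonal extraction gives a subsequence whose tail over $B_O(\delta_j)$ is at most $2\epsilon_j\sup_k|\xi_k|^2_{W^{1,2}_{p_1,b_1}}$, while its restriction to $\mathcal{A}_{\delta_j}$ converges in $L^2_{p_2,b_2}$. Consequently the subsequence is Cauchy in $L^2_{p_2,b_2}(B_O(\rho_0))$, and by completeness it converges. For the global statement I cover $M$ by a small ball $B_{O_j}(\rho_0)$ around each of the finitely many singular points together with the relatively compact smooth region $M_{\tau_0/2}$; on $M_{\tau_0/2}$ the weights are bounded above and below by positive constants, so ordinary Rellich--Kondrachov applies. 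Combining the finitely many local compactness statements completes the proof.

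\textbf{Main obstacle.} There is no conceptual difficulty: the whole argument is a ``weight decay plus classical Rellich'' mechanism. The only care needed is to check that the covariant derivative $\nabla_A$ in the definition of $W^{1,2}_{p_1,b_1}$ can be replaced by the coordinate gradient on annuli $\mathcal{A}_\delta$ staying away from the singular set without altering the equivalence class of norms, which is immediate from smoothness and admissibility of $A$ on $M\setminus\cup_j O_j$.
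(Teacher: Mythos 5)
Your proof is correct and follows essentially the same route as the paper: exploit the decay of the weight ratio $w_{p_2,b_2}/w_{p_1-1,b_1}$ near the singular point to absorb the $L^2_{p_2,b_2}$-mass on small balls, invoke classical Rellich on the complementary annulus where the weights are bounded, and conclude with a diagonal extraction. (The paper spells out only the case $p_1-1=p_2$, $b_1=b_2+1$ and remarks the general case is notationally identical; you carry out both cases explicitly, but the mechanism is the same.)
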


\begin{proof} It suffices to assume $B$ is centred at a singular point $O$, and does not contain any other singular point. We only  prove the case when $p_{1}-1=p_{2}$ and $b_{1}=b_{2}+1$.  The proof in general  is  the same, except that we have to spell out more notations.  By definition, the imbedding from $W^{1,2}_{p_{1},b_{1}}(B)$ to $L^{2}_{p_{1}-1,b_{1}}(B)$ is bounded. For any concentric  and smaller ball $B(R)$, We have 
 \begin{equation}
\int_{B(R)}|u|^2w_{p_{2},b_{2}}dx \leq  \frac{1}{(-\log R)^{2}}\int_{B(R)}|u|^2w_{p_{1}-1,b_{1}}dx.
\end{equation}
Then suppose $|u|_{W^{1,2}_{p_{1},b_{1}}(B)}\leq C_{1}$,  we can  choose $R_{m}$ depending on $b$ and $C_{1}$ such that 
\begin{equation}
(\int_{B(R_{m})}|u|^2w_{p_{2},b_{2}}dx )^{\frac{1}{2}}\leq \frac{1}{m}. 
\end{equation}

Given a sequence $u_{i}$ such that $|u_{i}|_{W^{1,2}_{p_{1},b_{1}}(B)}\leq C$, using compactness of the imbedding $W^{1,2}(B\setminus B(R_{m}))\rightarrow L^{2}(B\setminus B(R_{m}))$, we obtain a Cauchy-subsequence $u_{m,j}$ in   $L^{2}_{p_{2},b_{2}}(B\setminus B(R_{m}))$ and 
$$|u_{m,j}|_{L^{2}_{p_{2},b_{2}}[B(R_{m})]}\leq \frac{1}{m}.$$
This means for any $m$ and the $u_{m,j}$, there is a $N_{m}$ such that  $j,l>N_{m}$ implies
\begin{equation}
|u_{m,j}-u_{m,l}|_{L^{2}_{p_{2},b_{2}}(B)}<\frac{4}{m}. 
\end{equation}
 When $b>a$, we choose $u_{b,j}$ as a subsequence of $u_{a,j}$. Then we can write $u_{aa}$ as $u_{m,j_{a}}$, $u_{bb}$ as $u_{m,j_{b}}$. Since they are subsequence of $u_{m,j}$, then 
$j_{a}>a$, $j_{b}>b$. Consider the diagonal sequence $u_{aa}$. By the discussion above,  for any $m$, when $a,\ b$ $>m+N_{m}$, we have
\begin{equation}
|u_{aa}-u_{bb}|_{L^{2}_{p_{2},b_{2}}(B)}<\frac{4}{m}.
\end{equation}
Thus the diagonal sequence $u_{aa}$ is a Cauchy-Sequence in $L^{2}_{p_{2},b_{2}}(B)$.\end{proof}
\subsection{Fredholm Theory \label{section Global Fredholm and Schauder Theory}}
\textbf{Using the local inverse in Corollary \ref{Cor solving model laplacian equation over the ball without the compact support RHS condition} and the above compact Sobolev-imbedding, it's almost standard to prove $L_{A}$ is Fredholm}.
\begin{equation}\label{equ deformation equation for A on  a small ball}
\textrm{We  consider the  linearized equation}\ L_{A}\xi=f\ \textrm{on a small ball}.
\end{equation}

\begin{prop}\label{prop solving the deformation equation locally for the noncone connection} Let $A,p,b$ be as in Theorem \ref{Thm Fredholm}. There is a $\tau_{0}>0$ such that for    any singular point $O$,  there exists a bounded linear map $Q_{A,p,b}$ from $L^{2}_{p,b}[B_{O}(\tau_{0})]$ to $W^{1,2}_{p,b}[B_{O}(\tau_{0})]$ with the following properties. 
 \begin{itemize}
\item $L_{A}Q_{A,p,b}=Id$ from  $L^{2}_{p,b}[B_{O}(\tau_{0})]$  to itself.
\item The bound on $Q_{A,p,b}$ is less than a  $\bar{C}$  as in Definition \ref{Def special constants}.
\end{itemize}
Consequently, equation  (\ref{equ deformation equation for A on  a small ball}) admits a solution $\xi$ such that 
\begin{equation}\label{equ lem local inverse near O is bounded 2}|\xi|_{W^{1,2}_{p,b}[B_{O}(\tau_{0})]}\leq \bar{C} |f|_{L^{2}_{p,b}[B_{O}(\tau_{0})]}.\ \textrm{In particular},\  \bar{C}\ \textrm{does not depend on}\ \tau_{0}.
\end{equation}

\end{prop}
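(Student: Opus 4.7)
The strategy is to perturb off the model cone right-inverse $Q_{p,b,A_O}$ from Corollary \ref{Cor solving model laplacian equation over the ball without the compact support RHS condition} by a Neumann-series argument. Write $L_A = L_{A_O} + R$ with $R \triangleq L_A - L_{A_O}$. Since $L_A$ and $L_{A_O}$ are first-order Dirac-type operators having the same principal symbol (the derivatives enter only through $d$ and the Euclidean connection), the difference $R$ is a zero-order operator whose coefficient is essentially $[A-A_O,\,\cdot\,]$. Thus pointwise
\begin{equation*}
|R\xi|(x) \leq C |A-A_O|(x)\,|\xi|(x) \leq \frac{C(-\log r_x)^{-\mu_1}}{r_x}|\xi|(x)
\end{equation*}
by the admissibility bound (\ref{equ condition on rate of convergence for an admissable instanton}).

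\textbf{The key estimate and contraction.} For any $\xi \in W^{1,2}_{p,b}[B_O(\tau_0)]$ with $\tau_0 < 1$, monotonicity of $(-\log r)^{-\mu_1}$ in $r$ gives
\begin{equation*}
|R\xi|^2_{L^2_{p,b}[B_O(\tau_0)]} \leq C(-\log\tau_0)^{-2\mu_1}\int_{B_O(\tau_0)} \frac{|\xi|^2}{r^2}w_{p,b}\,dV \leq C(-\log\tau_0)^{-2\mu_1}|\xi|^2_{W^{1,2}_{p,b}[B_O(\tau_0)]},
\end{equation*}
since the $L^2_{p-1,b}$-piece is subsumed by the $W^{1,2}_{p,b}$-norm (Definition \ref{Def global weight and Sobolev spaces}). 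Composing with $Q_{p,b,A_O}$, whose bound as a map $L^2_{p,b}[B_O(\tau_0)] \to W^{1,2}_{p,b}[B_O(\tau_0)]$ is $\bar{C}$ independent of $\tau_0$, the map $T \triangleq R\,Q_{p,b,A_O}$ on $L^2_{p,b}[B_O(\tau_0)]$ satisfies
\begin{equation*}
|T\eta|_{L^2_{p,b}[B_O(\tau_0)]} \leq \bar{C}(-\log\tau_0)^{-\mu_1}|\eta|_{L^2_{p,b}[B_O(\tau_0)]}.
\end{equation*}
Choosing $\tau_0$ small enough (depending on $A$ and $\mu_1$) so that $\bar{C}(-\log\tau_0)^{-\mu_1}\leq \tfrac12$, the operator $\mathrm{Id}+T$ is invertible on $L^2_{p,b}[B_O(\tau_0)]$ via Neumann series with $\|(\mathrm{Id}+T)^{-1}\|\leq 2$.

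\textbf{Conclusion.} Set $Q_{A,p,b} \triangleq Q_{p,b,A_O}(\mathrm{Id}+T)^{-1}$. A direct calculation yields
\begin{equation*}
L_A Q_{A,p,b} = (L_{A_O}+R)\, Q_{p,b,A_O}(\mathrm{Id}+T)^{-1} = (\mathrm{Id}+T)(\mathrm{Id}+T)^{-1} = \mathrm{Id},
\end{equation*}
and the operator bound on $Q_{A,p,b}$ is at most $2\bar{C}$, which depends only on $p,b,A_O$ (both $\tau_0$ and $\mu_1$ enter only through the choice of $\tau_0$, not through the magnitude of the bound). The main technical subtlety is the precise matching between the $r^{-1}$-decay of $|A-A_O|$ and the $r^{-2}$-weight difference between $L^2_{p,b}$ and $L^2_{p-1,b}$: it is exactly this matching that allows all smallness to be extracted from the log-factor $(-\log\tau_0)^{-\mu_1}$, avoiding any power of $\tau_0$ in the final constant and thereby yielding a bound of $\bar{C}$-type as required.
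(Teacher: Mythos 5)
Your overall strategy — perturbing off the model right-inverse $Q_{p,b,A_{O}}$ via a Neumann series — is essentially the same as the paper's contraction-mapping argument; the resolvent identity $A(\mathrm{Id}+BA)^{-1}=(\mathrm{Id}+AB)^{-1}A$ makes your $Q_{p,b,A_{O}}(\mathrm{Id}+T)^{-1}$ literally equal to the paper's fixed-point operator. However, your justification of the key smallness contains a genuine error. You assert that $R=L_{A}-L_{A_{O}}$ is a zero-order operator because "the two operators have the same principal symbol." That is not true: $L_{A}$ is defined with the background co-associative form $\psi$ (and the metric $g_{\psi}$), whereas $L_{A_{O}}$ is defined with the Euclidean form $\psi_{0}$ (see the sentence following (\ref{equ introduction formula for deformation operator})). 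The formula $L_{A}[\sigma,a]=[d_{A}^{\star}a,\,d_{A}\sigma+\star(d_{A}a\wedge\psi)]$ makes the principal symbol depend on $\psi$ through $\star$ and the wedge, so replacing $\psi$ by $\psi_{0}$ changes the \emph{principal} part. Consequently $R$ contains a genuine first-order term, with coefficient controlled by $|\psi-\psi_{0}|+|g_{\psi}-g_{\psi_{0}}|\lesssim r$ on $B_{O}(\tau_{0})$, because $\phi(O)=\phi_{0}$ by Definition \ref{Def admissable open cover}.

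The paper handles both contributions simultaneously through Lemma \ref{lem bounding local perturbation of deformation operator}, which gives the two-term bound $|L_{A,\psi}-L_{A_{O},\psi_{0}}|\leq C\delta|\nabla_{A_{O}}\xi|+\frac{C\delta|\xi|}{r}$. Your argument treats only the zero-order piece and, worse, explicitly denies the existence of the first-order piece. The omission is easy to repair — the first-order term satisfies $\int_{B_{O}(\tau_{0})}r^{2}|\nabla_{A_{O}}\xi|^{2}w_{p,b}\,dV\leq\tau_{0}^{2}|\xi|^{2}_{W^{1,2}_{p,b}}$, which is smaller than your $(-\log\tau_{0})^{-2\mu_{1}}$ factor — but as written your proof rests on a false claim about the structure of $R$, so the contraction estimate is not established. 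You should either invoke Lemma \ref{lem bounding local perturbation of deformation operator} as the paper does, or explicitly decompose $R$ into its first- and zero-order parts and bound each.
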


\begin{rmk}Note that for  $W^{1,2}_{p,b}-$estimate, we don't have to shrink domain. This  is similar to  the case of standard Laplace equation (Theorem 9.9 in \cite{GilbargTrudinger}). 
\end{rmk}
\begin{proof}  The proof is  similar to that of Theorem 5.2 in \cite{GilbargTrudinger}. 

 Equation (\ref{equ deformation equation for A on  a small ball})
 is equivalent to 
 \begin{equation}\label{equ lem local inverse near O 1}
 \xi=Q_{p,b,A_{O}}f+Q_{p,b,A_{O}}P_{A_{O},A}\xi\triangleq \square \xi,\ P_{A_{O},A}=L_{A_{O}}-L_{A},
 \end{equation}
where $Q_{p,b,A_{O}}$ is the one in Corollary \ref{Cor solving model laplacian equation over the ball without the compact support RHS condition} ($\tau=\tau_{0}$).

By Definition \ref{Def Admissable connection with polynomial or exponential convergence} and Lemma \ref{lem bounding local perturbation of deformation operator }, for any $\epsilon_{0}$, when $\tau_{0}$ is small enough with respect to $\epsilon_{0}$ and $\psi$,  we obtain the following  $\textrm{for any}\ \xi_{1},\xi_{2}\in W^{1,2}_{p,b}[B_{O}(\tau_{0})]$.
\begin{equation}\label{equ lem local inverse near O 2}
|P_{A_{O},A}(\xi_{1}-\xi_{2})|_{L^{2}_{p,b}[B_{O}(\tau_{0})]}\leq \bar{C}\epsilon_{0}|\xi_{1}-\xi_{2}|_{W^{1,2}_{p,b}[B_{O}(\tau_{0})]}.
\end{equation}
By the  optimal $W^{1,2}_{p,b}-$ estimate of  $Q_{p,b,A_{O}}$, we obtain 
\begin{equation}\label{equ contract mapping for linear equation with Cepsilon0}
|\square (\xi_{1}-\xi_{2})|_{W^{1,2}_{p,b}[B_{O}(\tau_{0})]}\leq \bar{C}\epsilon_{0}|\xi_{1}-\xi_{2}|_{W^{1,2}_{p,b}[B_{O}(\tau_{0})]}.
\end{equation}

Let $ \bar{C}\epsilon_{0}<\frac{1}{4}$, $\square$ is a contract mapping from 
$W^{1,2}_{p,b}[B_{O}(\tau_{0})]$ to itself, then there must be a unique fixed point $\xi$ which   solves (\ref{equ deformation equation for A on  a small ball}). We  define 
$Q_{A,p,b}f$ as this $\xi$, the uniqueness  also implies \textbf{$Q_{A,p,b}$ is linear}. The condition $ \bar{C}\epsilon_{0}<\frac{1}{4}$, (\ref{equ lem local inverse near O 1}), (\ref{equ lem local inverse near O 2}), and Corollary \ref{Cor solving model laplacian equation over the ball without the compact support RHS condition} imply 
\begin{equation}\label{equ lem local inverse near O is bounded 1}
|\xi|_{W^{1,2}_{p,b}[B_{O}(\tau_{0})]}\leq |Q_{p,b,A_{O}}f|_{W^{1,2}_{p,b}}+ \frac{1}{4}|\xi|_{W^{1,2}_{p,b}[B_{O}(\tau_{0})]}.
\end{equation} 
Therefore (\ref{equ lem local inverse near O is bounded 2}) is true.\end{proof}
The following Lemma is well known.
\begin{lem}\label{lem parametrice in the smooth part}(Lemma 1.3.1 of \cite{Gilkey}, Theorem 4.3 of \cite{Lawson}). Under the same assumptions in Proposition \ref{prop solving the deformation equation locally for the noncone connection} on $A$,  for the $\tau_{0}$ obtained there, the $\tau_{0}-$admissible cover $\mathbb{U}_{\tau_{0}}$ satisfies the following property.  For any ball $B_{l}\in \mathbb{U}_{\tau_{0}}$ away from the singularity,   there exists a local parametrix $Q_{l}:\ L^{2}(B_{l})\rightarrow W^{1,2}(B_{l})$ such that 
$L_{A}Q_{l}=Id+K_{l}, \ K_{l} \ \textrm{is  compact}\ (L^{2}(B_{l})\rightarrow L^{2}(B_{l})).
$

\end{lem}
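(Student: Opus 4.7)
The plan is to invoke the standard pseudodifferential parametrix construction for smooth elliptic operators. Since $\mathbb{U}_{\tau_{0}}$ is admissible (Definition \ref{Def admissable open cover}), the ball $100 B_{l}$ lies in the smooth part of $E$ and is contained in a single coordinate chart; so after choosing a local trivialization, $L_{A}$ restricts there to a smooth first-order differential operator acting on sections of a trivial bundle over an open subset of $\R^{7}$. The identity of Lemma \ref{lem formula for LA squared} shows that $-L_{A}^{2}$ agrees with the rough Laplacian $\nabla_{A}^{\star}\nabla_{A}$ up to zeroth-order terms; in particular $L_{A}$ is of Dirac type, so its principal symbol is pointwise invertible on $T^{\star}M\setminus 0$.

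First I apply the classical H\"ormander construction on the chart $100 B_{l}$, as carried out in Section 1.3 of \cite{Gilkey} and Chapter III of \cite{Lawson}: one writes down a properly supported pseudodifferential operator $\widetilde{Q}$ of order $-1$ whose total symbol is obtained by inverting the principal symbol of $L_{A}$ and then iteratively correcting via the symbol calculus. This yields $L_{A}\widetilde{Q}=Id-R$ on $100 B_{l}$, where the total symbol of $R$ vanishes to infinite order, so $R$ is smoothing. Next, fix a cutoff $\chi \in C_{c}^{\infty}(2B_{l})$ with $\chi \equiv 1$ on $B_{l}$, and define $Q_{l}\xi := (\widetilde{Q}(\chi \xi))|_{B_{l}}$ for $\xi \in L^{2}(B_{l})$. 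The standard mapping property of order $-1$ pseudodifferential operators gives boundedness $Q_{l}:L^{2}(B_{l})\to W^{1,2}(B_{l})$.

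Composing, $L_{A}Q_{l}=Id+K_{l}$ on $B_{l}$, where $K_{l}$ is the sum of the smoothing remainder $-R$ and a term coming from the commutator $[L_{A},\chi]$ composed with $\widetilde{Q}$; both pieces send $L^{2}(B_{l})$ continuously into $W^{1,2}(B_{l})$ (in fact into $W^{k,2}(B_{l})$ for every $k$ on the smoothing part). Compactness of $K_{l}:L^{2}(B_{l})\to L^{2}(B_{l})$ then follows immediately from the Rellich--Kondrachov compactness of $W^{1,2}(B_{l})\hookrightarrow L^{2}(B_{l})$, since $B_{l}$ is a bounded smooth domain. There is really no obstacle here: the lemma is a textbook consequence of the symbol calculus once one observes that on the smooth balls $B_{l}$ the problem is completely independent of the singular structure at the $O_{j}$'s, and the only thing worth checking is the ellipticity of $L_{A}$, which is immediate from the form in (\ref{equ introduction formula for deformation operator}) and Lemma \ref{lem formula for LA squared}.
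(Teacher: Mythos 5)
Your proof is correct and follows essentially the same route as the paper's: both rely on the classical pseudodifferential parametrix for the elliptic first-order operator $L_{A}$ (the paper cites Gilkey's Lemma 1.3.1 directly, whereas you unpack the H\"ormander symbol construction), combined with a cutoff $\chi$ supported in $2B_{l}$ and the observation that the error is smoothing, hence compact via Rellich--Kondrachov. The only cosmetic difference is that you explicitly display the commutator term $[L_{A},\chi]\circ\widetilde{Q}$ as a contribution to $K_{l}$, which the paper leaves implicit in its cutoff argument.
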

\begin{proof}For the reader's convenience we mention a little bit: any section $\xi\in L^{2}(B_{l})$ can be extended as $0$ outside $B_{l}$, thus gives a section in "$H^{0}$" ($L^{2}$) defined in page 6 of \cite{Gilkey}. Choosing the "$\phi$" in (a) of Lemma 1.3.1 in \cite{Gilkey} to be the standard cutoff function which is identically $1$ in  $B_{l}$ but vanishes outside $2B_{l}$,  Lemma 1.3.1 in \cite{Gilkey} says $\phi(L_{A}Q_{l}-Id)$  is an infinitely-smoothing operator (defined in first line of page 12 in \cite{Gilkey}). Then by restricting   $\phi(L_{A}Q_{l}-Id)\xi$ onto $B_{l}$, the proof is complete.\end{proof}
\begin{thm}\label{thm global parametrix}Let $A,p,b$ be as in Theorem \ref{Thm Fredholm},  there is a  bounded linear operator $Q\ (L^{2}_{p,b}\rightarrow W^{1,2}_{p,b})$ such that  $K_{p,b}\triangleq L_{A}Q-id\ (L^{2}_{p,b}\rightarrow L^{2}_{p,b})$ is compact.
\end{thm}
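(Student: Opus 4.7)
The plan is to paste the local inverses together by a finite partition of unity. The building blocks are already in place: near each singularity we have the exact local right inverse $Q_{A,p,b}$ (Proposition \ref{prop solving the deformation equation locally for the noncone connection}), and on each smooth coordinate ball $B_l$ we have a local parametrix $Q_l$ with $L_A Q_l = \mathrm{id} + K_l$ where $K_l$ is compact on $L^2(B_l)$ (Lemma \ref{lem parametrice in the smooth part}). I will fix the common $\tau_0$ provided by Proposition \ref{prop solving the deformation equation locally for the noncone connection}, pass to the admissible cover $\mathbb{U}_{\tau_0}$, choose a smooth partition of unity $\{\chi_\alpha\}$ subordinate to the refinement $\{\tfrac{1}{100}B_l,\tfrac{1}{100}B_{O_j}\}$, and pick nested cutoffs $\chi_\alpha' \equiv 1$ on $\mathrm{supp}(\chi_\alpha)$, with $\mathrm{supp}(\chi_\alpha')$ contained in the corresponding enclosing ball. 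All ensuing sums are finite since $\mathbb{U}_{\tau_0}$ is. I will then set
\[
Qf \;=\; \sum_l \chi_l'\, Q_l(\chi_l f) \;+\; \sum_j \chi_{O_j}'\, Q_{A,p,b}(\chi_{O_j} f).
\]

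Boundedness $Q\colon L^2_{p,b}\to W^{1,2}_{p,b}$ is immediate: on the smooth balls $B_l$ the weight $w_{p,b}$ is pinched between two positive constants so the unweighted bound from Lemma \ref{lem parametrice in the smooth part} suffices; on the singular balls the weighted bound is exactly Proposition \ref{prop solving the deformation equation locally for the noncone connection}; and multiplication by a smooth cutoff preserves $W^{1,2}_{p,b}$. The key computation comes from the product rule $L_A(\chi_\alpha'\eta)=\chi_\alpha' L_A\eta + [L_A,\chi_\alpha']\eta$ together with the identities $L_A Q_l=\mathrm{id}+K_l$, $L_A Q_{A,p,b}=\mathrm{id}$, and $\chi_\alpha'\chi_\alpha=\chi_\alpha$; it telescopes to
\[
L_A Qf \;=\; f \;+\; \underbrace{\sum_l \chi_l'\, K_l(\chi_l f)}_{\mathrm{I}} \;+\; \underbrace{\sum_\alpha [L_A,\chi_\alpha']\, Q_\alpha(\chi_\alpha f)}_{\mathrm{II}},
\]
so $K_{p,b}=\mathrm{I}+\mathrm{II}$ and it remains only to show that each of $\mathrm{I},\mathrm{II}$ is compact on $L^2_{p,b}$.

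Term $\mathrm{I}$ is compact because each $K_l$ is compact on $L^2(B_l)$ by Lemma \ref{lem parametrice in the smooth part}, the weights are bounded on $B_l$, and the surrounding multiplications by $\chi_l,\chi_l'$ are bounded. For $\mathrm{II}$, the essential observation is that $[L_A,\chi_\alpha']$ is a \emph{zeroth-order} operator: the first-order Dirac-type $L_A$ commutes with multiplication up to a Clifford-type contraction with $d\chi_\alpha'$. Each summand in $\mathrm{II}$ therefore factors as
\[
L^2_{p,b}\ \xrightarrow{\ Q_\alpha(\chi_\alpha \cdot)\ }\ W^{1,2}_{p,b}\ \hookrightarrow\ L^2_{p,b}\ \xrightarrow{\ [L_A,\chi_\alpha']\ }\ L^2_{p,b},
\]
where the middle inclusion is compact by Lemma \ref{lem compact imbedding} applied with $p_1=p_2=p$, $b_1=b_2=b$ (since $p-1<p$); a finite sum of compact operators is compact. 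The one point I expect to require care is verifying that this global compact embedding really absorbs the commutators coming from the singular balls, where the weights degenerate. This is built into the very definition $|\xi|_{W^{1,2}_{p,b}}=|\nabla_A\xi|_{L^2_{p,b}}+|\xi|_{L^2_{p-1,b}}$, which encodes the extra factor of $r^{-1}$ needed at the singularities, so Lemma \ref{lem compact imbedding} supplies exactly what is needed and no further analysis is required.
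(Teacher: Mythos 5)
Your proof is correct and is essentially the paper's argument: patch the exact local inverse of Proposition~\ref{prop solving the deformation equation locally for the noncone connection} near the singularities together with the interior parametrices of Lemma~\ref{lem parametrice in the smooth part} by a finite partition of unity, then observe that the remainder $K_{p,b}$ is a finite sum of zeroth-order commutator terms and smoothing remainders whose compactness follows from Lemma~\ref{lem compact imbedding}. The only cosmetic differences are the placement of the partition of unity versus the enlarging cutoff (you swap the roles, which is equivalent since both constructions rest on $\varphi_\alpha\chi_\alpha=\varphi_\alpha$) and your direct appeal to compactness of $K_l$ on $L^2(B_l)$ for Term~$\mathrm{I}$, whereas the paper uses the stronger fact that $K_l$ is smoothing and hence bounded $L^2(B_l)\to W^{1,2}(B_l)$ so that all three summands of $K_{p,b}$ pass through the single compact embedding $W^{1,2}_{p,b}\hookrightarrow L^2_{p,b}$.
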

\begin{proof}  We consider the $\tau_{0}-$admissible cover  $\mathbb{U}_{\tau_{0}}$ in Definition (\ref{Def admissable open cover}), for the $\tau_{0}$ in Proposition \ref{prop solving the deformation equation locally for the noncone connection}. In  the  $B_{O_j}$'s (near the singular points), we use  the right inverse inverse  $Q_{j}$ constructed in Proposition \ref{prop solving the deformation equation locally for the noncone connection}. In the  $B_{l}$'s (away from the singular points), we use  the  $Q_{l}$'s  in Lemma \ref{lem parametrice in the smooth part}.   Then let
\begin{equation}\label{equ Thm global Sobolev parametrix}Q(f)=\Sigma_{j}\varphi_{j}Q_{j}(\chi_{j}f)+\Sigma_{l}\varphi_{l}Q_{l}(\chi_{l}f),\end{equation} where $\varphi_{j},\ \varphi_{l}$'s are the partition of unity of $\frac{\mathbb{U}_{\tau_{0}}}{5}$ (with co-centric balls with radius $\frac{1}{5}$ of the original one), and $\chi_{j}$ ($\chi_{l}$) are smooth  functions  such that 
\begin{displaymath}
\chi_{j}\ (\chi_{l}) =\left \{ \begin{array}{cc}
1, & \textrm{over}\ supp \varphi_{j}\subset \frac{B_{O_{j}}}{5}\ (supp \varphi_{l}\subset \frac{B_l}{5});\\
0,& \textrm{outside}\ \frac{B_{O_{j}}}{4}\ (\frac{B_{l}}{4}).
\end{array}\right.
\end{displaymath}
 Then $\varphi_{j}\chi_{j}=\varphi_{j}$ ($\varphi_{l}\chi_{l}=\varphi_{l}$).

For any smooth function $\varphi$ and section $\xi=\left [
\begin{array}{c}
   \sigma  \\
  a   \\
 \end{array}\right ]$, we calculate
 \begin{equation}\label{equ parametrix for LA: def of G}
 L_{A}(\varphi\xi)= \varphi L_{A}\xi+G(d\varphi,\xi),\ \textrm{where}
 \end{equation}
 \begin{equation}\label{equ Def G..} G(d\varphi,\xi)=\left [
\begin{array}{cc}
   0 & -\star (d\varphi \wedge \star a) \\
  d\varphi \wedge \sigma & \star (d\varphi \wedge a \wedge \psi)   \\
 \end{array}\right ]\ \textrm{is an algebraic operator. } \end{equation}  \begin{equation}\label{equ formula for LQ-id compact operator}\textrm{Thus}\
 L_{A}Q \xi=f+ \Sigma_{j}G[d\varphi_{j},Q_{j}(\chi_{j}f)]+\Sigma_{l}G[d\varphi_{l},Q_{l}(\chi_{l}f)]+\Sigma_{l}\varphi_{l}K_{l}(\chi_{l}f).
 \end{equation}
Since each $\varphi_{j}$ is smooth, Corollary \ref{Cor solving model laplacian equation over the ball without the compact support RHS condition} yields  
\begin{equation}
|\Sigma_{j}G[d\varphi_{j},Q_{j}(\chi_{j}f)|_{W^{1,2}_{p,b}}\leq C|f|_{L^{2}_{p,b}}.
\end{equation}
$\textrm{Lemma \ref{lem parametrice in the smooth part} implies}\ 
|\Sigma_{l}\varphi_{l}K_{l}(\chi_{l}f)|_{L_{p,b}^{1,2}}\leq C|f|_{L_{p,b}^{2}}$.
Let \begin{equation}\label{equ formula for Kpb}K_{p,b}(f)=\Sigma_{j}G[d\varphi_{j},Q_{j}(\chi_{j}f)]+\Sigma_{l}G[d\varphi_{l},Q_{l}(\chi_{l}f)]+\Sigma_{l}\varphi_{l}K_{l}(\chi_{l}f),\end{equation} we obtain 
$|K_{p,b} f|_{W^{1,2}_{p,b}}\leq C|f|_{L^{2}_{p,b}}.$

By  Lemma \ref{lem compact imbedding}, $K_{p,b}$ is compact from  $L^{2}_{p,b}$ to itself.  \end{proof}

\begin{proof}[\textbf{Proof of Theorem \ref{Thm Fredholm}}:]  By Theorem \ref{thm global parametrix},   using the argument in page 50 of \cite{Donaldson},  and in the proof of Theorem 4  in \cite{Evans},  the proof of the  Sobolev-theory part of Theorem \ref{Thm Fredholm} is complete.

  The Hybrid part in Theorem \ref{Thm Fredholm} is a direct corollary of  the  crucial $C^{0}-$  estimate in Theorem \ref{thm C0 est}. Suppose  $\bar{f}$ is in cokernel, (\ref{equ coker contained in a bigger space}) yields $L_{A}^{\star}\bar{f}=0$ away from the singularities. Then Theorem \ref{thm C0 est} for $L^{\star}_{A}$ ($\gamma=\frac{9}{2}+p$), with "$f$" there being $0$ and "$\xi$" there being $\bar{f}$) yields
   \begin{equation}
 \label{equ Thm paramatix 1}
   |\bar{f}|_{C^{1,\alpha}_{(\frac{7}{2}+p,b)}(M)}\leq C |\bar{f}|_{L^{2}_{p,b}}.
\end{equation} 
 This means  $CokerL_{A}\subset N_{p,b}$ (Definition \ref{Def Hybrid spaces}). 

Since $Imaga L_{A}|_{W^{1,2}_{p,b}}$ is closed in $L^{2}_{p,b}$, for any $f\in N_{p,b}$ , we have a resolution into parallel and perpendicular components with respect to  $Image L_{A}|_{W^{1,2}_{p,b}}$:
\begin{equation}\label{equ f=fparallel + f perp}
f=f^{\parallel}+f^{\perp},\ f^{\perp}\in CokerL_{A}.
\end{equation}  
The estimate (\ref{equ Thm paramatix 1}) means $f^{\perp}\in N_{p,b}$ and  
  \begin{equation}
 |f^{\perp}|_{C^{1,\alpha}_{\frac{7}{2}+p,b}}\leq 
C|f^{\perp}|_{L^{2}_{p,b}}\leq C|f|_{L^{2}_{p,b}}\leq C|f|_{N_{p,b}}.
 \end{equation}
The decomposition (\ref{equ f=fparallel + f perp}) implies $f^{\parallel}\in N_{p,b}$. Thus we have proved
  \begin{lem}\label{lem regularity of projection of f onto Image of L}Suppose $f\in N_{p,b}$, then 
 \begin{equation}
 |f^{\parallel}|_{N_{p,b}}\leq C|f|_{N_{p,b}},\  |f^{\perp}|_{N_{p,b}}\leq C|f|_{N_{p,b}}.
 \end{equation}
 \end{lem}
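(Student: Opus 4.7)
The plan is to handle $f^{\perp}$ first and then deduce the bound on $f^{\parallel}$ by subtraction. Since $f^{\perp}$ lies in $\mathrm{Coker}\, L_A$, the discussion preceding the lemma (specifically the use of (\ref{equ coker contained in a bigger space})) shows that $f^{\perp}$ satisfies the formal adjoint equation $L_A^{\star} f^{\perp} = 0$ away from the singular points, while a priori it is only an element of $L^2_{p,b}$. This is exactly the situation in which Theorem \ref{thm C0 est} can be invoked for $L_A^{\star}$ with $\gamma = \tfrac{9}{2}+p$, putting $0$ in the $f$-slot and $f^{\perp}$ in the $\xi$-slot; the conclusion is the weighted Schauder estimate
\[
|f^{\perp}|_{C^{1,\alpha}_{(\frac{7}{2}+p,b)}(M)} \leq C\, |f^{\perp}|_{L^2_{p,b}},
\]
which in particular certifies that $f^{\perp} \in N_{p,b}$.

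Next I would promote this to a bound in terms of $|f|_{N_{p,b}}$. Since $f^{\perp}$ is the $L^2_{p,b}$-orthogonal projection of $f$ onto the finite-dimensional subspace $\mathrm{Coker}\, L_A$ (using the inner product from Definition \ref{Def Fredholm operators and isomorphisms}), the Pythagorean identity gives $|f^{\perp}|_{L^2_{p,b}} \leq |f|_{L^2_{p,b}}$. On the other hand $|f|_{L^2_{p,b}} \leq |f|_{N_{p,b}}$ directly from the definition of the hybrid space $N_{p,b}$. Chaining these inequalities with the Schauder bound above yields
\[
|f^{\perp}|_{N_{p,b}} = |f^{\perp}|_{L^2_{p,b}} + |f^{\perp}|^{(\frac{7}{2}+p,b)}_{1,\alpha,M} \leq C\, |f|_{N_{p,b}},
\]
which is the first inequality claimed.

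For the second inequality, I would just write $f^{\parallel} = f - f^{\perp}$ and apply the triangle inequality in the normed space $N_{p,b}$: $|f^{\parallel}|_{N_{p,b}} \leq |f|_{N_{p,b}} + |f^{\perp}|_{N_{p,b}} \leq (1+C)|f|_{N_{p,b}}$. In particular this verifies after the fact that $f^{\parallel} \in N_{p,b}$, which was implicitly asserted when writing the decomposition (\ref{equ f=fparallel + f perp}).

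There is no real obstacle here; every ingredient has already been assembled. The only step worth flagging is that one needs the $C^{1,\alpha}$ a priori bound for $L_A^{\star}$, not just $L_A$, which is why Theorem \ref{thm C0 est} was stated for both operators. The role of the cokernel being a subspace of $L^2_{p,b}$ (so that the orthogonal projection makes sense and is norm-decreasing) is what converts a bound on $f^{\perp}$ by $|f^{\perp}|_{L^2_{p,b}}$ into a bound by $|f|_{N_{p,b}}$.
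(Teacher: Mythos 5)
Your proof is correct and follows essentially the same route as the paper: invoke Theorem \ref{thm C0 est} for $L_A^{\star}$ to show elements of the cokernel lie in $C^{1,\alpha}_{(\frac{7}{2}+p,b)}$, use that orthogonal projection onto $\mathrm{Image}^{\perp}L_A$ is $L^{2}_{p,b}$-bounded together with $|f|_{L^{2}_{p,b}}\leq |f|_{N_{p,b}}$ to control $|f^{\perp}|_{N_{p,b}}$, and then obtain the $f^{\parallel}$ bound by subtraction. The only cosmetic difference is that you make the norm-decreasing property of the projection explicit (constant $1$ rather than a generic $C$), which the paper leaves implicit.
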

 
 Lemma \ref{lem regularity of projection of f onto Image of L} and Theorem \ref{thm existence of a good solution when f is in the image} yields a solution to $L_{A}\xi=f^{\parallel}$ which is orthogonal to the kernel and 
 \begin{equation}\label{equ proof of Thm Fredholm 1}
 |\xi|_{L^{2}_{p-1,b}}\leq  |\xi|_{W^{1,2}_{p,b}}\leq C|f|_{L^{2}_{p,b}}.
 \end{equation}
Theorem \ref{thm C0 est} ($\gamma=\frac{7}{2}+p$) and (\ref{equ proof of Thm Fredholm 1}) gives 
\begin{equation}\label{equ proof of Thm Fredholm 2}|\xi|_{2,\alpha,M}^{(\frac{5}{2}+p,b)}\leq C|f|_{N_{p,b}}. \ \textrm{Therefore}\ 
 |\xi|_{H_{p,b}}\leq C|f|_{N_{p,b}}. 
 \end{equation}
 The proof of the Hybrid-spaces part of Theorem \ref{Thm Fredholm} is complete.\end{proof}
 \begin{rmk}\label{rmk Jpb}(The pre-image space $J_{p,b}$) We define 
 \begin{equation}
 J_{p,b}=\{\xi \in H_{p,b}| \xi\perp kerL_{A}\}.
 \end{equation}
 Theorem \ref{Thm Fredholm} says $L_{A}$ is an isomorphism from $J_{p,b}$ to $Image L_{A}|_{H_{p,b}}\subset N_{p,b}$.  Denoting  $|\xi|_{H_{p,b}}$ as $|\xi|_{J_{p,b}}$ when $\xi\in J_{p,b}$,  we rewrite  (\ref{equ proof of Thm Fredholm 2}) as
 \begin{equation}\label{equ in Jpb LA  inverse is bounded}
 |L_{A}^{-1}f|_{J_{p,b}}\leq C|f|_{N_{p,b}}, \ \textrm{for any}\ f\in Image L_{A}|_{H_{p,b}}.
 \end{equation}
  \end{rmk}
  \section{Perturbation \label{section Perturbation}}
In this section we don't need the log-weight, thus we  conform to the abbreviation convention in Definition \ref{Def abbreviation of notations for spaces} i.e. \textbf{there will be no "$b$" in the symbols of the function spaces.} We prove the most precise version of Theorem \ref{Thm Deforming instanton simple version}.
\begin{thm}\label{Thm Deforming instanton}
 Let $M$ be a $7-$manifold with a smooth $G_{2}-$structure $(\phi,\psi)$, and $E\rightarrow M$ be an  admissible $SO(m)-$bundle defined away from finitely-many points (Definition \ref{Def the bundle xi}). Suppose $A$ is an admissible  $\psi-$instanton of order $4$ (Definition \ref{Def Admissable connection with polynomial or exponential convergence} and (\ref{equ instanton equation for A})).  Then for any $A-$generic $p\in (-\frac{5}{2},-\frac{3}{2})$ (Definition \ref{Def A generic}), there exists a $\delta_{0}>0$ with the following property.

 Suppose   $cokerL_{A}=\{0\}$ (Theorem \ref{thm characterizing cokernel}), 
for  any admissible $\delta_{0}$-deformation $(\underline{\phi},\underline{\psi})$ of  $(\phi,\psi)$ (Definition \ref{Def deformation of the G2 structure}),  there exists a $\underline{\psi}-G_{2}$ monopole $(\underline{A},\sigma)$  such that  $\underline{A}$ satisfies Condition ${\circledS_{A,p}}$ (Definition \ref{Def condition SAp}). In particular, the tangent  connection of $\underline{A}$ at each singular point is the same as that of $A$. When $\underline{\psi}$ is closed,   $\underline{A}$ is a  $\underline{\psi}-$instanton.
\end{thm}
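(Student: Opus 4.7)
The plan is to solve the nonlinear $\underline{\psi}$-monopole equation for $(\underline{A},\sigma)=(A+a,\sigma)$ by a contraction mapping argument in the hybrid space $J_{p}$ of Remark~\ref{rmk Jpb}, with the linear inversion step carried out using Theorem~\ref{Thm Fredholm} and the vanishing of $\textrm{coker}\, L_{A}$.

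First I would expand the monopole equation around $A$ into a fixed-point form
\[ L_{A} \xi \;=\; \mathcal{Q}_{\underline{\psi}}(\xi) \;+\; \mathcal{E}(\underline{\psi}), \qquad \xi = (\sigma,a) \in \Xi, \]
where $\mathcal{Q}_{\underline{\psi}}(\xi)$ collects the terms that are at least quadratic in $\xi$ (coming from $[a,a]$-commutators and from expanding $\star_{\underline{\psi}}(F_{A+a}\wedge \underline{\psi})$ in both $a$ and in the deformation $\underline{\psi}-\psi$), and the $\xi$-independent source
\[ \mathcal{E}(\underline{\psi}) \;=\; -\,\star_{\psi}\bigl(F_{A}\wedge(\underline{\psi}-\psi)\bigr) \;+\; (\text{lower-order deformation terms}) \]
measures the failure of $A$ to satisfy the $\underline{\psi}$-monopole equation. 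Since $\underline{\phi}=\phi$ at each $O_{j}$, Definition~\ref{Def deformation of the G2 structure} gives $|\underline{\psi}-\psi|(x)\leq C\delta_{0} r_{x}$ near the singular set, while admissibility of $A$ gives $|F_{A}|\leq Cr^{-2}$; the product therefore decays like $C\delta_{0} r^{-1}$ at each $O_{j}$, with full $C^{5}$ control elsewhere. Together with the $C^{1,\alpha}$-Schauder weight $\tfrac{7}{2}+p$ built into $N_{p}$ and the fact that $p>-\tfrac{5}{2}$, this yields the sharp bound $|\mathcal{E}|_{N_{p}}\leq C\delta_{0}$.

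Next, the assumption $\textrm{coker}\,L_{A}=\{0\}$, together with Theorem~\ref{Thm Fredholm} and the bound (\ref{equ in Jpb LA inverse is bounded}), makes $L_{A}\colon J_{p}\to N_{p}$ a topological isomorphism with an inverse bounded by a constant $C_{A}$. Set $\mathcal{T}(\xi):=L_{A}^{-1}[\mathcal{Q}_{\underline{\psi}}(\xi)+\mathcal{E}(\underline{\psi})]$. Because $p\leq -\tfrac{3}{2}$, the multiplicative estimate of Lemma~\ref{lem multiplicative property of hybrid spaces} applies and gives
\[ \bigl|\mathcal{Q}_{\underline{\psi}}(\xi_{1})-\mathcal{Q}_{\underline{\psi}}(\xi_{2})\bigr|_{N_{p}} \;\leq\; C\bigl(|\xi_{1}|_{H_{p}}+|\xi_{2}|_{H_{p}}+|\underline{\psi}-\psi|_{C^{5}}\bigr)\,|\xi_{1}-\xi_{2}|_{H_{p}}, \]
so for $\delta_{0}$ small $\mathcal{T}$ is a contraction on the closed ball $\{\xi\in J_{p}:|\xi|_{J_{p}}\leq 2C_{A}\delta_{0}\}$, producing a unique fixed point $\xi_{\infty}=(\sigma_{\infty},a_{\infty})$. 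The candidate monopole is $(\underline{A},\sigma)=(A+a_{\infty},\sigma_{\infty})$.

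Finally, membership $\xi_{\infty}\in H_{p}$ with $p\in(-\tfrac{5}{2},-\tfrac{3}{2})$ gives, via the Schauder weight $\tfrac{5}{2}+p>0$ in Definition~\ref{Def Schauder spaces}, the decay $|a_{\infty}|(x)\leq Cr_{x}^{-\tfrac{5}{2}-p}\to 0$ with matching control on two derivatives, which is exactly the rate needed for Condition $\circledS_{A,p}$ and for the preservation of the tangent connection at each $O_{j}$. The co-closedness of $\underline{\psi}$ then upgrades the monopole to an instanton by the remark following Theorem~\ref{Thm Deforming instanton simple version}. The main obstacle is Step~2: one must obtain the \emph{sharp} $N_{p}$-bound on $\mathcal{E}$, which is precisely the ``first iteration'' highlighted before~(\ref{equ first iteration}); a pure weighted $W^{1,2}_{p}$-theory would be insufficient because the quadratic interaction of $a$ with $F_{A}\sim r^{-2}$ cannot be controlled without the pointwise Schauder component of the hybrid norm, and the admissibility condition $\underline{\phi}(O_{j})=\phi(O_{j})$ is what makes the source term $\mathcal{E}$ decay fast enough to belong to $N_{p}$ at all.
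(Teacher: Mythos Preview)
Your proposal follows essentially the same contraction-mapping strategy as the paper. Two minor differences worth noting: (i) the paper first upgrades $L_A$ to $L_{A,\underline{\psi}}$ and shows the latter is an isomorphism $J_p\to N_p$ (its Lemma~\ref{lem Launderline psi is an isomorphism from Jp to Np}), whereas you absorb the linear perturbation $(L_{A,\underline{\psi}}-L_A)\xi$ into the right-hand side---this is fine, but be careful with the phrase ``at least quadratic in $\xi$'': that term is linear in $\xi$ and first-order in $\underline{\psi}-\psi$, so your Lipschitz constant correctly carries the extra $|\underline{\psi}-\psi|_{C^5}$; (ii) your error bound $|\mathcal{E}|_{N_p}\le C\delta_0$ is actually sharper than what the paper proves in Proposition~\ref{prop C1 global bound for the error term}, because you use $|\underline{\psi}-\psi|(x)\le C\delta_0 r_x$ (from $|\nabla(\underline{\psi}-\psi)|\le C\delta_0$ and agreement at $O_j$), while the paper only uses $|\underline{\psi}-\psi|\le Cr$ with $C$ independent of $\delta_0$ and then relaxes the weight.

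There is, however, a genuine gap in your last paragraph. The upgrade from monopole to instanton when $\underline{\psi}$ is closed cannot be obtained by citing the remark after Theorem~\ref{Thm Deforming instanton simple version}: that remark is about \emph{smooth} connections on a closed manifold, where one simply integrates $0=\langle d_{\underline{A}}^{\star}d_{\underline{A}}\sigma,\sigma\rangle$ by parts. Here $\underline{A}$ and $\sigma$ are singular at the $O_j$, so the integration by parts produces boundary contributions near the singularities that must be shown to vanish. The paper handles this explicitly: one inserts cutoffs $\eta_\epsilon$ supported away from $B_{O_j}(\epsilon)$, uses the $J_p$-decay $|\sigma|\le C r^{-5/2-p}$ and $|d_{\underline{A}}\sigma|\le C r^{-7/2-p}$, and checks that the cross term $\int\langle d_{\underline{A}}\sigma,(d\eta_\epsilon)\wedge\sigma\rangle$ is $O(\epsilon^{-2p})\to 0$ because $p<0$. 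Only then does one conclude $d_{\underline{A}}\sigma=0$ and hence $F_{\underline{A}}\wedge\underline{\psi}=0$. You should add this step.
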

\begin{prop}\label{prop C1 global bound for the error term} Under the same conditions as in Theorem \ref{Thm Deforming instanton}, for any $\lambda_{1}>0$ and $\epsilon_{1}$, there is a $\delta_{0}$ with the following property. For any admissible $\delta_{0}-$deformation  $\underline{\psi}$ of $\psi$, we have 
\begin{equation}\label{equ in prop C1 global bound for the error term}
|\star_{\underline{\psi}}(F_{A}\wedge \underline{\psi})|_{C^{1,\alpha}_{{(1+\lambda_{1})}}(M)}< \epsilon_{1},\ \textrm{for any}\ 0<\alpha\leq 1.
\end{equation}
Thus for any $\lambda_{2}>0$ and $\epsilon_{1}$, the following is true when  $\delta_{0}$ is small enough.
\begin{equation}\label{equ in prop C1 global bound for the error term 1}
|\star_{\underline{\psi}}(F_{A}\wedge \underline{\psi})|_{N_{-\frac{5}{2}+\lambda_{2}}}< \epsilon_{1}. 
\end{equation}
\end{prop}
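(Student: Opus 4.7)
The plan rests on one algebraic observation: since $A$ is a $\psi$-instanton, $F_A\wedge\psi=0$, so
\[
\star_{\underline{\psi}}(F_A\wedge\underline{\psi}) = \star_{\underline{\psi}}\bigl(F_A\wedge(\underline{\psi}-\psi)\bigr),
\]
i.e.\ the obstruction lives entirely in the deformation tensor $\underline{\psi}-\psi$. By Definition \ref{Def deformation of the G2 structure}, $|\underline{\psi}-\psi|_{C^{5}(M)}\leq C\delta_{0}$ and $\underline{\psi}=\psi$ at each $O_{j}$; consequently, near each singular point, $|\nabla^{k}(\underline{\psi}-\psi)|\leq C\delta_{0}r^{1-k}$ for $k=0,1,2,3$ (the key is that the pointwise value vanishes at $O_{j}$, gaining one power of $r$ by Taylor's formula).

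First I would handle the pointwise and gradient parts of the weighted $C^{1,\alpha}_{(1+\lambda_{1})}$ norm near a singular point $O_{j}$. Since $A$ is admissible of order $4$, comparison with the cone $A_{O_{j}}$ and the cheap bound $|F_{A_{O_{j}}}|\leq C r^{-2}$ yield $r^{k+2}|\nabla_{A}^{k}F_{A}|\leq C$ for $k=0,1,2$ (the log-decaying error in (\ref{equ condition on rate of convergence for an admissable instanton}) is harmless). Combined with the bound on $\nabla^{k}(\underline{\psi}-\psi)$ from the previous paragraph and the Leibniz rule, one obtains
\[
|\nabla^{j}\bigl(F_{A}\wedge(\underline{\psi}-\psi)\bigr)|\leq C\delta_{0}\,r^{-(1+j)},\qquad j=0,1,2,
\]
so multiplying by the weights $r^{1+\lambda_{1}}$, $r^{2+\lambda_{1}}$, $r^{3+\lambda_{1}}$ respectively gives bounds of order $\delta_{0}r^{\lambda_{1}}\leq C\delta_{0}$. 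The $\star_{\underline{\psi}}$ operator only adds smooth factors, uniformly bounded in $C^{2}$ by Definition \ref{Def deformation of the G2 structure}, so it does not affect the estimates.

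Next I would control the Hölder seminorm $[\nabla^{2}(F_{A}\wedge(\underline{\psi}-\psi))]^{(3+\lambda_{1}+\alpha)}_{\alpha,V_{\pm,O_{j}}}$ by the same Leibniz expansion, using the scaling-correct form of (\ref{equ Def local Schauder norm 1}) with $\mu+\alpha=3+\lambda_{1}+\alpha>0$: a standard trick is to reduce to balls $B_{x}(r_{x}/2)$ of comparable radius to $r_{x}$, rescale to unit size, apply the product rule for ordinary Hölder norms, and scale back. The admissibility order $4$ of $A$ furnishes the required H\"older control on $\nabla^{2}F_{A}$. Away from the singularities, $F_{A}$ is smooth and bounded, and $|\underline{\psi}-\psi|_{C^{2,\alpha}(M)}\leq C\delta_{0}$ is immediate, so all seminorms go to $0$ with $\delta_{0}$. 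This proves (\ref{equ in prop C1 global bound for the error term}) after choosing $\delta_{0}$ small enough in terms of $\epsilon_{1}$ and $\lambda_{1}$.

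For the second estimate (\ref{equ in prop C1 global bound for the error term 1}), the $C^{1,\alpha}_{(1+\lambda_{2})}$ piece of $|\cdot|_{N_{-5/2+\lambda_{2}}}$ is exactly (\ref{equ in prop C1 global bound for the error term}) with $\lambda_{1}=\lambda_{2}$, so only the $L^{2}_{-5/2+\lambda_{2}}$ part is new. Using the pointwise bound $|F_{A}\wedge(\underline{\psi}-\psi)|\leq C\delta_{0}r^{-1}$ near each $O_{j}$ and integrating against $w_{-5/2+\lambda_{2}}=r^{-5+2\lambda_{2}}$ with volume element $r^{6}dr\,d\theta$ in the model coordinate, one gets an integrand $\sim r^{-1+2\lambda_{2}}$, which is integrable at $r=0$ precisely because $\lambda_{2}>0$. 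The only potential obstacle I foresee is keeping track of the $G_{2}$-dependence of $\star_{\underline{\psi}}$, but since $\underline{\psi}$ is a $C^{5}$-small perturbation of $\psi$ and the map $\underline{\psi}\mapsto\star_{\underline{\psi}}$ is smooth (\cite{SalamonWalpuski}, \cite{Bryant}), this contributes only multiplicative smooth factors already absorbed in the general constants of Definition \ref{Def Dependence of the constants}.
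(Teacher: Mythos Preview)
Your argument is correct and in fact more direct than the paper's. Both proofs start from the same algebraic identity $F_A\wedge\underline{\psi}=F_A\wedge(\underline{\psi}-\psi)$, but they diverge in how the smallness is extracted near a singular point. The paper only records $|\underline{\psi}-\psi|\leq Cr$ (with $C$ independent of $\delta_0$), obtaining $|F_A\wedge(\underline{\psi}-\psi)|\leq C/r$; it then ``relaxes the weight'' via $C/r\leq C\rho_0^{\lambda_1}/r^{1+\lambda_1}$ on $B_O(\rho_0)$, chooses $\rho_0$ small to make $C\rho_0^{\lambda_1}<\epsilon_1$, and only \emph{afterwards} uses $\delta_0$ small to control the complementary region $r\geq\rho_0$. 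You instead apply the mean value theorem to $(\underline{\psi}-\psi)$ itself, getting $|\underline{\psi}-\psi|\leq C\delta_0 r$ with the $\delta_0$ factor present from the start; this yields $r^{1+\lambda_1+j}|\nabla^{j}(F_A\wedge(\underline{\psi}-\psi))|\leq C\delta_0 r^{\lambda_1}\leq C\delta_0$ in one stroke, eliminating the two-scale $(\rho_0,\delta_0)$ bookkeeping. For the H\"older seminorm the paper bounds the weighted $C^{2}$-norm and then invokes the interpolation Lemma~\ref{lem C1 C0 intepolate Calpha}, whereas you rescale balls $B_x(r_x/2)$ to unit size and use the ordinary product rule; both are standard and your version is self-contained. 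Your treatment of the $L^{2}_{-5/2+\lambda_2}$ part is also more explicit than the paper's one-line remark. Two minor cosmetic points: in the paper's notation the seminorm is written $[\nabla^{2}(\cdot)]^{(3+\lambda_1)}_{\alpha}$ (the exponent $3+\lambda_1+\alpha$ is the power of $r$ in the weight, not the superscript), and for the $C^{1,\alpha}_{(1+\lambda_1)}$ norm you strictly only need the H\"older seminorm of $\nabla(\cdot)$, so your $\nabla^{2}$ estimate is a bit more than required---but harmless.
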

\begin{proof} The essential issue can be elaborated by  the $C^{0}-$estimate. The instanton condition (\ref{equ instanton equation for A}) implies 
  \begin{equation}\label{equ decomposation of the full  error term}
  F_{A}\wedge  \underline{\psi}=  F_{A}\wedge  (\underline{\psi}-\psi).
  \end{equation}
  
  Let $\rho_{0}>0$ be small enough such that $B_{O}(\rho_{0})$ is within the coordinate chart near $O$.  When $r\leq \rho_{0}$, the admissible condition implies $\underline{\psi}-\psi=[\underline{\psi}-\underline{\psi}(O)]+[\psi(O)-\psi].\ \textrm{Hence}\ \ |\underline{\psi}-\psi|<Cr.$ $\textrm{Adjust}\ \rho_{0}\ \textrm{such that}\ C\rho_{0}^{\lambda_{1}}<\frac{\epsilon_{1}}{2},\ \textrm{we find}$
  \begin{equation}\label{equ C0 bound on the error step 1}
|F_{A}\wedge  (\underline{\psi}-\psi)|\leq \frac{C}{r}\leq \frac{C\rho_{0}^{\lambda_{1}}}{r^{1+\lambda_{1}}}<\frac{\epsilon_{1}}{r^{1+\lambda_{1}}}. 
  \end{equation}
  When $r\geq \rho_{0}$, still by the condition in Proposition \ref{prop C1 global bound for the error term}, we have
    \begin{equation}\label{equ C0 bound on the error step 2}
   |F_{A}\wedge  (\underline{\psi}-\psi)|\leq C\delta_{0}\rho^{-2}_{0}=C\delta_{0}(\frac{\epsilon_{1}}{2})^{-\frac{2}{\lambda_{1}}}<\epsilon_{1},\  \textrm{when}\ \delta_{0}\ \ \textrm{is small enough}.
  \end{equation}
  
 Thus we obtain the $C^{0}-$bound 
  \begin{equation}\label{equ C0 bound on the error}
   |F_{A}\wedge  \underline{\psi})|_{C^{0}_{{(1+\lambda_{1})}}(M)}\approx|\star_{\underline{\psi}}(F_{A}\wedge  \underline{\psi})|_{C^{0}_{{(1+\lambda_{1})}}(M)}< C\epsilon_{1}.
  \end{equation}
 where  "$\approx$" means equivalent up to a constant in the sense of Definition \ref{Def Dependence of the constants}.
  
The  bounds on $|\nabla_{A} \star_{\underline{\psi}}(F_{A}\wedge  \underline{\psi})|_{C^{0}_{{(2+\lambda_{1})}}(M)}$ and
 $|\nabla^{2}_{A} \star_{\underline{\psi}}(F_{A}\wedge  \underline{\psi})|_{C^{0}_{{(3+\lambda_{1})}}(M)}$ are similar.  For the reader's convenience, we still do the gradient bound. 
  \begin{eqnarray}& &\nabla_{A,\psi}\star_{\underline{\psi}}(F_{A}\wedge  \underline{\psi})=\nabla_{A,\psi}\star_{\underline{\psi}}(F_{A}\wedge  [\underline{\psi}-\psi]) \\&=&[\nabla_{\psi}(\star_{\underline{\psi}})](F_{A}\wedge  [\underline{\psi}-\psi])+\star_{\underline{\psi}}(F_{A}\wedge \nabla_{\psi}[\underline{\psi}-\psi])+\star_{\underline{\psi}}(\nabla_{A,\psi}F_{A}\wedge[\underline{\psi}-\psi]) \nonumber
 \end{eqnarray} 
 
For the first term,  by (\ref{equ C0 bound on the error}) and that $|\underline{\psi}|_{C^{5}(M)}\leq C$, we have 
 \begin{eqnarray}
 & &|[\nabla_{\psi}(\star_{\underline{\psi}})](F_{A}\wedge   [\underline{\psi}-\psi])|_{C^{0}_{{(2+\lambda_{1})}}(M)}\leq |[\nabla_{\psi}(\star_{\underline{\psi}})](F_{A}\wedge   [\underline{\psi}-\psi])|_{C^{0}_{{(1+\lambda_{1})}}(M)}\nonumber
 \\&\leq & C\epsilon_{1}.
 \end{eqnarray}
 
 For the second term, we have the following cheap estimate 
  \begin{equation}
|\star_{\underline{\psi}}(F_{A}\wedge \nabla_{\psi}[\underline{\psi}-\psi])|\leq \frac{C\delta_{0}}{r^{2}}.
 \end{equation}
 By exactly the trick (relaxing the weight a little bit) from (\ref{equ C0 bound on the error step 1}) to (\ref{equ C0 bound on the error}), we obtain for any $\lambda_{1}>0$ that
  \begin{equation}
|\star_{\underline{\psi}}(F_{A}\wedge \nabla_{\psi}[\underline{\psi}-\psi])|_{C^{0}_{{(2+\lambda_{1})}}(M)}< \epsilon_{1}\ \textrm{when}\ \delta_{0}\ \textrm{is small enough}.
 \end{equation} 
 In the same way it follows that the  $C^{0}_{{(2+\lambda_{1})}}(M)-$norm of the third term is less than $\epsilon_{1}$ (using $|\nabla_{A,\psi}F_{A}|\leq \frac{C}{r^{3}}$). Then we obtain 
\begin{equation}\label{equ error bound 2}|\nabla_{A,\psi}\star_{\underline{\psi}}(F_{A}\wedge  \underline{\psi})|_{C^{0}_{{(2+\lambda_{1})}}(M)}< C\epsilon_{1}\  \textrm{when}\ \delta_{0}\ \ \textrm{is small enough}. 
\end{equation}
\begin{equation}\label{equ error bound 3}\textrm{The proof of the Hessian estimate }\ |\nabla_{A,\psi}^{2}\star_{\underline{\psi}}(F_{A}\wedge  \underline{\psi})|_{C^{0}_{{(3+\lambda_{1})}}(M)}< C\epsilon_{1}
\end{equation}
is absolutely the same, except that we have one more negative power on $r$. 

By replacing "$C\epsilon_{1}$" by $\epsilon_{1}$ (since $\epsilon_{1}$ is arbitrary and $C$ does not depend on it), the estimates (\ref{equ C0 bound on the error}), (\ref{equ error bound 2}), (\ref{equ error bound 3}) amount to
\begin{equation}
|\star_{\underline{\psi}}(F_{A}\wedge \underline{\psi})|_{C^{2}_{{(1+\lambda_{1})}}(M)}< \epsilon_{1}.
\end{equation}

By  Lemma \ref{lem C1 C0 intepolate Calpha}, the proof of (\ref{equ in prop C1 global bound for the error term}) is complete. Using Definition \ref{Def Hybrid spaces}, the proof of (\ref{equ in prop C1 global bound for the error term 1}) is done by letting $\lambda_{1}=\frac{\lambda_{2}}{2}$ in (\ref{equ in prop C1 global bound for the error term})  and $\delta_{0}$ small enough. 
 \end{proof}

The monopole  equation with respect to $\underline{\psi}$ is  \begin{equation}\label{equ instantonequation without cokernel}\star_{\underline{\psi}}(F_{A+a}\wedge \underline{\psi})+d_{A+a}\sigma=0, \textrm{with gauge fixing equation}\ d^{\star_{\underline{\psi}}}_{A}a=0
 \end{equation}
 It's equivalent to 
 \begin{equation}\label{equ instanton equation without cokernel without LA}
\star_{\underline{\psi}}(d_{A }a\wedge \underline{\psi})+\frac{1}{2}\star_{\underline{\psi}}([a,a]\wedge \underline{\psi})+d_{A}\sigma+[a,\sigma]+\star_{\underline{\psi}}(F_{A}\wedge \underline{\psi})=0
 \end{equation}
with gauge fixing. Equation (\ref{equ instantonequation without cokernel}) and (\ref{equ instanton equation without cokernel without LA}) can be written as
 \begin{equation}\label{equ instanton column vector equation without cokernel}
 L_{A,\underline{\psi}}[\begin{array}{c}
\sigma   \\
  a   
\end{array}]=\left[\begin{array}{c}
0  \\
  -\frac{1}{2}\star_{\underline{\psi}}([a,a]\wedge \underline{\psi})
 -[a,\sigma]-\star_{\underline{\psi}}(F_{A}\wedge \underline{\psi})
\end{array}\right ]
 \end{equation}
 \begin{lem}\label{lem Launderline psi is an isomorphism from Jp to Np} Under the same conditions as in Theorem \ref{Thm Deforming instanton}, $L_{A,\underline{\psi}}$ is an isomorphism from $J_{p}$ to $N_{p}$,  and the bounds (on itself and the inverse of it) are uniform for all admissible $\delta_{0}-$deformations of $\psi$, when $\delta_{0}$ is small enough  with respect to the data in Definition \ref{Def Dependence of the constants}. 
 \end{lem}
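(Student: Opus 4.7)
The plan is a standard Neumann-series perturbation argument. I would first write
\begin{equation*}
L_{A,\underline{\psi}} = L_A + P_{\underline{\psi}}, \qquad P_{\underline{\psi}}\left[\begin{array}{c}\sigma \\ a\end{array}\right] = \left[\begin{array}{c}(d^{\star_{\underline{\psi}}}_A - d^{\star_{\psi}}_A)a \\ \star_{\underline{\psi}}(d_A a \wedge \underline{\psi}) - \star_{\psi}(d_A a \wedge \psi)\end{array}\right],
\end{equation*}
so the perturbation $P_{\underline{\psi}}$ collects every contribution carrying a factor of $\underline{\psi}-\psi$ or $\nabla(\underline{\psi}-\psi)$, the latter arising from the change of Hodge star and formal adjoint (which depend on $\underline{\psi}$ through the metric $g_{\underline{\phi}}$). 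The standing hypothesis $\mathrm{coker}\,L_A = \{0\}$, combined with Theorem~\ref{Thm Fredholm} and Remark~\ref{rmk Jpb}, already yields that $L_A : J_p \to N_p$ is a Banach-space isomorphism. If I can show that $|P_{\underline{\psi}}|_{H_p \to N_p} \leq C\delta_0$ uniformly over admissible $\delta_0$-deformations, then the factorization
\begin{equation*}
L_{A,\underline{\psi}}\big|_{J_p} = L_A\big|_{J_p}\,\bigl(I_{J_p} + L_A^{-1}P_{\underline{\psi}}\big|_{J_p}\bigr)
\end{equation*}
inverts on $J_p$ by Neumann series as soon as $C\delta_0\,|L_A^{-1}|_{N_p \to J_p} < 1$, producing a uniform upper bound on $|L_{A,\underline{\psi}}^{-1}|$; the forward bound $|L_{A,\underline{\psi}}| \leq |L_A| + C\delta_0$ is immediate from the triangle inequality.

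The heart of the lemma is therefore the perturbation estimate $|P_{\underline{\psi}}\xi|_{N_p} \leq C\delta_0\,|\xi|_{H_p}$. I would expand $P_{\underline{\psi}}$ algebraically into a finite sum of terms of the schematic form
\begin{equation*}
B_1\,(\underline{\psi}-\psi) \otimes \nabla_A \xi \qquad \text{and} \qquad B_2\,\nabla(\underline{\psi}-\psi) \otimes \xi,
\end{equation*}
where $B_1,B_2$ are smooth multilinear expressions in $\psi$ and $\underline{\psi}$ whose $C^3$-norms are bounded independently of $\delta_0$. The decisive structural input is the vanishing $\underline{\phi}(O_j) = \phi(O_j)$ from Definition~\ref{Def deformation of the G2 structure}; together with $|\underline{\phi}-\phi|_{C^5(M)} \leq \delta_0$ this gives
\begin{equation*}
|\underline{\psi}-\psi|(x) \leq C\delta_0\, r_x, \qquad |\nabla^k(\underline{\psi}-\psi)|(x) \leq C\delta_0 \quad (k \geq 1)
\end{equation*}
near every singularity, i.e.\ $(\underline{\psi}-\psi)$ has weighted Schauder norm $\leq C\delta_0$ in $C^{4,\alpha}_{(-1)}$. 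Applying the multiplicative rule of the hybrid spaces (in the spirit of Lemma~\ref{lem multiplicative property of hybrid spaces}), the first type of term lands in $C^{1,\alpha}_{(5/2+p)} \hookrightarrow C^{1,\alpha}_{(7/2+p)}$ with norm $\leq C\delta_0 |\xi|_{H_p}$: the extra power of $r$ from $\underline{\psi}-\psi$ compensates exactly for the one that $\nabla_A \xi$ loses relative to $\xi$. The second type is handled by pairing the $C^0$-bound on $\nabla(\underline{\psi}-\psi)$ with the weight-$(5/2+p)$ estimate on $\xi$. For the $L^2_p$ component of the $N_p$-norm, the uniform bound $|\underline{\psi}-\psi|_{C^1(M)} \leq C\delta_0$ gives a pointwise inequality $|P_{\underline{\psi}}\xi| \leq C\delta_0(|\nabla_A \xi| + |\xi|/r)$, which integrates against $w_p$ to yield $|P_{\underline{\psi}}\xi|_{L^2_p} \leq C\delta_0 |\xi|_{W^{1,2}_p}$.

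The main obstacle is the weight bookkeeping in the Schauder part: the summands that carry $\nabla(\underline{\psi}-\psi)$ rather than $\underline{\psi}-\psi$ lose the spare power of $r$ coming from vanishing at $O_j$, and the missing power must be recovered from the built-in gap between the two weights $\tfrac{5}{2}+p$ and $\tfrac{7}{2}+p$ that define $H_p$. This power-matching is exactly what the hypothesis $p \in (-\tfrac{5}{2}, -\tfrac{3}{2})$ guarantees. Once this estimate is in hand, the Neumann series completes the proof and all bounds are uniform in $\delta_0$ because every implicit constant depends only on the background data of Definition~\ref{Def Dependence of the constants}, not on the specific deformation.
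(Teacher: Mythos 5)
Your proof is correct and takes essentially the same route as the paper's: the paper rewrites $L_{A,\underline{\psi}}\xi=f$ as the fixed-point equation $\xi = L_A^{-1}(L_A - L_{A,\underline{\psi}})\xi + L_A^{-1}f$ and invokes the contraction mapping principle on $J_p$, which is exactly your factorization $L_{A,\underline{\psi}}=L_A(I+L_A^{-1}P_{\underline{\psi}})$ inverted by Neumann series. Where the paper is terse — citing the template of Proposition~\ref{prop solving the deformation equation locally for the noncone connection} and the pointwise bound of Lemma~\ref{lem bounding local perturbation of deformation operator } — you spell out the operator-norm estimate $|P_{\underline{\psi}}|_{H_p\to N_p}\leq C\delta_0$ and the attendant weight bookkeeping, which is the right way to fill that gap.
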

 
 \begin{proof}The proof is exactly as that of Proposition \ref{prop solving the deformation equation locally for the noncone connection}, for the reader's convenience we include the crucial detail. 
  \begin{equation}\label{equ linearized equation without cokernel }
 \textrm{For any} \ f\in N_{p},\ \textrm{the equation}\ L_{A,\underline{\psi}}\left[\begin{array}{c}
\sigma  \\
a
\end{array}\right ] =f\ \textrm{is equivalent to}\
 \end{equation}
\begin{equation}\label{equ linearized equation for iteration  no cokernel}
 [\begin{array}{c}
\sigma   \\
  a   
\end{array}]=L^{-1}_{A}(L_{A}-L_{A,\underline{\psi}})[\begin{array}{c}
\sigma   \\
  a   
\end{array}]+L^{-1}_{A}f.
\end{equation}

The right hand side of (\ref{equ linearized equation for iteration  no cokernel}) is a contract mapping in terms of $[\begin{array}{c}
\sigma   \\
  a   
\end{array}]$ from $J_{p}$ to itself, thus iteration implies (\ref{equ linearized equation without cokernel }) can be  uniquely  solved in $J_{p}$. The bound on $L^{-1}_{A}$ follows from the last part in the proof of Proposition \ref{prop solving the deformation equation locally for the noncone connection}.
 \end{proof}
 
 By Lemma \ref{lem Launderline psi is an isomorphism from Jp to Np}, (\ref{equ instanton column vector equation without cokernel}) is  equivalent to the following equation
\begin{equation}
\left[\begin{array}{c}
\sigma  \\
a
\end{array}\right ]=L_{A,\underline{\psi}}^{-1}P\left[\begin{array}{c}
\sigma  \\
a
\end{array}\right ],
\end{equation}
where $
P\left[\begin{array}{c}
\sigma  \\
a
\end{array}\right ]$ means the right hand side of (\ref{equ instanton column vector equation without cokernel}).\begin{equation}\label{equ first iteration}
\textrm{The first iteration is}\ L_{A,\underline{\psi}}^{-1}P\left[\begin{array}{c}
0  \\
0\\

\end{array}\right ]=L_{A,\underline{\psi}}^{-1}\left[\begin{array}{c}
0  \\
-\star_{\underline{\psi}}(F_{A}\wedge \underline{\psi})
\end{array}\right ].
\end{equation}

For any $p$ as in Theorem \ref{Thm Deforming instanton}, there is a $\lambda_{2}$ such that $p>-\frac{5}{2}+\lambda_{2}$, thus  Proposition \ref{prop C1 global bound for the error term} implies the following bound on the first iteration   \begin{equation}\label{equ first iteration is small when cokernel is present}
|L_{A,\underline{\psi}}^{-1}P\left[\begin{array}{c}
0  \\
0\\
\end{array}\right ]|_{J_{p}}\leq C|\star_{\underline{\psi}}(F_{A}\wedge \underline{\psi})|_{N_{p}}< C\epsilon_{1}\ \textrm{when}\   \delta_{0}\ \textrm{is small enough}.
\end{equation}

\begin{proof}[\textbf{Proof of Theorem \ref{Thm Deforming instanton}}:] To solve the taming-pair equation (\ref{equ instantonequation without cokernel}), it suffices to show that $L_{A,\underline{\psi}}^{-1}P$ is a contract mapping when restricted to a small enough neighbourhood of $0$ in $J_{p}$, then iteration as in section 3 of  \cite{Myself2013} implies the existence of a unique solution close enough to $0$.

Since $p<-\frac{3}{2}$, this is an easy consequence of the multiplicative property of $J_{p}\ (H_{p}),N_{p}$ in Lemma \ref{lem multiplicative property of hybrid spaces}, and that  the 2 terms  \begin{equation}\label{equ quadratic terms 1 in the instanton equation with cokernel}-\frac{1}{2}\star_{\underline{\psi}}([a,a]\wedge \underline{\psi}])\  \textrm{and}\ -[a,\sigma]\end{equation} are quadratic. For the reader's convenience, we  include the crucial detail. 
\begin{eqnarray}\label{eqnarray 1 in proof of Main thm without cokernel}
\textrm{We compute} & &  P\left[\begin{array}{c}
\sigma_{1}  \\
a_{1}
\end{array}\right ]-   
P\left[\begin{array}{c}
\sigma_{2} \\
a_{2}
\end{array}\right]
\\&=&-\frac{1}{2}\{\star_{\underline{\psi}}([a_{1}-a_{2},a_{1}]\wedge \underline{\psi})+\frac{1}{2}\star_{\underline{\psi}}([a_{2},a_{2}-a_{1}]\wedge \underline{\psi})\}\nonumber\\& &-\{ [a_{1}-a_{2},\sigma_{1}]+[a_{2},\sigma_{2}-\sigma_{1}]\}\nonumber
\end{eqnarray}
Then Lemma \ref{lem multiplicative property of hybrid spaces} implies
\begin{equation}\label{equ 1 in proof of Main thm without cokernel}
|P\left[\begin{array}{c}
\sigma_{1}  \\
a_{1}
\end{array}\right ]-   
P\left[\begin{array}{c}
\sigma_{2} \\
a_{2}
\end{array}\right]|_{N_{p}}
\leq C |\left[\begin{array}{c}
\sigma_{1}  \\
a_{1}
\end{array}\right]-   
\left[\begin{array}{c}
\sigma_{2} \\
a_{2}
\end{array}\right]|_{J_{p}}
\times (\left|\begin{array}{c}
\sigma_{1}  \\
a_{1}
\end{array}\right|_{J_{p}}+  
\left|\begin{array}{c}
\sigma_{2} \\
a_{2}
\end{array}\right|_{J_{p}})
\end{equation}

Thus, letting $\epsilon_{1}$ small enough with respect to the "$C$" above and the "$C$" in (\ref{equ in Jpb LA  inverse is bounded}), the condition $(\left|\begin{array}{c}
\sigma_{1}  \\
a_{1}
\end{array}\right|_{J_{p}}+  
\left|\begin{array}{c}
\sigma_{2} \\
a_{2}
\end{array}\right|_{J_{p}})<\epsilon_{1}$ implies 
\begin{equation}
|L_{A,\underline{\psi}}^{-1}P\left[\begin{array}{c}
\sigma_{1}  \\
a_{1}
\end{array}\right ]-   
L_{A,\underline{\psi}}^{-1}P\left[\begin{array}{c}
\sigma_{2} \\
a_{2}
\end{array}\right]|_{J_{p}}
\leq \frac{1}{2} |\left[\begin{array}{c}
\sigma_{1}  \\
a_{1}
\end{array}\right]-   
\left[\begin{array}{c}
\sigma_{2} \\
a_{2}
\end{array}\right]|_{J_{p}}.
\end{equation}

The proof of the contract mapping is complete.

Denote  $A+a$  as $\underline{A}$. When $\underline{\psi}$ is closed,  note that by applying 
$d^{\star_{\underline{\phi}}}_{\underline{A}}$ to  (\ref{equ instanton equation without cokernel without LA}) away from the singularity, we obtain $d^{\star_{\underline{\phi}}}_{\underline{A}}d_{\underline{A}}\sigma=0$. Then we choose the cut-off function $\eta_{\epsilon}$  as in (\ref{equ cut-off function bound near the singular point}), and calculate
\begin{equation}\label{equ Thm deforming instanton 3}
0= \int_{M}<d^{\star_{\underline{\phi}}}_{\underline{A}}d_{\underline{A}}\sigma, \eta_{\epsilon}\sigma>dV=\int_{M}<d_{\underline{A}}\sigma,(d\eta_{\epsilon})\wedge\sigma>+\int_{M} \eta_{\epsilon}|d_{\underline{A}}\sigma|^{2}dV.
\end{equation}
$\sigma\in J_{p}$ implies $|d_{\underline{A}}\sigma|\leq \frac{C_{\sigma}}{r^{\frac{7}{2}+p}},\ |\sigma|\leq \frac{C}{r^{\frac{5}{2}+p}}$. Then 
\begin{equation}\label{equ Thm deforming instanton 4}
|\int_{M}<d_{\underline{A}}\sigma,(d\eta_{\epsilon})\wedge\sigma>|\leq C_{\sigma}\int_{B(\epsilon)-B(2\epsilon)}\frac{1}{r^{\frac{7}{2}+p}}\frac{1}{r^{\frac{5}{2}+p}}\frac{1}{r}\leq C\epsilon^{-2p}\rightarrow 0\ \textrm{as}\ \epsilon\rightarrow 0.
\end{equation}

Let $\epsilon\rightarrow 0$ in (\ref{equ Thm deforming instanton 3}), monotone convergence theorem and  (\ref{equ Thm deforming instanton 4}) implies $\int_{M}|d_{\underline{A}}\sigma|^{2}dV=0.$ This means $d_{\underline{A}}\sigma=0$, and thus (\ref{equ instantonequation without cokernel}) says $\underline{A}$ is a $\underline{\psi}-$instanton.
\end{proof}

\begin{proof}[\textbf{Proof of Theorem \ref{Thm Deforming local instanton}}:] Using  Corollary \ref{Cor solving model laplacian equation over the ball without the compact support RHS condition}, this is much easier than  Theorem \ref{Thm Deforming instanton}.  The crucial trick is to cut off the nonlinear term and error in (\ref{equ instantonequation without cokernel}) [(\ref{equ instanton column vector equation without cokernel})] i.e. let $\xi$ denote $\left[\begin{array}{c} a \\
  \sigma
\end{array}\right]$, we should consider 
the  equation  \begin{equation}\label{equ instanton column vector equation without cokernel cutted off}
 L_{A_{O},\underline{\psi}}\xi=\chi\left[\begin{array}{c} 0 \\
  \xi\otimes \xi-\star_{\underline{\psi}}(F_{A_{O}}\wedge \underline{\psi})
\end{array} ]\right. ,\ \xi\otimes \xi=-\frac{1}{2}\star_{\underline{\psi}}([a,a]\wedge \underline{\psi})
 -[a,\sigma], \end{equation}
$\chi$ is the standard cut-off function $\equiv 1$ in $B_{O}(\frac{1}{4})$ and $\equiv 0$ outside  $B_{O}(\frac{5}{16})$. Using 
(\ref{equ 0 estimate local version}) with $\gamma=\frac{7}{2}+p$ (\textbf{for any $p$ as in Theorem \ref{Thm Deforming instanton}}), and proof of Theorem 1 in \cite{Nirenberg} [in   $B_{O}(\frac{7}{16})$ and near $\partial B_{O}(\frac{7}{16})$], we obtain 
\begin{equation}\label{equ Thm deformation local instanton 1} 
|\xi|^{\{\frac{5}{2}+p\},\frac{7}{2}}_{2,\alpha,B_{O}(\frac{7}{16})}\leq \bar{C}\{|L_{A_{O}}\xi|^{\{\frac{7}{2}+p\},\frac{9}{2}}_{1,\alpha,B_{O}(\frac{7}{16})}+|\xi|_{L^{2}_{p-1}[B_{O}(\frac{7}{16})]}\}\ (\textrm{see Definition}\ \ref{Def local Schauder adapted to local perturbation}).
\end{equation}
We define
\begin{itemize}
\item $\bar{H}_{p}=\{\xi\ \textrm{is}\ C^{2,\alpha}\ \textrm{away from}\ O|\ |\xi|_{W^{1,2}_{p}(B_{O}(\frac{7}{16}))}+|\xi|^{\{\frac{5}{2}+p\},\frac{7}{2}}_{2,\alpha,B_{O}(\frac{7}{16})}<\infty\}$ and 
\item $\bar{N}_{p}=\{\xi\ \textrm{is}\ C^{1,\alpha}\ \textrm{away from}\ O|\ |\xi|_{L^{2}_{p}(B_{O}(\frac{7}{16}))}+|\xi|^{\{\frac{7}{2}+p\},\frac{9}{2}}_{1,\alpha,B_{O}(\frac{7}{16})}<\infty\}$.
\end{itemize}

Using Corollary \ref{Cor solving model laplacian equation over the ball without the compact support RHS condition} and (\ref{equ Thm deformation local instanton 1}), the $L_{A_{O},\psi_{0}}:\ \bar{H}_{p}\rightarrow \bar{N}_{p}$ is inverted by $Q_{p,A_{O}}$ which is linear and  bounded by a $\bar{C}$ as in Definition \ref{Def special constants}. Moreover, the advantage of cutting-off the monopole equation is 
\begin{clm}\label{clm multiplicative property of cutting off} We have $|\chi f|_{C^{1,\alpha}_{\{\frac{7}{2}+p\},\frac{9}{2}}[B_{O}(\frac{7}{16})]}\leq C |f|_{C^{1,\alpha}_{(\frac{7}{2}+p)}[B_{O}(\frac{6}{16})]}$. Consequently, 
$$|\chi \xi_{1}\otimes \xi_{2}|_{\bar{N}_{p}}\leq C|\xi_{1}|_{\bar{H}_{p}}|\xi_{2}|_{\bar{H}_{p}},\ \textrm{where}\ \otimes\ \textrm{and}\ \chi\ \textrm{are as in}\ (\ref{equ instanton column vector equation without cokernel cutted off}).$$
\end{clm}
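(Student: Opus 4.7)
The first inequality rests on the fact that $\chi$ is supported in $\overline{B_{O}(\tfrac{5}{16})}$, strictly inside $B_{O}(\tfrac{7}{16})$. On $\mathrm{supp}\,\chi$ the boundary-distance $R-r_{x}=\tfrac{7}{16}-r_{x}\geq \tfrac{1}{8}$ is bounded below, so for $t+j\geq 0$ the factor $(R-r_{x})^{t+j}$ is a positive universal constant and the interior weight $\min\{r_{x}^{\gamma+j},(R-r_{x})^{t+j}\}$ of Definition \ref{Def local Schauder adapted to local perturbation} agrees, up to a multiplicative constant, with the pure cone weight $r_{x}^{\gamma+j}$ of Definition \ref{Def local Schauder norms}. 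I will then apply the Leibniz rule $\nabla^{j}(\chi f)=\sum_{l\leq j}\binom{j}{l}(\nabla^{j-l}\chi)\otimes (\nabla^{l}f)$, use the uniform bound $|\nabla^{k}\chi|\leq C$, and observe that $r_{x}^{\gamma+j}|\nabla^{l}f|(x)\leq r_{x}^{j-l}\cdot r_{x}^{\gamma+l}|\nabla^{l}f|(x)$ with the $r_{x}^{j-l}$-factor bounded because $r_{x}\leq \tfrac{5}{16}$ and $j\geq l$. The H\"older seminorm contribution is handled via the standard telescoping
\begin{equation*}
\nabla^{k}(\chi f)(x)-\nabla^{k}(\chi f)(y)=\sum_{l\leq k}\binom{k}{l}\Bigl\{(\nabla^{k-l}\chi)(x)\bigl[(\nabla^{l}f)(x)-(\nabla^{l}f)(y)\bigr]+(\nabla^{l}f)(y)\bigl[(\nabla^{k-l}\chi)(x)-(\nabla^{k-l}\chi)(y)\bigr]\Bigr\},
\end{equation*}
the second bracket being $O(|x-y|)$ by smoothness of $\chi$.

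For the multiplicative consequence, I apply the first inequality with $f=\xi_{1}\otimes \xi_{2}$, which reduces the task to controlling $|\xi_{1}\otimes \xi_{2}|_{C^{1,\alpha}_{(7/2+p)}[B_{O}(6/16)]}$ by $|\xi_{1}|_{\bar{H}_{p}}|\xi_{2}|_{\bar{H}_{p}}$. Since the boundary-distance factor of the $(\{5/2+p\},7/2)$-weight is bounded below on $B_{O}(\tfrac{6}{16})\subset B_{O}(\tfrac{7}{16})$, the inclusion $\bar{H}_{p}\hookrightarrow C^{2,\alpha}_{(5/2+p)}[B_{O}(\tfrac{6}{16})]$ is bounded. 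The elementary multiplicative property of weighted Schauder spaces then gives $\xi_{1}\otimes \xi_{2}\in C^{1,\alpha}_{(5+2p)}[B_{O}(\tfrac{6}{16})]$ with $|\xi_{1}\otimes \xi_{2}|_{C^{1,\alpha}_{(5+2p)}}\leq C|\xi_{1}|_{C^{2,\alpha}_{(5/2+p)}}|\xi_{2}|_{C^{2,\alpha}_{(5/2+p)}}$. Because $p<-\tfrac{3}{2}$ is a standing hypothesis of Theorem \ref{Thm Deforming local instanton}, we have $5+2p<\tfrac{7}{2}+p$, so on the bounded domain $r^{-3/2-p}$ is a bounded factor and the weight can be coarsened from $(5+2p)$ to $(7/2+p)$ without loss, yielding the H\"older part of the target norm. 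The $L^{2}_{p}$-part is handled exactly as in Lemma \ref{lem multiplicative property of hybrid spaces}: pull out the $C^{0}_{(5/2+p)}$-norm of one factor and use that $\int |\xi_{2}|^{2}r^{-5}\,dV\leq \int|\xi_{2}|^{2}r^{2p-2}\,dV$ (valid because $p<-\tfrac{3}{2}$ forces $2p-2<-5$) bounds the remainder by the $W^{1,2}_{p}$-piece of $\bar{H}_{p}$.

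The main technical obstacle is aligning Definition \ref{Def local Schauder adapted to local perturbation}'s interior weight $\min\{r^{\gamma+j},(R-r)^{t+j}\}$ with Definition \ref{Def local Schauder norms}'s pure cone weight $r^{\gamma+j}$. The cutoff $\chi$ has been engineered precisely so that its support stays a definite distance from $\partial B_{O}(\tfrac{7}{16})$, converting the comparison between the two weight systems into a constant factor on $\mathrm{supp}\,\chi$ and making the first inequality essentially automatic. A minor subtlety in the H\"older seminorm stems from the fact that Definition \ref{Def local Schauder adapted to local perturbation} evaluates the weight at $x$ alone rather than at $\min\{r_{x},r_{y}\}$; this difference is absorbed into a constant whenever $x\in \mathrm{supp}\,\chi$, and when $x\not\in \mathrm{supp}\,\chi$ the H\"older difference $\nabla^{k}(\chi f)(x)-\nabla^{k}(\chi f)(y)$ either vanishes or is already controlled by the sup-norm bound established in the first step.
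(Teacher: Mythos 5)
Your argument is correct and follows exactly the route the paper itself sketches: the paper's entire stated proof is ``similar to Lemma~\ref{lem multiplicative property of hybrid spaces}, the cutoff function $\chi$ plays the key role,'' and your write-up fills in precisely those details. The central observation --- that $\operatorname{supp}\chi\subset \overline{B_{O}(\tfrac{5}{16})}$ keeps the boundary-distance $\tfrac{7}{16}-r_{x}$ bounded below, so the $\min\{r^{\gamma+j},(R-r)^{t+j}\}$ weight of Definition~\ref{Def local Schauder adapted to local perturbation} reduces to the pure cone weight $r^{\gamma+j}$ of Definition~\ref{Def local Schauder norms} up to a constant --- is the intended content of ``$\chi$ plays the key role,'' and your Leibniz/interpolation bookkeeping is sound. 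The reduction of the second inequality to the first via $f=\xi_{1}\otimes\xi_{2}$, then to the $C^{2,\alpha}_{(5/2+p)}$ and $L^{2}_{p-1}$ pieces of $\bar H_{p}$ using $p<-\tfrac{3}{2}$ to coarsen the weight, mirrors Lemma~\ref{lem multiplicative property of hybrid spaces} as the paper says.

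One small caveat concerning your ``minor subtlety'' remark. If Definition~\ref{Def local Schauder adapted to local perturbation} is read literally, with the H\"older weight evaluated at $r_{x}$ alone, then the case $x,y\in\operatorname{supp}\chi$ with $r_{y}\ll r_{x}$ is not simply ``absorbed into a constant'': $r_{x}^{\gamma+1+\alpha}|\nabla(\chi f)(y)|/|x-y|^{\alpha}$ scales like $(r_{x}/r_{y})^{\gamma+1}$, which is unbounded. However, this is almost certainly an imprecision in Definition~\ref{Def local Schauder adapted to local perturbation} itself (whose displayed formula also contains the obvious typo $|\nabla\xi(x)-\nabla\xi(x)|$); reading the weight at $r_{x,y}=\min\{r_{x},r_{y}\}$ as in Definition~\ref{Def local Schauder norms} removes the issue entirely, and under that natural reading your argument is complete. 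It would have been cleaner to say so explicitly rather than to assert the difference is absorbed.
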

The proof of the above is similar to Lemma \ref{lem multiplicative property of hybrid spaces}, the cutoff function $\chi$ plays the key role. \textbf{Claim \ref{clm multiplicative property of cutting off} means we can avoid the boundary estimate near $\partial B$}, $B$ as in Theorem \ref{Thm Deforming local instanton}. With the help of Proposition \ref{prop C1 global bound for the error term},   by  going through the proof of Lemma \ref{lem Launderline psi is an isomorphism from Jp to Np}, Theorem \ref{Thm Deforming instanton}, and  
\begin{itemize}
\item replacing the $A$ and $\psi$ (Proposition \ref{prop C1 global bound for the error term}) by  $A_{O}$ and $\psi_{0}$,  
\item replacing  the $L_{A,\underline{\psi}}$, $L_{A}$ in Lemma \ref{lem Launderline psi is an isomorphism from Jp to Np} by  $L_{A_{O},\underline{\psi}}$, $L_{A_{O}}$ respectively, 
\item replacing  the $L^{-1}_{A,\underline{\psi}}$ in proof of Theorem \ref{Thm Deforming instanton} by $L^{-1}_{A_{O},\underline{\psi}}$,
\item   replacing  the $J_{p},N_{p}$ in proof of Theorem \ref{Thm Deforming instanton} by 
$\bar{H}_{p},\bar{N}_{p}$ respectively, 
\end{itemize}
we obtain a solution $\xi$ to (\ref{equ instanton column vector equation without cokernel cutted off}) in $\bar{H}_{p}$, \textbf{for any $p$ as in Theorem \ref{Thm Deforming instanton}}. Since $\chi\equiv 1$ in $B_{O}(\frac{1}{4})$, $A_{O}+a$ solves  the monopole-equation therein [see (\ref{equ instanton column vector equation without cokernel}) and the discussion above it]. The proof of Theorem \ref{Thm Deforming local instanton} is complete.
\end{proof}
\section{Characterizing the cokernel \label{section Characterizing the cokernel}}

The formal adjoint of $L_{A}$ is
\begin{equation}\label{equ LA star formula} L_{A}^{\star}f=L_{A}f+G(\frac{d\omega_{p,b}}{\omega_{p,b}},f)\ \textrm{defined away from the singularities}. 
\end{equation}

The cokernel is defined as $Image^{\perp}L_{A}$ i.e.
\begin{equation}\label{equ Def of Coker} cokerL_{A}\triangleq \{f\in L^{2}_{p,b}|\int_{M}<L_{A}\xi,f>w_{p,b}dV=0\ \textrm{for all}\ \xi\in W^{1,2}_{p,b}\}.\end{equation}
Taking the test sections $\xi$ in (\ref{equ Def of Coker}) as smooth sections supported away from the singularities, using Theorem \ref{thm C0 est} (for $\gamma=\frac{9}{2}+p$), we deduce 
\begin{equation}\label{equ coker contained in a bigger space}
cokerL_{A}\subset \{f\in N_{p,b}|L^{\star}_{A}f=0\ \textrm{away from the singularities}\}.
\end{equation}
However, a priorily, there is no guarantee that the right hand side of (\ref{equ coker contained in a bigger space}) is finite-dimensional. \textbf{Fortunately, when $p$ is $A-$generic, the "blowing-up" rate of elements in  cokernel can be improved as follows.}
\begin{thm}\label{thm characterizing cokernel} Let $A,p,b$ be as in Theorem \ref{Thm Fredholm}. For all $0<\mu<\vartheta_{1-p}$,  
\begin{equation}\label{equ thm characterizing cokernel}
cokerL_{A}=\{f\in C^{1,\alpha}_{(\frac{7}{2}+p-\mu)}(M)| L^{\star}_{A}f=0\ \textrm{away from the singularites}\}.
\end{equation}
Moreover, $|f|^{(\frac{7}{2}+p-\mu)}_{1,\alpha,M}\leq C|f|_{L^{2}_{p,b}}$ for any $f\in cokerL_{A}$. In particular, if  $f$ satisfies the two conditions in the parentheses of (\ref{equ thm characterizing cokernel})  for some $\mu>0$, then $f$ satisfies them for all $\mu<\vartheta_{1-p}$.

\end{thm}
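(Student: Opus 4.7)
The plan is to prove both inclusions of (\ref{equ thm characterizing cokernel}) and the quantitative estimate. The containment $\mathrm{coker}\, L_A \subset \{f \in N_{p,b} : L_A^* f = 0 \text{ away from the singularities}\}$ is already recorded in (\ref{equ coker contained in a bigger space}) during the proof of Theorem \ref{Thm Fredholm}, so the remaining task is to sharpen the blow-up rate from $(7/2 + p, b)$ to $(7/2 + p - \mu)$ with no log factor, and to prove the reverse containment. I would split the work into a bootstrap step, a Schauder upgrade, and an integration-by-parts argument.

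The main content is a bootstrap lemma analogous to Lemma \ref{lem bound on C3 norm of solution to laplace equation when f is smooth and vainishes near O} but for the formal adjoint: if $f \in L^2_{p,b}[B_{O}(2\tau_0)]$ and $L_A^* f = 0$ on $B_{O}(2\tau_0)$, then $f \in L^2_{p - \lambda, 0}[B_{O}(\tau_0)]$ for all $0 \leq \lambda < \vartheta_{1-p}$. By (\ref{equ LA star formula}), $L_A^* = L_A + G(d\omega_{p,b}/\omega_{p,b}, \cdot)$, and near $O$ the zeroth-order correction is $O(r^{-1})$. Combined with the admissibility estimate (\ref{equ condition on rate of convergence for an admissable instanton}) for $A - A_O$, the equation $L_A^* f = 0$ reduces to $L_{A_O}^2 f = \mathrm{error}$, where the error is strictly lower order in the weighted sense. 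I would then expand $f = \sum_v r^{-3/2} f_v \Psi_v$ via Proposition \ref{prop seperation of variable for general cone} and derive, for each Fourier coefficient, a perturbed version of the standard ODE (\ref{equ the standard ODE}). The $L^2_{p,b}$ hypothesis rules out all homogeneous modes $r^v$ with $v$ at or below the critical threshold $1 - p$; this is the cokernel analogue of the $-p$ threshold appearing in Lemma \ref{lem bound on C3 norm of solution to laplace equation when f is smooth and vainishes near O}, reflecting the one-power weight shift between $W^{1,2}_{p,b}$ and $L^2_{p,b}$ induced by $L_A^*$. The gap hypothesis $\mu < \vartheta_{1-p}$ then guarantees that the same modes remain excluded when the weight is pushed from $p$ to $p - \lambda$ for any $\lambda \leq \mu$, yielding both the improved integrability and pointwise derivative bounds in the style of (\ref{equ local kernel regularity 0.5}).

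With the bootstrap lemma in hand, Theorem \ref{thm C0 est} applied to $L_A^*$ with $\gamma = 9/2 + p - \mu$ and trivial log-weight gives, since $L_A^* f = 0$ away from the singularities,
\begin{equation*}
|f|^{(7/2 + p - \mu)}_{2,\alpha,M} \leq C |f|_{L^2_{p - \mu}} \leq C |f|_{L^2_{p, b}},
\end{equation*}
proving the forward inclusion with the claimed estimate. For the reverse containment, take $f \in C^{1,\alpha}_{(7/2+p-\mu)}(M)$ with $L_A^* f = 0$ away from the singularities, and let $\xi \in W^{1,2}_{p,b}$. Multiplying by the cutoff $\eta_\epsilon$ of (\ref{equ cut-off function bound near the singular point}) and using the commutator formula (\ref{equ parametrix for LA: def of G}),
\begin{equation*}
\int_M \langle L_A \xi, \eta_\epsilon f\rangle \omega_{p,b}\, dV = \int_M \eta_\epsilon \langle \xi, L_A^* f\rangle \omega_{p,b}\, dV + \int_M \langle G(d\eta_\epsilon, \xi), f\rangle \omega_{p,b}\, dV.
\end{equation*}
The first term vanishes identically and the second is supported in the shell $B_{O}(2\epsilon)\setminus B_{O}(\epsilon)$. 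Using $\xi \in L^2_{p-1,b}$, $|f| \leq C r^{-(7/2 + p - \mu)}$, $|d\eta_\epsilon| \leq C/\epsilon$, and Cauchy--Schwarz exactly as in (\ref{equ integration by parts holds true in the case of L12 model estimate wrt to cone}), this boundary term is $O(\epsilon^{\mu}) \to 0$, completing the identification.

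The main obstacle is the bootstrap lemma. The separation-of-variables machinery of Proposition \ref{prop seperation of variable for general cone} is developed for $L_{A_O}^2$, not for the formal adjoint of $L_A$, so the zeroth-order shift from $G(d\omega_{p,b}/\omega_{p,b}, \cdot)$ and the admissibility error $L_A - L_{A_O}$ must be tracked through the Fourier ODE analysis simultaneously. The delicate point is verifying that the correct critical exponent for cokernel Fourier modes is $1-p$ rather than $-p$, so that the relevant spectrum gap is $\vartheta_{1-p}$ as appears in the statement. Once this is in place, the trick of Lemma \ref{lem bound on C3 norm of solution to laplace equation when f is smooth and vainishes near O}---the $L^2_{p,b}$ constraint discards bad modes, the gap rules out the critical ones, and the surviving modes automatically live in a better weighted space---carries over.
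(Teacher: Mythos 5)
Your overall plan (bootstrap lemma, Schauder upgrade via Theorem \ref{thm C0 est}, integration by parts for the reverse inclusion) has the right shape, and the integration-by-parts argument you give matches the paper's Lemma \ref{lem cokernel contains the more vanishing sections} almost verbatim. But the crucial bootstrap step diverges from the paper and contains a genuine gap. You propose to analyze $L_A^\star f = 0$ directly via the separation-of-variables machinery, asserting that it ``reduces to $L_{A_O}^2 f = \mathrm{error}$, where the error is strictly lower order in the weighted sense.'' This is not accurate: the zeroth-order correction $G(dw_{p,b}/w_{p,b}, f)$ appearing in (\ref{equ LA star formula}) is $O(|f|/r)$ near the singularity, which is of the \emph{same} order as the tangential terms $r^{-2}\Upsilon_{A_O}$ produced by the model Laplacian itself; it cannot be absorbed into a small perturbation. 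To salvage your approach one would have to substitute $g = f\,w_{p,b}$ so that the weighted-adjoint condition becomes an unweighted equation $L_{A_O}g \approx 0$, but then $f \in L^2_{p,b}$ translates to $g \in L^2_{-p,-b}$, and the negative log-power $-b$ falls outside the hypotheses of Lemma \ref{lem bound on C3 norm of solution to laplace equation when f is smooth and vainishes near O} and of the ODE estimates of Section \ref{section Solutions  to the  ODEs on the  Fourier-coefficients}, which are all proved for $b \geq 0$. You flag the verification of the $1-p$ threshold as ``the delicate point,'' but that is exactly what remains unproven, so the argument as written does not close.

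The paper avoids this entirely with a functional-analytic argument built around the parametrix $K_{p,b}$ of Theorem \ref{thm global parametrix}. The key observation (\ref{equ Crucial observation that Q is the same for neary by weights}) is that the local right inverse $Q_{A_{O_j},p,b}$ is \emph{literally unchanged} when $p$ is replaced by $p\pm\mu$ for $\mu < \vartheta_{1-p}$: the case split in the solution formulas (\ref{equ solution when v is real and  p>1-v is integral from 1 to r}) and (\ref{equ solution formula when v positive and p less than 1-v}) depends only on whether $v$ lies above or below $1-p$, and the spectrum gap ensures no $v$-eigenvalue is crossed. From this, Lemma \ref{lem  boostrap K} shows $K_{p,b}$ agrees with $K_{p+\mu,b}$ on $L^2_{p,b} \subset L^2_{p+\mu,b}$, hence is bounded $L^2_{p+\mu,b}\to L^2_{p,b}$. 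Dualizing with respect to the $L^2_{p,b}$ inner product (Lemma \ref{lem  boostrap Kstar}) gives $K_{p,b}^\star : L^2_{p,b} \to L^2_{p-\mu,b}$ bounded; since the cokernel sits inside $\ker(Id - K_{p,b}^\star)$ by the Fredholm alternative, any cokernel element $f = K_{p,b}^\star f$ lands automatically in $L^2_{p-\mu,b}$, with no new ODE analysis of the adjoint at all. Your route is in the Lockhart--McOwen spirit and could in principle be completed, but the parametrix argument in the paper is what actually produces the claimed $\vartheta_{1-p}$ gain, and it is materially simpler than what your bootstrap would require.
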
 
\begin{proof} By Theorem 5 of Appendix D.5 in \cite{Evans},  $cokerL_{A}\subset ker(Id-K^{\star}_{p,b})|_{L^{2}_{p,b}}$.  Lemma \ref{lem  boostrap Kstar} implies any $f\in ker(Id-K^{\star}_{p,b})$ is actually in $L^{2}_{p-\mu,b}$ with uniform bound in terms of the $L^{2}_{p,b}-$ norm of $f$. Then Theorem \ref{thm characterizing cokernel} is a directly corollary of  (\ref{equ coker contained in a bigger space}), Lemma \ref{lem  boostrap Kstar}, \ref{lem cokernel contains the more vanishing sections}, and Theorem \ref{thm C0 est} (for $L_{A}^{\star}$ by taking $\gamma=\frac{9}{2}+p-\mu$).
\end{proof}

The  crucial observation is that in the setting of Theorem \ref{thm characterizing cokernel}, we have 
\begin{equation}\label{equ Crucial observation that Q is the same for neary by weights} 
Q_{A_{O_{j}}, p,b}f=Q_{A_{O_{j}}, p+\mu,b}f \ \textrm{for any} \ j.
\end{equation}
The reason is that the $v-$spectrum of the tangential operators  are fixed. Thus,  if  $v>(<) 1-p$, the same  holds with $p$ with replaced by $p+\mu$ or $p-\mu$. Hence the solution formulas (in (\ref{equ solution when v is real and  p>1-v is integral from 1 to r}), (\ref{equ solution formula when v positive and p less than 1-v}), (\ref{equ solution when v is purely imaginary}), and   (\ref{equ solution when v=0})) do not change.  

\begin{proof}It's directly implies by the Lemmas \ref{lem  boostrap K}, \ref{lem  boostrap Kstar}, and \ref{lem cokernel contains the more vanishing sections}.
\end{proof}
\begin{lem}\label{lem  boostrap K}Let $A,p,b,\mu$ be as in Theorem \ref{thm characterizing cokernel}, then     $L^{2}_{p,b}\subset L^{2}_{p+\mu,b}$ is an invariant subspace of $K_{p+\mu,b}$, and $K_{p+\mu,b}=K_{p,b}$ when restricted to  $L^{2}_{p,b}$. Consequently, for any  $f\in L^{2}_{p,b}$, we have 
   \begin{equation}
   |K_{p,b}f|_{L^{2}_{p,b}}\leq C_{\mu}|f|_{L^{2}_{p+\mu,b}}. 
   \end{equation}

\end{lem}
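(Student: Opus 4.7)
The plan is to read off the statement directly from the explicit formula (\ref{equ formula for Kpb}) for $K_{p,b}$ and show that each building block is weight-independent on the subspace $L^2_{p,b}$. First I would record the easy inclusion $L^2_{p,b}\subset L^2_{p+\mu,b}$, which is immediate from $\mu>0$ and the comparison $w_{p+\mu,b}\le C w_{p,b}$ near each singular point. The genericity hypothesis together with $\mu<\vartheta_{1-p}$ guarantees that $1-(p+\mu)$ is not a $v$-eigenvalue and, more importantly, that no $v$ in the spectrum of any $\Upsilon_{A_{O_j}}$ lies in the interval $(1-p-\mu,1-p)$; consequently the dichotomy $v>1-p$ versus $v<1-p$ used to select the ODE branch in (\ref{equ solution when v is real and  p>1-v is integral from 1 to r})--(\ref{equ solution when v=0}) is unchanged when $p$ is replaced by $p+\mu$. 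In particular the Fredholm theory (Theorem \ref{Thm Fredholm}) and the parametrix construction of Theorem \ref{thm global parametrix} apply verbatim at weight $(p+\mu,b)$, so $K_{p+\mu,b}$ is a well-defined compact operator on $L^2_{p+\mu,b}$.

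The heart of the argument is to prove the pointwise identity $K_{p+\mu,b}f=K_{p,b}f$ for every $f\in L^2_{p,b}$. Using formula (\ref{equ formula for Kpb}) this reduces to comparing the local building blocks. The smooth-part parametrices $Q_l$ and their compact residuals $K_l$ from Lemma \ref{lem parametrice in the smooth part} are constructed by the classical pseudo-differential calculus on a fixed coordinate ball and do not depend on the weight at all, so they contribute identical terms. For the local inverses $Q_j=Q_{A,p,b}$ near each singular point, the observation (\ref{equ Crucial observation that Q is the same for neary by weights}) yields $Q_{p,b,A_{O_j}}=Q_{p+\mu,b,A_{O_j}}$ on $L^2_{p,b}$: the Fourier--ODE formulas are literally the same once the branch selection agrees. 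The full local inverse $Q_{A,p,b}$ is obtained from $Q_{p,b,A_{O_j}}$ by the Banach fixed-point iteration (\ref{equ lem local inverse near O 1}). Since the contraction operator $\square$ coincides on the common domain $W^{1,2}_{p,b}\subset W^{1,2}_{p+\mu,b}$, and the $p,b$-fixed point lies in the smaller space $W^{1,2}_{p,b}$, uniqueness of fixed points in $W^{1,2}_{p+\mu,b}$ forces $Q_{A,p,b}f=Q_{A,p+\mu,b}f$ for all $f\in L^2_{p,b}$. Substituting into (\ref{equ formula for Kpb}) gives $K_{p,b}f=K_{p+\mu,b}f$.

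For the final estimate I would exploit the crucial geometric feature of (\ref{equ formula for Kpb}): every summand is supported on a region bounded away from the singular set. The derivatives $d\varphi_j$ and $d\varphi_l$ vanish in a neighborhood of each $O_j$, and the smoothing operators $\varphi_l K_l$ are supported in the regular coordinate balls. On any such fixed compact region the two weight functions $w_{p,b}$ and $w_{p+\mu,b}$ are uniformly comparable up to a constant $C_\mu$. Therefore
\[
|K_{p,b}f|_{L^2_{p,b}} = |K_{p+\mu,b}f|_{L^2_{p,b}} \le C_\mu\,|K_{p+\mu,b}f|_{L^2_{p+\mu,b}} \le C_\mu\,|f|_{L^2_{p+\mu,b}},
\]
where the last inequality is the boundedness of $K_{p+\mu,b}$ on $L^2_{p+\mu,b}$ from Theorem \ref{thm global parametrix}. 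The main subtlety, which I would handle carefully, is the fixed-point identification $Q_{A,p,b}f=Q_{A,p+\mu,b}f$: one must check that the contraction argument at weight $(p+\mu,b)$ has a unique solution in $W^{1,2}_{p+\mu,b}$, so that the smaller-space solution produced at weight $(p,b)$ is recognised as that unique fixed point — this is where the hypothesis $\mu<\vartheta_{1-p}$ together with $A$-genericity of $p$ enters essentially.
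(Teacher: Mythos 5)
Your argument is correct and follows essentially the same route as the paper: both reduce the identity $K_{p,b}f=K_{p+\mu,b}f$ to the weight-independence of the local parametrices, both exploit (\ref{equ Crucial observation that Q is the same for neary by weights}) to see the model inverses $Q_{p,b,A_{O_j}}$ and $Q_{p+\mu,b,A_{O_j}}$ agree on the smaller Sobolev space, and both upgrade this to $Q_{A,p,b}=Q_{A,p+\mu,b}$ via uniqueness of the fixed point of the contraction $\square$ from (\ref{equ lem local inverse near O 1}). The one place you diverge from the paper is in finishing the estimate: you observe that every summand in (\ref{equ formula for Kpb}) is supported in an annulus bounded away from the singular set, where $w_{p,b}$ and $w_{p+\mu,b}$ are uniformly comparable, and then invoke the $L^2_{p+\mu,b}$-boundedness of $K_{p+\mu,b}$; the paper instead bounds $|Q_{j,p,b}f|_{L^2_{p,b}}$ directly, implicitly using $\mu\le\vartheta_{1-p}\le 1$ so that $W^{1,2}_{p+\mu,b}\subset L^2_{p+\mu-1,b}\subset L^2_{p,b}$. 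Your route is slightly cleaner since it sidesteps the $\mu\le 1$ accounting, while the paper's phrasing keeps everything local to the balls $B_{O_j}$. Both are valid.

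One small remark: when you say the Fredholm theory of Theorem \ref{Thm Fredholm} applies verbatim at weight $p+\mu$, note that $A$-genericity of $p+\mu$ would require $-(p+\mu)$ to avoid the $v$-spectrum as well, which is not guaranteed by $\mu<\vartheta_{1-p}$ alone. You do not actually need the full Fredholm theory, only the parametrix construction of Theorem \ref{thm global parametrix}, which relies solely on the local inverse $Q_{p+\mu,b,A_O}$; for that, only the condition that $1-(p+\mu)$ avoids the $v$-spectrum is used, and this is exactly what $\mu<\vartheta_{1-p}$ provides. So the argument is fine, but the appeal should be to Theorem \ref{thm global parametrix} rather than Theorem \ref{Thm Fredholm}.
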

\begin{proof}By (\ref{equ formula for Kpb}), we only need to show the parametrices $Q_{j}$ near the singularities satisfies the property asserted. The parametrices away from the singularity do not depend on the weight chosen. 

   In the setting of (\ref{equ lem local inverse near O 2}) and (\ref{equ contract mapping for linear equation with Cepsilon0}), let $Q_{j,p,b}$ denote $Q_{A,p,b}$ near $O_{j}$.   By (\ref{equ Crucial observation that Q is the same for neary by weights}) and uniqueness of fixed point of contract mappings, we find  $Q_{j,p,b}f=Q_{j,p+\mu,b}f$ when  $f\in C^{\infty}_{c}[B_{O_{j}}\setminus O_{j}]$. Since $Q_{j,p,b}\ (Q_{j,p+\mu,b})$  is bounded from $L^{2}_{p,b}(B_{O_{j}})\ (L^{2}_{p+\mu,b}(B_{O_{j}}))$ to $W^{1,2}_{p,b}(B_{O_{j}})\ (W^{1,2}_{p+\mu,b}(B_{O_{j}}))$,  their extensions (as in proof of Theorem \ref{thm W22 estimate on 1-forms}) agree on $L^{2}_{p,b}(B_{O_{j}})\subset L^{2}_{p+\mu,b}(B_{O_{j}})$.  Hence 
   \begin{equation}
   |Q_{j,p,b}f|_{L^{2}_{p,b}(B_{O_{j}})}\leq |f|_{L^{2}_{p+\mu,b}(B_{O_{j}})}\ \textrm{when}\ f\in L^{2}_{p,b}(B_{O_{j}}). 
   \end{equation}
\end{proof}
   \begin{lem}\label{lem  boostrap Kstar} Let $A,p,b,\mu$ be as in Theorem \ref{thm characterizing cokernel}, then  $K_{p,b}^{\star}$ is a bounded operator from $L^{2}_{p,b}$ to $L^{2}_{p-\mu,b}$. The bound is uniform as in Definition \ref{Def Dependence of the constants}.

   \end{lem}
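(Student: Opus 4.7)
The plan is to dualize the conclusion of Lemma~\ref{lem  boostrap K} using the weighted $L^{2}$ pairing $\langle\phi,h\rangle_{p,b}:=\int_{M}\phi\,h\,w_{p,b}\,dV$. The key algebraic fact driving everything is $w_{p,b}^{2}=w_{p-\mu,b}\,w_{p+\mu,b}$, which makes this pairing a perfect duality between $L^{2}_{p-\mu,b}$ and $L^{2}_{p+\mu,b}$ (via the substitution $\phi\mapsto\phi\,w_{p+\mu,b}/w_{p,b}$ one reduces Riesz representation in this pairing to ordinary $L^{2}_{p+\mu,b}$-Riesz representation). Under this duality the assertion ``$K^{\star}_{p,b}$ is bounded from $L^{2}_{p,b}$ into $L^{2}_{p-\mu,b}$'' is formally equivalent to ``$K_{p+\mu,b}$ is bounded from $L^{2}_{p+\mu,b}$ into the smaller space $L^{2}_{p,b}$,'' which is essentially what Lemma~\ref{lem  boostrap K} delivers after a small density upgrade.

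The first step would be to promote Lemma~\ref{lem  boostrap K} to a genuine bounded-operator statement at the shifted weight. Its conclusion $\|K_{p,b}f\|_{L^{2}_{p,b}}\le C_{\mu}\|f\|_{L^{2}_{p+\mu,b}}$ holds for $f\in L^{2}_{p,b}$, and since $C^{\infty}_{c}(M\setminus\cup_{j}O_{j})$ is dense in both $L^{2}_{p,b}$ and $L^{2}_{p+\mu,b}$, a standard continuity-plus-density argument produces a unique bounded extension $\tilde K:L^{2}_{p+\mu,b}\to L^{2}_{p,b}$. This extension must coincide with $K_{p+\mu,b}$, because Lemma~\ref{lem  boostrap K} already identifies them on the dense subspace $L^{2}_{p,b}$ and both extensions are continuous on $L^{2}_{p+\mu,b}$.

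Next I would run the duality computation. For $g\in L^{2}_{p,b}$ consider
\[
B(g,h)\;:=\;\langle g,\,K_{p+\mu,b}h\rangle_{p,b},\qquad h\in L^{2}_{p+\mu,b}.
\]
The previous step places $K_{p+\mu,b}h$ in $L^{2}_{p,b}$, so Cauchy--Schwarz in $\langle\cdot,\cdot\rangle_{p,b}$ yields $|B(g,h)|\le C_{\mu}\|g\|_{L^{2}_{p,b}}\|h\|_{L^{2}_{p+\mu,b}}$. When $h\in L^{2}_{p,b}$, Lemma~\ref{lem  boostrap K} identifies $K_{p+\mu,b}h$ with $K_{p,b}h$, so $B(g,h)=\langle K^{\star}_{p,b}g,h\rangle_{p,b}$ there. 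Applying Riesz representation in the $(L^{2}_{p-\mu,b},L^{2}_{p+\mu,b})$-pairing to the bounded functional $h\mapsto B(g,h)$ gives a unique $\tilde g\in L^{2}_{p-\mu,b}$ with $\|\tilde g\|_{L^{2}_{p-\mu,b}}\le C_{\mu}\|g\|_{L^{2}_{p,b}}$ and $B(g,h)=\langle\tilde g,h\rangle_{p,b}$ on all of $L^{2}_{p+\mu,b}$; comparing with the identity on the dense subspace $L^{2}_{p,b}$ forces $\tilde g=K^{\star}_{p,b}g$, which is the advertised bound.

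The only genuinely delicate point will be arranging that $K_{p+\mu,b}$ is actually constructed, since that requires $p+\mu$ itself to be $A$-generic so that Theorem~\ref{Thm Fredholm} and Proposition~\ref{prop solving the deformation equation locally for the noncone connection} apply at the shifted weight. The hypothesis $\mu<\vartheta_{1-p}$ handles the $1-p$ side of $A$-genericity, and a further harmless shrinking of $\mu$ (inside the same spectral gap) handles the $-p$ side. Everything else is routine weighted-$L^{2}$ bookkeeping anchored on the identity $w_{p,b}^{2}=w_{p-\mu,b}w_{p+\mu,b}$.
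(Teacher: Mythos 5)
Your proof is correct and rests on the same underlying mechanism as the paper's: it dualizes Lemma~\ref{lem  boostrap K} via the weight identity $w_{p,b}^{2}=w_{p-\mu,b}\,w_{p+\mu,b}$. The difference is in packaging. You phrase the duality abstractly, extending $K_{p+\mu,b}$ by density to a bounded map $L^{2}_{p+\mu,b}\to L^{2}_{p,b}$ and then applying Riesz representation in the $(L^{2}_{p-\mu,b},L^{2}_{p+\mu,b})$ pairing. The paper unrolls the very same duality by hand: it tests against the explicit element $\eta_{\epsilon}^{2}K^{\star}_{p,b}f/r^{2\mu}$ of $L^{2}_{p+\mu,b}$, applies Lemma~\ref{lem  boostrap K} plus Cauchy--Schwarz to bound the resulting quadratic form from both sides, cancels one factor of $|\eta_{\epsilon}K^{\star}_{p,b}f|_{L^{2}_{p-\mu,b}}$, and sends $\epsilon\to 0$ by monotone convergence. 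Your route buys a cleaner conceptual picture and avoids the cutoff/limit at the end, at the cost of an extra density upgrade of $K_{p+\mu,b}$ which the paper never needs since it only ever feeds $L^{2}_{p,b}$ inputs to Lemma~\ref{lem  boostrap K}. Two small points worth fixing in your writeup: the substitution that linearizes the pairing should run the other way, $\phi\mapsto\phi\,w_{p,b}/w_{p+\mu,b}$, which is the isometry $L^{2}_{p-\mu,b}\to L^{2}_{p+\mu,b}$; and the identity $w_{p,b}^{2}=w_{p-\mu,b}w_{p+\mu,b}$ holds exactly only where all three weights equal $r^{2p}(-\log r)^{2b}$ or equal $1$, not in the interpolation annulus near $r\approx\tau_{0}$ --- there it holds only up to uniform constants, which is all either proof actually uses, but one should say so. Finally, you are right to flag the genericity of $p+\mu$; the paper's Lemma~\ref{lem  boostrap K} already needs $Q_{j,p+\mu,b}$ to exist, so this is not a defect you introduced, and your remedy (shrink to a nearby generic $\mu'>\mu$ inside the same gap, then use $L^{2}_{p-\mu',b}\subset L^{2}_{p-\mu,b}$) is the appropriate one.
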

\begin{proof}   Assuming  $\xi\in L^{2}_{p,b}$ vanishes near the singularities (thus  $\xi\in L^{2}_{p,b}$ for any $p$), by Lemma \ref{lem  boostrap K}, we find 
\begin{equation}\label{equ lem boostrap Kstar 1}
\int_{M}<K_{p,b}(\frac{\xi}{r^\mu}),\ f>w_{p,b}dV\leq C|K_{p,b}(\frac{\xi}{r^\mu})|_{L^{2}_{p,b}}|f|_{L^{2}_{p,b}}\leq C|\frac{\xi}{r^\mu}|_{L^{2}_{p+\mu,b}}|f|_{L^{2}_{p,b}}.
\end{equation}

On the other hand, let  $\eta_{\epsilon}$  be the cutoff function of the singular points satisfying condition  (\ref{equ cut-off function bound near the singular point}). Let $\xi=\eta^{2}_{\epsilon}\frac{K_{p,b}^{\star}\xi}{r^\mu}$, 
we obtain \begin{equation}\label{equ lem boostrap Kstar 2}
\int_{M}<K_{p,b}(\frac{\xi}{r^\mu}),\ f>w_{p,b}dV=\int_{M}\eta^{2}_{\epsilon}\frac{|K_{p,b}^{\star}f|^{2}}{r^{2\mu}}w_{p,b}dV\geq C |\eta_{\epsilon}K_{p,b}^{\star}f|^{2}_{L^{2}_{p-\mu,b}}.
\end{equation}
Hence (\ref{equ lem boostrap Kstar 1}) and (\ref{equ lem boostrap Kstar 2}) imply 
\begin{equation}
|\eta_{\epsilon}K_{p,b}^{\star}f|^{2}_{L^{2}_{p-\mu,b}}\leq C|\eta^{2}_{\epsilon}K_{p,b}^{\star}f|_{L^{2}_{p-\mu,b}}|f|_{L^{2}_{p,b}}\leq C|\eta_{\epsilon}K_{p,b}^{\star}f|_{L^{2}_{p-\mu,b}}|f|_{L^{2}_{p,b}}.
\end{equation}
Then $|\eta_{\epsilon}K_{p,b}^{\star}f|_{L^{2}_{p-\mu,b}}\leq C|f|_{L^{2}_{p,b}}$. Let $\epsilon\rightarrow 0$, by the monotone convergence theorem of Lebesgue measure, the proof of Lemma \ref{lem  boostrap Kstar} is complete. 
\end{proof}

\begin{lem}\label{lem cokernel contains the more vanishing sections} Suppose $L_{A}^{\star}f=0$ away from the singular points and  $f\in C^{1,\alpha}_{\frac{7}{2}+p-\mu}(M)$ for some $\mu>0$.  Then $f\in coker L_{A}$ . 
\end{lem}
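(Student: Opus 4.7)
The strategy is to verify the two defining conditions of the cokernel in (\ref{equ Def of Coker}): that $f \in L^2_{p,b}$, and that $f$ is $L^2_{p,b}$-orthogonal to $L_A(W^{1,2}_{p,b})$. The first is immediate from the pointwise bound $|f|(x) \le C r^{-\frac{7}{2}-p+\mu}$ encoded in $f\in C^{1,\alpha}_{(\frac{7}{2}+p-\mu)}(M)$: near each singular point,
\[
\int_{B_O(\tau_0)} |f|^2 w_{p,b}\, dV \le C\int_0^{\tau_0} r^{-1+2\mu}(-\log r)^{2b}\,dr < \infty,
\]
and integrability on $M_{\tau_0}$ is automatic by compactness.

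For the orthogonality I would combine a cutoff near the singularities with the formal adjoint identity. Let $\eta_\epsilon$ be the cutoff of the singular set from (\ref{equ cut-off function bound near the singular point}); since both $L_A\xi$ and $f$ lie in $L^2_{p,b}$, dominated convergence gives
\[
\int_M \langle L_A \xi, f\rangle w_{p,b}\,dV \;=\; \lim_{\epsilon\to 0}\int_M \langle L_A \xi, \eta_\epsilon f\rangle w_{p,b}\,dV.
\]
Because $\eta_\epsilon f$ is smooth with compact support in $M\setminus\bigcup_j O_j$ and $W^{1,2}_{p,b}$ is the closure of smooth sections (Definition \ref{Def global weight and Sobolev spaces}), approximating $\xi$ by smooth $\xi_n$ reduces everything to the classical adjoint identity $\int \langle L_A\xi_n,\eta_\epsilon f\rangle w_{p,b} = \int \langle \xi_n, L_A^\star(\eta_\epsilon f)\rangle w_{p,b}$, which passes to $\xi$ by boundedness of $L_A:W^{1,2}_{p,b}\to L^2_{p,b}$. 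A short Leibniz calculation using (\ref{equ LA star formula}) together with the formula (\ref{equ Def G..}) for the algebraic operator $G$ yields
\[
L_A^\star(\eta_\epsilon f) \;=\; \eta_\epsilon L_A^\star f + G(d\eta_\epsilon, f) \;=\; G(d\eta_\epsilon, f),
\]
the first summand vanishing on the support of $\eta_\epsilon$ by hypothesis.

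It then remains to show $\int_M \langle \xi, G(d\eta_\epsilon, f)\rangle w_{p,b}\,dV \to 0$. This term is supported in the annuli $B_{O_j}(2\epsilon)\setminus B_{O_j}(\epsilon)$, where $|d\eta_\epsilon| \le C/r$, so Cauchy--Schwarz yields
\[
\left|\int_M \langle \xi, G(d\eta_\epsilon, f)\rangle w_{p,b}\,dV\right| \le C\,|\xi|_{L^2_{p-1,b}}\!\left(\int_{\cup_j [B_{O_j}(2\epsilon)\setminus B_{O_j}(\epsilon)]} |f|^2 w_{p,b}\,dV\right)^{\!1/2}\!\!.
\]
The first factor is controlled by $|\xi|_{W^{1,2}_{p,b}}$, and the pointwise decay of $f$ makes the second factor of order $\epsilon^{\mu}(-\log\epsilon)^{b}$, which tends to $0$ precisely because $\mu>0$. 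The only mildly delicate point is justifying the integration by parts for general $\xi\in W^{1,2}_{p,b}$ rather than smooth $\xi$, but as indicated this is a routine density argument since $L_A^\star(\eta_\epsilon f)$ is a fixed smooth compactly supported section.
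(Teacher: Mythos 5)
Your proposal is correct and follows essentially the same route as the paper: cut off the singular set with $\eta_\epsilon$, move $L_A$ across the $L^2_{p,b}$ pairing to get $G(d\eta_\epsilon,f)$ (the term $\eta_\epsilon L_A^\star f$ drops out by hypothesis), and use H\"older together with the decay of $f$ encoded in $f\in C^{1,\alpha}_{(\frac{7}{2}+p-\mu)}(M)$ to show the boundary contribution is $O(\epsilon^\mu)$. The paper states the same chain of identities more tersely; your version spells out the preliminary check that $f\in L^2_{p,b}$ and the density argument justifying the integration by parts, which the paper leaves implicit.
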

\begin{proof} With the same $\eta_{\epsilon}$ as in Lemma \ref{lem  boostrap Kstar},  we compute
\begin{eqnarray}
& &\int_{M}<L_{A}\xi,f>w_{p,b}dV=\lim_{\epsilon\rightarrow 0}\int_{M}<L_{A}\xi,\eta_{\epsilon}f>w_{p,b}dV\nonumber
\\&=&\lim_{\epsilon\rightarrow 0}\int_{M}<\xi,G(d\eta_{\epsilon},f)>w_{p,b}dV \triangleq \lim_{\epsilon\rightarrow 0}\Pi_{\epsilon}. \nonumber
\end{eqnarray}

By  (\ref{equ cut-off function bound near the singular point}), (\ref{equ LA star formula}), (\ref{equ Def of Coker}), and  H\"older inequality, since $\mu>0$, we obtain 
\begin{eqnarray*}& &\Pi_{\epsilon}
\leq  C|\xi|_{L^{2}_{p-1,b}}(\int_{M}|G(d\eta_{\epsilon},f)|^{2}r^{2}w_{p,b}dV)^{\frac{1}{2}}\\&\leq &  C\Sigma_{j}|\xi|_{L^{2}_{p-1,b}}(\epsilon^{2\mu}\int_{B_{O_{j}}(2\epsilon)-B_{O_{j}}(\epsilon)}|f|^{2}\frac{w_{p,b}}{r^{2\mu}}dV)^{\frac{1}{2}}
\\&\leq &  C\epsilon^{\mu}|\xi|_{L^{2}_{p-1,b}}|f|_{L^{2}_{p-\mu,b}}\rightarrow 0\ \textrm{as}\ \epsilon\rightarrow 0.
\end{eqnarray*}
The proof is complete.
\end{proof}

\section{Appendix}
\subsection{Appendix A: Weitzenb\"ock formula in the model case. \label{section Appendix A: Weitzenb formula in the model case}}
Suppose $p+q\leq n$, $\Phi$ is a $p-$form, $\nu$ is a $q-$form, and both are possibly $adE-$valued. We have \begin{equation}\label{equ appendix A star wedge}
  \star(\Phi\wedge \nu)= (\star \Phi)\llcorner\nu=(-1)^{pq}  \Phi \lrcorner(\star\nu).
\end{equation}

Let the $e_{i}$'s be the standard coordinate vectors in $\R^{7}$. 
The standard (Euclidean) $G_{2}-$structure  over $\R^{7}\setminus O$ is 
\begin{eqnarray}\label{eqnarray Euc G2 forms}& &\phi_{0}=e^{123}-e^{145}-e^{167}-e^{246}+e^{257}-e^{347}-e^{356}
\\& & \psi_{0}=-e^{1247}-e^{1256}+e^{1346}-e^{1357}-e^{2345}-e^{2367}+e^{4567}\nonumber
\end{eqnarray}

Notice  the $e_{i}$'s here are not the same as the ones in Section \ref{Appendix B: proof of  cone formula for laplacian}. We abuse notations on frames in different sections. 
 
\begin{lem}\label{lem formula for LA squared}Suppose $A_{O}$ is a cone connection over $E\rightarrow \R^{7}\setminus O$. Let $L_{A_{O}}$ be the deformation operator of $A_{O}$ with respect to $\phi_{0},\psi_{0}$ (see (\ref{equ introduction formula for deformation operator})), and $[\begin{array}{c}
\sigma   \\
  a   
\end{array}]$  be as in Section \ref{section Seperation of variable for  the system in the model case}.   Then 
\begin{equation}\label{equ bochner identity for general cone connection}
L_{A_{O}}^{2}[\begin{array}{c}
\sigma   \\
  a   
\end{array}]=[\begin{array}{c}
\nabla^{\star}\nabla \sigma  -\star([F_{A_{O}},a]\wedge\psi_{0}) \\
 \star([F_{A_{O}},\sigma]\wedge\psi_{0})+\nabla^{\star}\nabla a+F_{A_{O}}\underline{\otimes} a-[F_{A_{O}}, a]\lrcorner \psi_{0}.   
\end{array}]
\end{equation}

Suppose $A_{O}$ is a $G_{2}-$instanton with respect to the standard (Euclidean) $G_{2}-$structures i.e.
$\star(F_{A_{O}}\wedge \phi_{0})=-F_{A_{O}}$ or ($F_{A_{O}}\wedge \psi_{0}=0$), then 
\begin{equation}\label{equ bochner identity for instanton cone connection}
L_{A_{O}}^{2}[\begin{array}{c}
\sigma   \\
  a   
\end{array}]=[\begin{array}{c}
\nabla^{\star}\nabla \sigma   \\
  \nabla^{\star}\nabla a+2F_{A_{O}}\underline{\otimes} a.   
\end{array}]
\end{equation}
\end{lem}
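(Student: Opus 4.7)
The plan is to compute $L_{A_O}^2 (\sigma, a)^T$ by direct composition and reduce each piece via standard Weitzenböck plus Bianchi, exploiting the flatness of the Euclidean metric and the closedness $d\psi_0 = 0$. Writing
\[
L_{A_O}^2 \begin{pmatrix} \sigma \\ a \end{pmatrix} = L_{A_O} \begin{pmatrix} d_A^{\star} a \\ d_A \sigma + \star(d_A a \wedge \psi_0) \end{pmatrix}
\]
I separate the output into its $0$-form and $1$-form components and treat each in turn.

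For the $0$-form component $d_A^{\star} d_A \sigma + d_A^{\star}\star(d_A a \wedge \psi_0)$, the first summand is just $\nabla^{\star}\nabla \sigma$ by the standard Bochner identity on $0$-forms. For the second, I use $d^{\star} = -\star d \star$ on $1$-forms in dimension $7$, together with $\star\star = \mathrm{id}$ on even-degree forms, to rewrite it as $-\star\bigl(d_A(d_A a \wedge \psi_0)\bigr)$. Expanding via the Leibniz rule, invoking $d\psi_0 = 0$, and applying Bianchi $d_A^2 a = [F_{A_O}, a]$ gives exactly $-\star([F_{A_O}, a] \wedge \psi_0)$, as required.

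For the $1$-form component, expanding gives three terms:
\[
d_A d_A^{\star} a + \star(d_A d_A \sigma \wedge \psi_0) + \star\bigl(d_A \star(d_A a \wedge \psi_0) \wedge \psi_0\bigr).
\]
The middle term becomes $\star([F_{A_O}, \sigma] \wedge \psi_0)$ immediately by Bianchi. For the other two, I would invoke the full Weitzenböck identity on $1$-forms, namely $(d_A d_A^{\star} + d_A^{\star} d_A) a = \nabla^{\star}\nabla a + F_{A_O} \underline{\otimes} a$ on the flat background, to rewrite $d_A d_A^{\star} a = \nabla^{\star}\nabla a + F_{A_O}\underline{\otimes} a - d_A^{\star} d_A a$. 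The remaining algebraic identity to verify is
\[
- d_A^{\star} d_A a + \star\bigl(d_A \star(d_A a \wedge \psi_0) \wedge \psi_0\bigr) = -[F_{A_O}, a]\lrcorner \psi_0.
\]
I plan to establish this by setting $\omega = d_A a \in \Omega^2_{adE}$, pulling the outer $d_A$ inside using $d\psi_0 = 0$, and then applying the standard $G_2$-identity describing how $\omega \mapsto \star(\star(\omega \wedge \psi_0) \wedge \psi_0)$ acts on $\Lambda^2 = \Lambda^2_7 \oplus \Lambda^2_{14}$ (where it is diagonalised with eigenvalues determined by the $G_2$-decomposition). Bianchi $d_A \omega = [F_{A_O}, a]$ then produces precisely the claimed $-[F_{A_O}, a]\lrcorner \psi_0$.

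The main obstacle is the last algebraic identity: one must carefully track signs and the $G_2$-eigenspace decomposition to see that the $d_A^{\star} d_A a$ contributions cancel between the two terms and leave the interior product against $\psi_0$. Once (\ref{equ bochner identity for general cone connection}) is in hand, the instanton special case (\ref{equ bochner identity for instanton cone connection}) follows quickly: the condition $F_{A_O} \wedge \psi_0 = 0$ immediately annihilates $\star([F_{A_O}, a]\wedge \psi_0)$ and $\star([F_{A_O}, \sigma]\wedge \psi_0)$ (by moving the $1$- or $0$-form past $\psi_0$), and the combination $F_{A_O} \underline{\otimes} a - [F_{A_O}, a]\lrcorner \psi_0$ reduces to $2 F_{A_O} \underline{\otimes} a$ because for $F_{A_O} \in \Lambda^2_{14}$ the $G_2$-contraction identities force $-[F_{A_O}, a]\lrcorner \psi_0 = F_{A_O}\underline{\otimes} a$.
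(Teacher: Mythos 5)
Your overall strategy matches the paper's: compose $L_{A_O}$ with itself, use the Bochner formula on $0$-forms and the full Weitzenb\"ock identity $d_A d_A^{\star}+d_A^{\star}d_A=\nabla^{\star}\nabla+F_{A_O}\underline{\otimes}\cdot$ on $1$-forms, use Bianchi $d_A^2=[F_{A_O},\cdot]$, and reduce everything to one key identity involving contractions with $\phi_0,\psi_0$. Your $0$-form computation is correct, the identification of $\star(d_A d_A\sigma\wedge\psi_0)=\star([F_{A_O},\sigma]\wedge\psi_0)$ is correct, and your ``remaining algebraic identity'' is indeed equivalent (after setting $b=d_A a$ and using $\star(c\wedge\psi_0)=c\lrcorner\phi_0$ for $2$-forms $c$) to the paper's intermediate Claim $[d(b\lrcorner\phi_0)]\lrcorner\phi_0=d^{\star}b-db\lrcorner\psi_0$.

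The gap is in how you propose to prove that identity. You say you will ``pull the outer $d_A$ inside using $d\psi_0=0$ and then apply the standard $G_2$-identity describing how $\omega\mapsto\star(\star(\omega\wedge\psi_0)\wedge\psi_0)$ acts on $\Lambda^2=\Lambda^2_7\oplus\Lambda^2_{14}$.'' This does not work as stated: $d_A$ does not commute with the pointwise bundle map $\omega\mapsto\star(\omega\wedge\psi_0)=\omega\lrcorner\phi_0$, because the first is a degree-raising differential operator and the second lowers degree; the failure to commute is precisely what produces the $d^{\star}b$ term on the right-hand side of the needed identity, so ``pulling $d_A$ inside'' assumes what is to be proved. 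The closedness $d\psi_0=0$ lets you move $d$ past the \emph{wedge} with $\psi_0$, but not past the $\star$. The identity you need is linear in first derivatives of $b$, so a representation-theoretic proof would have to decompose $T^{\star}\otimes\Lambda^2$ (equivalently, to understand how the image of $b\mapsto(d^{\star}b,db)\in\Lambda^1\oplus\Lambda^3$ interacts with the $G_2$-contractions), not merely $\Lambda^2$ itself; as written your plan collapses a first-order differential identity to a zeroth-order algebraic one. The paper avoids this by simply expanding $b\lrcorner\phi_0$ and $[d(b\lrcorner\phi_0)]\lrcorner\phi_0$ in the standard coordinate frame $(e_1,\dots,e_7)$ and verifying the identity component by component, which is elementary and unambiguous. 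Your treatment of the instanton specialisation (that $F_{A_O}\wedge\psi_0=0$ kills two terms and forces $-[F_{A_O},a]\lrcorner\psi_0=F_{A_O}\underline{\otimes}a$) is correct in substance and matches the paper's final step.
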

\begin{proof} All the forms in this proof can possibly be $adE-$valued.

By (\ref{equ appendix A star wedge}) and (\ref{equ introduction formula for deformation operator}), we have $L_{A_{O}}[\begin{array}{c}
\sigma   \\
  a   
\end{array}]=[\begin{array}{c}
d^{\star}a   \\
  d\sigma+da\lrcorner \phi_{0}.   
\end{array}]$
Hence \begin{equation}
L^{2}_{A_{O}}[\begin{array}{c}
\sigma  \\
  a   
\end{array}]=[\begin{array}{c}
d^{\star}d\sigma -\star([F_{A_{O}},a]\wedge\psi_{0})  \\
 \star([F_{A_{O}},\sigma]\wedge\psi_{0})+ dd^{\star}a+\{d[da\lrcorner \phi_{0}]\}\lrcorner \phi_{0}.   
\end{array}]
\end{equation}

We first prove the general formula (\ref{equ bochner identity for general cone connection}). For any 1-form $B=B_{i}e^{i}$, we compute 
\begin{equation}
(dB\lrcorner  \phi_{0})(e_{1})=-d_{6}B_{7}+d_{7}B_{6}-d_{4}B_{5}+d_{5}B_{4}-d_{3}B_{2}+d_{2}B_{3}
\end{equation} 
Let $b$ be a $2-$form, write $b=\Sigma_{i<j}b_{ij}e^{ij}$. Let $B=b\lrcorner \phi_{0}$, then 
\begin{eqnarray*}& & B_{7}=-b_{16}+b_{25}-b_{34},\ \ B_{6}=-b_{24}+b_{17}-b_{35},\ \ B_{5}=-b_{27}-b_{14}+b_{36}.\nonumber
\\& & B_{4}=b_{15}+b_{37}+b_{26},\ \ B_{3}=-b_{47}-b_{56}+b_{12},\ \ B_{2}=-b_{13}-b_{46}+b_{57}.
\end{eqnarray*}
\begin{eqnarray}\textrm{Then} & & \{[d (b\lrcorner \phi_{0})] \lrcorner \phi_{0} \}(e_{1})
\\&=&-\Sigma_{i=2}^{7}d_{i}b_{i1}-<db,e^{625}>+<db,e^{634}>+<db,e^{427}>+<db,e^{537}>\nonumber
\\&=& d^{\star}b(e_{1})-(db\lrcorner \psi_{0})(e_{1}).\nonumber
\end{eqnarray}

Therefore, by computing the component on $e_{2},......,e_{7}$ similarly,  we  arrive at  the following intermediate result. 
\begin{clm}\label{equ intermediate formula for LA{2}} For any $adE$-valued $2-$form $b$, the following formula holds. 
\begin{equation}
[d (b\lrcorner \phi_{0})] \lrcorner \phi_{0}=d^{\star}b-db\lrcorner \psi_{0}.
\end{equation}
\end{clm}
Let $b=da$, we obtain $[d (da\lrcorner \phi_{0})] \lrcorner \phi_{0}=d^{\star}da-[F_{A_{O}},a]\lrcorner \psi_{0}$. Using the Bochner-identity $
d^{\star}da+dd^{\star}a=\nabla^{\star}\nabla a+F_{A_{O}}\underline{\otimes} a$, we find
\begin{equation}
dd^{\star}a+\{d[da\lrcorner \phi_{0}]\}\lrcorner \phi_{0}= \nabla^{\star}\nabla a+F_{A_{O}}\underline{\otimes} a-[F_{A_{O}},a]\lrcorner \psi_{0}.
\end{equation}
The proof of (\ref{equ bochner identity for general cone connection}) is complete

Next, suppose $A_{O}$ is a $G_{2}-$instanton  with respect to $\phi_{0}$, we prove  (\ref{equ bochner identity for instanton cone connection}). Notice in this case we automatically have $\star([F_{A_{O}},a]\wedge\psi_{0})=0$ and 
 $\star([F_{A_{O}},\sigma]\wedge\psi_{0})=0$. We compute 
\begin{equation} -[F_{A_{O}},a]\lrcorner \psi_{0} =-\star(\phi_{0}\wedge [F_{A_{O}},a])=\star\{-\phi_{0}\wedge F_{A_{O}}\wedge a)\}+\star\{\phi_{0}\wedge a\wedge F_{A_{O}})\}
\end{equation}

The instanton equation implies \begin{equation}
\star\{-\phi_{0}\wedge F_{A_{O}}\wedge a\}=-[\star(\phi_{0}\wedge F_{A_{O}})]\lrcorner a=F_{A_{O}}\llcorner a,
\end{equation}
and 
\begin{equation}
\star\{\phi_{0}\wedge  a\wedge F_{A_{O}}\}=-\star\{  a\wedge \phi_{0} \wedge F_{A_{O}}\}=a\lrcorner \star( \phi_{0} \wedge F_{A_{O}})=-a\lrcorner F_{A_{O}}.
\end{equation}

Then we get
\begin{equation} -[F_{A_{O}},a]\lrcorner \psi_{0} =F_{A_{O}}\llcorner a-a\lrcorner F_{A_{O}}=\Sigma_{i,j}[F_{ij},a_{i}]e^{j}\triangleq F_{A_{O}}\underline{\otimes} a.
\end{equation}

The proof of (\ref{equ bochner identity for instanton cone connection}) and Lemma \ref{lem formula for LA squared} is complete.
\end{proof}

\subsection{Appendix B: Proof of Lemma \ref{lem  cone formula for laplacian} \label{Appendix B: proof of  cone formula for laplacian}}
The polar coordinate formula for $1-$forms  is more involved. We need some preliminary identities. The Euclidean metric is equal to $dr^{2}+r^{2}g_{S^{n-1}}$.  For any    point $(\mathfrak{p},r)\in \R^{7}\setminus O$, we choose  $e_{1}......e_{n-1}$ as an orthonormal frame on $S^{n-1}$ near $\mathfrak{p}$. Furthermore, we require $e_{1}......e_{n-1}$ to  be the geodesic coordinates on the sphere at $\mathfrak{p}$.

As vector fields defined under polar coordinate near  $\mathfrak{p}\times (0,1)$, for any $r$, let $\nabla^{S}$ denote the covariant derivative induced on $S^{n-1}(r)$.  $\nabla^{S}$ is just the Levi-Civita connection of the induced metric. Since (the metric on) $S^{n-1}(r)$ differs from the unit sphere by a constant rescaling, and $e_{i}'$s are the geodesic coordinates at $\mathfrak{p}$,   then  
\begin{equation}\label{equ cone formula the ei are geodesic coordinate at p}
\nabla^{S}_{e_{j}}e_{i}=0 \ \textrm{along}\ \mathfrak{p}\times (0,1),\ \textrm{for all}\ i,\ j. 
\end{equation}
Notice  the $e_{i}$'s here are not the same as the ones in Section \ref{section Appendix A: Weitzenb formula in the model case}. We abuse these notations.

 The vector files  $\frac{\partial}{\partial r},\frac{e_{i}}{r},i=1...n-1$ form 
an orthonormal  basis for the Euclidean metric over $\R^{7}\setminus O$.  

\begin{lem} \label{lem covariant derivatives} In a neighbourhood of $\mathfrak{p}\times (0,1)$ in $\R^{n}\setminus O$, we have 
\begin{equation}
\nabla_{\frac{\partial}{\partial r}}e_{i}=\frac{e_{i}}{r};\ \ \ \ \ \ \ \nabla_{\frac{\partial}{\partial r}}e^{i}=-\frac{e^{i}}{r};\ \ \ \ \ \ \;\ \nabla_{e_{i}}dr=re^{i}. 
\end{equation}
\begin{equation}\label{equ at p ei are orthonormal basis on sphere}
\nabla_{e_{i}}e^{k}=\nabla^{S}_{e_{i}}e^{k}-\delta_{ik}\frac{dr}{r};\ \ \ \ \ \ \nabla_{e_{i}}e_{j}=\nabla^{S}_{e_{i}}e_{j}-\delta_{ij}r\frac{\partial }{\partial r}.
\end{equation}
\end{lem}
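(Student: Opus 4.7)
The plan is to reduce everything to the Koszul formula applied to the cone metric $g = dr^{2} + r^{2}g_{S^{n-1}}$, using three elementary facts: (i) since the $e_{i}$ are lifted from $S^{n-1}$ and independent of $r$, one has $[\partial/\partial r, e_{i}] = 0$; (ii) $\partial/\partial r$ is a unit-speed geodesic field, so $\nabla_{\partial/\partial r}\partial/\partial r = 0$; and (iii) the inner products are $\langle\partial_{r},\partial_{r}\rangle = 1$, $\langle e_{i},\partial_{r}\rangle = 0$, $\langle e_{i}, e_{j}\rangle = r^{2}g_{S}(e_{i},e_{j})$, which along the ray $\mathfrak{p}\times(0,1)$ equals $r^{2}\delta_{ij}$ by the geodesic-coordinate choice (\ref{equ cone formula the ei are geodesic coordinate at p}).

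First I would prove the radial identity $\nabla_{\partial/\partial r} e_{i} = e_{i}/r$: the Koszul formula, together with (i) and (ii), gives $2\langle\nabla_{\partial_{r}}e_{i},e_{j}\rangle = \partial_{r}\langle e_{i},e_{j}\rangle = \partial_{r}(r^{2}g_{S}(e_{i},e_{j})) = 2rg_{S}(e_{i},e_{j}) = 2\langle e_{i}/r, e_{j}\rangle$, while $\langle\nabla_{\partial_{r}}e_{i},\partial_{r}\rangle = -\langle e_{i},\nabla_{\partial_{r}}\partial_{r}\rangle = 0$. The dual formula $\nabla_{\partial/\partial r} e^{i} = -e^{i}/r$ then follows by applying $(\nabla_{\partial_{r}}e^{i})(X) = \partial_{r}(e^{i}(X)) - e^{i}(\nabla_{\partial_{r}}X)$ to $X = e_{j}$ and $X = \partial_{r}$, using that $e^{i}(e_{j})$ is $r$-independent and $e^{i}(\partial_{r}) = 0$.

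Next, torsion-freeness combined with (i) gives $\nabla_{e_{i}}\partial_{r} = \nabla_{\partial_{r}}e_{i} = e_{i}/r$. The formula $\nabla_{e_{i}}dr = re^{i}$ then comes from $(\nabla_{e_{i}}dr)(X) = e_{i}(dr(X)) - dr(\nabla_{e_{i}}X)$: evaluating on $e_{j}$ and on $\partial_{r}$ one gets $-dr(\nabla_{e_{i}}e_{j}) = r\delta_{ij}$ (using the formula for $\nabla_{e_{i}}e_{j}$ below) and $-dr(e_{i}/r) = 0$, both agreeing with $re^{i}$.

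Finally, for (\ref{equ at p ei are orthonormal basis on sphere}) I would apply Koszul one more time. For the tangential component, $2\langle\nabla_{e_{i}}e_{j},e_{k}\rangle = e_{i}\langle e_{j},e_{k}\rangle + e_{j}\langle e_{i},e_{k}\rangle - e_{k}\langle e_{i},e_{j}\rangle + \langle[e_{i},e_{j}],e_{k}\rangle - \langle[e_{i},e_{k}],e_{j}\rangle - \langle[e_{j},e_{k}],e_{i}\rangle$; each inner product is $r^{2}$ times the sphere inner product and the commutators $[e_{i},e_{j}]$ are tangent to $S^{n-1}$, so the right-hand side equals $r^{2}$ times the analogous Koszul expression on $(S^{n-1},g_{S})$, namely $2r^{2}g_{S}(\nabla^{S}_{e_{i}}e_{j},e_{k}) = 2\langle\nabla^{S}_{e_{i}}e_{j},e_{k}\rangle$. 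For the radial component, all commutator terms drop out and $2\langle\nabla_{e_{i}}e_{j},\partial_{r}\rangle = -\partial_{r}\langle e_{i},e_{j}\rangle = -2rg_{S}(e_{i},e_{j})$, which along $\mathfrak{p}\times(0,1)$ is $-2r\delta_{ij}$. This yields $\nabla_{e_{i}}e_{j} = \nabla^{S}_{e_{i}}e_{j} - \delta_{ij}r\partial/\partial r$, and the dual formula for $\nabla_{e_{i}}e^{k}$ follows by the same duality trick used for $\nabla_{\partial_{r}}e^{i}$. The whole argument is routine; the only thing requiring care is bookkeeping the factor $r^{2}$ in the metric together with the geodesic-coordinate condition, which ensures $\delta_{ij}$ appears precisely along the ray through $\mathfrak{p}$.
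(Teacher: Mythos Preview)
Your proof is correct. The paper does not actually supply a proof of this lemma; it is stated as a preliminary identity and used immediately in the computation of Lemma~\ref{lem  cone formula for laplacian}. Your argument via the Koszul formula for the cone metric $g = dr^{2} + r^{2}g_{S^{n-1}}$ is the standard route and all the bookkeeping checks out.

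One cosmetic remark: you compute $\nabla_{e_{i}}dr$ by forward-referencing the formula for $\nabla_{e_{i}}e_{j}$. This is logically fine since you prove the latter independently, but you can also avoid the forward reference entirely by using metric duality: since $dr$ is dual to $\partial_{r}$ and $\nabla$ is metric, $\nabla_{e_{i}}dr$ is dual to $\nabla_{e_{i}}\partial_{r} = e_{i}/r$, and the metric dual of $e_{i}/r$ is $r\,e^{i}$ (because $\langle e_{i},e_{j}\rangle = r^{2}\delta_{ij}$ along the ray). Either way the result is the same.
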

Hence we compute for any $\phi\in \Omega^{1}_{\Xi}(S^{n-1})$ that
\begin{equation}\label{equ radial derivative of a spherical form}
\nabla_{\frac{\partial}{\partial r}}\phi=\nabla_{\frac{\partial}{\partial r}}(\phi_{i}e^{i})=-\frac{\phi}{r};\ \nabla_{\frac{\partial}{\partial r}}\nabla_{\frac{\partial}{\partial r}}\phi=\frac{2\phi}{r^{2}}.
\end{equation}

\begin{proof}[Proof of Lemma \ref{lem  cone formula for laplacian}:]

We first observe that, for any bundle-valued form $b$, 
\begin{equation}
-\nabla^{\star}\nabla b=\Sigma_{k=1}^{n}\nabla^{2}b(v_{k},v_{k}),\ (v_{k})\ \textrm{is an orthonormal basis}. 
\end{equation}
This definition does not depend on the orthonormal basis chosen, then we can use $\frac{\partial}{\partial r},\frac{e_{1}}{r}......\frac{e_{n-1}}{r}$ to obtain
\begin{eqnarray}\label{eqnarray cone formula 1 forms 1}& &-\nabla^{\star}\nabla b=\nabla^{2}b(\frac{\partial}{\partial r},\frac{\partial}{\partial r})+\frac{1}{r^{2}}\Sigma_{i=1}^{n-1}\nabla^{2}b(e_{i},e_{i})
\\&=&\nabla_{\frac{\partial}{\partial r}}\nabla_{\frac{\partial}{\partial r}}b+\frac{1}{r^{2}}\Sigma_{i=1}^{n-1}\nabla_{i}\nabla_{i}b-\frac{1}{r^{2}}\nabla_{(\Sigma_{i}\nabla_{i}e_{i})}b\nonumber
\\&=&\nabla_{\frac{\partial}{\partial r}}\nabla_{\frac{\partial}{\partial r}}b+\frac{1}{r^{2}}\Sigma_{i=1}^{n-1}\nabla_{i}\nabla_{i}b-\frac{1}{r^{2}}\nabla_{(\Sigma_{i}\nabla^{S}_{i}e_{i})}b+\frac{n-1}{r}\nabla_{\frac{\partial}{\partial r}}b.\nonumber
\end{eqnarray}
The $\nabla_{i}\nabla_{i}b$ should be understood as $\nabla_{i}(\nabla_{i}b)$.

Part I: we compute the rough laplacian of the radial part $a_{r}\frac{dr}{r}$. By (\ref{eqnarray cone formula 1 forms 1}), 
\begin{eqnarray}\label{eqnarray cone formula 1 forms 2}& & -\nabla^{\star}\nabla(a_{r}\frac{dr}{r})
\\&=&\nabla_{\frac{\partial}{\partial r}}\nabla_{\frac{\partial}{\partial r}}(a_{r}\frac{dr}{r})+\frac{1}{r^{2}}\Sigma_{i=1}^{n-1}\nabla_{i}\nabla_{i}(a_{r}\frac{dr}{r})-\frac{1}{r^{2}}\nabla_{(\Sigma_{i}\nabla^{S}_{i}e_{i})}(a_{r}\frac{dr}{r})+\frac{n-1}{r}\nabla_{\frac{\partial}{\partial r}}(a_{r}\frac{dr}{r}).\nonumber
\end{eqnarray}
Term-wise computation gives 
\begin{eqnarray}\label{eqnarray termwise computation for laplacian of the radial term}
& &\nabla_{\frac{\partial}{\partial r}}(a_{r}\frac{dr}{r})
=(\nabla_{\frac{\partial }{\partial r}}a_{r}) \frac{dr}{r}-a_{r}\frac{dr}{r^2};\\& & \nabla_{\frac{\partial}{\partial r}}\nabla_{\frac{\partial}{\partial r}}(a_{r}\frac{dr}{r})=(\nabla_{\frac{\partial }{\partial r}}\nabla_{\frac{\partial }{\partial r}}a_{r}) \frac{dr}{r}-2(\nabla_{\frac{\partial}{\partial r}}a_{r})\frac{dr}{r^{2}}+\frac{2}{r^{3}}a_{r}dr.\nonumber
\end{eqnarray}
For the hardest term $\Sigma_{i=1}^{n-1}\nabla_{i}\nabla_{i}(a_{r}\frac{dr}{r})$, fix $i$,  we compute 
\begin{equation}\label{equ cone formula 1 forms 1}
\nabla_{i}\nabla_{i}(a_{r}\frac{dr}{r})=(\nabla_{i}\nabla_{i}a_{r})\frac{dr}{r}+2(\nabla_{i}a_{r})(\nabla_{i}\frac{dr}{r})+a_{r}\nabla_{i}\nabla_{i}\frac{dr}{r}.
\end{equation}

Using 
\begin{equation}
\nabla_{i}\frac{dr}{r}=e^{i},\ \nabla_{i}\nabla_{i}\frac{dr}{r}=-\frac{dr}{r}+\nabla^{S}_{i}e^{i},\ \textrm{and}\ \Sigma_{i=1}^{n-1}\nabla_{i}\nabla_{i}a_{r}=\Delta_{S}a_{r}+\nabla^{S}_{\Sigma_{i}\nabla^{S}_{i}e_{i}}a_{r},
\end{equation}
we obtain (by summing up $i$ in (\ref{equ cone formula 1 forms 1})) 
\begin{equation}\label{equ hardest term in rough laplacian of radial term}
\Sigma_{i=1}^{n-1}\nabla_{i}\nabla_{i}(a_{r}\frac{dr}{r})=(\Delta_{S}a_{r})\frac{dr}{r}+(\nabla^{S}_{\Sigma_{i}\nabla^{S}_{i}e_{i}}a_{r})\frac{dr}{r}+2d_{S}a_{r}-(n-1)a_{r}\frac{dr}{r}+a_{r}(\Sigma_{i}\nabla^{S}_{i}e^{i})
\end{equation}

By (\ref{equ cone formula the ei are geodesic coordinate at p}), (\ref{eqnarray termwise computation for laplacian of the radial term}), (\ref{eqnarray cone formula 1 forms 2}), and (\ref{equ hardest term in rough laplacian of radial term}),  along $\mathfrak{p}\times (0,1)$, we have 
\begin{eqnarray}\label{equ rough lapla of the radial term}
& &-\nabla^{\star}\nabla (a_{r}\frac{dr}{r})
\\&=&(\nabla_{\frac{\partial}{\partial r}}\nabla_{\frac{\partial}{\partial r}}a_{r})\frac{dr}{r}+\frac{n-3}{r}(\nabla_{\frac{\partial }{\partial r}}a_{r})\frac{dr}{r}+\frac{1}{r^{2}}[\Delta_{S}a_{r}-(2n-4)a_{r}]\frac{dr}{r}+\frac{2}{r^{2}}d_{S}a_{r}.\nonumber
\end{eqnarray}

Since (\ref{equ rough lapla of the radial term}) is independent of the coordinate chosen, and $\mathfrak{p}$ is arbitrary, then it holds everywhere on $\R^{7}\setminus O$.

Part II: we compute the rough laplacian of the radial part $a_{s}$ (which  does not have radial component). First we have 
\begin{equation*} -\nabla^{\star}\nabla a_{s}
=\nabla_{\frac{\partial}{\partial r}}\nabla_{\frac{\partial}{\partial r}}a_{s}+\frac{1}{r^{2}}\Sigma_{i=1}^{n-1}\nabla_{i}\nabla_{i}a_{s}-\frac{1}{r^{2}}\nabla_{(\Sigma_{i}\nabla^{S}_{i}e_{i})}a_{s}+\frac{n-1}{r}\nabla_{\frac{\partial}{\partial r}}a_{s}.
\end{equation*}
In this case, we only have to compute the crucial term $\nabla_{i}\nabla_{i}a_{s}$. We write $a_{s}=\Sigma_{i=1}^{n-1}a_{i}$, then 
\begin{equation}\nabla_{i}a_{s}=(\nabla_{i}a_{k})e^{k}+a_{k}\nabla_{i}e^{k}=\nabla^{S}_{i}a_{s}+a_{k}(\nabla_{i}e^{k}-\nabla_{i}^{S}e^{k})
=\nabla^{S}_{i}a_{s}-a_{s}(e_{i})\frac{dr}{r}.
\end{equation}

To compute $\nabla_{i}\nabla_{i}a_{s}$, it suffices to compute $\nabla_{i}\nabla_{i}^{S}a_{s}$ and $\nabla_{i}[a_{s}(e_{i})\frac{dr}{r}]$.  

\begin{equation}
\Sigma_{i}\nabla_{i}\nabla_{i}^{S}a_{s}=\Sigma_{i}\nabla^{S}_{i}\nabla_{i}^{S}a_{s}-\Sigma_{i}(\nabla_{i}^{S}a_{s})(e_{i})\frac{dr}{r}=\Delta_{S}a_{s}+\nabla^{S}_{\nabla^{S}_{i}e_{i}}a_{s}+d^{\star}a_{s}\frac{dr}{r}.
\end{equation} 
\begin{eqnarray}\textrm{
On the other hand}& &\Sigma_{i}\nabla_{i}[a_{s}(e_{i})\frac{dr}{r}]=\Sigma_{i}\{[\nabla^{S}_{i}a_{s}(e_{i})]\frac{dr}{r}+\frac{1}{r}a_{i}\nabla_{i}dr\}\nonumber
\\&=& \Sigma_{i}\{(\nabla^{S}_{i}a_{s})(e_{i})\frac{dr}{r}+a_{s}(\nabla_{i}^{S}e_{i})\frac{dr}{r}+a_{i}e^{i}\}\nonumber
\\&=& -(d^{\star}a_{s})\frac{dr}{r}+a_{s}(\nabla_{i}^{S}e_{i})\frac{dr}{r}+a_{s}.
\end{eqnarray}
\begin{equation}\label{equ cone formula 1 forms 2}\textrm{Then}\
\Sigma_{i}\nabla_{i}\nabla_{i}a_{s}=\Delta_{S}a_{s}+2(d^{\star}a_{s})\frac{dr}{r}+\nabla^{S}_{\nabla^{S}_{i}e_{i}}a_{s}-a_{s}(\nabla_{i}^{S}e_{i})\frac{dr}{r}-a_{s}.
\end{equation}
By (\ref{equ at p ei are orthonormal basis on sphere}), (\ref{equ cone formula 1 forms 2}), and (\ref{equ cone formula the ei are geodesic coordinate at p}),  the following holds along $\mathfrak{p}\times (0,1)$
\begin{equation}\label{equ rough lapla of the spherical part} -\nabla^{\star}\nabla a_{s}=
 \nabla_{\frac{\partial}{\partial r}}\nabla_{\frac{\partial}{\partial r}}a_{s}+\frac{n-1}{r}\nabla_{\frac{\partial}{\partial r}}a_{s}
+\frac{1}{r^{2}}(\Delta_{S}a_{s}-a_{s})+(2d^{\star}a_{s})\frac{dr}{r^{3}}.
\end{equation}
It does not depend on the coordinates chosen, thus holds everywhere. The proof of Lemma \ref{lem  cone formula for laplacian} is completed by combining (\ref{equ rough lapla of the radial term}) and (\ref{equ rough lapla of the spherical part}).
\end{proof}
\subsection{Appendix C: Fundamental facts on elliptic systems}

\textbf{In this section, we work under the same conditions as in Theorem \ref{Thm Fredholm}}.
For any  $q$ near a singular point $O$, denote
\begin{equation}\label{equ abbreviation for the ball}B=B_{q}(\frac{r_{q}}{100}),
\end{equation}
then $B$ lies in one coordinate sector. Let $L_{\psi}$ be the local elliptic operator with $A=0$ in $B$ i.e.
 $L_{\psi}[\begin{array}{c}
\sigma   \\
  a   
\end{array}]=[\begin{array}{c}
d^{\star}a   \\
  d\sigma+da\lrcorner \phi  
\end{array}]$.  For any locally defined $G_{2}-$structure $(\phi,\psi)$, we have the following weighted estimate   due to Nirenberg-Douglis \cite{Nirenberg}. 
\begin{equation}
|\xi|^{\star}_{2,\alpha,B}\leq C|L_{\psi} \xi|^{(1)}_{1,\alpha,B}+C|\xi|_{0,B}, 
\end{equation}
where "$C$" depends at most on the $C^{5}-$norm  and the non-degeneracy of   $\phi$.

An easy but important building-block  is 
\begin{lem}\label{lem Schauder estimate in local coordinates in small balls} Suppose $\xi\in C^{k,\alpha}(B)$, the following estimate holds. \begin{equation*}
|\xi|^{\star}_{k,\alpha,B}\leq C|L_{A}\xi|^{[1]}_{k-1,\alpha,B}+C|\xi|_{0,B},\ k=2,3,4.
\end{equation*}
The estimate holds the same with $L_{A}$ replaced by $L_{A}^{\star}$.
\end{lem}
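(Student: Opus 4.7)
The plan is to reduce the estimate for $L_{A}$ to the Douglis--Nirenberg estimate for $L_{\psi}$ (displayed just before the lemma), treating the connection $A$ as a zeroth-order perturbation that can be absorbed via interpolation. The key observation is that $L_{A}-L_{\psi}$ is an \emph{algebraic} operator in $\xi$: schematically $(L_{A}-L_{\psi})\bigl[\begin{smallmatrix}\sigma\\a\end{smallmatrix}\bigr]=\bigl[\begin{smallmatrix}-\star(A\wedge\star a)\\ [A,\sigma]+\star([A,a]\wedge\psi)\end{smallmatrix}\bigr]$, so it is bounded on Schauder spaces by the corresponding norm of $A$ times that of $\xi$, with one lower derivative.

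First I would write
\begin{equation*}
L_{\psi}\xi \;=\; L_{A}\xi \;-\; (L_{A}-L_{\psi})\xi,
\end{equation*}
and apply the given Douglis--Nirenberg inequality for $L_{\psi}$ on $B$ (and, for $k=3,4$, its higher-order version obtained by differentiating the system $k-2$ times, which is available since $\phi$ is $C^{5}$ and the symbol is the same). This produces
\begin{equation*}
|\xi|^{\star}_{k,\alpha,B}\;\leq\; C\,|L_{A}\xi|^{[1]}_{k-1,\alpha,B}+C\,|(L_{A}-L_{\psi})\xi|^{[1]}_{k-1,\alpha,B}+C\,|\xi|_{0,B}.
\end{equation*}
The middle term is controlled by $C\,|A|^{\star}_{k-1,\alpha,B}\cdot|\xi|^{\star}_{k-1,\alpha,B}$, using the multiplicative property of the interior weighted Schauder norms (see (4.17) and Problem~4.8 in \cite{GilbargTrudinger}), together with the elementary interpolation
\begin{equation*}
|\xi|^{\star}_{k-1,\alpha,B}\;\leq\;\varepsilon\,|\xi|^{\star}_{k,\alpha,B}+C_{\varepsilon}\,|\xi|_{0,B}.
\end{equation*}
Choosing $\varepsilon$ small enough depending on $|A|^{\star}_{k-1,\alpha,B}$ absorbs the perturbation into the left-hand side, which yields the claimed bound.

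The step that needs attention is uniformity of $|A|^{\star}_{k-1,\alpha,B}$ as $q$ approaches the singularity. Since $A$ is admissible of order $4$ (Definition~\ref{Def Admissable connection with polynomial or exponential convergence}), on the ball $B=B_{q}(r_{q}/100)$ we have $|\nabla_{A_{O}}^{j}A|\leq Cr_{q}^{-1-j}$ for $j\leq 4$; combined with the fact that $d_{x}\leq r_{q}/100$ for $x\in B$, the weighted norm $|A|^{\star}_{k-1,\alpha,B}$ for $k\leq 4$ is bounded by a uniform constant depending only on the data in Definition~\ref{Def Dependence of the constants}. Consequently the constant $C$ in the final estimate is uniform in $q$, which is exactly what is needed for the global patching argument in Theorem~\ref{thm global apriori L22 estimate} and Theorem~\ref{thm C0 est}.

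Finally, for $L_{A}^{\star}$, formula (\ref{equ LA star formula}) gives $L_{A}^{\star}=L_{A}+G(d\omega_{p,b}/\omega_{p,b},\,\cdot\,)$, and on $B$ the coefficient $d\omega_{p,b}/\omega_{p,b}$ is smooth and enjoys the same scale-invariant bounds as $A$ (powers of $1/r_{q}$ for derivatives). Hence the identical perturbation--interpolation argument, with $R_{A}$ replaced by $R_{A}+G(d\omega_{p,b}/\omega_{p,b},\,\cdot\,)$, yields the corresponding estimate for $L_{A}^{\star}$. The main obstacle I anticipate is simply the bookkeeping of the weighted norms $|\cdot|^{\star}$ versus $|\cdot|^{[1]}$ and verifying the multiplicative property with the correct weight exponents — a mechanical but slightly tedious check against \cite{GilbargTrudinger}.
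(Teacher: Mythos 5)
Your proof follows essentially the same route as the paper: reduce to the Douglis--Nirenberg estimate for $L_{\psi}$, treat $L_{A}-L_{\psi}=A\otimes\,\cdot$ as a lower-order algebraic perturbation, bound its weighted Schauder norm using admissibility of $A$, and absorb it via interpolation; the extension to $L_{A}^{\star}$ is handled identically using $|dw_{p,b}/w_{p,b}|\leq C/r$. One bookkeeping slip you should fix: you claim $|A|^{\star}_{k-1,\alpha,B}$ is uniformly bounded, but the $\star$ norm puts weight $d_{x}^{j}$ on the $j$-th derivative and \emph{no} weight on the zeroth-order term, so $|A|^{\star}_{0,B}=\sup_{B}|A|\approx r_{q}^{-1}$ is \emph{not} bounded as $q\to O$; the correct statement, which the paper uses, is $|A|^{[1]}_{k-1,\alpha,B}\leq C$ (weights $d_{x}^{j+1}$), and correspondingly the multiplicative inequality should read $|A\otimes\xi|^{[1]}_{k-1,\alpha,B}\leq C\,|A|^{[1]}_{k-1,\alpha,B}\,|\xi|^{\star}_{k-1,\alpha,B}$ so that the weight exponents add up to $1$ on the left. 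With that one-character correction your interpolation--absorption argument goes through exactly as in the paper.
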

\begin{proof} We only prove it for $L_{A}$ when $k=2$. On  $L_{A}^{\star}$, note that
 Definition \ref{Def global weight and Sobolev spaces} implies $|\frac{dw_{p,b}}{w_{p,b}}|\leq \frac{C}{r}$, thus we  verify that $|\frac{dw_{p,b}}{w_{p,b}}|^{[1]}_{2,0,B}\leq |\frac{dw_{p,b}}{w_{p,b}}|^{(1)}_{2,0,B}<C$. Then  formula (\ref{equ LA star formula}) implies that the proof for $L_{A}$ directly carries over to $L_{A}^{\star}$.

 The admissible conditions implies   $|A|^{[1]}_{2,0,B}
\leq  C$, hence $|A|^{[1]}_{1,\alpha,B}
\leq  C$.  Using $|(L_{A}-L_{\psi})\xi|=|A\otimes_{g_{\phi}}\xi|$ and  splitting of weight, we have 
$$|A\otimes_{g}\xi|^{[1]}_{1,\alpha,B}\leq |A|^{[1]}_{1,\alpha,B}|\xi|^{\star}_{1,\alpha,B}\leq C|\xi|^{\star}_{1,\alpha,B}.$$
Thus $|(L_{A}-L_{\psi})\xi|^{[1]}_{1,\alpha,B}\leq C|\xi|^{\star}_{1,\alpha,B}$, and this implies
\begin{equation}\label{equ Schauder est with junk terms}
|\xi|^{\star}_{2,\alpha,B}\leq C(|L_{A}\xi|^{[1]}_{1,\alpha,B}+|\xi|_{0,B})+|(L_{A}-L_{\psi})\xi|^{[1]}_{1,\alpha,B}\leq C(|L_{A}\xi|^{[1]}_{1,\alpha,B}+|\xi|^{\star}_{1,\alpha,B})
\end{equation}
By  Lemma 6.32 in \cite{GilbargTrudinger} (standard interpolation), for any $\mu\in (0,\frac{1}{100})$, we have
 \begin{equation}\label{equ intepolation for Schauder est}
|\xi|^{\star}_{1,\alpha,B}\leq \mu[\nabla^{2}\xi]^{[2]}_{0,B}+C_{\mu}|\xi|_{0,B}.
\end{equation}
Then   (\ref{equ Schauder est with junk terms}) and (\ref{equ intepolation for Schauder est})  imply Lemma \ref{lem Schauder estimate in local coordinates in small balls}. 
\end{proof}

\begin{prop}\label{prop log weighted Schauder estimate} Let $\gamma$ be any real number,  suppose $\xi\in $ is $C^{2,\alpha}$ away from the singularity, $L_{A}\xi\in C_{(\gamma,b)}^{1,\alpha}(M)$, $\xi\in C_{(\gamma-1,b)}^{0}(M) $. Then $\xi\in C_{(\gamma-1,b)}^{2,\alpha}(M)$, and 
\begin{equation*} 
|\xi|^{(\gamma-1,b)}_{2,\alpha,M}\leq C|L_{A}\xi|^{(\gamma,b)}_{1,\alpha,M}+C|\xi|^{(\gamma-1,b)}_{0,M}.
\end{equation*}
\end{prop}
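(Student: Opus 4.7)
The plan is to derive the global weighted estimate by localizing to balls on which the singular weight $r^{\gamma-1}(-\log r)^b$ is essentially constant, invoking Lemma \ref{lem Schauder estimate in local coordinates in small balls} on each such ball, and reassembling via the equivalence of the three global Schauder norms in Definition \ref{Def Global Schauder norms}.

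Away from the singularities, on the compact piece $M_{\tau_0}$, the weight is bounded above and below by positive constants, so the desired estimate is a direct consequence of the standard interior Schauder estimate applied to the finitely many admissible balls $B_l \subset M_{\tau_0}$. Near a singular point $O$, for each point $q$ with $r_q\leq\tau_0$ I would work on $B := B_q(r_q/100)$, which lies inside a single coordinate sector $V_{\pm,O}$ and on which $r$ is comparable to $r_q$ (the ratio lies in $[99/100,\,101/100]$). Applying Lemma \ref{lem Schauder estimate in local coordinates in small balls} with $k=2$ gives
\[
|\xi|^{\star}_{2,\alpha,B}\;\leq\;C\,|L_A\xi|^{[1]}_{1,\alpha,B}+C\,|\xi|_{0,B},
\]
in which the Gilbarg--Trudinger interior-distance weights $d_x$ are all comparable to $r_q$. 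Multiplying both sides by $r_q^{\gamma-1}(-\log r_q)^b$ and using that $(-\log r)^b/(-\log r_q)^b$ is uniformly bounded on $B$, the left-hand side controls the pointwise values $r_q^{\gamma+k}(-\log r_q)^b|\nabla^k\xi|(q)$ for $k=0,1,2$ together with the local H\"older seminorm $r_q^{\gamma+1+\alpha}(-\log r_q)^b[\nabla^2\xi]_{\alpha,B}$, while the right-hand side is dominated by $|L_A\xi|^{(\gamma,b)}_{1,\alpha,V_{\pm,O}}+|\xi|^{(\gamma-1,b)}_{0,V_{\pm,O}}$. Taking the supremum over $q$ produces precisely the sector-wise contribution to norm II in (\ref{equ norm II}); combining with the previous step and invoking the equivalence of norms I, II, III closes the argument.

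The main technical point I expect to monitor is the H\"older seminorm for pairs $(x,y)\in V_{\pm,O}\times V_{\pm,O}$ with $|x-y|\geq r_{x,y}/100$: these are \emph{not} jointly covered by any single ball $B_q$ above, and so must be handled by the trivial bound $|\nabla^2\xi(x)-\nabla^2\xi(y)|/|x-y|^{\alpha}\leq 2\cdot 100^{\alpha}\,r_{x,y}^{-\alpha}|\nabla^2\xi|_{C^0(B_x\cup B_y)}$, which, when multiplied by $r_{x,y}^{\gamma+1+\alpha}(-\log r_{x,y})^b$, feeds into the weighted $C^0$-norm of $\nabla^2\xi$ already bounded in the previous paragraph. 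A minor bookkeeping issue arises from the sign dichotomy in (\ref{equ Def local Schauder norm 1}) when $\gamma+1+\alpha<0$: the convention using $\underline{r_{x,y}}$ gives a norm no stronger than the one using $r_{x,y}$, so the localization above still produces a valid upper bound. No new obstacle is expected from this dichotomy.
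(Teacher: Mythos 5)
Your proposal is correct and follows essentially the same strategy as the paper's proof: localize to coordinate balls $B_q(c\,r_q)$, apply the interior Schauder estimate of Lemma \ref{lem Schauder estimate in local coordinates in small balls}, multiply through by the singular weight to first secure the weighted $C^{2,0}$ bound (the Gilbarg--Trudinger trick the paper cites as (\ref{equ C20 estimate in the apriori Schauder})), and then control H\"older quotients of well-separated pairs by that weighted $C^{0}$-bound on $\nabla^2\xi$, with the $\gamma+1+\alpha<0$ case handled by the $\underline{r_{x,y}}$ convention. The only cosmetic mismatch is that you phrase the far-pair threshold as $|x-y|\geq r_{x,y}/100$ rather than $\underline{r_{x,y}}/\mathrm{const}$; since any pair jointly inside a single $B_q(r_q/100)$ automatically has $r_x\simeq r_y$, the two thresholds coincide up to constants and your argument goes through as written.
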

\begin{proof} We only consider the case when $1+\gamma+\alpha\geq 0$. By our choice of $x$ and $y$ (the paragraph above (\ref{equ Prop Apriori Schauder 1 Appendix C})), the proof is similar when it's negative.  Without loss of generality, for any singular point $O$, we only consider  $x\in B_{O}(\rho_{0})$ and  $B_{x}(\frac{r_{x}}{1000})$. For any $y\in B_{x}(\frac{r_{x}}{1000})$, the distance from $y$ to $\partial B_{x}(\frac{r_{x}}{1000})$ is less than $r_{y}$, and  we have $2r_{y}>r_{x}>\frac{r_{y}}{2}$, $10(-\log r_{y})>-\log r_{x}>\frac{-\log r_{y}}{10}$. Thus $r_{x,y}\simeq r_{y} \simeq r_{x}\simeq \underline{r_{x,y}}$. Hence, using the proof of 4.20 in Theorem 4.8 of \cite{GilbargTrudinger} and Lemma  \ref{lem Schauder estimate in local coordinates in small balls}, multiplying both sides of the estimate in Lemma\ref{lem Schauder estimate in local coordinates in small balls} by $r_{q}^{\gamma-1}(-\log r_{q})^{b}$, we obtain the following   lower order estimate 

\begin{equation} \label{equ C20 estimate in the apriori Schauder}
|\xi|^{(\gamma-1,b)}_{2,0,M}\leq C|L_{A}\xi|^{(\gamma,b)}_{1,\alpha,M}+C|\xi|^{(\gamma-1,b)}_{0,M}.
\end{equation}
 The term left to  estimate is the following one with highest order.
  $$Q_{x,y}=(-\log \underline{r_{x,y}})^{b}r^{1+\gamma+\alpha}_{x,y}\frac{|\nabla^{2}\xi(x)-\nabla^{2}  \xi(y)|}{|x-y|^{\alpha}}.$$
This can be done in a standard way.  We can assume $r_{x}\geq r_{y}$, since otherwise we only need to interchange them. We note that this choice is different from the paragraph above (6.15) in \cite{GilbargTrudinger}.  Suppose $y\in B_{x}(\frac{r_{x}}{2000})$, the proof of (\ref{equ C20 estimate in the apriori Schauder})  implies one more conclusion:
\begin{equation}\label{equ Prop Apriori Schauder 1 Appendix C}
Q_{x,y}\leq C|L_{A}\xi|^{(\gamma,b)}_{1,\alpha,M}+C|\xi|^{(\gamma-1,b)}_{0,M}.
\end{equation}
When  $y\notin B_{x}(\frac{r_{x}}{2000})$ (we only need to consider $y$ in $B_{O}(\rho_{0})$),  we find
\begin{eqnarray}
& &Q_{x,y}
\\&\leq &  (-\log r_{x})^{b}r^{1+\gamma}_{x}|\nabla^{2}\xi(x)|+(-\log r_{y})^{b}r^{1+\gamma}_{y}|\nabla^{2}  \xi(y)|\leq C|\xi|^{(\gamma-1,b)}_{2,0,M}\nonumber
\\& \leq &   C|L_{A}\xi|^{(\gamma,b)}_{\alpha,M}+C|\xi|^{(\gamma-1,b)}_{0,M},\ \textrm{by}\ (\ref{equ C20 estimate in the apriori Schauder}).\nonumber
\end{eqnarray}

The proof of Proposition \ref{prop log weighted Schauder estimate} is complete.
\end{proof}

Working on each coordinate patch separately, by  the proof of Lemma 6.32 in \cite{GilbargTrudinger} with  slight modification (on log weight as Proposition \ref{prop log weighted Schauder estimate}), we obtain 
\begin{lem}(Intepolation)\label{lem C1 C0 intepolate Calpha} For any $\mu<\frac{1}{3000}$, $b\geq 0$,  $0<\alpha<1$, and real numbers  $k$, there exists a constant $C_{\mu,k,\alpha,b}$ with the following property.   For any section $\xi$ and non-negative integer $j$, the following  intepolation holds.
\begin{equation}\label{equ intepolation 1 for calpha norm}
|\nabla^{j}\xi|^{(k,b)}_{\alpha,M}\leq \mu |\nabla^{j+1} \xi|^{(k+1,b)}_{0,M}+C_{\mu,k,\alpha,b} |\nabla^{j}\xi|^{(k,b)}_{0,M}, 
\end{equation}
where $\nabla^{j}\xi$ is viewed as a combination of locally defined matrix-valued tensors in each chart of $\mathbb{U}_{\rho_{0}}$. 
\end{lem}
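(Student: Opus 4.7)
The plan is to reduce the weighted, global interpolation to the classical unweighted Euclidean interpolation (Lemma 6.32 in \cite{GilbargTrudinger}) on small balls where the weights $r^{k}$ and $(-\log r)^{b}$ are each constant up to universally bounded factors, and then reassemble the pieces via the admissible cover $\mathbb{U}_{\rho_{0}}$ as in Definition \ref{Def admissable open cover}. On each ball $B_{l}$ lying a definite distance from every singular point, the weights are bounded above and below by constants depending only on $l$, so the classical interpolation gives (\ref{equ intepolation 1 for calpha norm}) directly on that patch. All of the content therefore lies in the sectors $V_{\pm,O_{j}}$ near a singular point.

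Fix a sector $V_{+,O}$, a point $x\in V_{+,O}$, and set $\rho_{x}=r_{x}/1000$. I would apply Lemma 6.32 of \cite{GilbargTrudinger} on the Euclidean ball $B_{x}(\rho_{x})$ (which is contained in a smooth coordinate chart, since $\rho_{x}\ll r_{x}$) to obtain
\[
[\nabla^{j}\xi]_{\alpha,B_{x}(\rho_{x})}\leq \mu_{0}\,\rho_{x}\sup_{B_{x}(\rho_{x})}|\nabla^{j+1}\xi|+C_{\mu_{0},\alpha}\,\rho_{x}^{-\alpha}\sup_{B_{x}(\rho_{x})}|\nabla^{j}\xi|
\]
for any $\mu_{0}<1$. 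Since $r_{z}$ and $-\log r_{z}$ are each comparable to $r_{x}$ and $-\log r_{x}$ respectively for every $z\in B_{x}(\rho_{x})$, multiplying both sides by $(-\log r_{x})^{b}r_{x}^{k+\alpha}$ converts the right-hand side into $C\bigl(\mu_{0}|\nabla^{j+1}\xi|^{(k+1,b)}_{0,M}+C_{\mu_{0},\alpha,k,b}|\nabla^{j}\xi|^{(k,b)}_{0,M}\bigr)$, which controls the contribution of any pair $x,y$ with $|x-y|\leq \rho_{x}/2$ to the difference quotient in the definition of $[\cdot]^{(k,b)}_{\alpha,V_{+,O}}$.

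It remains to handle the \emph{far} regime where $y\in V_{+,O}$ with $|x-y|>\rho_{x}/2$. There $|x-y|\geq r_{x}/2000$ (and, by interchanging $x$ and $y$, also $|x-y|\geq r_{y}/2000$), so
\[
\frac{|\nabla^{j}\xi(x)-\nabla^{j}\xi(y)|}{|x-y|^{\alpha}}\leq 2000^{\alpha}\bigl(r_{x}^{-\alpha}+r_{y}^{-\alpha}\bigr)\max\{|\nabla^{j}\xi(x)|,|\nabla^{j}\xi(y)|\}.
\]
Multiplying by $(-\log \underline{r_{x,y}})^{b}r_{x,y}^{k+\alpha}$ when $k+\alpha\geq 0$, or by $(-\log \underline{r_{x,y}})^{b}\underline{r_{x,y}}^{k+\alpha}$ when $k+\alpha<0$, and using the pointwise bound $|\nabla^{j}\xi(z)|\leq r_{z}^{-k}(-\log r_{z})^{-b}|\nabla^{j}\xi|^{(k,b)}_{0,M}$ at $z=x$ and $z=y$, one obtains a clean bound by $C_{\mu,k,\alpha,b}|\nabla^{j}\xi|^{(k,b)}_{0,M}$. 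Taking the supremum over $x,y$ in each sector and combining with the already-handled $B_{l}$ pieces through the cover $\mathbb{U}_{\rho_{0}}$ then yields (\ref{equ intepolation 1 for calpha norm}).

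The main (mild) obstacle is bookkeeping the two sign regimes of $k+\alpha$: the dichotomy in (\ref{equ Def local Schauder norm 1}) is designed precisely so that in the far case the weight pairs with $\underline{r_{x,y}}$, allowing the $C^{0}$-norm of $\nabla^{j}\xi$ evaluated at the point with the \emph{larger} $r$ to absorb the inequality without loss, whereas in the close case, where $r_{x}\simeq r_{y}\simeq r_{x,y}\simeq \underline{r_{x,y}}$, the distinction is cosmetic. Once one is careful about which of $r_{x,y}$ or $\underline{r_{x,y}}$ appears, the argument reduces to a direct rescaling of Lemma 6.32 of \cite{GilbargTrudinger}, exactly as was already done in the proof of Proposition \ref{prop log weighted Schauder estimate}.
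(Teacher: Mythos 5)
Your proof is correct and follows the same route the paper sketches: reduce to Lemma 6.32 of \cite{GilbargTrudinger} on small Euclidean balls $B_{x}(r_{x}/1000)$ where the weights are comparable to constants, treat close and far pairs $x,y$ separately (using the WLOG convention $r_{x}\geq r_{y}$, which is what ``interchanging $x$ and $y$'' should mean) exactly as in the proof of Proposition \ref{prop log weighted Schauder estimate}, and patch via the admissible cover $\mathbb{U}_{\rho_{0}}$. One small correction: the rescaled interpolation on $B_{x}(\rho_{x})$ has coefficient $\mu_{0}\rho_{x}^{1-\alpha}$ rather than $\mu_{0}\rho_{x}$ in front of $\sup|\nabla^{j+1}\xi|$, but this typo only makes the resulting weighted bound more favorable (since $\rho_{x}<1$), so your conclusion is unaffected.
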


\begin{lem}\label{lem regularity of solution to the deformation equation with C1alpha rhs} Suppose $B$ is a ball such that  $2B$ is contained in a single coordinate neighbourhood away from the singularity. Suppose $\xi\in W^{1,2}(2B)$ and $L_{A}\xi\in C^{1,\alpha}(2B)$ (in the sense of strong solution). Then $u\in C^{2,\alpha}(B)$. 
\end{lem}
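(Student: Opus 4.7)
Since $2B$ sits inside a single coordinate chart away from the singular set, the connection $A$ has smooth coefficients on $2B$, and $L_A$ is a first-order elliptic operator with smooth coefficients there. The claim is therefore a purely local, standard interior regularity statement; the plan is to bootstrap via $L_A^2$ and finish by Schauder.

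First, I would apply $L_A$ to both sides of $L_A\xi=f$. Since $L_A$ is first-order and $f\in C^{1,\alpha}(2B)$, we get $L_A^2\xi=L_A f\in C^{0,\alpha}(2B)$, in the sense of distributions to begin with. By Lemma~\ref{lem formula for LA squared}, the operator $-L_A^2$ differs from the rough Laplacian $\nabla_A^\star\nabla_A$ only by smooth algebraic zeroth-order terms on $2B$. Thus $\xi\in W^{1,2}(2B)$ is a weak solution of a second-order elliptic system of the form $\nabla_A^\star\nabla_A\xi+\text{l.o.t.}(\xi)=g$ with $g=-L_A f+(\text{smooth})\cdot \xi\in L^2_{loc}(2B)$.

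Next, I would iterate standard interior $L^p$ estimates for this second-order elliptic system (e.g.\ Theorem 9.11 of~\cite{GilbargTrudinger} applied componentwise in a trivialization) together with Sobolev embedding in dimension $7$. Starting from $\xi\in W^{1,2}_{loc}$, one gets $\xi\in W^{2,2}_{loc}\subset W^{1,q}_{loc}$ for some $q>2$; repeating the argument with $g$ viewed as an element of $L^q_{loc}$ (note that $L_A f\in C^{0,\alpha}\subset L^\infty_{loc}$ is already the best possible input, and the lower-order term $\xi$ now lies in $L^q$ for larger and larger $q$) gives $\xi\in W^{2,p}_{loc}(2B)$ for every $p<\infty$ after finitely many steps. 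Morrey embedding then yields $\xi\in C^{1,\beta}_{loc}(2B)$ for some $\beta\in(0,1)$.

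Finally, with $\xi\in C^{1,\beta}$ and $L_A^2\xi\in C^{0,\alpha}$ on $2B$, the classical interior Schauder estimate for the elliptic system $-L_A^2$ (Theorem~6.2/6.17 of~\cite{GilbargTrudinger}, or the Douglis--Nirenberg version in Theorem~1 of~\cite{Nirenberg}) upgrades $\xi$ to $C^{2,\alpha}$ on the concentric ball $B$. (Equivalently, one can conclude directly from Lemma~\ref{lem Schauder estimate in local coordinates in small balls} applied to $L_A$ in balls contained in $2B$, once we know $\xi\in C^{1,\beta}$ so that the right-hand side $f\in C^{1,\alpha}$ sits in the correct Schauder class.) No step here is delicate; the only bookkeeping is tracking integrability indices through Sobolev embedding in dimension $7$, and the fact that $L_A^2$ is genuinely a Laplace-type operator is exactly what Lemma~\ref{lem formula for LA squared} guarantees.
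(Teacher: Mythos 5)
Your argument is essentially the paper's: apply $L_A$ once to turn $L_A\xi=f$ into a second-order Laplace-type equation for $\xi$, bootstrap $W^{2,p}$ regularity via $L^p$ elliptic estimates and Sobolev embedding until the right-hand side is H\"older, then finish with interior Schauder --- which is exactly how the paper iterates its Lemma~\ref{lem regularity of the laplace equation}. One small inaccuracy: Lemma~\ref{lem formula for LA squared} is proved for the cone connection with the Euclidean $G_2$-forms, and for a general admissible $A$ and $G_2$-structure the Weitzenb\"ock remainder can contain first-order terms $\nabla\xi\otimes T$ as well as zeroth-order ones (as the paper remarks); this does not break your bootstrap, since at each stage $\nabla\xi$ gains integrability by Sobolev embedding, but the claim that the difference is ``only smooth algebraic zeroth-order terms'' should be weakened accordingly.
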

\begin{rmk} We believe Lemma \ref{lem regularity of solution to the deformation equation with C1alpha rhs} is in literature. Since the author can not find an exact reference, we still give a proof for the readers' convenience.
\end{rmk}
\begin{proof} $\textrm{We apply}\ -L_{A}\ \textrm{to the equation again to obtain}\
-L^{2}_{A}\xi=-L_{A}f\in C^{\alpha}(2B).$
From the proof of Lemma \ref{lem formula for LA squared}, we see that the difference of the Weitzenb\"ock formula between the model case and the general case is some lower order term (concerning at most first derivative of $\xi$ in local coordinates). Then 
\begin{equation}\label{equ in regularity main lem}
\Delta_{g_{\phi}}\xi=\nabla\xi\otimes T_{\phi,\psi,A,1}+\xi\otimes T_{\phi,\psi,A,0}-L_{A}f,
\end{equation}
where the $T$'s are tensors depending algebraicly on $\phi$, $\psi$, $A$, and their  derivatives. The $T$'s might be only  locally defined, but this is sufficient. 

The important point  is that $\Delta_{g_{\phi}}\xi$ means the metric Laplacian of each entry of $\xi$ in local coordinates, thus (\ref{equ in regularity main lem}) is actually a bunch of scalar equations. 
Then Lemma \ref{lem regularity of solution to the deformation equation with C1alpha rhs}  follows by applying Lemma \ref{lem regularity of the laplace equation} successively.
\end{proof}
\begin{lem}(\cite{GilbargTrudinger})\label{lem regularity of the laplace equation} Under the same conditions on  $B$ in  Lemma \ref{lem regularity of solution to the deformation equation with C1alpha rhs},  suppose $\xi \in W^{1,2}(2B)$ is a weak solution to  
\begin{equation}\label{equ standard laplace equation in lem regularity of laplace equation}
\Delta_{g_{\phi}}\xi=h\ \textrm{in}\ 2B.
\end{equation}
Suppose $h\in L^{p}(2B)$, $p\geq 2$, then $\xi\in L^{2,p}(B)$. Suppose $h\in C^{\alpha}(2B)$, $0<\alpha<1$, then $\xi\in C^{2,\alpha}(B)$.
\end{lem}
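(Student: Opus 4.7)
The plan is to observe that, in a fixed trivialization and local chart inside $2B$ (which exists since $2B$ is disjoint from the singularities and contained in a single coordinate neighbourhood), the operator $\Delta_{g_\phi}$ acts component-wise on the matrix-valued entries of $\xi$. Writing $\xi=(\xi^A)$ and $h=(h^A)$ in this fixed frame, the equation $\Delta_{g_\phi}\xi=h$ decouples into a finite system of scalar equations
\begin{equation*}
\Delta_{g_\phi}\xi^A=h^A\quad\text{in }2B,
\end{equation*}
each of which is a linear second-order \emph{scalar} equation whose principal part $\Delta_{g_\phi}$ has smooth coefficients (since $\phi$ is smooth) and is uniformly elliptic on the compact closure of $B$. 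This is the whole reason the result can be cited from \cite{GilbargTrudinger}: once the vector-valued problem has been reduced to scalar problems, one is in the classical setting.

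For the $L^p$ statement, I would apply the interior $W^{2,p}$ estimate for linear elliptic scalar equations (Theorem 9.11 of \cite{GilbargTrudinger}): for each component $\xi^A\in W^{1,2}(2B)\subset L^p(2B)$ (after restriction to a slightly larger intermediate ball, using that $W^{1,2}\hookrightarrow L^p$ only in low dimensions is irrelevant since one can truncate to an auxiliary ball $B\subset B'\subset 2B$ and bootstrap in stages if $p$ is large), with right-hand side $h^A\in L^p(2B)$, one obtains $\xi^A\in W^{2,p}(B)$ together with the bound $|\xi^A|_{W^{2,p}(B)}\leq C(|\xi^A|_{L^p(B')}+|h^A|_{L^p(B')})$. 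Summing the components gives $\xi\in L^{2,p}(B)$.

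For the $C^{2,\alpha}$ statement, I would first bootstrap: pick $p$ large enough so that $W^{2,p}\hookrightarrow C^{1,\beta}$ for some $\beta$ on $B'$; this upgrades the weak solution to a classical one. Then apply the interior Schauder estimate for linear elliptic scalar equations (Theorem 6.17 of \cite{GilbargTrudinger}) to each component $\xi^A$, with $h^A\in C^\alpha(2B)$, obtaining $\xi^A\in C^{2,\alpha}(B)$ and the corresponding quantitative estimate. Reassembling components yields $\xi\in C^{2,\alpha}(B)$.

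Since both pieces of the lemma are direct quotations of classical interior regularity for scalar uniformly elliptic operators with smooth coefficients, there is essentially no obstacle; the only minor point worth mentioning is the bootstrap step needed to pass from weak $W^{1,2}$ to classical $C^2$ before invoking the Schauder theorem, which is handled by first using the $L^p$ conclusion with $p$ large and Morrey embedding. No additional input beyond \cite{GilbargTrudinger} is required.
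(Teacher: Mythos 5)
Your approach differs genuinely from the paper's, and it has a real (if textbook-level) gap. The paper does not try to upgrade the regularity of the given weak solution $\xi$ directly. Instead it solves a local Dirichlet problem: viewing~(\ref{equ standard laplace equation in lem regularity of laplace equation}) as a system of scalar equations, it invokes Theorem 9.15 (for $h\in L^p$) or Theorem 6.14 (for $h\in C^\alpha$) of \cite{GilbargTrudinger} to produce an auxiliary solution $\bar\xi$ on $B$ with zero boundary data and the desired $W^{2,p}$ or $C^{2,\alpha}$ regularity \emph{built in}. Since $\Delta_{g_\phi}(\bar\xi-\xi)=0$, the difference is harmonic, hence smooth by the paper's Lemma \ref{lem regularity of kernel and cokernel}, and so $\xi=\bar\xi-(\bar\xi-\xi)$ inherits the regularity of $\bar\xi$. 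This sidesteps the entire weak-to-strong bootstrap.

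Your proposal instead cites Theorems 9.11 and 6.17 of \cite{GilbargTrudinger}, but both are \emph{a priori} interior estimates: Theorem 9.11 presupposes $\xi^A\in W^{2,p}_{\mathrm{loc}}$ and Theorem 6.17 presupposes $\xi^A\in C^2$. You cannot apply either directly to a solution known only to lie in $W^{1,2}$. To make your route rigorous you would first need an interior $W^{2,2}$-regularity theorem for weak solutions (Theorem 8.8 of \cite{GilbargTrudinger}), then a genuine $L^p$-bootstrap to reach $W^{2,p}_{\mathrm{loc}}$, and, for the Schauder case, a further step from $C^{1,\beta}$ to $C^2$ before Theorem 6.17 becomes applicable; Morrey embedding alone does not deliver $C^2$. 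None of these steps is hard, but the chain of citations as you wrote it does not close, whereas the paper's construct-and-subtract argument does in one stroke.
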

\begin{proof} It suffices to construct local solutions (with optimal regularity) to  (\ref{equ standard laplace equation in lem regularity of laplace equation}).  Viewing  (\ref{equ standard laplace equation in lem regularity of laplace equation}) as a bunch of 
 scalar equations,   let the boundary value over $\partial B$ be $0$, when  $h\in L^{p}(2B)$ and  $p\geq 2$, Theorem 9.15 in \cite{GilbargTrudinger} implies that (\ref{equ standard laplace equation in lem regularity of laplace equation}) admits a solution $\bar{\xi}\in L^{2,p}(B)$ (in $B$). When $h\in C^{\alpha}(2B)$, Theorem 6.14 in \cite{GilbargTrudinger}  gives a solution $\bar{\xi}\in C^{2,\alpha}(B)$ to (\ref{equ standard laplace equation in lem regularity of laplace equation}).    In both cases, $\Delta_{g}(\bar{\xi}-\xi)=0$, therefore $\bar{\xi}-\xi$ is smooth by Lemma \ref{lem regularity of kernel and cokernel}. Then $\xi=(\xi-\bar{\xi})+\bar{\xi}\in L^{2,p}(B)$ or $C^{2,\alpha}(B)$, when $h\in L^{p}(B)$ or $C^{\alpha}(B)$, respectively. \end{proof}
\begin{lem}\label{lem regularity of kernel and cokernel}(see Gilkey \cite{Gilkey}) Suppose $f\in L^{2}_{p,b}$ belongs to the cokernel of $L_{A}$ in distribution sense (see (\ref{equ Def of Coker})). Then $f$ is smooth away from the singular points.  $\xi\in kerL_{A}\subset W^{1,2}_{p,b}$ also implies  $\xi$ is  smooth away from the singularities. 
\end{lem}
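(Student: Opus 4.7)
The plan is to reduce the lemma to standard interior elliptic regularity for first-order elliptic systems with smooth coefficients. The key point is that away from the singular set, the admissibility of $A$ (Definition \ref{Def Admissable connection with polynomial or exponential convergence}) makes both $L_A$ and $L_A^\star$ smooth elliptic operators: for $L_A^\star$ this follows from (\ref{equ LA star formula}) together with the fact that $w_{p,b}$ is a smooth positive function on $M\setminus\cup_jO_j$, so $\frac{dw_{p,b}}{w_{p,b}}$ is smooth and the correction term $G(\cdot,\cdot)$ is a smooth zeroth-order algebraic operator.

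Fix a point $q$ with $r_q>0$ and choose a ball $B=B_q(\rho)$ so small that $2B$ is disjoint from the singular set; it suffices to prove smoothness on each such $B$. For the kernel statement, given $\xi\in W^{1,2}_{p,b}\subset W^{1,2}(2B)$ with $L_A\xi=0$, Lemma \ref{lem regularity of solution to the deformation equation with C1alpha rhs} (applied with $f\equiv 0\in C^{1,\alpha}$) yields $\xi\in C^{2,\alpha}(B)$. To iterate, I invoke equation (\ref{equ in regularity main lem}) from the proof of that lemma: with $f=0$ one gets
\begin{equation*}
\Delta_{g_\phi}\xi=\nabla\xi\otimes T_{\phi,\psi,A,1}+\xi\otimes T_{\phi,\psi,A,0},
\end{equation*}
a scalar Laplace system with smooth coefficients on $B$. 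Applying Lemma \ref{lem regularity of the laplace equation} on a chain of shrinking concentric balls boosts the Schauder regularity by one each time, so $\xi\in C^\infty(B')$ for every $B'\Subset B$.

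For the cokernel statement, by Definition \ref{Def Fredholm operators and isomorphisms} and (\ref{equ Def of Coker}), $f\in L^2_{p,b}$ satisfies
\begin{equation*}
\int_M\langle L_A\xi,f\rangle\,w_{p,b}\,dV=0\qquad\forall\,\xi\in C_c^\infty(2B),
\end{equation*}
where we have used density and that such $\xi$ lies in $W^{1,2}_{p,b}$. Since $w_{p,b}$ is a smooth positive function on $2B$, integrating by parts and writing this against the weighted volume form (absorbing the factor $w_{p,b}$ into the adjoint) gives exactly $L_A^\star f=0$ in the distributional sense on $2B$, with $L_A^\star$ expressed by (\ref{equ LA star formula}). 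This $L_A^\star$ is a smooth first-order elliptic system on $2B$, so by the classical parametrix construction — precisely Lemma 1.3.1 of \cite{Gilkey} (the same result used in Lemma \ref{lem parametrice in the smooth part}) — any distributional solution of $L_A^\star f=0$ is smooth; equivalently, $f\in W^{1,2}(B)$ and then the same Weitzenb\"ock-plus-Schauder bootstrap as in the kernel case upgrades $f$ to $C^\infty(B')$ for every $B'\Subset B$.

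The main (very mild) obstacle is the cokernel step, because one starts from $f$ only in $L^2_{\text{loc}}$ and must first bootstrap to $W^{1,2}_{\text{loc}}$ before the Weitzenb\"ock reduction of the kernel argument can be reused. Once that initial $L^2\!\to\!W^{1,2}$ gain is obtained via the smooth parametrix for the elliptic first-order system $L_A^\star$ on the smooth region, the rest is routine iteration. No use of the weighted theory or of the behaviour near the singularities is needed for this lemma, since everything is purely local and the local operators $L_A,L_A^\star$ have smooth coefficients there.
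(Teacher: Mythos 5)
Your proposal is correct and its core is the same as the paper's: reduce the cokernel condition (\ref{equ Def of Coker}) to the distributional equation $L_A^\star f=0$ on the smooth locus, note that $L_A^\star$ there is a smooth first-order elliptic system because $w_{p,b}$ is smooth and positive away from the $O_j$'s (so the correction in (\ref{equ LA star formula}) is a smooth zeroth-order term), and then invoke Lemma 1.3.1 of Gilkey to get smoothness. That one step already yields the full conclusion, and it is precisely what the paper does (with cutoffs $\eta,\chi$ and a smoothing/limiting argument to justify passing to $H^{-1}$). The extra Weitzenb\"ock-plus-Schauder bootstrap you layer on is redundant for the cokernel once Gilkey's parametrix regularity has been cited, and for $L_A^\star$ you would also need to note that the added zeroth-order term $G(dw_{p,b}/w_{p,b},\cdot)$ produces a harmless smooth lower-order contribution in the squared equation — a small point you pass over by saying ``the same bootstrap''. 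For the kernel case, routing through Lemma \ref{lem regularity of solution to the deformation equation with C1alpha rhs} and the Weitzenb\"ock reduction (\ref{equ in regularity main lem}) and Lemma \ref{lem regularity of the laplace equation} is a perfectly valid alternative to applying Gilkey directly to $L_A\xi=0$; it uses more of the paper's own machinery but does not represent a genuinely different strategy. No gap.
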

\begin{proof} It's an easy exercise on pseudo-differential operators. We only need to show the conditions  imply  $L^{\star}_{A}f=0$ where we view $L^{\star}_{A}f$ as an element in $H^{-1}$ (see Lemma 1.2.1 in \cite{Gilkey}).  Thus  Lemma 1.3.1 of \cite{Gilkey} is directly applicable. 

 To achieve this, for any ball $B$ such that $100B$ is still away from the singularities, we choose $\eta$ as the standard cutoff function which vanishes outside $2B$ and is identically $1$ in $B$. We also choose
$\chi$ as the standard cutoff function which vanishes outside $3B$ and is identically $1$ in $2B$. 
By Lemma 1.2.1 and 1.1.6 in \cite{Gilkey}, using a limiting argument with respect the smoothing of  
$\chi f$, we obtain  $\eta L^{\star}_{A} (\chi f)=0$ as an element in $H^{-1}$. Since $L_{A}^{\star}$ is elliptic, we conclude by Lemma 1.3.1 of \cite{Gilkey} that $f$ is smooth in $B$. 
\end{proof}
 \begin{lem}\label{lem bounding local perturbation of deformation operator }Suppose $\tau_{0}\leq \delta$, and $\psi_{1},\psi_{2}$ are two $G_{2}-$structures over $B_{O}(2\tau_{0})$, $|\psi_{1}-\psi_{2}|<\delta.$ Suppose $A_{1},\ A_{2}$ are 2 connections over $B_{O}(\tau_{0})$,  $|A_{1}-A_{2}|<\frac{\delta}{r}$.  
\begin{equation}\label{equ Lem bounding local perturbation of deformation operator }
\textrm{Then}\ |L_{A_{1},\psi_{1}}-L_{A_{2},\psi_{2}}|\leq C\delta|\nabla_{A_{2}}\xi|+\frac{C\delta|\xi|}{r}\ \textrm{in}\ B_{O}(\tau_{0}). \ \ \ \ \ \ \ \ \ \ \ \ \ \ \ \ \ \ \ \ \ \ \
\end{equation}
\end{lem}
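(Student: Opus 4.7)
The plan is to split the difference $L_{A_{1},\psi_{1}}-L_{A_{2},\psi_{2}}$ into a change-of-connection piece plus a change-of-$G_{2}$-structure piece, and estimate each using the explicit formula (\ref{equ introduction formula for deformation operator}).

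First, I would decompose
\[
L_{A_{1},\psi_{1}} - L_{A_{2},\psi_{2}} \;=\; \bigl(L_{A_{1},\psi_{1}} - L_{A_{2},\psi_{1}}\bigr) + \bigl(L_{A_{2},\psi_{1}} - L_{A_{2},\psi_{2}}\bigr).
\]
For the first piece, inspection of (\ref{equ introduction formula for deformation operator}) shows that, with $\psi_{1}$ fixed, the $A$-dependence of $L_{A,\psi_{1}}$ enters only through the Lie-bracket terms inside $d_{A}\sigma$, $d_{A}^{\star_{\psi_{1}}}a$, and $\star_{\psi_{1}}(d_{A}a\wedge\psi_{1})$. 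Subtracting the $A_{2}$-version leaves purely algebraic expressions of the schematic form $[A_{1}-A_{2},\xi]$, possibly contracted with $\psi_{1}$ or $g_{\psi_{1}}$; their pointwise norm is therefore bounded by $C|A_{1}-A_{2}|\,|\xi|\leq C\delta|\xi|/r$ using the hypothesis $|A_{1}-A_{2}|\leq\delta/r$.

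For the second piece, only the Hodge-stars and wedge-with-$\psi$ operations change. In local coordinates one can write
\[
(L_{A_{2},\psi_{1}}-L_{A_{2},\psi_{2}})\xi \;=\; T(\psi_{1},\psi_{2})\,\nabla_{A_{2}}\xi \;+\; S(\psi_{1},\psi_{2},\nabla\psi_{1},\nabla\psi_{2},A_{2})\,\xi,
\]
where $T$ is a smooth function of $\psi_{1},\psi_{2}$ vanishing on the diagonal (coming from the difference of principal symbols), and $S$ collects both the algebraic contributions from the metric change and the contributions from derivatives of $\star_{\psi}$ appearing inside $d^{\star_{\psi}}$. Interpreting the hypothesis $|\psi_{1}-\psi_{2}|<\delta$ in the (implicit) $C^{1}$-sense consistent with Definition \ref{Def deformation of the G2 structure}, and using the uniform bounds on each $\psi_{i}$ available on $B_{O}(2\tau_{0})$, I get $|T|,|S|\leq C\delta$, yielding a bound $C\delta|\nabla_{A_{2}}\xi|+C\delta|\xi|$ for this piece.

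Finally, combining the two pieces and using the smallness of the ball, $r\leq 2\tau_{0}\leq 2\delta$, so that $|\xi|\leq 2\delta\cdot(|\xi|/r)$, the zeroth-order contribution $C\delta|\xi|$ is absorbed into $2C\delta^{2}|\xi|/r\leq C\delta|\xi|/r$, producing (\ref{equ Lem bounding local perturbation of deformation operator }). The one mildly subtle point is the presence of $\nabla(\psi_{1}-\psi_{2})$ inside $S$: this forces a tacit strengthening of the hypothesis $|\psi_{1}-\psi_{2}|<\delta$ to first derivatives, which is automatic in the intended applications because the admissible $G_{2}$-deformations are $C^{5}$-close by Definition \ref{Def deformation of the G2 structure}.
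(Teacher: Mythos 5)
Your decomposition into a change-of-connection piece and a change-of-$G_{2}$-structure piece is essentially the same telescoping device the paper uses (the paper telescopes at the level of $\star_{\psi_1} d_{A_1}\star_{\psi_1} - \star_{\psi_2} d_{A_2}\star_{\psi_2}$, getting three terms at once, then applies the Leibniz rule to $d_{A_2}[(\star_{\psi_1}-\star_{\psi_2})\,a]$; your two-step split is an equivalent reorganisation). Both treatments of the connection-change term as a purely algebraic commutator giving $C\delta|\xi|/r$, and both final absorptions via $r\leq 2\tau_0\leq 2\delta$, agree.

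However, the ``subtle point'' you flag at the end is not a gap and does not force any tacit strengthening of the hypothesis. You are right that $\nabla(\star_{\psi_1}-\star_{\psi_2})$ appears inside the zeroth-order coefficient $S$ and that $|\psi_1-\psi_2|<\delta$ gives no smallness on it. But you do not need $|S|\leq C\delta$; the bound $|S|\leq C$ (which follows just from the $C^2$-bounds on $\psi_1$ and $\psi_2$ separately — exactly what the remark after the lemma says $C$ depends on) already suffices, because the factor of $\delta$ in front of $|\xi|/r$ comes entirely from the ball size: on $B_O(\tau_0)$ one has $r\leq\tau_0\leq\delta$, so $C|\xi| = C\,r\cdot(|\xi|/r)\leq C\delta\,|\xi|/r$. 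This is exactly how the paper proceeds: its intermediate bound is $C\delta|\nabla_{A_2}\xi|+C|\xi|$ (with no $\delta$ on the zeroth-order term), and the $\delta/r$ weight is then created by restricting to $r<\tau_0$. So the lemma holds as stated under only a $C^0$-smallness hypothesis on $\psi_1-\psi_2$, and your proposed ``$C^1$-interpretation'' is not needed. Everything else in your argument is sound.
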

\begin{rmk} The $C$ depends on $C^{2}-$norms of $\psi_{1},\psi_{2}$ and $C^{0}-$norm of $rA_{1}$ in local coordinates. The $\nabla_{A_2}$ is the covariant derivative with respect to the Euclidean metric in the coordinate. The estimate (\ref{equ Lem bounding local perturbation of deformation operator }) still holds for $\nabla_{A_{1}}$ and with respect to any smooth metric.
\end{rmk}
\begin{proof} In  (\ref{equ introduction formula for deformation operator}), we only estimate the difference from  $d_{A}^{\star}a$, the  errors from  the other terms 
  are similar.  By $d_{A}^{\star}=-\star d_{A} \star$, we note 
$\star_{\psi_{1}}d_{A_{1}}\star_{\psi_{1}}-\star_{\psi_{2}}d_{A_{2}}\star_{\psi_{2}}
=(\star_{\psi_{1}}-\star_{\psi_{2}})d_{A_{1}}\star_{\psi_{1}}+\star_{\psi_{2}}(d_{A_{1}}-d_{A_{2}})\star_{\psi_{1}}+\star_{\psi_{2}}d_{A_{2}}(\star_{\psi_{1}}-\star_{\psi_{2}}).$
We only estimate the last term $\star_{\psi_{2}}d_{A_{2}}(\star_{\psi_{1}}-\star_{\psi_{2}})$, the estimate of the other terms are similar and easier. Using
%\begin{equation}
%[(\star_{1}-\star_{2})a]_{i_{1,...,i_{6}}}=\Sigma_{j=1}^{7}\frac{a^{j}(\det^{\frac{1}{2}} g)\tau_{j,i_{1}...i_{6}}}{6!},\ \tau_{j,i_{1}...i_{6}}\ \textrm{is the sign of permutation}.\nonumber
%\end{equation}
%$\textrm{Therefore}\ 
%[(\star_{\psi_{1}}-\star_{\psi_{2}})a]_{i_{1}...i_{6}}=\Sigma_{k,j}\frac{\tau_{j,i_{1}...i_{6}}}{6!}[g_{\psi_{1}}^{kj}(\det g_{\psi_{1}})^{\frac{1}{2}}-g_{\psi_{2}}^{kj}(\det g_{\psi_{2}})^{\frac{1}{2}} ]a_{k}.$
%For any coordinate variable $x_{l}$, $\nabla_{A,\frac{\partial }{\partial x_{l}}}=\frac{\partial }{\partial x_{l}}+[A_{l},\cdot]$, $A_{l}=A(\frac{\partial }{\partial x_{l}})$. Therefore
%\begin{eqnarray}
%\nabla_{A,\frac{\partial }{\partial x_{l}}} [(\star_{\psi_{1}}-\star_{\psi_{2}})a]_{i_{1}...i_{6}}
%&=&\Sigma_{k,j}\{\nabla \frac{\tau_{j,i_{1}...i_{6}}}{6!}[g_{\psi_{1}}^{kj}(\det g_{\psi_{1}})^{\frac{1}{2}}-g_{\psi_{2}}^{kj}(\det g_{\psi_{2}})^{\frac{1}{2}} ]\}a_{k}\nonumber
%\\& &+\frac{\tau_{j,i_{1}...i_{6}}}{6!}[g_{\psi_{1}}^{kj}(\det g_{\psi_{1}})^{\frac{1}{2}}-g_{\psi_{2}}^{kj}(\det g_{\psi_{2}})^{\frac{1}{2}} ]\nabla_{A,\frac{\partial }{\partial x_{l}}} a_{k}.\nonumber
%\end{eqnarray}
$
d_{A_{2}}[(\star_{\psi_{1}}-\star_{\psi_{2}})a]= (\star_{\psi_{1}}-\star_{\psi_{2}})\otimes \nabla_{A_{2}}a+[\nabla (\star_{\psi_{1}}-\star_{\psi_{2}})]\otimes a$, we get $|L_{A_{1},\psi_{1}}-L_{A_{2},\psi_{2}}|\leq C\delta|\nabla_{A_{2}}\xi|+C|\xi|$.
Then we obtain  (\ref{equ Lem bounding local perturbation of deformation operator }) when $r< \tau_{0}$.\end{proof}
\subsection{Appendix D: Density and smooth convergence of   Fourier Series \label{section Appendix D: Density and smooth convergence of   Fourier Series}}
\begin{lem}\label{Lemma Appendix D} Let $S$ be a  closed Riemannian manifold (of any dimension), and $\Xi\rightarrow S$ be a smooth $SO(m)-$vector bundle with an inner product. Suppose $A_{S}$ is a smooth connection on $\Xi$, and  $\Delta_{A_{S}}=\nabla^{\star}_{A_{S}}\nabla_{A_{S}}+\mathfrak{F}$ is a \textbf{self-adjoint} Laplacian-type operator acting on sections of $\Xi$, where $\nabla^{\star}_{A_{S}}\nabla_{A_{S}}$ is the rough Laplacian of $A_{S}$ and the Riemannian metric, $\mathfrak{F}$ is a smooth algebraic operator (which does not concern any covariant derivative). 
Let $\beta$ be the real eigenvalues of $\Delta_{A_{S}}$ repeated according to their multiplicities, and  $\Psi_{\beta}$ be the corresponding orthonormal basis in $L^{2}_{\Xi}(S)$. Then for any smooth section $\underline{f}$ to $\Xi$, the Fourier-series $\Sigma_{\beta}\underline{f}_{\beta}\Psi_{\beta}$ (of $\underline{f}$) converges to $\underline{f}$ in the $C^{\infty}-$topology.  

   Moreover, the speed of convergence only depends on the $C^{\infty}-$norm of $\underline{f}$ i.e.  
there exists a integer $\tau>0$ depending only on $\Delta_{A_{S}}$, such that for any $\epsilon>0$ and integer $s\geq 0$, there exists a $k$ depending only on  $|\underline{f}|_{W^{2\tau+2s,2}(S)}$, $\epsilon$, and $\Delta_{A_{S}}$, such that  $|\underline{f}-\Sigma_{\beta< k}\underline{f}_{\beta}\Psi_{\beta}|_{W^{2s,2}(S)}<\epsilon$.  
\end{lem}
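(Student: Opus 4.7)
The plan is to combine the spectral theorem for self-adjoint elliptic operators on closed manifolds with iterated Gårding estimates, and then deduce the quantitative $C^\infty$ statement from Sobolev embedding. Because $S$ is closed and $\Delta_{A_S}$ is an elliptic, second-order, self-adjoint operator on a smooth vector bundle, standard theory (Chapter 1 of \cite{Gilkey}) gives a discrete real spectrum bounded below and a complete orthonormal basis $\{\Psi_\beta\}$ of eigensections in $L^2_\Xi(S)$, with $\beta\to\infty$. After replacing $\Delta_{A_S}$ by $\Delta_{A_S}+c\cdot Id$ for a constant $c$ depending only on $\|\mathfrak F\|_{C^0}$, I may assume all eigenvalues satisfy $\beta\geq 1$; this only shifts the spectrum and will not affect the argument.

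The key analytic input is the two-sided equivalence
\[
c_s\sum_\beta \beta^{2s}|v_\beta|^{2}\ \leq\ |v|^{2}_{W^{2s,2}(S)}\ \leq\ C_s\sum_\beta \beta^{2s}|v_\beta|^{2},
\qquad v=\sum_\beta v_\beta\Psi_\beta.
\]
The upper bound follows from iterating the order-two elliptic estimate $|u|_{W^{2s+2,2}}\leq C(|\Delta_{A_S}u|_{W^{2s,2}}+|u|_{W^{2s,2}})$ (using Proposition \ref{prop log weighted Schauder estimate}-type interior Schauder/Gårding on $S$, or Theorem 9.15 of \cite{GilbargTrudinger} applied chart-by-chart), combined with the identity $|\Delta_{A_S}^s v|^{2}_{L^{2}}=\sum_\beta\beta^{2s}|v_\beta|^2$. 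The lower bound holds because $\Delta_{A_S}^s$ is a smooth differential operator of order $2s$, hence bounded $W^{2s,2}\to L^2$. The constants $c_s,C_s$ depend only on $\Delta_{A_S}$ and $s$.

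Given the equivalence, the quantitative tail estimate is immediate. For any smooth $\underline f$ and integer $s\geq 0$, set $P_k\underline f\triangleq\sum_{\beta<k}\underline f_\beta\Psi_\beta$ and fix any integer $\tau\geq 1$. Then
\[
|\underline f-P_k\underline f|^{2}_{W^{2s,2}(S)}
\leq C_s\sum_{\beta\geq k}\beta^{2s}|\underline f_\beta|^{2}
\leq \frac{C_s}{k^{2\tau}}\sum_{\beta\geq k}\beta^{2s+2\tau}|\underline f_\beta|^{2}
\leq \frac{C_s}{c_{s+\tau}\,k^{2\tau}}\,|\underline f|^{2}_{W^{2s+2\tau,2}(S)},
\]
which is less than $\epsilon^{2}$ once $k$ is chosen large depending only on $\epsilon$, $s$, $\tau$ and $|\underline f|_{W^{2s+2\tau,2}}$. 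Since $\underline f$ is smooth, the right-hand side is finite for every $s$, so $P_k\underline f\to \underline f$ in every $W^{2s,2}(S)$; Sobolev embedding on the closed manifold $S$ then upgrades this to convergence in $C^{\infty}(S)$, with the rate in each $C^{k}$ norm controlled by a higher Sobolev norm of $\underline f$ as asserted.

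I do not foresee a substantive obstacle: the proof is entirely routine once one has the basic spectral theorem for self-adjoint elliptic operators on closed manifolds and the observation that elliptic regularity of order $2s$ relates $|\cdot|_{W^{2s,2}}$ to $|\Delta_{A_S}^{s}\cdot|_{L^{2}}$. The one mild point requiring attention is the possibly non-positive zeroth-order term $\mathfrak F$, which is handled by the harmless spectral shift $\Delta_{A_S}\rightsquigarrow\Delta_{A_S}+c\cdot Id$; this shift, and hence the value of $\tau$, depend only on $\Delta_{A_S}$, as the statement requires.
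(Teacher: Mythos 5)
Your proof is correct, but it follows a genuinely different route from the paper's, and is in fact cleaner. The paper's argument (Claim \ref{clm by Zeta function}) bounds each Fourier coefficient pointwise by $|\underline{f}_{\beta}|\leq \bar C|\underline f|_{W^{2t,2}}/\beta^{t}$ and then sums the tail $\Sigma_{\beta\geq k}\beta^{-(t-l)}$ by triangle inequality; to make that scalar series converge it must invoke the $\zeta$-function of $\Delta_{A_{S}}$ (equivalently Weyl asymptotics via Corollary 2.43 of \cite{Getzler}), which forces the derivative loss $\tau$ to be roughly $\dim S/2+2$. You instead work with the $L^{2}$ structure directly: by orthogonality $|\Delta_{A_{S}}^{l}F_{k}|^{2}_{L^{2}}=\Sigma_{\beta\geq k}\beta^{2l}|\underline f_{\beta}|^{2}$, and the trivial observation $\beta^{2s}\leq \beta^{2s+2\tau}/k^{2\tau}$ for $\beta\geq k$ replaces the $\zeta$-function summability entirely. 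Combined with the two-sided equivalence $c_{s}\Sigma\beta^{2s}|v_{\beta}|^{2}\leq |v|^{2}_{W^{2s,2}}\leq C_{s}\Sigma\beta^{2s}|v_{\beta}|^{2}$ (both directions of which you justify correctly by iterated elliptic estimates and boundedness of $\Delta^{s}_{A_{S}}:W^{2s,2}\to L^{2}$), this yields the tail bound with $\tau=1$, which is stronger than the paper's conclusion and does not require any heat-kernel or spectral-asymptotics input. The paper's choice of triangle inequality rather than Parseval is what creates the need for the $\zeta$-function; your use of Parseval removes it. Both approaches are valid, but yours is the more elementary and sharper one.
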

\begin{cor}\label{cor Appendix D:} In the setting of Lemma \ref{lem density of smooth functions in weighted L2 space} and Theorem \ref{thm W22 estimate on 1-forms}, let $\underline{f}\in C^{\infty}_{c}[B_{O}(\rho)\setminus O]$, then the Fourier-series in (\ref{equ raw ODE for 1 forms}) converges in $C^{0}[B_{O}(\rho)]$ to $\underline{f}$. 
\end{cor}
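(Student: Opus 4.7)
The idea is to reduce the claim to a slicewise application of Lemma \ref{Lemma Appendix D} on the link $S^{6}$, followed by a uniformity argument in the radial variable. Since $\underline{f}\in C^{\infty}_{c}[B_{O}(\rho)\setminus O]$, there exist $0<a<b<\rho$ with $\mathrm{supp}\,\underline{f}\subset S^{6}\times[a,b]$, and $\underline{f}$ has uniformly bounded derivatives of every order. For $r\notin[a,b]$, the slice $\underline{f}(\cdot,r)$ vanishes on $S^{6}$, hence every Fourier coefficient $\underline{f}_{\beta}(r)=\langle\underline{f}(\cdot,r),\Psi_{\beta}\rangle_{L^{2}(S^{6})}$ is zero and each partial sum agrees trivially with $\underline{f}$ there. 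It therefore suffices to establish uniform $C^{0}$ convergence on the closed annulus $S^{6}\times[a,b]$.

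For fixed $r\in[a,b]$, I would view $\underline{f}(\cdot,r)$ as a smooth section of $\Xi\to S^{6}$ and invoke Lemma \ref{Lemma Appendix D} applied to the self-adjoint elliptic operator $\Upsilon_{A_{O}}$ (self-adjointness and ellipticity come from Proposition \ref{prop seperation of variable for general cone}). This gives, for any desired Sobolev exponent $s$, an integer $\tau$ and a truncation $k=k(\epsilon,r)$ with
$$\bigl|\underline{f}(\cdot,r)-\sum_{\beta<k}\underline{f}_{\beta}(r)\Psi_{\beta}\bigr|_{W^{2s,2}(S^{6})}<\epsilon,$$
where $k$ depends only on $\epsilon$, on $\Upsilon_{A_{O}}$, and on $|\underline{f}(\cdot,r)|_{W^{2\tau+2s,2}(S^{6})}$.

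Because $\underline{f}$ is smooth with support in the compact set $S^{6}\times[a,b]$, the norm $|\underline{f}(\cdot,r)|_{W^{2\tau+2s,2}(S^{6})}$ is bounded uniformly in $r$, so $k$ can be chosen independent of $r$. Taking $s$ large enough that the Sobolev embedding $W^{2s,2}(S^{6})\hookrightarrow C^{0}(S^{6})$ holds (e.g.\ $s=2$), this upgrades the slicewise $W^{2s,2}$ bound to a uniform-in-$r$ $C^{0}(S^{6})$ bound. Combined with the previous paragraph, this yields uniform $C^{0}$-convergence on all of $B_{O}(\rho)$; continuity of each partial sum (finite combinations of the smooth $\underline{f}_{\beta}(r)\Psi_{\beta}$) guarantees continuity of the limit, and the limit is $\underline{f}$ itself by the slicewise identification.

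The only technical subtlety is that Lemma \ref{Lemma Appendix D} is stated for operators of the rough-Laplacian-plus-algebraic form $\nabla^{\star}\nabla+\mathfrak{F}$, whereas the formula for $\Upsilon_{A_{O}}$ in (\ref{equ prop cone formula general one}) contains first-order differential couplings $d_{s}$ and $d_{s}^{\star}$ between $\Omega^{0}$- and $\Omega^{1}$-components. The cleanest fix is to apply Lemma \ref{Lemma Appendix D} to $\Upsilon_{A_{O}}^{2}$: this operator is still self-adjoint elliptic, shares the eigenbasis $\{\Psi_{\beta}\}$ with eigenvalues $\{\beta^{2}\}$, and the Fourier expansion is unchanged. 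Alternatively one bypasses the lemma entirely and runs the standard spectral argument directly, using $\sum_{\beta}\beta^{2k}|\underline{f}_{\beta}(r)|^{2}=|\Upsilon_{A_{O}}^{k}\underline{f}(\cdot,r)|^{2}_{L^{2}}$ together with the Weyl-type bound $|\Psi_{\beta}|_{W^{m,2}(S^{6})}\leq C(1+|\beta|)^{m/2}$ for unit-norm eigensections. This is the only step beyond routine unpacking; everything else is bookkeeping.
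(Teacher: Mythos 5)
Your argument is correct and follows essentially the same route as the paper: reduce to a compact annulus in $r$, apply Lemma \ref{Lemma Appendix D} slicewise with constants uniform in $r$ (using the uniform Sobolev bounds coming from compact support), and finish with Sobolev embedding $W^{2s,2}(S^{6})\hookrightarrow C^{0}(S^{6})$. The paper's proof is terser but does exactly this, taking $s=\frac{n-1}{4}+10$.

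The one place you go beyond the paper is the observation that $\Upsilon_{A_{O}}$ does not literally match the hypotheses of Lemma \ref{Lemma Appendix D}: the formula (\ref{equ prop cone formula general one}) has the first-order terms $2d_{s}^{\star}a_{s}$ and $2d_{s}a_{r}$, so $\Upsilon_{A_{O}}$ is not of the form rough Laplacian plus a purely algebraic $\mathfrak{F}$. The paper simply cites the lemma without comment. Your instinct is right that this is a mismatch in the stated hypotheses, but it is ultimately harmless: what the proof of Lemma \ref{Lemma Appendix D} actually uses is (i) discreteness of the spectrum and an orthonormal eigenbasis (from self-adjointness and ellipticity on a closed manifold), (ii) elliptic regularity estimates of the form $|\cdot|_{W^{2,2}}\leq C(|\Delta_{A_{S}}\cdot|_{L^{2}}+|\cdot|_{L^{2}})$, and (iii) polynomial growth of eigenvalues giving convergence of $\Sigma_{\beta}\beta^{-t}$ for $t$ large. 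None of these require $\mathfrak{F}$ to be zeroth-order; any second-order self-adjoint elliptic operator with positive principal symbol will do, and $\Upsilon_{A_{O}}$ is one (up to a sign). Your alternative fixes — passing to $\Upsilon_{A_{O}}^{2}$ or running the spectral estimate directly — are both valid but not strictly necessary. Overall this is a careful reading, not a gap.
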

\begin{proof}[Proof of Lemma \ref{Lemma Appendix D}:]  Since $\Delta_{A_{S}}$ is bounded from below, by considering $\Delta_{A_{S}}+a_{0}I$ for some big enough $a_{0}$, we can assume all the $\beta$'s are larger than $1000$. 

 In Claim \ref{clm by Zeta function}, we note that $\Delta_{A_{S}}^{l}(\Sigma_{\beta<k_{l}}\underline{f}_{\beta}\Psi_{\beta})$ is the  Fourier partial sum of $\Delta_{A_{S}}^{l}\underline{f}$. Let $F_{k}$ denote $\Sigma_{\beta\geq k}\underline{f}_{\beta}\Psi_{\beta}$, and  $k=100+\sup_{0\leq l\leq s}k_{l}$,  the standard $W^{2,2}-$estimate for $\Delta_{A_{S}}$ and Claim \ref{clm by Zeta function} imply
\begin{equation}\label{equ Cor appendix D 1}
|\Delta^{s-1}_{A_{S}}F_{k,m}|_{W^{2,2}(S)}\leq \bar{C}|\Delta^{s}_{A_{S}} F_{k,m}|_{L^{2}(S)}+\bar{C}|\Delta^{s-1}_{A_{S}}F_{k,m}|_{L^{2}(S)}\leq \bar{C}\epsilon
\end{equation}
uniformly in $m$. Let $m\rightarrow \infty$, we find $\Delta^{s}F_{k}\in W^{2,2}(S)$ and 
\begin{equation}\label{equ Cor appendix D 2}
|\Delta^{s-1}_{A_{S}}F_{k}|_{W^{2,2}(S)}\leq \bar{C}|\Delta^{s}_{A_{S}} F_{k}|_{L^{2}(S)}+\bar{C}|\Delta^{s-1}_{A_{S}} F_{k}|_{L^{2}(S)}\leq \bar{C}\epsilon.
\end{equation}
By induction, using Theorem 5.2 in \cite{Lawson}, by similar estimates as (\ref{equ Cor appendix D 1}) and  (\ref{equ Cor appendix D 2}), we obtain $|F_{k}|_{W^{2s,2}(S)}\leq \bar{C}_{s}\epsilon.$ Replacing $\bar{C}_{s}\epsilon$ by $\epsilon$,  the proof of Lemma \ref{Lemma Appendix D} is complete  by assuming the following. 
 \begin{clm}\label{clm by Zeta function}  For any $\epsilon>0$,  integer $l\geq 0$,  there exists a $k_{l}$ depending only on $|\underline{f}|_{W^{2l+2\tau,2}(S)}$, $\epsilon$, $l$, $\Delta_{A_{S}}$,  such that $|\Delta_{A_{S}}^{l}\underline{f}-\Delta_{A_{S}}^{l}(\Sigma_{\beta<k_{l}}\underline{f}_{\beta}\Psi_{\beta})|_{L^{2}(S^{6})}<\epsilon$.
\end{clm} The proof of the Claim is by the asymptotic property of zeta-functions. For any positive integer $t$, using 
\begin{equation}\underline{f}_{\beta}=\int_{S}<\underline{f},\Psi_{\beta}>=\frac{\int_{S}<\underline{f},\Delta^{t}_{A_{S}}\Psi_{\beta}>}{\beta^{t}}=\frac{\int_{S}<\Delta^{t}_{A_{S}}\underline{f},\Psi_{\beta}>}{\beta^{t}},
\end{equation}
we get $|\underline{f}_{\beta}|<\bar{C}\frac{|\underline{f}|_{W^{2t,2}(S)}}{\beta^{t}}.$ Then $
|\Delta_{A_{S}}^{l}F_{k_{l}}|_{L^{2}(S^{6})}\leq \bar{C}|\underline{f}|_{W^{2t,2}(S)}\Sigma_{\beta\geq k_{l}}\frac{1}{\beta^{t-l}}.$
The sum $\Sigma_{\beta\geq k_{l}}\frac{1}{\beta^{t-l}}$ is part of the zeta-function of $\Delta_{A_{S}}$. There exists a large enough $\tau$ with respect to the data in Lemma \ref{Lemma Appendix D}, such that $\Sigma_{\beta}\frac{1}{\beta^{t-l}}$  converges to an analytic function of   $t-l\geq \tau$. By Corollary 2.43 in \cite{Getzler}, or Lemma 1.10.1 in \cite{Gilkey}, we can take $\tau=\frac{dim S}{2}+2$. Nevertheless,  we don't need $\tau$ to be explicit. 

Let   $t=\tau+l$, and $k_{l}$ be large enough with respect to $\epsilon$ and the zeta function of $\Delta_{A_{S}}$, the proof  of Claim \ref{clm by Zeta function} is complete.  
  \end{proof}
    \begin{proof}[Proof of Corollary \ref{cor Appendix D:}:] The condition $\underline{f}\in C^{\infty}_{c}[B_{O}(\rho)\setminus O]$ implies that,  by viewing $\underline{f}$ as an $r-$dependent smooth section, 
   the Sobolev norms of $\underline{f}$  are uniformly bounded in $r$ i.e $|\underline{f}(r,\cdot)|_{W^{2t,2}(S^{n-1})}\leq C_{f,t}$. Moreover, $\underline{f}$ (and its Fourier-coefficients) vanishes when $r$ is small enough.  Then for any $\epsilon$, let $s=\frac{n-1}{4}+10$, by Sobolev imbedding,  there exists a $k$ as in Lemma \ref{Lemma Appendix D} such that  the estimate
   $$|\underline{f}(r,\cdot)-\Sigma_{\beta< k}\underline{f}_{\beta}(r)\Psi_{\beta}|_{C^{0}(S^{n-1})}\leq \bar{C}_{\underline{f}}|\underline{f}(r,\cdot)-\Sigma_{\beta< k}\underline{f}_{\beta}(r)\Psi_{\beta}|_{W^{2s,2}(S^{n-1})}<\epsilon$$  
   holds uniformly in $r$. The proof of Corollary \ref{cor Appendix D:} is complete.
  \end{proof}
   \begin{proof}[\textbf{Proof of Lemma \ref{lem density of smooth functions in weighted L2 space}}:] Without loss of generality, we assume $\rho=1$. Dropping the last condition in Definition \ref{Def of L22 norm model case}, we first show that   $C^{\infty}_{c}[B_{O}(1)\setminus O]$ is dense in $L^{2}_{p,b} [B_{O}(1)]$.
We assume $\underline{f}$ satisfies the condition after the "which" in  Lemma \ref{lem density of smooth functions in weighted L2 space}. Under Local coordinate, $\underline{f}$ is a matrix-valued function.  For any $\epsilon>0$, by absolute continuity of Lebesgue integration  (Theorem 4.12 in \cite{Zhouminqiang}), for any small enough $h>0$, we can decompose $\underline{f}=\underline{f}_{0}+\underline{f}_{1}$. $\underline{f}_{0}$ is supported in $V_{+,O}\setminus V_{+,h}$ and  $
\int_{ V_{+,O}\setminus V_{+,h}}|\underline{f}_{0}|^{2}wdV<(\frac{\epsilon}{2})^{2},\  \underline{f}_{1}\ \textrm{is supported in}\ V_{+,h},$ where  $V_{+,h}$ is the set of points with distance to $\partial V_{+,O}$ greater than $h$.

Since $\underline{f}_{1}$ is supported away from the singular point, using   Lemma 7.2 in \cite{GilbargTrudinger},  we can find $\bar{\underline{f}}_{+}$ such that $|\bar{\underline{f}}_{+}-\underline{f}_{1}|_{L^{2}_{p,b}(V_{+,O})}<\frac{\epsilon}{2}, supp \bar{\underline{f}}_{+} \subset\  V_{+,\frac{h}{2}}.$
Then $|\bar{\underline{f}}_{+}-\underline{f}|_{L^{2}_{p,b}(V_{+,O})}\leq |\bar{\underline{f}}_{+}-\underline{f}_{1}|_{L^{2}_{p,b}(V_{+,O})}+|\underline{f}_{0}|_{L^{2}_{p,b}(V_{+,O})}<\epsilon$. $\bar{\underline{f}}_{+}$ is exactly the desired approximation. Let $\bar{\underline{f}}_{-}$ be the same  approximation in $V_{-,O}$. We denote the partition of unity over $S^{n-1}$ subordinate to $U_{+}, U_{-}$ as $\eta_{+},\eta_{-}$, and pull them back to $\R^{7}\setminus O$. Let $\underline{f}_{\epsilon}=\eta_{+}\bar{\underline{f}}_{+}+\eta_{-}\bar{\underline{f}}_{-}$, we obtain 
 \begin{equation}\label{equ Density in L2 1}
 |\underline{f}_{\epsilon}-\underline{f}|_{L^{2}_{p,b}[B_{O}(1)]}\leq |\eta_{+}\underline{f}-\eta_{+}\bar{\underline{f}}_{+}|_{L^{2}_{p,b}[V_{+,O}]}+|\eta_{-}\underline{f}-\eta_{-}\bar{\underline{f}}_{-}|_{L^{2}_{p,b}[V_{-,O}]}<2\epsilon.
 \end{equation}
 
   Viewing $\underline{f}_{\epsilon}$ as a $r-$dependent smooth section of  $\Xi\rightarrow S^{6}(1)$,  Corollary \ref{cor Appendix D:} implies that the series  $\Sigma_{v}\underline{f}_{\epsilon,v}(r)\Psi_{v}$ (see (\ref{equ raw ODE for 1 forms})) converges to $\underline{f}_{\epsilon}$ in $C^{0}[B_{O}(1)]$, 
and the following holds for some  large enough $v_{0}>0$. 
 \begin{equation}\label{equ Density in L2 2}
  \int_{B_{O}(1)}|\underline{f}_{\epsilon}-\underline{f}_{[v_{0}],\epsilon}|^{2}wdV<\epsilon^{2},\ \textrm{where}\ \underline{f}_{[v_{0}],\epsilon}\triangleq \Sigma_{v< v_{0}}\underline{f}_{\epsilon,v}(r)\Psi_{v}.
 \end{equation}
 Inequalities (\ref{equ Density in L2 1}) and (\ref{equ Density in L2 2}) imply $|\underline{f}_{[v_{0}],\epsilon}-\underline{f}|_{L^{2}_{p,b}[B_{O}(1)]}<3\epsilon$.\end{proof} 
The following proof does not depend on Corollary \ref{cor Appendix D:}. 
\begin{proof}[\textbf{Proof of Lemma \ref{lem H=W}}:]  We only consider the case  $k=1$, and assume $\rho=1$.  The assertion $W^{1,2}_{p,b}[B_{O}(1)]\subset \mathfrak{W}^{1,2}_{p,b}[B_{O}(1)]$ is an easy exercise  using 
monotone convergence theorem and Theorem 7.4 in \cite{GilbargTrudinger} away from the singularity. 

     The assertion  $ \mathfrak{W}^{1,2}_{p,b}[B_{O}(1)]\subset W^{1,2}_{p,b}[B_{O}(1)]$ means every section $\xi\in \mathfrak{W}^{1,2}_{p,b}[B_{O}(1)]$  can be approximated by smooth sections defined in $B_{O}(1)$ (away from $O$) in $W^{1,2}_{p,b}[B_{O}(1)]$-topology. It suffices to show every $\xi\in \mathfrak{W}^{1,2}_{p,b}(V_{+,O})$ (in local coordinate) can be approximated by smooth multi-matrix-valued functions defined in $V_{+,O}$. This job is done by using the proof of Theorem 7.9 in \cite{GilbargTrudinger} with the "$\frac{\epsilon}{2^{j}}$" (in (7.25) there) replaced by $\frac{\epsilon}{2^{j}\tau_{j}}$, where $\tau_{j}=1+\sup_{\Omega_{j}}\frac{\omega}{r^{2}}$, and $\Omega_{j}$ is the corresponding open set in  a natural cover of $V_{+,O}$.\end{proof}
  \subsection{Appendix E: Various integral identities and proof of Proposition \ref{prop bounding L2 norm of Hessian for the model cone laplace equation} \label{section Appendix E: Various integral identities and proof}}
  \begin{lem}\label{lem rough L2 identity integration by parts }Under the conditions as in Proposition \ref{prop existence of solution with lowest order estimate} and  \ref{prop final ODE W22 estimate when v=0}, let $u$ be as in (\ref{equ solution when v=0}), let $\bar{u}$ and $\bar{f}$ be as in Claim \ref{clm weight change on the ODE}  and (\ref{weighted ODE}), we have
\begin{eqnarray}\label{equ rough L2 identity integration by parts}
& &\int^{\frac{1}{2}}_{0}\bar{f}^{2}rw_{0} dr
\\&=& \int^{\frac{1}{2}}_{0}|\frac{d^{2} \bar{u}}{d r^{2}}|^{2}rw_{0}dr+(k^{2}+2a^{2})\int^{\frac{1}{2}}_{0}|\frac{d \bar{u}}{d r}|^{2}\frac{w_{0}}{r} dr\nonumber
+ (a^{4}-2ka^{2}-2a^{2})\int^{\frac{1}{2}}_{0}\frac{ \bar{u}^{2} w_{0}}{ r^{3}} dr\\& &-k\int^{\frac{1}{2}}_{0}|\frac{d \bar{u}}{d r}|^{2}\frac{d w_{0}}{d r} dr
+(2+k)a^{2}\int^{\frac{1}{2}}_{0}\frac{ \bar{u}^{2} }{ r^{2}} \frac{d w_{0}}{d r}dr-a^{2}\int^{\frac{1}{2}}_{0}\frac{ \bar{u}^{2} }{ r}\frac{d^{2} w_{0}}{d r^{2}} dr\nonumber
\\& &+ k |\frac{d \bar{u}}{d r}|^{2}w_{0}|^{\frac{1}{2}}_{0}-  (k+1)a^{2} \frac{ \bar{u}^{2}}{ r^{2}}w_{0}|^{\frac{1}{2}}_{0}-2a^{2} \frac{d \bar{u}}{d r}\frac{ \bar{u}}{ r}w_{0}|^{\frac{1}{2}}_{0}+a^{2}\frac{ \bar{u}^{2}}{ r}\frac{d w_{0}}{d r}|^{\frac{1}{2}}_{0}\nonumber
\end{eqnarray}
\end{lem}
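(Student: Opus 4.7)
The plan is to derive the identity purely by squaring the ODE (\ref{weighted ODE}), multiplying by the measure $rw_{0}\,dr$, integrating on $(0,\tfrac{1}{2})$, and then reducing the three cross terms via integration by parts. The boundary terms are allowed to stand as they are, since the point of the lemma is precisely to record them; their vanishing is the separate point exploited in (\ref{equ strong L2 identity integration by parts}).

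First I would square (\ref{weighted ODE}) to obtain
\begin{equation*}
\bar{f}^{2}=(\bar{u}'')^{2}+\tfrac{k^{2}}{r^{2}}(\bar{u}')^{2}+\tfrac{a^{4}}{r^{4}}\bar{u}^{2}+\tfrac{2k}{r}\bar{u}''\bar{u}'-\tfrac{2a^{2}}{r^{2}}\bar{u}''\bar{u}-\tfrac{2ka^{2}}{r^{3}}\bar{u}'\bar{u}.
\end{equation*}
Multiplying by $rw_{0}$ and integrating yields three bulk contributions corresponding to $(\bar{u}'')^{2}$, $(\bar{u}')^{2}/r$, $\bar{u}^{2}/r^{3}$ (with coefficients $1$, $k^{2}$, $a^{4}$, respectively), plus three cross-term integrals that must be massaged. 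The target coefficients $k^{2}+2a^{2}$ and $a^{4}-2ka^{2}-2a^{2}$ show exactly what extra contributions must come out of the cross terms.

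Next I would treat the three cross terms one by one. For $2k\int\bar{u}''\bar{u}' w_{0}\,dr$ one recognizes $\bar{u}''\bar{u}' = \tfrac12\tfrac{d}{dr}(\bar{u}')^{2}$ and integrates by parts once, producing the boundary term $k|\bar{u}'|^{2}w_{0}\big|_{0}^{1/2}$ together with the bulk term $-k\int(\bar{u}')^{2}w_{0}'\,dr$. For $-2a^{2}\int\frac{\bar{u}''\bar{u}w_{0}}{r}\,dr$ one integrates by parts with $v=\bar{u}'$, producing the boundary term $-2a^{2}\frac{\bar{u}\bar{u}'w_{0}}{r}\big|_{0}^{1/2}$ and a bulk expression involving $(\bar{u}')^{2}w_{0}/r$ and two further $\bar{u}\bar{u}'$ integrals; one recognizes $\bar{u}\bar{u}'=\tfrac12(\bar{u}^{2})'$ and integrates by parts once more to convert these into the boundary pieces $a^{2}\frac{\bar{u}^{2}w_{0}'}{r}\big|_{0}^{1/2}-a^{2}\frac{\bar{u}^{2}w_{0}}{r^{2}}\big|_{0}^{1/2}$ plus bulk contributions in $w_{0}''/r$, $w_{0}'/r^{2}$, $w_{0}/r^{3}$. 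The same $\bar{u}\bar{u}'=\tfrac12(\bar{u}^{2})'$ trick on the last cross term $-2ka^{2}\int\frac{\bar{u}'\bar{u}w_{0}}{r^{2}}\,dr$ yields the boundary term $-ka^{2}\frac{\bar{u}^{2}w_{0}}{r^{2}}\big|_{0}^{1/2}$ plus further bulk contributions in $w_{0}'/r^{2}$ and $w_{0}/r^{3}$.

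Finally I would collect terms by the six prototype integrands in (\ref{equ rough L2 identity integration by parts}) and the four prototype boundary expressions. The coefficient in front of $(\bar{u}')^{2}w_{0}/r$ gathers as $k^{2}+2a^{2}$ (from the diagonal term and cross term~2), the coefficient in front of $\bar{u}^{2}w_{0}/r^{3}$ gathers as $a^{4}-2a^{2}-2ka^{2}$ (from the diagonal and cross terms~2,~3), and the three remaining bulk integrands emerge with their respective coefficients $-k$, $(k+2)a^{2}$, $-a^{2}$; the boundary contributions accumulate to $k|\bar{u}'|^{2}w_{0}$, $-2a^{2}\frac{\bar{u}\bar{u}'w_{0}}{r}$, $a^{2}\frac{\bar{u}^{2}w_{0}'}{r}$, and $-(k+1)a^{2}\frac{\bar{u}^{2}w_{0}}{r^{2}}$. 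The computation is entirely routine; the only genuine obstacle is the bookkeeping of coefficients, which is why I would organize the integration by parts exclusively through the two elementary identities $\bar{u}''\bar{u}'=\tfrac12(\bar{u}'{}^{2})'$ and $\bar{u}\bar{u}'=\tfrac12(\bar{u}^{2})'$, keeping every boundary contribution labelled by the particular cross term that produced it.
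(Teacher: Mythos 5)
Your proposal is correct and follows essentially the same route as the paper: square the weighted ODE (\ref{weighted ODE}), integrate against $rw_{0}\,dr$, and reduce the three cross terms by integration by parts using exactly the two elementary identities $\bar{u}''\bar{u}'=\tfrac12((\bar{u}')^{2})'$ and $\bar{u}\bar{u}'=\tfrac12(\bar{u}^{2})'$. I verified that the coefficients you describe accumulating in each of the six bulk integrals and four boundary terms match (\ref{equ rough L2 identity integration by parts}), so the bookkeeping works out as you claim.
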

\begin{proof}[Proof of Lemma \ref{lem rough L2 identity integration by parts }:]
Integrating the square of both hand sides of (\ref{weighted ODE}) over $(0,\frac{1}{2})$ with respect to $ rw_{0}$, we  obtain  
\begin{eqnarray}\label{eqnarray simply integrate the square of both hand sides}
& &\int^{\frac{1}{2}}_{0}\bar{f}^{2}rw_{0} dr
\\&=& \int^{\frac{1}{2}}_{0}|\frac{d^{2} \bar{u}}{d r^{2}}|^{2}rw_{0}dr+k^{2}\int^{\frac{1}{2}}_{0}|\frac{d \bar{u}}{d r}|^{2}\frac{w_{0}}{r} dr\nonumber
+ a^{4}\int^{\frac{1}{2}}_{0}\frac{ \bar{u}^{2} w_{0}}{ r^{3}} dr\\& &+2k\int^{\frac{1}{2}}_{0}\frac{d^{2} \bar{u}}{d r^{2}}\frac{d \bar{u}}{d r}w_{0} dr
-2a^{2}\int^{\frac{1}{2}}_{0}\frac{d^{2} \bar{u}}{d r^{2}}\frac{\bar{u}}{ r}w_{0}dr-2ka^{2}\int^{\frac{1}{2}}_{0}\frac{d\bar{u}}{d r}\frac{\bar{u}}{ r^{2}}w_{0}dr\nonumber.
\end{eqnarray}
Integration by parts (of $\frac{d}{d r}$) gives 
\begin{eqnarray}
& &2k\int^{\frac{1}{2}}_{0}\frac{d^{2} \bar{u}}{d r^{2}}\frac{d \bar{u}}{d r}w_{0} dr=-k\int^{\frac{1}{2}}_{0}|\frac{d \bar{u}}{d r}|^{2}\frac{d w_{0}}{d r}dr
+k|\frac{d \bar{u}}{d r}|^{2}w_{0}\mid^{\frac{1}{2}}_{0}
\\& &-2ka^{2}\int^{\frac{1}{2}}_{0}\frac{d\bar{u}}{d r}\frac{\bar{u}}{ r^{2}}w_{0}dr
=-2ka^{2}\int^{\frac{1}{2}}_{0}\frac{\bar{u}^{2}w_{0}}{ r^{3}}dr+ka^{2}\int^{\frac{1}{2}}_{0}\frac{\bar{u}^{2}}{ r^{2}}\frac{d w_{0}}{d r}dr-ka^{2}\frac{\bar{u}^{2}w_{0}}{r^{2}}\mid^{\frac{1}{2}}_{0}.\nonumber
\end{eqnarray}
\begin{eqnarray}& &-2a^{2}\int^{\frac{1}{2}}_{0}\frac{d^{2}\bar{u}}{d r^{2}}\frac{\bar{u}}{ r}w_{0}dr
\\&=&-2a^{2}\frac{d u}{d r}\frac{u}{r}w_{0}\mid^{\frac{1}{2}}_{0} -2a^{2}\int^{\frac{1}{2}}_{0}\frac{\bar{u}^{2}w_{0}}{ r^{3}}dr+2a^{2}\int^{\frac{1}{2}}_{0}\frac{\bar{u}^{2}}{ r^{2}}\frac{d w_{0}}{d r}dr-a^{2}\frac{\bar{u}^{2}w_{0}}{r^{2}}\mid^{\frac{1}{2}}_{0}.\nonumber
\\& &+a^{2}\frac{\bar{u}^{2}}{r}\frac{d w_{0}}{d r}\mid^{\frac{1}{2}}_{0}+2a^{2}\int^{\frac{1}{2}}_{0}|\frac{d u}{d r}|^{2}\frac{w_{0}}{r}dr-a^{2}\int^{\frac{1}{2}}_{0}\frac{ u^{2}}{ r}\frac{d^{2}w_{0}}{d r^{2}}dr.\nonumber
\end{eqnarray}
Plugging the above in (\ref{eqnarray simply integrate the square of both hand sides}), the proof of Lemma \ref{lem rough L2 identity integration by parts } is complete. \end{proof}
\begin{lem}\label{lem log estimate} Let $b\geq 0$. For any $k$, there exists a constant $C_{k,b}$ which depends only on the positive lower bound of $|k-1|$ (not on the upper bound), with the following properties. Suppose $k<1$, then
\begin{equation}\label{equ the log integral 1}
\int^{r}_{0}x^{-k}(-\log x)^{2b}dx\leq \frac{C_{k,b}}{1-k}r^{1-k}(-\log r)^{2b},\ \textrm{for all}\ r\in\ [0,\frac{1}{2}].
\end{equation}
Suppose $k>1$, then 
\begin{equation}\label{equ the log integral 2}
\int^{\frac{1}{2}}_{r}x^{-k}(-\log x)^{2b}dx\leq \frac{C_{k,b}}{k-1}r^{1-k}(-\log r)^{2b},\ \textrm{for all}\ r\in\ [0,\frac{1}{2}].
\end{equation}
\end{lem}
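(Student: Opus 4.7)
The plan is to reduce both inequalities to single-variable exponential integrals by the substitution $y=-\log x$, and then to bound these by integration by parts (a single IBP for $k>1$, an iterated IBP for $k<1$).

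First, set $\delta=|k-1|$ and $\alpha=-\log r$, noting that $\alpha\geq \log 2$ since $r\leq \tfrac12$. The substitution $x=e^{-y}$, $dx=-e^{-y}dy$ converts the left side of (\ref{equ the log integral 1}) into $\int_{\alpha}^{\infty} e^{-\delta y} y^{2b}\,dy$ and the left side of (\ref{equ the log integral 2}) into $\int_{\log 2}^{\alpha} e^{\delta y} y^{2b}\,dy$; in both cases the target right-hand side becomes $C\,\delta^{-1}\alpha^{2b}e^{\mp \delta\alpha}$ (the sign matching the corresponding case). So it suffices to prove these transformed bounds.

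For the case $k>1$, a single integration by parts with $u=y^{2b}$, $dv=e^{\delta y}dy$ gives
\[
\int_{\log 2}^{\alpha} e^{\delta y} y^{2b}\,dy \;=\; \frac{e^{\delta\alpha}\alpha^{2b}-e^{\delta\log 2}(\log 2)^{2b}}{\delta}\;-\;\frac{2b}{\delta}\int_{\log 2}^{\alpha} e^{\delta y} y^{2b-1}\,dy \;\leq\; \frac{e^{\delta\alpha}\alpha^{2b}}{\delta},
\]
since the subtracted terms are non-negative. Undoing the substitution yields (\ref{equ the log integral 2}) with $C_{k,b}=1$.

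For the case $k<1$, I would integrate by parts $n=\lceil 2b\rceil$ times, producing a finite linear combination of terms of the form $\delta^{-(j+1)}\alpha^{2b-j}e^{-\delta\alpha}$ for $0\leq j\leq n-1$, plus a remainder integral whose integrand has non-positive exponent on $y$ and is easily dominated by a multiple of $\delta^{-(n+1)}e^{-\delta\alpha}$. The key observation is that under the hypothesis that $\delta=1-k$ is bounded below by some $\delta_0>0$, we have $\alpha\delta\geq \delta_0\log 2=:c_0>0$, so each factor $\alpha^{2b-j}\delta^{-j}=\alpha^{2b}(\alpha\delta)^{-j}\leq c_0^{-j}\alpha^{2b}$. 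Collecting terms then produces the desired bound with a constant depending only on $b$ and $\delta_0$.

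The main obstacle is tracking the $\delta$-dependence in the iterated integration by parts for $k<1$: each step introduces a factor of $1/\delta$, so a priori the accumulated constant could blow up as $\delta\to 0$. This is resolved by the observation above that $\alpha\delta$ is uniformly bounded below, which absorbs all extra $1/\delta$ factors beyond the single one that appears on the right-hand side. This is precisely why $C_{k,b}$ can be taken to depend only on a positive lower bound for $|k-1|$ (and not on an upper bound), since the $k>1$ side works with constant $1$ regardless of the size of $k$.
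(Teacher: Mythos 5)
Your proof is correct and takes essentially the same approach as the paper's: integration by parts (once for small $b$, iterated for larger $b$), followed by absorbing the extra powers of $1/|k-1|$ using the hypothesized positive lower bound on $|k-1|$ together with $-\log r\geq\log 2$. The substitution $x=e^{-y}$ is a cosmetic reformulation; the paper performs the same integration by parts directly in the $x$-variable, and your observation that the $k>1$ case closes in one step with $C_{k,b}=1$ matches what one gets there as well, since the resulting boundary term at $x=\tfrac12$ and the iterated integral both come with favorable signs.
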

\begin{proof}This is an absolutely easy practice in calculus. We only prove (\ref{equ the log integral 1}) assuming $b<\frac{1}{2}$. The general case and proof of (\ref{equ the log integral 2}) are similar except that we have more terms of the same nature.  We compute 
\begin{equation}\label{eqnarray log integral with induction}\int^{r}_{0}x^{-k}(-\log x)^{2b}dx
= \frac{x^{1-k}}{1-k}(-\log x)^{2b}|^{r}_{0}+\frac{2b}{1-k}\int^{r}_{0}x^{-k}(-\log x)^{2b-1}dx.
\end{equation}
Since $2b-1<0$, we have $\int^{r}_{0}x^{-k}(-\log x)^{2b-1}dx\leq C\int^{r}_{0}x^{-k}dx= \frac{Cr^{1-k}}{1-k}$. Thus the right hand side of (\ref{eqnarray log integral with induction}) is bounded by $\frac{C_{k}}{1-k}r^{1-k}(-\log r)^{2b}$.\end{proof}

  \begin{proof}[\textbf{Proof of Proposition \ref{prop bounding L2 norm of Hessian for the model cone laplace equation}}]  Let $\eta_{\epsilon}$ be as in (\ref{equ cut-off function bound near the singular point}), let $d_{j}$ denote $\nabla_{A_{O}, \frac{\partial}{\partial x_{j}}}$ (under Euclidean metric and coordinate), we compute for any $\varrho>0$ that 
\begin{eqnarray}\label{eqnarray L22 identity from  integration by parts}& &\int_{B_{O}(\frac{\varrho}{4})}|\nabla^{\star}_{A_{O}}\nabla_{A_{O}}\xi|^{2}\eta_{\epsilon}\chi^{2}wdV=\Sigma_{j,i}\int_{B_{O}(\frac{\varrho}{4})} <d_{j}d_{j}\xi,d_{i}d_{i}\xi>\eta_{\epsilon}\chi^{2}w dV
\\&=& -\Sigma_{j,i}\int_{B_{O}(\frac{\varrho}{4})} <d_{j}\xi,d_{j}d_{i}d_{i}\xi>\eta_{\epsilon}\chi^{2}w dV-\Sigma_{j,i}\int_{B_{O}(\frac{\varrho}{4})} <d_{j}\xi,d_{i}d_{i}\xi>[\nabla_{j}(\eta_{\epsilon}\chi^{2}w )]dV\nonumber 
\\&=& -\Sigma_{j,i}\int_{B_{O}(\frac{\varrho}{4})} <d_{j}\xi,d_{i}d_{j}d_{i}\xi>\eta_{\epsilon}\chi^{2}w dV+\Sigma_{j,i}\int_{B_{O}(\frac{\varrho}{4})} <d_{j}\xi,[F_{ij},d_{i}\xi]>\eta_{\epsilon}\chi^{2}w dV \nonumber 
\\& &-\Sigma_{j,i}\int_{B_{O}(\frac{\varrho}{4})} <d_{j}\xi,d_{i}d_{i}\xi>[\nabla_{j}(\eta_{\epsilon}\chi^{2}w )]dV \nonumber\end{eqnarray} \begin{eqnarray}
&=& \Sigma_{j,i}\int_{B_{O}(\frac{\varrho}{4})} <d_{i}d_{j}\xi,d_{j}d_{i}\xi>\eta_{\epsilon}\chi^{2}w dV
+\Sigma_{j,i}\int_{B_{O}(\frac{\varrho}{4})} <d_{j}\xi,d_{j}d_{i}\xi>[\nabla_{i}(\eta_{\epsilon}\chi^{2}w )]dV\nonumber
\\& &+\Sigma_{j,i}\int_{B_{O}(\frac{\varrho}{4})} <d_{j}\xi,[F_{ij},d_{i}\xi]>\eta_{\epsilon}\chi^{2}w dV \nonumber
-\Sigma_{j,i}\int_{B_{O}(\frac{\varrho}{4})} <d_{j}\xi,d_{i}d_{i}\xi>[\nabla_{j}(\eta_{\epsilon}\chi^{2}w )]dV
\\&=& \Sigma_{j,i}\int_{B_{O}(\frac{\varrho}{4})} <d_{j}d_{i}\xi,d_{j}d_{i}\xi>\eta_{\epsilon}\chi^{2}w dV
+\Sigma_{j,i}\int_{B_{O}(\frac{\varrho}{4})} <[F_{ij},\xi],d_{j}d_{i}\xi>\eta_{\epsilon}\chi^{2}w dV \nonumber
\\& &+\Sigma_{j,i}\int_{B_{O}(\frac{\varrho}{4})} <d_{j}\xi,d_{j}d_{i}\xi>[\nabla_{i}(\eta_{\epsilon}\chi^{2}w )]dV+\Sigma_{j,i}\int_{B_{O}(\frac{\varrho}{4})} <d_{j}\xi,[F_{ij},d_{i}\xi]>\eta_{\epsilon}\chi^{2}w dV\nonumber
\\& &- \Sigma_{j,i}\int_{B_{O}(\frac{\varrho}{4})} <d_{j}\xi,d_{i}d_{i}\xi>[\nabla_{j}(\eta_{\epsilon}\chi^{2}w )]dV. \nonumber
\end{eqnarray}
Then we distribute all derivatives like $\nabla (\eta_{\epsilon}\chi^{2}w)$. By the method in (\ref{equ integration by parts holds true in the case of L12 model estimate wrt to cone}), Lemma \ref{lem bound on C3 norm of solution to laplace equation when f is smooth and vainishes near O}, and the proof of (\ref{equ bounding L2 norm of gradient for the model cone laplace equation}),  all the integrals containing $\nabla \eta_{\epsilon}$ tend to $0$ as $\epsilon\rightarrow 0$, thus the equality between  top and bottom of  (\ref{eqnarray L22 identity from  integration by parts}) gives. 
  \begin{eqnarray}\label{eqnarray prop hessian L2 bound model case}& & \int_{B_{O}(\frac{\varrho}{4})}|\nabla^{2}_{A_{O}}\xi|^{2}\chi^{2}wdV
 \\&=& \int_{B_{O}(\frac{\varrho}{4})}|\nabla^{\star}_{A_{O}}\nabla_{A_{O}}\xi|^{2}\chi^{2}wdV-\Sigma_{j,i}\int_{B_{O}(\frac{\varrho}{4})} <[F_{ij},\xi],d_{j}d_{i}\xi>\chi^{2}w dV \nonumber
\\& &-\Sigma_{j,i}\int_{B_{O}(\frac{\varrho}{4})} <d_{j}\xi,d_{j}d_{i}\xi>[\nabla_{i}(\chi^{2}w )]dV-\Sigma_{j,i}\int_{B_{O}(\frac{\varrho}{4})} <d_{j}\xi,[F_{ij},d_{i}\xi]>\chi^{2}w dV\nonumber
\\& &+\Sigma_{j,i}\int_{B_{O}(\frac{\varrho}{4})} <d_{j}\xi,d_{i}d_{i}\xi>[\nabla_{j}(\chi^{2}w )]dV. \nonumber
  \end{eqnarray}
  
 Using (\ref{equ gradient of the cutoff function and weight}), (\ref{equ in L2 bound on the gradient consequence of Bochner formula}), and (\ref{eqnarray prop hessian L2 bound model case}),  by the proof of (\ref{eqnarray in model L12 bound 1}) and  (\ref{eqnarray the L12 estimate model case with a small error term on the right to be absorbed}), we deduce 
  \begin{eqnarray}\label{eqnarray prop hessian L2 bound 2}& &  \int_{B_{O}(\frac{\varrho}{4})}|\nabla^{2}_{A_{O}}\xi|^{2}\chi^{2}wdV
   \\& \leq & \bar{C}_{\vartheta}\int_{B_{O}(\frac{\varrho}{4})}\frac{|\nabla_{A_{O}}\xi|^{2}}{r^{2}}\chi^{2}wdV+\bar{C}_{\vartheta}\int_{B_{O}(\frac{\varrho}{4})}|\nabla\chi|^{2}|\nabla_{A_{O}}\xi|^{2}wdV\nonumber
   \\& &+\bar{C}_{\vartheta} \int_{B_{O}(\frac{\varrho}{4})}\frac{|\xi|^{2}}{r^{4}}\chi^{2}wdV+\bar{C}\int_{B_{O}(\frac{\varrho}{4})}|\underline{f}|^{2} \chi^{2}wdV.
   +\vartheta  \int_{B_{O}(\frac{\varrho}{4})}|\nabla_{A_{O}}^{2}\xi|^{2}\chi^{2}wdV.\nonumber
 \end{eqnarray}
 
 Let  $\chi$ be the standard cutoff function which is identically $1$ over $B_{O}(\frac{\varrho}{5})$ and vanishes outside $B_{O}(\frac{\varrho}{4.5})$, the  proof of (\ref{equ in model L12 bound 1}) implies 
 \begin{equation}\label{eqnarray prop hessian L2 bound 3}\bar{C}_{\vartheta}\int_{B_{O}(\frac{\varrho}{4})}|\nabla\chi|^{2}|\nabla_{A_{O}}\xi|^{2}wdV\leq  \bar{C}_{\vartheta}\int_{B_{O}(\frac{\varrho}{4.5})}\frac{|\nabla_{A_{O}}\xi|^{2}}{r^{2}}wdV. \end{equation} 
 
  Let $\vartheta=\frac{1}{10}$,  combining (\ref{eqnarray prop hessian L2 bound 2}) and (\ref{eqnarray prop hessian L2 bound 3}),  the proof  is complete. \end{proof}
\newpage
\small
\section{Notation and Subject Index} 
 The locations in the column of "definition" includes the nearby material.
\begin{center}\begin{tabular}{|p{6cm}|p{6cm}|}
  \hline
 Subject or Notation & definition   \\   \hline
 $A-$generic  & Def \ref{Def A generic} \\ \hline
 admissible connections  & Def \ref{Def Admissable connection with polynomial or exponential convergence}\\ \hline
 $H_{p,b}$, $H_{p}$, $N_{p,b}$, $N_{p}$   & Def \ref{Def Hybrid spaces}, Def \ref{Def abbreviation of notations for spaces} \\ \hline
$C^{k,\alpha}_{\gamma,b}$, $C^{k,\alpha}_{\gamma}$, $|\cdot|^{(\gamma,b)}_{k,\alpha}$,$|\cdot|^{(\gamma)}_{k,\alpha}$  & Def \ref{Def Schauder spaces}, Def \ref{Def Global Schauder norms},  Def \ref{Def abbreviation of notations for spaces}  \\ \hline
$|\cdot|^{[y]}_{k,\alpha,B}$, $|\cdot|^{\star}_{k,\alpha,B}$ & Proof of Theorem \ref{thm C0 est},(\ref{equ norm III}) \\ \hline
$\mathbb{U}_{\tau_{0}}$, $\tau_{0}-$admissible cover, $\mathbb{U}_{\rho_{0}}$  & Def \ref{Def admissable open cover}, Def \ref{Def Global Schauder norms}  \\ \hline
condition ${\circledS_{A,p}}$ & Def \ref{Def condition SAp}  \\ \hline
admissible $\delta_{0}-$deformation of the $G_{2}-$structure, $\phi$, $\psi$, $\phi_{0}$, $\psi_{0}$ & Def \ref{Def deformation of the G2 structure}, (\ref{eqnarray Euc G2 forms}) \\ \hline
$W^{2,2}_{p,b}$, $W^{1,2}_{p,b}$, $W^{1,2}_{p}$,$L^{2}_{p,b}$, $L^{2}_{p}$& Def \ref{Def global weight and Sobolev spaces}, Def \ref{Def of L22 norm model case}, Def \ref{Def abbreviation of notations for spaces} \\ \hline
$\underline{\otimes}$, $\llcorner$, $\lrcorner$, $\otimes$  & Def \ref{Def Two tensor products}, Def \ref{Def tensor product}  \\ \hline
$\bar{C}$  & Def \ref{Def special constants}, Def \ref{Def tensor product}  \\ \hline
$L_{A}$, $L_{A_{O}}$, $L_{A,\underline{\psi}}$, $L_{A}^{\star}$  & (\ref{equ Def of equation for square of model LAO}), Lemma \ref{lem Launderline psi is an isomorphism from Jp to Np}, (\ref{equ introduction formula for deformation operator}), (\ref{equ LA star formula}) \\ \hline
$Q_{A,p,b}$, $Q_{A,p}$ & Corollary \ref{Cor solving model laplacian equation over the ball without the compact support RHS condition}, Def \ref{Def abbreviation of notations for spaces} \\ \hline
$J_{p,b}$, $J_{p}$  & Remark \ref{rmk Jpb}, Def \ref{Def abbreviation of notations for spaces} \\ \hline
$G(\cdot,\cdot)$ & (\ref{equ Def G..})  \\ \hline
$v$, $v-$spectrum, $\beta$, $\Psi_{\beta}$, $\Psi_{v}$  & Def \ref{Def v spectrum}  \\ \hline
$K_{p,b}$  &  Theorem \ref{thm global parametrix}  \\ \hline
$\Xi$  & Def \ref{Def the bundle xi}  \\ \hline
$\perp$, $\parallel$  & Def \ref{Def Fredholm operators and isomorphisms}  \\ \hline
$O$, $O_{j}$, $V_{O,+}$, $V_{O,-}$, $U_{+}$, $U_{-}$  & Def \ref{Def admissable open cover}  \\ \hline 
$\Upsilon_{A_{O}}$, $\Upsilon_{A_{O_{j}}}$  & Proposition \ref{prop seperation of variable for general cone} \\ \hline
$\Delta_{s}$  & (\ref{equ cone formula for the 0form with proper basis})   \\ \hline
$w$, $w_{p,b}$  & Def \ref{Def of L22 norm model case}, Def \ref{Def global weight and Sobolev spaces} \\ \hline
$dV$ & Def \ref{Def volume forms}  \\ \hline
$\vartheta_{-p}$, $\vartheta_{1-p}$ & Def \ref{Def spectrum gap} \\ \hline
$coker L_{A}$ & (\ref{equ Def of Coker})  \\ \hline
$d_{j}$, $d_{i}$  & (\ref{eqnarray L22 identity from  integration by parts})  \\ \hline
$r,\ r_{x},\ r_{x,y},\ \underline{r_{x,y}}$ & Remark \ref{rmk homotopy property and def of r}, Def \ref{Def local Schauder norms}  \\ \hline 
(p,b)-Fredholm  & Def \ref{Def Fredholm operators and isomorphisms}  \\ \hline
 \end{tabular}
  \vspace{10pt}
\renewcommand\arraystretch{1.5}
  \end{center}
 
\end{document}